\documentclass[a4paper,11pt]{article}
\usepackage{amsmath,amsthm,amssymb,dsfont,graphicx,xspace,epsfig}
\usepackage[dvipsnames]{xcolor}
\usepackage[T1]{fontenc}
\usepackage[plain]{fullpage}
\usepackage{thm-restate}
\usepackage[hidelinks]{hyperref}
\usepackage{bm}
\usepackage{color}
\usepackage{comment}
\usepackage{mathrsfs}
\usepackage{cleveref}
\usepackage[shortlabels]{enumitem}
\usepackage{float}
\usepackage{pict2e}
\usepackage{mathtools}
\usepackage{tikz}
\usepackage{subfigure}
\usepackage{xstring,refcount}
\usepackage{authblk}
\usepackage{yfonts}

\usetikzlibrary{fit, backgrounds, matrix, arrows.meta}
\usetikzlibrary{calc,arrows,positioning}
\usetikzlibrary{decorations}
\usetikzlibrary{decorations.pathmorphing}
\usetikzlibrary{decorations.pathreplacing}
\usetikzlibrary{decorations.shapes}
\usetikzlibrary{decorations.text}
\usetikzlibrary{decorations.markings}
\usetikzlibrary{decorations.fractals}
\usetikzlibrary{decorations.footprints}

\tikzset{vertex/.style = {circle,fill=black,minimum size=6pt, inner sep=0pt, outer sep=3pt}}
\tikzset{mvertex/.style = {circle,fill=black,minimum size=5pt, inner sep=0pt, outer sep=2pt}}
\tikzset{svertex/.style = {circle,fill=black,minimum size=4pt, inner sep=0pt, outer sep=2pt}}
\tikzset{labelledvertex/.style = {circle,fill=none,draw,very thick, inner sep=2pt, outer sep=2pt}}
\tikzset{Xrect/.style = {rectangle,fill=none,draw,very thick, minimum width=1.2cm, minimum height=0.8cm, outer sep=0pt}}

\tikzset{vtarc/.style = {line width=2.2pt,->,> = latex}}
\tikzset{tarc/.style = {thick,->,> = latex}}
\tikzset{sarc/.style = {thin,->,> = latex}}
\tikzset{tdigon/.style = {line width=2.2pt,<->,> = latex}}
\tikzset{digon/.style = {thick,<->,> = latex}}
\tikzset{bigvertex/.style = {shape=circle,draw}}

\tikzset{ledge/.style = {gray}}
\tikzset{tedge/.style = {thick}}
\tikzset{vtedge/.style = {line width=1.3pt}}
\tikzset{stedge/.style = {line width=2.2pt}}

\definecolor{g-blue}{rgb}{0.0, 0.5, 1.0}
\definecolor{g-green}{rgb}{0.3, 0.8, 0.3522}

\newtheorem{theorem}{Theorem}[section]
\newtheorem{lemma}[theorem]{Lemma}
\newtheorem{corollary}[theorem]{Corollary}
\newtheorem{proposition}[theorem]{Proposition}
\newtheorem{conjecture}[theorem]{Conjecture}

\newtheorem{claim}{Claim}[theorem]
\newcounter{oldthm}{}
\def\@extractthmnum#1.#2{#2}

\theoremstyle{definition}
\newtheorem{question}[theorem]{Question}

\newenvironment{proofclaim}[1][Proof of claim]
  {\begin{proof}[#1]}
  {\end{proof}}

\newcommand{\defproblem}[3]{
 \vspace{3mm}
\noindent\fbox{
 \begin{minipage}{0.96\textwidth}
 \begin{tabular*}{\textwidth}{@{\extracolsep{\fill}}lr}\sc #1  \\ \end{tabular*}
 {\bf{Input:}} #2 \\
 {\bf{Question:}} #3
 \end{minipage}
 }
 \vspace{3mm}
}

\makeatletter
\newcommand{\overleftrightsmallarrow}{\mathpalette{\overarrowsmall@\leftrightarrowfill@}}
\newcommand{\overrightsmallarrow}{\mathpalette{\overarrowsmall@\rightarrowfill@}}
\newcommand{\overleftsmallarrow}{\mathpalette{\overarrowsmall@\leftarrowfill@}}
\newcommand{\overarrowsmall@}[3]{%
  \vbox{%
    \ialign{%
      ##\crcr
      #1{\smaller@style{#2}}\crcr
      \noalign{\nointerlineskip}%
      $\m@th\hfil#2#3\hfil$\crcr
    }%
  }%
}
\def\smaller@style#1{%
  \ifx#1\displaystyle\scriptstyle\else
    \ifx#1\textstyle\scriptstyle\else
      \scriptscriptstyle
    \fi
  \fi
}
\makeatother

\newcommand{\dig}[1]{\langle #1 \rangle}

\newcommand{\XOR}{\,\triangle\,}

\newcommand{\Pcal}{\mathcal{P}}
\newcommand{\Qcal}{\mathcal{Q}}
\newcommand{\Xcal}{\mathcal{X}}
\newcommand{\Ycal}{\mathcal{Y}}
\newcommand{\Zcal}{\mathcal{Z}}

\newcommand{\Eset}[3]{\delta_{#1}(#2,#3)}

\DeclareMathOperator{\temp}{sinv}
\newcommand{\inv}[2]{\temp_{#1}^{\leq #2}}
\newcommand{\invunb}[1]{\temp_{#1}}

\DeclareMathOperator{\tempac}{acinv}
\newcommand{\acinveqp}[1]{\tempac^{=#1}}
\newcommand{\acinvleqp}[1]{\tempac^{\leq#1}}
\newcommand{\acinvunb}{\tempac}

\newcommand{\InvSeq}[2]{$(#1,\leq\!#2)$-inverting set}
\newcommand{\eqp}[1]{$(=\!#1)$}
\newcommand{\leqp}[1]{$(\leq\!#1)$}

\DeclareMathOperator{\UG}{UG}

\DeclareMathOperator{\Inv}{Inv}
\DeclareMathOperator{\Push}{Push}

\renewcommand{\epsilon}{\varepsilon}
\renewcommand{\emptyset}{\varnothing}

\renewcommand{\tilde}{\widetilde}
\renewcommand{\phi}{\varphi}

\let\ge\geqslant
\let\leq\leqslant
\let\geq\geqslant

\hypersetup{
    colorlinks,
    linkcolor={red!50!black},
    citecolor={blue!50!black},
    urlcolor={blue!80!black}
}

\title{Increasing arc-connectivity by bounded- and fixed-size inversions\thanks{This work is announced at 'Mathematical Foundations of Computer Science' (MFCS) 2026.}}

\author[1]{Florian Hörsch}
\author[,2]{Lucas Picasarri-Arrieta\thanks{Research supported by JSPS KAKENHI JP20A402 and 22H05001, and by JST ASPIRE JPMJAP2302.}}

\affil[1]{CISPA, Saarbrücken, Germany}
\affil[2]{National Institute of Informatics, Tokyo, Japan}
\date{}

\begin{document}

\maketitle

\begin{abstract}
Given an integer $k\geq 1$, a digraph $D$ is $k$-arc-strong if the removal of any set of at most $k-1$ arcs of $D$ yields a strongly connected digraph. 
For a digraph $D$ and some set $X \subseteq V(D)$, the inversion of $X$ is the operation of flipping all arcs both of whose endvertices are in $X$. We initiate the study of establishing arc-connectivity properties by applying inversions of bounded or fixed size.

For fixed-size inversions, we consider the feasibility of the problem by characterizing, for all integers $p \geq 2$ and $k \geq 1$, the digraphs that can be made $k$-arc-strong by applying inversions of size exactly $p$, provided a minimum size of the digraphs.

For bounded-size inversions, the tractability of the feasibility problem follows easily from a famous theorem of Nash-Williams, so we focus on minimising the number of inversions. We prove that for all integers $p\geq 3$ and $k \geq 1$ and any $\epsilon>0$, there exists a polynomial-time $(4k-2+\epsilon)$-approximation algorithm for computing the minimum number of inversions of size at most $p$ that make a given digraph $k$-arc-strong. 
This is in stark contrast to other results on inversion optimization problems. On the other hand, we show that for any $p\geq 3$ and $k \geq 1$ the problem is NP-hard, and, moreover, APX-hard.

As a result on parameterized complexity, we show that for any $k \geq 2$, it is $W[1]$-hard with respect to $p$ to decide whether a given digraph can be made $k$-arc-strong by applying a single inversion of size at most $p$. We also prove that for a given multidigraph, it is $W[1]$-hard with respect to $\ell$ to decide whether it can be made 2-arc-strong by applying $\ell$ inversions of size 2. 

\noindent{}{\bf Keywords:} Bounded-size inversions, Strong connectivity, Approximation algorithms, Parameterized complexity.

\end{abstract}

\section{Introduction}

In this paper, a {\it multidigraph} consists of a vertex set and an arc set, where parallel arcs and digons are allowed. A multidigraph without parallel arcs is called a {\it digraph} and a digraph without digons is called an {\it oriented graph}. More basic notation can be found in Section~\ref{sec:preliminaries}. 

Given a multidigraph $D$ and a set $X\subseteq V(D)$, {\it inverting $X$ in $D$} means flipping the orientation of all arcs in $D$ that have both endvertices in $X$. Inversions fall into the category of reconfiguration operations. A general overview of reconfiguration in graph theory can be found in the survey of Nishimura~\cite{nishimuraALGO11}.
Despite being a relatively young field in graph theory, the theory of inversions has received significant attention in the last 15 years. Most work on inversions has been done with the objective of understanding the minimum number of inversions through which a given oriented graph can be made acyclic. The literature on this subject is already quite rich. In recent years, inversions with different objectives have also been considered. This in particular involves results on the so-called inversion diameter of undirected graphs and inversions with the objective of making an oriented graph highly connected. 

It is also natural to consider inversions of bounded and fixed size. In oriented graphs, inversions involving two vertices correspond to arc flips. While general inversions are known to behave rather chaotically, problems involving arc flips tend to be more tractable. It is hence natural to study fixed-size and bounded-size inversions as an intermediate step between these two concepts. Prior to this work, a few articles have considered bounded-size inversions with the objective of making an oriented graph acyclic, as well as in the context of the inversion diameter. So far, to the best of our knowledge, bounded- or fixed-size inversions have not yet been studied in the context of making a given oriented graph highly connected.

The objective of the present article is to fill this gap. As our first main result, for all fixed positive integers $p$ and $k$, we give a characterization of all digraphs that can be made $k$-arc-strong by inversions of size exactly $p$, provided the digraph is sufficiently large compared to $p$ and $k$. Next, we focus on making a digraph $k$-arc-strong by a minimum number of inversions of size at most $p$. We prove that for any fixed $p \geq 3$ and $k \geq 1$, the problem is APX-hard, but admits a polynomial-time $(4k-2+\epsilon)$-approximation algorithm for arbitrary $\epsilon>0$. Finally, we give some results on the parameterized complexity of the problem when considering either $p$ or the solution size as parameters.

In the following, we first give a more extensive overview of previous work and then present our findings in more detail.

\subsection{Previous work}

We first overview the results on making a given oriented graph acyclic by a minimum number of inversions of arbitrary size, the setting that has attracted most attention.  Given an oriented graph $D$, we denote by $\acinvunb(D)$ the minimum number of inversions to make $D$ acyclic.
Inversions were first considered by Belkhechine et al.~\cite{BELKHECHINE2010703}. 
They focused on tournaments and initiated the study of the asymptotic behaviour of $\acinvunb(n)$, defined as the maximum of $\acinvunb(T)$ over all tournaments $T$ on $n$ vertices. Their bounds were later significantly improved by Alon et al.~\cite{APSSW} and Aubian et al.~\cite{inversion}, who independently showed that $\acinvunb(n) = (1-o(1))n$.

The complexity of computing $\acinvunb(D)$ for a given oriented graph $D$ was first considered by Bang-Jensen, da Silva, and Havet~\cite{dmtcs:7474}. They proved that it is NP-hard to decide whether $\acinvunb(D)\leq 1$ for a given oriented graph $D$. This was extended by Alon et al.~\cite{APSSW} who proved that deciding whether $\acinvunb(D)\leq \ell$ is NP-hard for every fixed $\ell \geq 1$. 
Further, in~\cite{dmtcs:7474}, a conjecture on the behavior of $\acinvunb$ under dijoins was made. Given two oriented graphs $D_1$ and $D_2$, the {\it dijoin $D_1\Rightarrow D_2$} is obtained from vertex-disjoint copies of $D_1$ and $D_2$ by adding one arc from every vertex in $V(D_1)$ to every vertex in $V(D_2)$. The authors conjectured that 
\[
\acinvunb(D_1 \Rightarrow D_2)=\acinvunb(D_1)+\acinvunb(D_2)
\]
holds for all oriented graphs $D_1$ and $D_2$. While some cases for small inversion numbers were solved in~\cite{dmtcs:7474}, the conjecture was generally refuted independently in~\cite{APSSW} and~\cite{inversion}. Some results on special cases of this conjecture have been proved by Behague et al. \cite{Behague_Johnston_Morrison_Ogden_2025}, Wang, Yang, and Lu~\cite{wang2024inversionnumberdijoinsblowup}, and Behague and Gaudart-Wifling~\cite{behague2025casedijoinconjectureinverting}.\medskip

Recently, inversions have also been studied beyond the objective of making an oriented graph acyclic. The concept of inversion diameter was introduced by Havet, Hörsch, and Rambaud~\cite{IGT_2026__3__49_0}. Given an undirected simple graph, its {\it inversion graph} is the graph defined on the set of all its orientations and where two orientations are adjacent if one can be obtained from the other by a single inversion. 
The {\it inversion diameter} of a graph is the diameter of its inversion graph. In~\cite{IGT_2026__3__49_0}, the authors prove some hardness results on computing the inversion diameter of a graph and show that it is functionally tied to some other graph parameters, namely the star chromatic number, the acyclic chromatic number and the oriented chromatic number. This yields in particular that the inversion diameter is bounded for all proper minor-closed classes of graphs. More explicit bounds on the inversion diameter of graphs coming from special classes were obtained, for example planar graphs, planar graphs of bounded girth, and graphs of bounded treewidth. The treewidth bound has been shown to be tight by Wang et al.~\cite{wang2026inversiondiametertreewidth}. Recently, several quantitative improvements on bounds in~\cite{IGT_2026__3__49_0} have been obtained by Arana et al.~\cite{arana2026inversiondiameter2edgecoloredhomomorphisms} as well as Bo et al.~\cite{boArxiv26}.

\medskip

We now come to another setting that plays a crucial role in this article, namely the idea of applying inversions in order to make the thus obtained digraph satisfy certain connectivity properties. To our best knowledge, the only work in which this setting was previously considered is the one of Duron et al.~\cite{https://doi.org/10.1002/jgt.23290}. For a positive integer $k$ and a multidigraph $D$, we use $\invunb{k}(D)$ for the minimum number of inversions which are necessary to make $D$ $k$-arc-strong. In the preceding discussion, we have mainly ignored the question of what happens when instead of considering oriented graphs, we also allow digons or parallel arcs. 
This is justified by the fact that these objects play no role when considering ways to make a multidigraph acyclic. Indeed, parallel arcs are irrelevant, and the presence of a digon immediately rules out the existence of a feasible solution. Similarly, parallel arcs and digons play no role in the study of inversion diameters.
However, when it comes to shooting for a connectivity property, then parallel arcs may make a crucial difference. In particular, it follows directly from a well-known theorem on orientations of Nash-Williams~\cite{nashwilliamsCJM12} that an oriented graph can be made $k$-arc-strong by inversions if and only if its underlying graph is $2k$-edge-connected. As implicitly pointed out in~\cite{https://doi.org/10.1002/jgt.23290}, this result can rather easily be generalized to digraphs. On the other hand, it follows directly from a result of Hörsch and Szigeti~\cite{HORSCH2021103292} that it is NP-hard to decide whether a multidigraph can be made 2-arc-strong by applying inversions. 
In~\cite{https://doi.org/10.1002/jgt.23290}, the authors prove that when aiming to minimize the number of necessary inversions, the problem also becomes hard in oriented graphs. 
More precisely, they prove that for any positive integers $k$ and $\ell$, it is NP-hard to decide whether $\invunb{k}(D)\leq \ell$ for a given oriented graph $D$. Moreover, they prove the following result, showing that even any acceptable approximation is out of reach.

\begin{theorem}[\cite{https://doi.org/10.1002/jgt.23290}]
    \label{inapprac}
    For any two positive integers $k$ and $\ell$, unless $P=NP$ there is no polynomial-time algorithm asserting whether a given oriented graph $D$ can be made $k$-arc-strong by one inversion or cannot be made $k$-arc-strong by at most $\ell$ inversions. 
\end{theorem}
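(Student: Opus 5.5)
We prove this gap-hardness result by a polynomial reduction from an NP-complete problem. We build, from an instance $I$, an oriented graph $D_I$ such that: if $I$ is a yes-instance, one inversion makes $D_I$ $k$-arc-strong; if $I$ is a no-instance, even $\ell$ inversions do not. An algorithm separating these two cases would then decide $I$.

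The natural source is the known NP-hardness of deciding whether a single inversion suffices, $\invunb{k}(D)\leq 1$, which we take as a base reduction from a satisfiability-type problem. In that base construction, a yes-instance yields a set $X$ whose inversion makes every cut of size below $k$ large enough. In a no-instance, every single inversion leaves some \emph{deficient} cut, that is, a set $Y$ with $d^+(Y)<k$ or $d^-(Y)<k$.

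The amplification step replaces the instance by many copies so that $\ell$ inversions cannot fix all of them. We take $N=\ell+1$ (or more) disjoint copies $D^1,\dots,D^N$ of the base gadget. We link them through a highly connected, already $k$-arc-strong oriented "backbone", for example a large regular tournament attached to designated vertices, so that the only deficient cuts are those internal to the copies.

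The difficulty is that a single inversion $X$ may intersect all copies at once. So the construction must make inversions "local". A solution's effect on copy $D^i$ depends only on $X\cap V(D^i)$. Hence in the yes case, inverting the union $\bigcup_i X^i$ of the per-copy solutions repairs all copies simultaneously, provided the backbone arcs inside this union are arranged to remain $k$-arc-strong after flipping. We can ensure this by attaching each copy to the backbone through vertices outside $X^i$, or by making the backbone's connectivity large (say $2k$ in both directions) so that any flip keeps it $k$-arc-strong.

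The main obstacle is the no case. Any $\ell$ inversions $X_1,\dots,X_\ell$ restrict to each copy as $\ell$ inversions, and the induced change there is a symmetric-difference pattern of up to $2^\ell$ regions, which is not one inversion. So per-copy hardness for one inversion does not directly give hardness for $\ell$ inversions. We must therefore strengthen the base gadget so that, in a no-instance, no sequence of $\ell$ inversions restricted to a copy fixes it.

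First, we would try to design the base gadget so that fixing it forces a very structured inversion. For instance, certain "forced" arcs must be reversed, and "rigid" arcs must not be reversed, the latter protected by digon-free parallel paths and cuts of size exactly $k-1$ that pin them. The inverted vertex set is then determined up to complement and encodes a truth assignment. Several inversions compose to an arc-reversal set of the form $\{uv : uv$ lies in an odd number of $X_j\}$. We would argue that the forced/rigid constraints make this reversal set itself of the form $E(X)$ for a single consistent $X$ on the gadget, using a parity/cut-space argument on a dense rigid core. This reduces $\ell$ inversions back to one and contradicts the no-instance.

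If this fails, the fallback is a direct reduction from a problem that is already gap-hard, such as label-cover or a promise coloring problem. A single inversion is a 2-coloring "in/out", and $\ell$ inversions give $2^\ell$ classes. We would reduce from hardness of coloring 2-colorable hypergraphs with $2^\ell$ colors, with hyperedge gadgets that are fixed iff they are not monochromatic.
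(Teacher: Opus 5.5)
The paper does not prove this statement (it is quoted from Duron et al.), so your proposal has to stand on its own, and it does not. The one step with real content, that in a no-instance \emph{no} $\ell$ inversions repair the construction, is announced but never carried out.

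The amplification by $N=\ell+1$ copies is vacuous, as you half-notice. If some $\ell$ sets repair one copy, then taking the union of the same pattern over all copies (avoiding the backbone) gives $\ell$ sets that repair every copy at once. And in the no case you need each single copy to resist $\ell$ inversions anyway, at which point one copy suffices. So everything rests on the ``strengthened base gadget''. There you only say that forced/rigid arcs and ``a parity/cut-space argument on a dense rigid core'' would force the set of arcs reversed by $X_1,\dots,X_\ell$ to equal $A(D[X])$ for a single $X$. No mechanism is given, and this is exactly the theorem; nothing in a generic one-inversion hardness reduction makes it true.

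There are also two smaller errors. Inverting $X$ and inverting $V\setminus X$ reverse different arc sets, so the inverted set is not ``determined up to complement''. And large arc-strength of the backbone does not survive arbitrary inversions: in a tournament, inverting $\{v\}\cup N^+(v)$ makes $v$ a sink.

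The missing idea is an invariant that survives $\ell$ inversions. Give each vertex $v$ its membership vector $c(v)\in\{0,1\}^\ell$ with respect to $X_1,\dots,X_\ell$. Then an arc $uv$ is reversed if and only if $\langle c(u),c(v)\rangle\equiv 1 \pmod 2$. So $\ell$ inversions amount to a $2^\ell$-colouring. Crucially, and this is what your fallback omits, whether an arc at a fixed vertex $h$ is reversed depends only on the colour of its other endpoint.

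With this, your fallback becomes a proof for $k=1$. Reduce from the theorem of Dinur, Regev and Smyth: for every constant $C$ it is NP-hard to distinguish $2$-colourable $3$-uniform hypergraphs from non-$C$-colourable ones; take $C=2^\ell$. Build the digraph as follows.
\begin{itemize}
\item Take a strong tournament $T$.
\item Give each hypergraph vertex an in-arc from $T$ and an out-arc to a different vertex of $T$.
\item For each hyperedge $e=\{a,b,c\}$, add a vertex $h_e$ whose only arcs go from $h_e$ to $a$, $b$ and $c$.
\end{itemize}
If $(C_1,C_2)$ is a $2$-colouring, inverting $C_1\cup\{h_e : e\in E(\mathcal{H})\}$ gives every $h_e$ an in-arc and an out-arc, and the result is strong. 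Conversely, if $\ell$ inversions make the digraph strong, then $x\mapsto\langle c(h_e),c(x)\rangle$ is non-constant on every hyperedge $e$. Hence $c$ is a proper $2^\ell$-colouring of $\mathcal{H}$.

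For $k\ge 2$ you still need a gadget in which the extra arcs at $h_e$, needed to reach degree $k$, cannot be exploited by the inversions. The obvious fix of replacing each hypergraph vertex by $k$ copies fails. If $e$ is monochromatic copy-by-copy, the number of reversed arcs at $h_e$ is only forced to be a multiple of $3$, which for $k\ge 2$ is compatible with having $k$ arcs in each direction. So that case needs its own construction.
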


Next, it was proved in~\cite{https://doi.org/10.1002/jgt.23290} that for every fixed positive integer $k$, the maximum of $\invunb{k}(D)$ over all $2k$-edge-connected digraphs $D$ on $n$ vertices is a logarithmic function in $n$.
Further, the authors prove a number of upper bounds for the special case of tournaments.

\paragraph{Inversions of bounded and prescribed size}
 
In all of the above discussion, we have disregarded the size of the sets that are inverted. It is natural to aim to understand the behavior of inversions when the size of the sets that can be inverted is restricted. In the following, for some positive integer $p$, an {\it \eqp{p}-inversion} (respectively {\it \leqp{p}-inversion}) is an inversion of a set of size exactly $p$ (respectively at most $p$). 
The most restrictive meaningful case to consider is the one of inversions of sets of size 2. In oriented graphs, inversions of size 2 correspond exactly to arc flips, the operation of reversing an arc. 
This operation has been deeply studied from many angles. 
From an algorithmic perspective, the behavior of inversions has so far been rather chaotic, as exemplified by the fact that deciding whether $\acinvunb(D)\leq 1$ for a given oriented graph $D$ is NP-hard.
On the other hand, arc flips are comparatively well understood. In this light, considering inversions of bounded or prescribed size is a natural intermediate step between unbounded inversions and arc flips. 

Bounded-size inversions in order to make an oriented graph acyclic were first considered by Yuster~\cite{https://doi.org/10.1002/jgt.23251}. He focused on tournaments and proved a collection of extremal results. For a given oriented graph $D$, we use $\acinveqp{p}(D)$ ($\acinvleqp{p}(D)$) for the minimum number of \eqp{p}-inversions (\leqp{p}-inversions) to make $D$ acyclic. The algorithmic complexity of computing the minimum number of bounded-size inversions to make an oriented graph acyclic was studied by Bang-Jensen et al.~\cite{bangjensen2025makingorientedgraphacyclic}. While computing a minimum set of arcs whose flipping makes a given oriented graph acyclic is one of Karp's 21 NP-hard problems~\cite{karp2010book}, the corresponding result in tournaments, proved independently by Alon~\cite{alonSJDM20} and Charbit, Thomassé, and Yeo~\cite{charbitCPC16}, answered a long-standing open question. In~\cite{bangjensen2025makingorientedgraphacyclic}, it was proved that  computing $\acinvleqp{p}(D)$ for a given tournament $D$ is NP-hard for any fixed $p \geq 2$, using a comparatively easy reduction for $p \geq 3$. The existence of a polynomial-time constant factor approximation for the minimum number of arc flips that makes a given digraph acyclic was a long-standing open problem. Finally, it was proved by Guruswami et al.~\cite{doi:10.1137/090756144} that such an algorithm would contradict the Unique Games Conjecture (UGC). Some simpler hardness proofs were given by Svensson~\cite{v009a024} and Guruswami and Lee~\cite{v012a006}.  It is not difficult to see that for every fixed $p \geq 2$, a polynomial-time constant factor approximation for computing $\acinvleqp{p}$ exists if and only if the answer for the arc flip problem is affirmative. Hence, for any $p \geq 2$, a constant factor approximation for $\acinvleqp{p}$  can be ruled out under UGC. Next, in~\cite{bangjensen2025makingorientedgraphacyclic}, parameterized complexity was also considered. Among others, the authors proved the following result.

\begin{theorem}[\cite{bangjensen2025makingorientedgraphacyclic}]
    \label{acw1}
    Deciding whether a given oriented graph can be made acyclic with a single inversion of size at most $p$ is $W[1]$-hard with respect to $p$.
\end{theorem}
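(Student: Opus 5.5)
We would prove Theorem~\ref{acw1} by a parameterized reduction from \textsc{Multicolored Clique}, which is $W[1]$-hard with respect to the clique size $t$. The instance is an undirected graph $G$ whose vertex set is partitioned into colour classes $V_1,\dots,V_t$, and we ask whether $G$ contains a clique with exactly one vertex in each class. From it we will build an oriented graph $D$ and set $p = f(t)$ for some function $f$ depending only on $t$, say $p = t + \binom{t}{2}$ or a small multiple of it. We then want $D$ to be made acyclic by inverting a single set $X$ with $|X|\le p$ if and only if $G$ has a multicolored clique.

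Recall that after inverting $X$, every directed cycle of $D$ must have been destroyed. A cycle whose arcs all lie inside $X$ becomes reversed and is therefore still a cycle, so $D[X]$ itself must be acyclic. A cycle that has arcs both inside and outside $X$ gets broken exactly when its arcs inside $X$ are not all consistent with the cycle after inversion. We plan to use these two facts to design the gadgets.

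\begin{itemize}
\item \emph{Selection gadgets.} For each colour $i$, we create a directed cycle structure that forces $X$ to contain some vertex of $V_i$. Every cycle through this part must have an arc with both ends in $X$, so the forced element is one of the chosen vertices.
\item \emph{Edge gadgets.} For each pair $i<j$, we attach short cycles, one for each non-edge $uv$ with $u\in V_i$ and $v\in V_j$. Such a cycle is broken only if $X$ contains some designated internal vertex, and that vertex is unavailable when $u$ and $v$ are both chosen.
\end{itemize}

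An alternative, probably cleaner, is to encode the problem so that $X$ is an acyclic "feedback" set. Start with a tournament-like ordering, take vertices $v\in V(G)$, and add a long cycle $C_v$ whose arcs pass through $v$. The budget $p$ then allows exactly $t$ vertex choices plus $\binom{t}{2}$ edge-witness vertices $w_{uv}$. For each selected edge, the cycles involving both endpoints are broken only by $w_{uv}$ together with $u$ and $v$.

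The proof then has three steps. In the first, we show that any valid $X$ of size at most $p$ must pick exactly one "vertex" element per colour and one "edge" element per colour pair. We get this by a counting argument using disjoint cycles, each of which needs at least two vertices of $X$ on it, together with the budget $p$. In the second, we show that an edge element $w_{uv}$ only breaks its cycles when $u$ and $v$ are both in $X$, which gives consistency between the chosen edges and the chosen vertices. In the third, we prove the converse: a multicolored clique yields a set $X$ whose inversion leaves every cycle broken and creates no new cycle. For this, we show that $D$ minus the relevant arcs admits a topological order that remains valid after the inversion.

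We expect the main obstacle to be the last point, namely ensuring that the inversion does not create new cycles. Arcs inside $X$ whose direction flips can combine with arcs outside $X$ to form cycles that were not present before. We would first try to place all gadget vertices along a fixed linear order, with only carefully controlled backward arcs. The chosen $X$ would be designed so that after inversion every arc respects a modified order in which the members of $X$ are reordered consistently. Checking that this reordering works for every gadget is the technical heart of the argument.
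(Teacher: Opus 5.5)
The paper does not prove this statement; it quotes it from \cite{bangjensen2025makingorientedgraphacyclic}. Your proposal therefore has to stand on its own, and as written it has a genuine gap: the gadgets are never specified, and the ones you do describe cannot work.

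For $\Inv(D,X)$ to be acyclic, \emph{every} directed cycle of $D$ must contain an arc with both ends in $X$. This applies to the cycles attached to unselected vertices and edges just as much as to the selected ones.
\begin{itemize}
\item \emph{Edge gadget.} It attaches to each non-edge $uv$ between $V_i$ and $V_j$ a cycle that can only be broken through its own designated vertex. Then $X$ must contain (an arc through) the designated vertex of every such cycle. The number of non-edges is not bounded by any function of $t$, so yes-instances of \textsc{Multicolored Clique} are mapped to no-instances.
\item \emph{Intended mechanism.} The effect you want (``this cycle becomes unbreakable once $u$ and $v$ are both chosen'') cannot be obtained this way at all. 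The requirement that each cycle contain an arc of $D[X]$ is monotone in $X$, whereas non-adjacency is the anti-monotone constraint ``not both $u$ and $v$''.
\item \emph{Alternative version.} It has the same defect: a cycle $C_v$ for every $v\in V(G)$, and witness cycles for every edge, all have to be destroyed.
\item \emph{Selection gadget.} It has the dual problem. If there is one cycle $h\to v\to\dots\to h$ per $v\in V_i$ and the arcs $hv$ are its only breakable arcs, all of $V_i$ is forced into $X$. If instead the cycles share a breakable arc, putting its two ends into $X$ destroys them all without touching $V_i$.
\end{itemize}

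What is missing is a way to turn the conjunction over all cycles into a choice, together with a source for the anti-monotone constraints.
\begin{itemize}
\item \emph{Choice.} A disjunction ``pick one of many'' has to be carried by a single cycle, or by cycles sharing the same candidate arcs. All non-candidate arcs of that cycle must be made unbreakable, e.g. by replacing each with more than $p$ internally disjoint $2$-paths, which no set of size at most $p$ can cover.
\item \emph{Anti-monotone constraints.} Constraints of the form ``not both $a$ and $b$'' cannot come from cycle destruction. They must come from the budget, or from the requirement that the inversion creates no new cycle. For instance, use an arc $a\to b$ together with more than $p$ internally disjoint paths $a\to w\to b$: any $X\supseteq\{a,b\}$ leaves one such path intact, and it closes a cycle with the reversed arc.
\end{itemize}
In your plan this new-cycle phenomenon appears only as an obstacle to be checked at the end, whereas it is the natural tool for the consistency half of the reduction. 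Once gadgets are built this way, both directions can be verified with the criterion that $\Inv(D,X)$ is acyclic if and only if $D$ has a vertex ordering whose backward arcs are exactly the arcs of $D[X]$.

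For contrast, consider the paper's own single-inversion hardness proof for $k$-arc-strong connectivity (\Cref{lem:W1_hardness_invkp_by_p}). There the ``exists'' constraints come for free from cut deficiencies such as $d^-_D(x_i)=k-1$. This deficiency is repaired, e.g., as soon as $X$ contains $x_i$ and any one vertex of $V_i$, and consistency then follows from the tight budget $p=2r+|E(H)|$. Acyclicity offers no such single deficiency to repair, which is exactly why gadgets with one cycle per constraint fail.
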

Observe that \Cref{acw1} is incomparable to the result that it is NP-hard to decide whether a given oriented graph can be made acyclic by a single inversion.

Finally, in~\cite{bangjensen2025makingorientedgraphacyclic}, fixed-size inversions were also studied. Here, besides the problem of minimising the number of inversions, also the question of the existence of a feasible solution becomes significantly more interesting. Observe that, when considering an oriented graph, a bounded-size inversion can flip a single arc and hence the problem of deciding the existence of a feasible solution reduces to an orientation problem. This is no longer the case for fixed-size inversions. Nevertheless, using methods from linear algebra, the authors of~\cite{bangjensen2025makingorientedgraphacyclic} prove a characterization of tournaments that can be made acyclic by fixed-size inversions and use it to give a polynomial-time algorithm for general oriented graphs. 

When it comes to bounded-size inversions for other objectives than becoming acyclic, we wish to mention the recent work of Havet, Rambaud, and Silva~\cite{havet2026leqpinversiondiameteroriented}. They consider a variation of the diameter problem for bounded-size inversions and prove a collection of extremal results.

\subsection{Our contributions}
As far as we are aware, no previous efforts have been made to combine the concept of bounded-size and fixed-size inversions with the objective of making a digraph satisfy connectivity requirements. The objective of the present article is to fill this gap. Our results can be divided into two categories. First, we consider feasibility problems. Here, we deal with the question of which digraphs or multidigraphs can be made $k$-arc-strong by fixed-size inversions, irrespective of the number of required inversions. Afterwards, we focus on bounded-size inversions, and consider the problem of making a directed graph $k$-arc-strong by using a minimum number of such inversions. Here, we give some results on the exact computation and the approximation of the minimum number of inversions. We finally give some results related to parameterized complexity.

\paragraph{Feasibility}

We first consider general multidigraphs and the most basic case, which is $p=2$. Observe that for $p=2$, bounded-size and fixed-size inversions give the same means of altering a multidigraph. Moreover, when it comes to feasibility, inversions of size 2 are exactly as powerful as arbitrary inversions. Interestingly, we can obtain the following result as a simple corollary of a result of Hörsch and Szigeti~\cite{HORSCH2021103292}. It rules out the possibility of solving the feasibility problem for general multidigraphs even in the most basic case.

\begin{theorem}
    \label{thm:NP_hardness_2inversion_multidigraphs}
    It is NP-hard to decide whether a given multidigraph can be made $2$-arc-strong by applying \eqp{2}-inversions.
\end{theorem}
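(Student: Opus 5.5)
The plan is to derive Theorem~\ref{thm:NP_hardness_2inversion_multidigraphs} from the result of Hörsch and Szigeti~\cite{HORSCH2021103292}. Their result, as discussed above, states that it is NP-hard to decide whether a multidigraph can be made $2$-arc-strong by applying inversions. More precisely, it says that deciding whether a given mixed graph, or equivalently a multidigraph with some arcs fixed, admits a $2$-arc-strong orientation is NP-hard. The first step is to recall this statement in the form we need: given a multidigraph $D$, decide whether there is a set $F\subseteq A(D)$ such that reversing exactly the arcs of $F$ yields a $2$-arc-strong multidigraph. I would phrase this as an orientation-type problem in which the reachable configurations are arbitrary reorientations of individual arcs, subject to whatever constraints the Hörsch--Szigeti construction imposes.

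The key observation is that \eqp{2}-inversions achieve exactly the same set of reachable multidigraphs as arbitrary inversions. Inverting a pair $\{u,v\}$ flips simultaneously all arcs between $u$ and $v$. It follows that:
\begin{itemize}
\item parallel arcs between $u$ and $v$ always keep a common orientation relative to each other;
\item digons stay digons.
\end{itemize}
Conversely, any inversion of a set $X$ equals the composition of the \eqp{2}-inversions of all pairs inside $X$, because each pair is then flipped exactly once. The reachable multidigraphs are therefore precisely those obtained by choosing, for each unordered adjacent pair $\{u,v\}$, whether to reverse the whole bundle of arcs between $u$ and $v$. So feasibility with \eqp{2}-inversions coincides with feasibility with arbitrary inversions, and the cited NP-hardness transfers directly.

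The only real obstacle is to make sure the cited result is indeed stated for inversions, or for the equivalent bundle-reversal model, rather than for orientations of mixed graphs where single parallel arcs may be reversed independently. If the Hörsch--Szigeti hardness is for orienting a mixed graph, I would encode it as follows:
\begin{itemize}
\item a fixed arc $uv$ becomes a pair of parallel arcs $uv$ together with a digon-free gadget; more simply, I would use that a bundle containing a digon is invariant up to symmetry;
\item an undirected edge becomes a single arc whose endpoints have no other arcs between them.
\end{itemize}
I would try first a direct gadget: replace each directed arc $uv$ of the mixed graph by a path $u\to w\to v$ through a new vertex $w$, and attach $w$ to $u$ and $v$ by additional digons. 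This gives $w$ enough degree for $2$-arc-strength no matter what is flipped, while the contribution to cuts separating $u$ from $v$ still behaves like a fixed arc. The digons are invariant under inversion, and I would check that flipping the bundle $\{u,w\}$ does not help. Verifying that these gadgets preserve the $2$-arc-strong equivalence in both directions is the step I expect to need care.
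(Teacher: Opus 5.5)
\textbf{There is a genuine gap: the hardness you start from is essentially the statement you are asked to prove, not a result of Hörsch and Szigeti.} What they actually prove is that Deletability Orientation (DO) is NP-hard. In DO, one is given a graph $G$ and $F\subseteq E(G)$, and must decide whether $G$ has a strongly connected orientation in which every edge of $F$ becomes a deletable arc.

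The introduction says that NP-hardness of making a multidigraph $2$-arc-strong by inversions ``follows directly'' from their work. That claim is exactly Theorem~\ref{thm:NP_hardness_2inversion_multidigraphs}, once one adds your observation that \eqp{2}-inversions reach the same multidigraphs as arbitrary inversions. That observation is correct but easy. So your main line is circular.

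Your alternative framings do not help, because both describe polynomial-time problems. If arcs could be reversed individually, Theorem~\ref{thm:nashwilliams} shows that feasibility is just $4$-edge-connectivity of $\UG(D)$. The existence of a $k$-arc-strong orientation of a mixed graph is also decidable in polynomial time, by Frank's orientation theorem for mixed graphs (via submodular flows). A reduction from that problem therefore yields no hardness. Moreover, your gadget does not fix the direction of $uw$: inverting $\{u,w\}$ reverses that arc and leaves the attached digons unchanged.

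\textbf{The missing idea is to use multiplicities to encode deletability, rather than trying to fix orientations, and to reduce from DO.} Number $V(G)=\{v_1,\dots,v_n\}$. For each edge $v_iv_j$ with $i<j$, let $D$ contain one arc $v_iv_j$ if the edge lies in $F$, and two parallel arcs $v_iv_j$ otherwise.

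A \eqp{2}-inversion flips a whole bundle. Hence the multidigraphs reachable from $D$ are exactly the orientations $\vec{G}$ of $G$ with the arcs of $E(G)\setminus F$ doubled. Deleting one copy of a doubled arc leaves a digraph containing $\vec{G}$. Deleting the single arc of some $f\in F$ leaves $\vec{G}\setminus\vec{f}$ with some arcs duplicated. Duplication does not affect strong connectivity. So the reached multidigraph is $2$-arc-strong if and only if $\vec{G}$ is strongly connected and every arc coming from $F$ is deletable.

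This is the paper's proof, and it is where the content of the theorem lies: the hardness comes precisely from parallel arcs being forced to flip together.
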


We give the proof of \Cref{thm:NP_hardness_2inversion_multidigraphs} in Section~\ref{sec:NP_hardness} for completeness.
In the light of \Cref{thm:NP_hardness_2inversion_multidigraphs}, we now focus on digraphs. For any positive integer $k$, a well-known theorem of Nash-Williams~\cite{nashwilliamsCJM12} (see Theorem~\ref{thm:nashwilliams}) states that a graph has a $k$-arc-strong orientation if and only if it is $2k$-edge-connected. In the following, we say that a multidigraph is {\it $2k$-edge-connected} if its underlying graph is $2k$-edge-connected. 
Clearly, every digraph that can be made $k$-arc-strong by applying \eqp{2}-inversions  is in particular $2k$-edge-connected by the theorem of Nash-Williams. 
Moreover, we immediately obtain that this condition is also sufficient for oriented graphs.  Our next result extends this observation in two ways. First, we allow digraphs rather than only oriented graphs. Second, rather than only considering \eqp{2}-inversions, we consider inversions of any fixed even size. 
On the other hand, we need the digraph in consideration to have a certain minimum size. Formally, we prove the following result. 

\begin{restatable}{theorem}{feasabilityeven}
    \label{thm:invertibility_even_case}
    Let $k$ be a positive integer, $p\geq 2$ be an even integer, and $D$ be a digraph of order $n\geq\max \{p+2,2k+2\}$. Then $D$ can be made $k$-arc-strong using \eqp{p}-inversions if and only if it is $2k$-edge-connected.
\end{restatable}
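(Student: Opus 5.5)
The necessity part is immediate. Inverting a set $X$ only reverses arcs inside $X$, so it never changes the underlying multigraph. Also, if a digraph is $k$-arc-strong, then every nonempty proper $X\subsetneq V(D)$ has at least $k$ arcs leaving and $k$ arcs entering it. Hence the underlying graph has at least $2k$ edges across every cut, so $D$ must be $2k$-edge-connected. All the work is in sufficiency, which I would split into two steps. First, fix a target $k$-arc-strong digraph $D^*$ that has the same underlying graph as $D$ and keeps every digon of $D$ as a digon. Second, show that $D^*$ can be reached from $D$ by \eqp{p}-inversions.

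For the first step I use the observation that inverting a set maps a digon to a digon, so digons are invariant under any sequence of inversions. Let $G'$ be the underlying graph of $D$ with the digons removed, and for $X\subseteq V(D)$ let $\mathrm{dig}(X)$ be the number of digons crossing the cut of $X$. Each crossing digon contributes one arc in each direction. So it suffices to orient $G'$ with $d^+(X)\geq k-\mathrm{dig}(X)$ for every nonempty proper $X$. The hypothesis gives $d_{G'}(X)+2\,\mathrm{dig}(X)\geq 2k$, that is, $\lfloor d_{G'}(X)/2\rfloor\geq k-\mathrm{dig}(X)$. Nash-Williams' strong (well-balanced) orientation theorem provides an orientation of $G'$ with $d^+(X)\geq\lfloor d_{G'}(X)/2\rfloor$ for all $X$, which is exactly what is needed. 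This is the generalization to digraphs mentioned after Theorem~\ref{thm:nashwilliams} (implicit in~\cite{https://doi.org/10.1002/jgt.23290}); I would spell it out in this form. Let $F$ be the set of non-digon arcs of $D$ whose orientation in $D^*$ differs from that in $D$.

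For the second step I work over $\mathrm{GF}(2)$. Inversions commute, and an arc ends up reversed exactly when it lies inside an odd number of the inverted sets. Identify each set $X$ with the indicator vector of the edges of $K_n$ inside $X$. Then I need a family of $p$-sets whose indicator vectors sum to a vector that agrees with $\mathbf 1_F$ on the pairs spanned by non-digon arcs. On non-adjacent pairs and on digon pairs the value is irrelevant, since nothing happens there or a digon stays a digon. It therefore suffices to show that every single pair $uv$ lies in the $\mathrm{GF}(2)$-span $\mathcal L$ of the $p$-clique indicators of $K_n$, modulo the free coordinates. The basic gadget is as follows. Take $S$ with $|S|=p-2$ and four distinct vertices $a,b,c,d\notin S$, which is possible since $n\geq p+2$. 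Summing the indicators of $S\cup\{a,b\}$, $S\cup\{c,d\}$, $S\cup\{a,c\}$ and $S\cup\{b,d\}$ gives exactly the $4$-cycle $abdc$, because every edge inside $S$ or from $S$ to $\{a,b,c,d\}$ is covered an even number of times. So all $4$-cycles of $K_n$ lie in $\mathcal L$, and hence so does the space they generate: even-degree edge sets with an even number of edges, or nearly so. A single $p$-clique, with $p$ even, has every degree equal to $p-1$, which is odd. Adding it and reducing with $4$-cycles should break the degree-parity invariant, and I would aim to derive from this that triangles, and then perfect-matching-like sets, lie in $\mathcal L$.

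I expect the main obstacle to be closing this last gap: $\mathcal L$ may not contain single edges, leaving a parity invariant (for instance, on the number of flipped edges, or on degree parities). My plan there is to compute $\mathcal L$ exactly from the gadgets above and, if it is a proper subspace, to absorb the defect using the freedom in choosing $D^*$. One option is to reverse a directed cycle of $D^*$ through suitable arcs. Another is to exploit an arc not in $F$ together with the slack $n\geq 2k+2$, which guarantees vertices of large enough degree to re-route without destroying $k$-arc-strength, so that $\mathbf 1_F$ can be moved into the correct coset of $\mathcal L$. Digon pairs and non-edges, whose coordinates are free, also give additional room in this step.
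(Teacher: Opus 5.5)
Your Step~2 is where the content of the sufficiency direction lies, and you do not carry it out. After showing that $4$-cycles lie in $\mathcal L$, you only state an aim and a menu of possible repairs, and that aim cannot be reached in one case. If $p\equiv 0 \pmod 4$, every $p$-set spans $\binom{p}{2}$ pairs, which is an even number, so every element of $\mathcal L$ has even size. Then no triangle and no single edge lies in $\mathcal L$. If moreover $D$ is a tournament, there are no free coordinates, and the parity of the number of reversed arcs is invariant under \eqp{p}-inversions. So the statement you reduce to (``every single pair lies in $\mathcal L$ modulo the free coordinates'') is false for tournaments when $4\mid p$. An arbitrary target $D^*$ need not be reachable, and $D^*$ must be chosen with this invariant in mind. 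Your argument never uses the hypothesis $n\geq 2k+2$, and this is exactly the case where it is needed.

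To close the gap you need two things.

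(a) $\mathcal L$ contains every even-size edge set. Take disjoint pairs $ab,cd$ and a set $S$ of size $p-2$ avoiding them. Then $\mathbf 1_{E(K[S\cup\{a,b\}])}+\mathbf 1_{E(K[S\cup\{c,d\}])}=\mathbf 1_{ab}+\mathbf 1_{cd}+\mathbf 1_{E(K_{S,\{a,b,c,d\}})}$. The complete bipartite part is an edge-disjoint union of $4$-cycles: pair up $S$, which has even size, and pair up $\{a,b,c,d\}$. For intersecting pairs $ab,ac$, add the identities for $ab,xy$ and $ac,xy$. Hence $\mathcal L$ is everything when $p\equiv 2\pmod 4$, and exactly the even-size sets when $p\equiv 0\pmod 4$. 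As soon as $D$ has one free pair $f$, the vector $\mathbf 1_e+\mathbf 1_f\in\mathcal L$ simulates flipping any single arc $e$. This is \Cref{prop:simulating_2_inversions}, which the paper imports.

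(b) If $D$ is a tournament, pick any edge $e$. Since $n\geq 2k+2$, the graph $K_n-e$ is still $2k$-edge-connected. Take a $k$-arc-strong orientation of it and orient $e$ so that the number of arcs to reverse is even. The paper does the equivalent: it deletes an arc of $D$, which creates a non-adjacent pair, solves that instance, and applies the same inversions to $D$.

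There is also a smaller error in Step~1. An orientation of $G'$ with $d^+(X)\geq\lfloor d_{G'}(X)/2\rfloor$ for all $X$ need not exist. In $K_4$ the out-degrees lie in $\{1,2\}$ and sum to $6$, so two vertices $u,v$ have out-degree $1$, and then $d^+(\{u,v\})=1<2$. What you actually need is only $d^+(X)\geq k-\mathrm{dig}(X)$. That follows from Jackson's theorem with the digons as the fixed partial orientation, i.e.\ the second part of \Cref{thm:nashwilliams}.
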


The case of fixed odd $p$ is more complicated and certain nontrivial obstructions appear. We describe these obstructions and prove that these are the only $2k$-edge-connected digraphs that cannot be made $k$-arc-strong by inversions of fixed odd size, again assuming a minimum size condition. 

We now describe these obstructions. Namely, for an integer $k \geq 1$, a {\it $k$-obstruction} is a digraph $D$, with underlying graph $G$, admitting a partition $(X_1,\dots,X_r,Y)$ of its vertex set into non-empty parts such that, for $X=\bigcup_{i\in [r]} X_i$, we have:
\begin{enumerate}[label=(\roman*)]
    \item for every $i\in [r]$, $d_G(X_i) = 2k$;
    \item for every $x\in X$ and $y\in Y$, there is exactly one edge between $x$ and $y$ in $G$; and
    \item $d^+_D(X) - \frac{1}{2}|X|\cdot |Y|$ is an odd integer.
\end{enumerate}
See Figure~\ref{fig:k_obstructions} for an illustration of $k$-obstructions. Note that, for every fixed $k$, there exist arbitrarily large $2k$-edge-connected $k$-obstructions. 
The following is our main result on feasibility for fixed odd inversions.

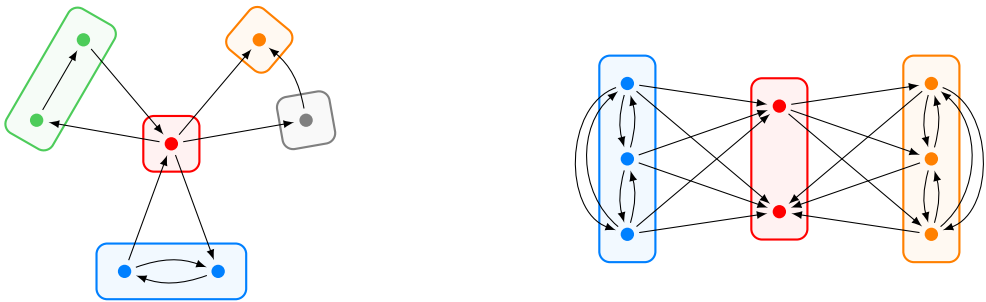
\begin{figure}
    \centering
    \begin{tikzpicture}[region/.style={draw=black, rounded corners, thick,inner sep=6pt, outer sep=2pt}]
        \newcommand{\rad}{1.8}
        \newcommand{\dev}{20}
        \node[mvertex, red] (y) at (0,0) {};
        \begin{scope}[on background layer]
            \node[region,red, fill=red!5, fit=(y)] (Y) {};
        \end{scope}
        \begin{scope}[rotate=30+\dev]
            \draw [draw=orange, rounded corners, thick, fill=orange!5] ($(0:\rad)-(0.35,0.35)$) rectangle ++(0.7,0.7);
            \node[mvertex, orange] (x1a) at (0:\rad) {};
        \end{scope}
        \begin{scope}[rotate=30-\dev]
            \draw [draw=gray, rounded corners, thick, fill=gray!5] ($(0:\rad)-(0.35,0.35)$) rectangle ++(0.7,0.7);
            \node[mvertex, gray] (x1b) at (0:\rad) {};
        \end{scope}
        
        \begin{scope}[rotate=150]
            \node (pad) at ($(0-\dev:\rad)+(-0.35,-0.35)$) {};
            \draw [draw=g-green, rounded corners, thick, fill=g-green!5] (pad) rectangle ++(0.7,1.9);
            \node[mvertex, g-green] (x2a) at (0+\dev:\rad) {};
            \node[mvertex, g-green] (x2b) at (0-\dev:\rad) {};
        \end{scope}
    
        \node[mvertex, g-blue] (x3a) at (-90+\dev:\rad) {};
        \node[mvertex, g-blue] (x3b) at (-90-\dev:\rad) {};

        \begin{scope}[on background layer]
            \node[region,g-blue, fill=g-blue!5, fit=(x3a)(x3b)] (X3) {};
        \end{scope}
        
        \draw[sarc] (y) to (x1a);
        \draw[sarc] (y) to (x1b);
        \draw[sarc] (y) to (x2a);
        \draw[sarc] (x2b) to (y);
        \draw[sarc] (y) to (x3a);
        \draw[sarc] (x3b) to (y);
        
        \draw[sarc] (x1b) to[out=120-\dev, in=-60+\dev] (x1a);
        \draw[sarc] (x2a) to (x2b);
        \draw[sarc] (x3a) to[bend left=20] (x3b);
        \draw[sarc] (x3b) to[bend left=20] (x3a);

        \begin{scope}[xshift=8cm, yshift=-1.2cm]
        \node[mvertex, red] (y1) at (0,0.3) {};
        \node[mvertex, red] (y2) at (0,1.7) {};
        \begin{scope}[on background layer]
            \node[region,red, fill=red!5, fit=(y1)(y2)] {};
        \end{scope}

        \foreach \i in {1,2,3}{
            \node[mvertex, g-blue] (xa\i) at (-2,-1+\i) {};
            \draw[sarc] (xa\i) to (y1);
            \draw[sarc] (xa\i) to (y2);}
            
        \begin{scope}[on background layer]
            \node[region, g-blue, fill=g-blue!5, fit=(xa1)(xa3)] {};
        \end{scope}
        
        \foreach \i in {1,2,3}{
            \node[mvertex, orange] (xb\i) at (2,-1+\i) {};
            \draw[sarc] (xb\i) to (y1);
            \draw[sarc] (y2) to (xb\i);
        }
        
        \begin{scope}[on background layer]
            \node[region, orange, fill=orange!5, fit=(xb1)(xb3)] {};
        \end{scope}
        
        \draw[sarc] (xa1) to[out=140, in=-140] (xa3);
        \draw[sarc] (xa3) to[out=-160, in=160] (xa1);
        
        \draw[sarc] (xb1) to[out=40, in=-40] (xb3);
        \draw[sarc] (xb3) to[out=-20, in=20] (xb1);
        
        \draw[sarc, bend right=15] (xa1) to (xa2);
        \draw[sarc, bend right=15] (xa2) to (xa1);
        \draw[sarc, bend right=15] (xa3) to (xa2);
        \draw[sarc, bend right=15] (xa2) to (xa3);
        
        \draw[sarc, bend right=15] (xb1) to (xb2);
        \draw[sarc, bend right=15] (xb2) to (xb1);
        \draw[sarc, bend right=15] (xb3) to (xb2);
        \draw[sarc, bend right=15] (xb2) to (xb3);
        \end{scope}
    \end{tikzpicture}
    \caption{Two examples of $k$-obstructions, with $k=1$ (left) and $k=3$ (right). In both cases, each color illustrates a part of the partition $(X_1,\dots,X_r,Y)$, the vertices of $Y$ being depicted in red.}
    \label{fig:k_obstructions}
\end{figure}

\begin{restatable}{theorem}{feasabilityodd}
    \label{thm:invertibility_odd_case}
    Let $k$ be a positive integer, $p\geq 3$ be an odd integer and $D$ be a digraph of order $n \geq \max\{p+2,4k+2\}$. Then $D$ can be made $k$-arc-strong using \eqp{p}-inversions if and only if it is $2k$-edge-connected and it is not a $k$-obstruction.
\end{restatable}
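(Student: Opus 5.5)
The plan is to turn reachability by \eqp{p}-inversions into a linear-algebra condition over $\mathbb{F}_2$. Then the statement becomes a question about $k$-arc-strong orientations whose out-degrees have prescribed parities.

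\emph{Step 1: the invariant.} Digons are unaffected by inversions, so only the set $E'$ of non-digon edges of the underlying graph $G$ matters. Encode a sequence of inversions by the set $F\subseteq E'$ of edges flipped an odd number of times. Inverting a $p$-set $S$ flips exactly $E'[S]$. For any $Z\subseteq V$ we have $|E'[S]\cap \delta(Z)|\le |S\cap Z|\cdot|S\setminus Z|$, with equality for the complete graph. I will use the vertex version: every vertex $v\in S$ has exactly $|S|-1=p-1$ neighbours in $S$ in the complete graph, an even number. Restricted to $E'$, parity is not automatic, so I first work with all pairs: for odd $p$, inversion vectors of the complete graph $K_n$ are Eulerian, i.e.\ every vertex has even degree in $K_S$.

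\emph{Step 2: the span of inversions.} I will show that, for $n\ge p+2$, the $\mathbb{F}_2$-span of $\{E(K_S): |S|=p\}$ is exactly the cycle space of $K_n$. The span lies in the cycle space by Step~1. Conversely, I aim to write every triangle as a sum of $p$-cliques, which suffices because triangles span the cycle space of $K_n$. To do this I take symmetric differences of $K_{T\cup\{a\}}$, $K_{T\cup\{b\}}$ and similar sets for a common $(p-1)$-set $T$, which yields stars between $T$ and pairs. Combining such gadgets over different $T$, using the two spare vertices guaranteed by $n\ge p+2$, isolates a triangle. Intersecting with $E'$, we get: $D$ can be transformed into an orientation $D'$ of $G$ (with the same digons) if and only if the set of edges oriented differently in $D$ and $D'$ meets every vertex an even number of times. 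This holds if and only if $d^+_{D'}(v)\equiv d^+_D(v) \pmod 2$ for every $v$, where edges inside digons are counted consistently in both.

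\emph{Step 3: necessity.} Suppose $D$ is a $k$-obstruction and $D'$ is reachable from $D$. By Step~2, $d^+_{D'}(X)\equiv d^+_D(X)\pmod 2$, since $d^+(X)=\sum_{v\in X}d^+(v)-|A(D[X])|$ and $|A(D[X])|$ is fixed. If $D'$ were $k$-arc-strong, condition (i) forces $d^+_{D'}(X_i)=d^-_{D'}(X_i)=k$ for each $i$. Summing over $i$ and using (ii) to count the $X$--$Y$ edges, I expect $d^+_{D'}(X)-\frac12|X||Y|$ to be forced to be even. This contradicts (iii). Pinning down this count exactly, including the arcs between distinct $X_i$, is routine but needs care.

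\emph{Step 4: sufficiency, the main obstacle.} We need a $k$-arc-strong orientation of $G$ with prescribed out-degree parities $d^+_D(v) \bmod 2$. The first attempt is a parity-constrained version of Nash-Williams' theorem, in the spirit of Frank's results on orientations with parity constraints. Failing that, I will argue directly. Start from any $k$-arc-strong orientation given by \Cref{thm:nashwilliams} and pair up the vertices with wrong parity; their number is even, since the total number of arcs is fixed. For each pair $u,w$, reversing a $u$--$w$ path fixes both parities, and reversing a suitable directed path preserves $k$-arc-strength unless it crosses a tight set. Reversing a directed $u\to w$ path changes $d^+(Z)$ only by $\pm1$ across sets separating $u$ and $w$. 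So the analysis reduces to tight sets $Z$ with $d^+(Z)=k$ or $d^-(Z)=k$. Uncrossing these, together with the hypothesis $n\ge 4k+2$, should show that a parity defect can be unfixable only when the tight sets are the $X_i$ with $d_G(X_i)=2k$, all fully joined by single edges to the remaining set $Y$, and the parity of $d^+(X)$ is off. This is exactly a $k$-obstruction. This uncrossing and extremal analysis is the hardest part.
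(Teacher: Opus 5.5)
Step~2 is false, and Steps~3 and~4 depend on it.

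\textbf{The projection to $E'$.} After computing the span on $K_n$, the flip sets achievable in $D$ are the \emph{projections} $C\cap E'$ of elements $C$ of the span. They are not the elements of the span that happen to lie in $E'$. A vertex of $S$ may have an odd number of $E'$-neighbours in $S$ (you noticed this yourself), so out-degree parities are not invariant in general.

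Here is a concrete counterexample. Take $k=1$, $p=3$, and let $D$ be an orientation of $C_6$ with a vertex of out-degree~$2$.
\begin{itemize}
\item $n=6=\max\{p+2,4k+2\}$.
\item $D$ is not a $1$-obstruction, since condition~(ii) would force $|X|,|Y|\le 2$.
\item Every single edge $xy$ is reversed by inverting $\{x,y,w\}$ with $w$ non-adjacent to both. So every orientation is reachable and $D$ is $(1,3)$-invertible.
\item Yet every strongly connected orientation of $C_6$ has all out-degrees equal to~$1$, so your criterion declares $D$ non-invertible.
\end{itemize}
The correct criterion for $p\equiv 3\pmod 4$ is weaker. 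It only requires $|F\cap\delta(Z)|$ to be even, where $F$ is the difference set, for sets $Z$ not crossed by any non-adjacent or digon pair. These are the unions of components of the graph of such ``free'' pairs, which is what the paper's free pairs and Claim~\ref{claim:free_triplets} capture.

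\textbf{The span for $p\equiv 1\pmod 4$.} Here the span itself is smaller than you claim. Since $\binom p2$ is even, every sum of $p$-cliques has an even number of edges and cannot be a triangle. For instance, in a tournament on at least $7$ vertices containing a directed triangle, reversing that triangle preserves all out-degrees but is not reachable by $(=5)$-inversions. The paper gets around this by simulating $(=3)$-inversions only modulo the arcs of $D$, using an independent triple whose inversion is trivial (Lemma~\ref{lemma:simulating_3_inversions}). It then treats $\alpha(D)\le 2$ by a separate extremal argument.

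\textbf{Consequences for Step~4.} Step~4 targets a false statement. In the $C_6$ example no strongly connected orientation with your prescribed parities exists, yet $D$ is invertible and not an obstruction. So no uncrossing argument can show that unfixable parity defects arise only from $k$-obstructions.

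Even with the corrected invariant, where components of the free graph replace vertices, Step~4 remains a heuristic. The structural analysis that produces the partition $(X_1,\dots,X_r,Y)$ is not carried out, and that is the real content of the theorem and where $n\ge 4k+2$ enters. The paper does this for $p=3$ via a counterexample minimising $|A(D)|$. For $p\ge 5$ it uses the simulation lemma when $\alpha(D)\ge 3$, and for $\alpha(D)\le 2$ an argument via Mader's theorem.

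\textbf{Step~3.} Step~3 reaches the right conclusion by a wrong route. Replace the vertex-parity argument by a direct count. A $p$-set $S$ reverses exactly $|S\cap X|\cdot|S\cap Y|$ edges between $X$ and $Y$, which is even because $|S|$ is odd. Then Proposition~\ref{g7zuhoi} gives $d^+_{D'}(X)=\frac12|X||Y|$ for every $k$-arc-strong $D'$. You should also state that $2k$-edge-connectivity is necessary, by \Cref{thm:nashwilliams}.
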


\Cref{thm:invertibility_even_case,thm:invertibility_odd_case} are proved in Section~\ref{sec:invertibility}. From these, we can conclude that for any fixed $p$ and $k$, we can decide in polynomial time whether a given digraph can be made $k$-arc-strong by inversions of size exactly $p$. Moreover, we show in Section~\ref{sec:identifying_obstructions} that obstructions can be recognised in polynomial time, which implies the following stronger statement.

\begin{restatable}{corollary}{corkernelkplusp}
    \label{cor:kernel_k+p}
    The problem of deciding whether a given directed graph can be made $k$-arc-strong by inversions of size exactly $p$ admits a kernel of size $O(k+p)$.
\end{restatable}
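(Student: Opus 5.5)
We obtain the kernel by combining \Cref{thm:invertibility_even_case,thm:invertibility_odd_case} with the polynomial-time recognition of $2k$-edge-connectivity and of $k$-obstructions. Let $(D,k,p)$ be an instance, $n=|V(D)|$, and $N=\max\{p+2,4k+2\}$, so $N=O(k+p)$. We also fix, once and for all, a trivial yes-instance $I_{\mathrm{yes}}$ and a trivial no-instance $I_{\mathrm{no}}$ of constant size. For instance, $I_{\mathrm{no}}$ can be a digraph on two vertices with a single arc and $k=1$, $p=2$; its underlying graph is not $2$-edge-connected, so it is a no-instance. For $I_{\mathrm{yes}}$ we can take a directed triangle with $k=1$ and $p=3$: it is already strong, so applying no inversion at all is a solution.

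The kernelization algorithm distinguishes two cases.

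\begin{enumerate}[label=(\alph*)]
\item If $n<N$, the instance already has size $O(k+p)$ in terms of vertices. Since $D$ is a digraph, it has at most $2\binom{n}{2}$ arcs, so the instance is output unchanged.
\item If $n\geq N$, then $n\geq \max\{p+2,2k+2\}$ as well, so exactly one of the two theorems applies, according to the parity of $p$.
\end{enumerate}

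In case (b), we test $2k$-edge-connectivity of the underlying graph in polynomial time by max-flow computations. If $p$ is even, \Cref{thm:invertibility_even_case} says that $D$ is a yes-instance exactly when this test succeeds. If $p$ is odd, \Cref{thm:invertibility_odd_case} says that $D$ is a yes-instance exactly when it is $2k$-edge-connected and not a $k$-obstruction. The second condition is checked with the recognition algorithm of Section~\ref{sec:identifying_obstructions}. Depending on the answer, we output $I_{\mathrm{yes}}$ or $I_{\mathrm{no}}$, which have constant size. The whole procedure runs in polynomial time for fixed $p$ and $k$; it is polynomial even when $p$ and $k$ are part of the input, provided the obstruction recognition is.

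The only nontrivial ingredient is recognising $k$-obstructions in polynomial time, which the paper defers to Section~\ref{sec:identifying_obstructions}. I expect that argument to go roughly as follows. By condition (ii), every vertex of $X$ has exactly $|Y|$ neighbours in $Y$. Moreover, every $X_i$ satisfies $d_G(X_i)=2k$, and at least $|X_i|\cdot|Y|$ of these edges go to $Y$. Hence $|Y|$ is small, or the parts $X_i$ are few and tiny, and $Y$ is essentially determined by degree considerations.

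A concrete way to carry this out is as follows.
\begin{enumerate}[label=(\arabic*)]
\item Guess $|Y|\leq 2k$; this range holds whenever $X\neq\emptyset$.
\item Characterise the candidate sets $X_i$ as the $2k$-cuts of $G$ that are fully joined to $Y$.
\item Find these cuts among the polynomially many minimum cuts, since a $2k$-edge-connected graph has $O(n^2)$ cuts of size exactly $2k$ at the minimum value.
\item Check condition (iii) by a parity computation.
\end{enumerate}
This is the step I expect to be the main obstacle. Since the statement only needs the recognition result, which is stated earlier in the paper's announced plan, the corollary itself follows directly from the case distinction above.
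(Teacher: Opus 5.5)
Your proposal is correct and matches the paper's proof. You keep $D$ when $n<\max\{p+2,4k+2\}$; otherwise you decide the instance in polynomial time via $2k$-edge-connectivity testing, the characterisation theorems and Lemma~\ref{mainident}, and replace it by a trivial yes- or no-instance (the paper uses a $k$-arc-strong rotative tournament on $2k+1$ vertices and an edgeless digraph on two vertices, which keep $k$ and $p$ unchanged, whereas your directed triangle with $k=1,p=3$ relies on the standard kernel definition allowing the parameters to change).

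Your explicit split on the parity of $p$ is slightly more careful than the paper's write-up, which appeals only to the odd-$p$ Lemma~\ref{lem:invertibility}. Your sketch of obstruction recognition is unnecessary, since you, like the paper, use Lemma~\ref{mainident} as a black box.
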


Theorems~\ref{thm:invertibility_even_case} and~\ref{thm:invertibility_odd_case} raise the question whether the minimum size bound is really necessary or just an artefact of a suboptimal proof technique. The following complexity result shows that at least the bound $n \geq p+2$ is crucial in this characterization, even for oriented graphs.

\begin{restatable}{theorem}{thmNPpush}
    \label{thm:NP_hardness_n-1_inversions}
    It is NP-hard to decide whether a given an oriented graph on $n$ vertices can be made strongly connected by inversions of size exactly $n-1$. 
\end{restatable}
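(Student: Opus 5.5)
The plan is to reduce from the problem of making a digraph strongly connected by \emph{pushing} vertices, where pushing a vertex $v$ means reversing every arc incident to $v$. Klostermeyer showed this problem to be NP-complete. Everything rests on understanding exactly which digraphs are reachable from $D$ by \eqp{(n-1)}-inversions.

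\textbf{Step 1: an \eqp{(n-1)}-inversion is a push up to global reversal.} Let $D$ have $n$ vertices. Inverting $V(D)\setminus\{v\}$ flips precisely the arcs not incident to $v$. This equals the composition of pushing $v$ with reversing all arcs of $D$. All these operations are arc-wise flips, so they commute. Reversing all arcs preserves strong connectivity, since $D$ is strong if and only if its converse is.

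\textbf{Step 2: characterise the reachable digraphs.} By Step 1, any sequence of \eqp{(n-1)}-inversions produces, up to a global reversal, the digraph obtained by pushing each vertex an appropriate number of times. Only the parity of each count matters, so the result is determined by a set $S\subseteq V(D)$. Pushing all vertices of $S$ reverses exactly the arcs with one end in $S$ and the other in $V(D)\setminus S$. Conversely, for every set $S$, inverting $V(D)\setminus\{v\}$ once for each $v\in S$ yields either $D$ with the cut $\delta(S)$ reversed, or the converse of that digraph.

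It follows that $D$ can be made strong by \eqp{(n-1)}-inversions if and only if some set $S$ has the property that reversing the arcs of the cut between $S$ and its complement yields a strong digraph. This is exactly the push-strong-connectivity problem. The reduction is therefore the identity map on instances, and NP-hardness transfers.

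\textbf{Step 3 (main obstacle): oriented inputs.} The one step needing care is that the theorem is stated for oriented graphs. I must check that Klostermeyer's hardness proof already produces oriented graphs, or adapt it. If it does not, I would give a direct reduction, for instance from Not-All-Equal 3-SAT, in the cut-reversal formulation of Step 2.

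In that reduction, a truth assignment corresponds to choosing $S$. Each clause would be encoded by an oriented gadget that becomes strongly connected after reversing $\delta(S)$ exactly when the clause's variables are not all on the same side of $S$. The gadgets would be attached to a common root so that global strong connectivity reduces to each gadget being fine. Since that construction uses only simple orientations, the oriented-graph requirement is met automatically.
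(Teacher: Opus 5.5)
Your proof is correct and matches the paper's. The paper likewise observes that inverting $V\setminus\{v\}$ amounts to pushing $v$ and then reversing all arcs, so that $\Push(D;X)$ equals $\Inv(D;\mathcal{X})$ or its converse, with $\mathcal{X}=\{V\setminus\{x\}:x\in X\}$ and the choice depending on the parity of $|X|$. It then concludes by the identity reduction from Klostermeyer's Strong Pushing problem. Your Step~3 concern does not arise, since that problem is already posed for oriented graphs, so the Not-All-Equal 3-SAT fallback is unnecessary.
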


The statement above follows from a hardness result due to Klostermeyer~\cite{klostermeyerAC51} concerning a closely related problem of {\it vertex pushing}. We state the problem and give the simple reduction in Section~\ref{sec:NP_hardness} for completeness.

\paragraph{Exact and Approximate Computation}

Here, we focus on the minimum number of inversions of bounded size needed to make a given digraph $k$-arc-strong. First consider the case of inversions of size 2. For oriented graphs, this case corresponds exactly to the operation of arc flipping. This problem can be solved in polynomial time~\cite{FRANK198297, bang2009} using a powerful orientation theorem of Frank~\cite{FRANK198297}. 
Recall that by \Cref{thm:invertibility_even_case}, for every even $p\geq 2$ and every $k \ge 1$, we have that every digraph of a certain minimum size can be made $k$-arc-strong if and only if its underlying graph is $2k$-edge-connected. Actually, it follows from \Cref{thm:nashwilliams} that is presented later that for $p=2$, this minimum size condition can be omitted. In particular, we obtain that a digraph can be made $k$-arc-strong by $(=2)$-inversions if and only if it can be made $k$-arc-strong by arc reversals.
In this light, it is interesting to observe that there is a significant difference between 2-inversions and arc reversals when it comes to minimizing the number of necessary operations. Indeed, the number of arc flips necessary to establish $k$-arc-strong connectivity can be different from the number of necessary \eqp{2}-inversions. See Figure~\ref{fig:difference_swap_inversion} for an illustration.
Nevertheless, we can use the result of Frank in a slightly different way to establish that also the case of digraphs can be solved in polynomial time.

\begin{figure}
    \centering
    \begin{tikzpicture}
        \node[mvertex, label=left:$v_1$] (v1) at (-150:1.5) {};
        \node[mvertex, label=below:$v_2$] (v2) at (0,0) {};
        \node[mvertex, label=right:$v_3$] (v3) at (-30:1.5) {};
        \node[mvertex, label=above:$v_4$] (v4) at (90:1.5) {};

        \draw[tarc] (v1) to (v2);
        \draw[tarc] (v1) to[out=80, in=-140] (v4);
        \draw[tarc] (v2) to (v3);
        \draw[tarc] (v4) to[out=-40, in=100] (v3);
        
        \draw[tarc] (v2) to[out=105, in=-105] (v4);
        \draw[tarc] (v4) to[out=-75, in=75] (v2);
        
        \draw[tarc] (v1) to[out=7, in=173] (v3);
        \draw[tarc] (v3) to[out=-165, in=-15] (v1);
    \end{tikzpicture}
    \caption{An example of a digraph where the number of necessary \eqp{2}-inversions to make it 2-arc-strong does not correspond to the necessary number of arc reversals. Indeed, the reversal of the arc from $v_1$ to $v_3$ makes the graph 2-arc-strong, but at least two \eqp{2}-inversions are required.}
    \label{fig:difference_swap_inversion}
\end{figure}
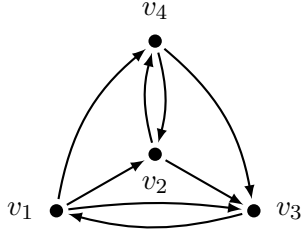

\begin{restatable}{theorem}{twoinfmin}
    \label{thm:poly-2-inversions}
    Given a digraph $D$ and an integer $k$, in polynomial time one can compute the minimum number of \eqp{2}-inversions to make $D$ $k$-arc-strong.
\end{restatable}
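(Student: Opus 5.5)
The plan is to show that, in a digraph, \eqp{2}-inversions are exactly reversals of arcs that do not lie in a digon. The problem then becomes a minimum-cost reorientation problem, which Frank's orientation theory~\cite{FRANK198297} solves in polynomial time.

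\textbf{Step 1: what a \eqp{2}-inversion does.} Let $D$ be a digraph, which has no parallel arcs. For a pair $\{u,v\}$ there are three cases.
\begin{itemize}
    \item If there is no arc between $u$ and $v$, inverting $\{u,v\}$ does nothing.
    \item If $u$ and $v$ form a digon, inverting $\{u,v\}$ swaps the two arcs and returns the same digraph.
    \item If there is exactly one arc between $u$ and $v$, inverting $\{u,v\}$ reverses that arc and creates no digon.
\end{itemize}
So a sequence of \eqp{2}-inversions acts as follows. Digons are never changed, and every other arc ends up either in its original orientation or reversed. The reversed arcs are those whose pair was inverted an odd number of times. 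Inverting the same pair twice is wasteful, so the minimum number of \eqp{2}-inversions equals the minimum size of a set $F$ of non-digon arcs whose reversal makes $D$ $k$-arc-strong. This also explains the example in Figure~\ref{fig:difference_swap_inversion}: there, an arc reversal may reverse one arc of a digon, but an inversion cannot.

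\textbf{Step 2: reduction to min-cost reorientation.} Let $A_2$ be the set of arcs lying in digons and $A_1$ the remaining arcs. For $X \subsetneq V(D)$ with $X\neq\emptyset$, let $c(X)$ be the number of arcs of $A_2$ entering $X$; these arcs are fixed. The task is to reorient $A_1$ so that, for every such $X$, at least $k-c(X)$ arcs of $A_1$ enter $X$. We want to do this while minimizing the number of reversed arcs.

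Two equivalent formulations are available.
\begin{itemize}
    \item Mixed-graph formulation: the arcs of $A_2$ are fixed, and each arc of $A_1$ may be kept at cost $0$ or reversed at cost $1$. We seek a minimum-cost $k$-arc-strong orientation of this mixed graph.
    \item Uniform formulation: give every arc of $A_1$ reversal cost $1$ and every arc of $A_2$ a reversal cost larger than $|A(D)|$. Then no optimal solution reverses an arc of $A_2$, provided any feasible one exists.
\end{itemize}
I would use the uniform formulation, since it needs only the weighted version of Frank's result.

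\textbf{Step 3: applying Frank's theorem.} Frank's theorem on minimum-cost reorientation with respect to a crossing supermodular (here, constant $k$) in-degree requirement handles this instance in polynomial time. The underlying tool is submodular flows, as used in~\cite{FRANK198297,bang2009}. Infeasibility is detected when the optimum cost exceeds $|A(D)|$, or directly via the criterion of Theorem~\ref{thm:nashwilliams}.

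The main obstacle is Step 3. We must check that the cited algorithm indeed allows arbitrary nonnegative reversal costs, or equivalently fixed arcs. Arc-reversal problems are usually stated for oriented graphs with unit costs, whereas here the digons $A_2$ must never be touched. If only the unit-cost oriented-graph version were available, I would instead encode the fixed arcs directly. The arcs of $A_2$ contribute the function $c(X)$, and the remaining requirement $k-c(X)$ is still crossing supermodular, because $c$ is an in-degree function and hence modular on crossing pairs up to the usual submodular correction. This is precisely the setting of Frank's general orientation theorem via submodular flows.
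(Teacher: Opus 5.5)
Your proposal is correct and follows essentially the same route as the paper, which applies Frank's minimum-weight $k$-arc-strong orientation theorem to $\UG(D)$ with weight $0$ for keeping an arc's orientation, $1$ for reversing a simple arc, and $|A(D)|+1$ for reversing an arc of a digon — exactly your uniform formulation. The concern in your Step 3 does not arise: the version of Frank's theorem the paper cites already allows arbitrary nonnegative integer weights on edge--endvertex pairs, so your fallback via fixed arcs and the crossing supermodular requirement $k-c(X)$ is not needed.
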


The proof of Theorem~\ref{thm:poly-2-inversions} is provided in Section~\ref{sec:edge_connectivity_tools} for completeness (see Theorem~\ref{thm:frank} below).
Our next result displays a drastic change in the behavior of the problem when considering $p \geq 3$. Namely, like for most inversion problems, it turns out that an exact computation of the number of necessary inversions is out of reach. Moreover, the following result also excludes the existence of a PTAS. 

\begin{restatable}{theorem}{thminapproximability}
    \label{thm:inapproximability}
    For every fixed $p\geq 3$, there exists $\epsilon_{p} >0$ such that, unless $P=NP$, for every $k\geq 1$, in polynomial time one cannot approximate the minimum number of \leqp{p}-inversions to make an oriented graph $k$-arc-strong within a factor of $1+\epsilon_{p}$. 
\end{restatable}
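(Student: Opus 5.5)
We prove the statement by an approximation-preserving (L-)reduction from a bounded-occurrence APX-hard covering problem. The natural source is \textsc{Vertex Cover} in graphs of maximum degree $3$, or more conveniently \textsc{Triangle Packing}/\textsc{Exact Cover by $3$-sets} with bounded occurrences. Our first choice is a covering problem where each inversion of size $3$ can "serve" a bounded number of deficient structures.

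The guiding intuition is as follows. A \leqp{p}-inversion on a set $X$ with $|X|\le p$ can only repair cuts $\delta(Z)$ for which $X$ meets both $Z$ and its complement. Moreover, it reverses at most $\binom{p}{2}$ arcs. If we build an oriented graph in which many small "deficient" gadgets must each be touched, then a single bounded-size inversion can fix only a constant number of gadgets. The optimum is then $\Theta(\text{number of gadgets})$, which is exactly the regime where L-reductions give constant-factor gaps.

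Concretely, for $p=3$ and $k=1$, we start from a cubic graph $H$ and take a highly connected oriented "backbone" $B$, for instance a large $2k$-arc-strong tournament-like structure.
\begin{enumerate}[label=(\arabic*)]
\item For each edge $uv$ of $H$, attach a vertex $s_{uv}$ with in- and out-arcs arranged so that $s_{uv}$ is a sink (in-degree $2k$, out-degree $0$). Its arcs go to the vertices $u$ and $v$ of $B$, or to small private copies of them.
\item Design the attachment so that one inversion of a set of size at most $3$ fixes $s_{uv}$ exactly when it contains $s_{uv}$ and one of its neighbours.
\item Arrange that an inversion $\{u,s_{uv},s_{uw}\}$ containing a vertex $u$ can fix up to $\deg(u)$ sink gadgets at once only through the shared vertex $u$.
\end{enumerate}
With this design, minimum solutions correspond to covering the gadget set by stars of size at most $p-1$ centred at vertices. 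This translates to a bounded-degree covering problem (\textsc{Vertex Cover} / \textsc{Set Cover} with sets of size $\le p-1$), which is APX-hard for $p-1\ge 2$.

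For general $k$, we use $2k$ parallel-in-spirit paths via distinct backbone vertices, keeping the graph oriented. For general $p$, we pad: the sets are still of size at most $p$, and larger sets give no extra benefit if gadget degrees are capped at $2$. We then prove both directions:
\begin{itemize}
\item A cover of size $c$ yields $c$ inversions making $D$ $k$-arc-strong. This uses the backbone's high connectivity to verify every cut.
\item Any set of $\ell$ inversions can be converted into a cover of size at most $\ell$. Every sink gadget must lie in some inversion together with a neighbour, and we charge it to that inversion.
\end{itemize}
Since optimum values are linear in $|V(H)|$ for cubic graphs, this is an L-reduction, and it yields $\epsilon_p$.

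The main obstacle is the converse direction. Inversions are global on their sets: one inversion may fix one gadget while creating new deficiencies, or it may fix gadgets in an unintended way by combining several of them. We must show via a cut-counting argument that each inversion "fixes" at most the intended gadgets. Our plan is to choose the backbone with connectivity far exceeding $k+\binom{p}{2}$, so that only the gadget cuts $\delta^+(s_{uv})$ can ever be deficient. The analysis then reduces to local case checks on sets of size at most $p$.
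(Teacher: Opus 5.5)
\textbf{There is a genuine gap.} The covering problem your gadgets encode is not the one you state, and for $p=3$ it is easy.

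\emph{The case $p=3$.} Take $k=1$ and let $s_{uv}$ be a sink whose only neighbours are $u$ and $v$. The gadgets are pairwise non-adjacent, so an inversion of size at most $3$ fixes at most two of them, and two only if it has the form $\{u,s_{uv},s_{uw}\}$. Hence $\inv{1}{3}(D)\geq\lceil |E(H)|/2\rceil$. For connected $H$ this bound is attained, because $E(H)$ splits into $\lfloor |E(H)|/2\rfloor$ paths of length $2$ plus at most one edge. So the optimum is computable in polynomial time, and no inapproximability can come out of this case. Your justification breaks exactly here: set cover with sets of size at most $2$ is edge cover, which is solvable via maximum matching and is not APX-hard.

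\emph{The case $p\geq 4$.} The step `each inversion can be charged to a single star' is false, because an inversion may contain several vertices of $H$. For $p=4$, any two gadgets $s_e,s_f$ are fixed by one inversion: use $\{w,s_e,s_f\}$ if $e\cap f=\{w\}$, and $\{u,v,s_e,s_f\}$ with $u\in e$, $v\in f$ if $e\cap f=\emptyset$. For cubic $H$ the count then becomes $\lceil(|E(H)|-\alpha(H))/2\rceil$, which is strictly smaller than $\tau(H)$ whenever $H$ is not bipartite. So optimal solutions are not vertex covers. For larger $p$, inversions with several centres cover even more gadgets, contrary to your remark that larger sets bring no extra benefit.

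\emph{The case $k\geq 2$.} A sink of in-degree $2k$ needs $k$ of its arcs reversed, so step (2) fails outright. The remark about `parallel-in-spirit paths' does not say how to repair this with a gap independent of $k$, which the statement requires.

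\textbf{The missing idea, as in the paper.} Make every ground vertex deficient by exactly one arc, for every $k$. Each ground vertex gets $k$ in-arcs from and $k-1$ out-arcs to private vertices of a $k$-arc-strong tournament $W$ containing all helper vertices (or the reverse orientation). It then suffices that the vertex lies in one inversion together with one suitable neighbour, and the value of the instance does not depend on $k$.

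One then reduces from a \emph{packing} problem whose target is linear in the number $n$ of ground vertices; your initial instinct of a bounded-occurrence packing problem was the right one.
\begin{itemize}
\item For $p=3$, use bipartite $P_3$-packing. The ground set is $V(G)$, with the edges of $G$ oriented from $A$ to $B$. An inversion of three ground vertices, each with a neighbour inside, must induce a $P_3$ of $G$. A helper $w_Q$ for every pair $Q$ makes two-element covers always available. This gives $\inv{k}{3}(D)=\lceil (n-y)/2\rceil$, where $y$ is the maximum size of a $P_3$-packing.
\item For $p\geq 4$, use $(p-1)$-uniform hypergraph matching. The ground set is independent. Each hyperedge $e$ gets a helper $w_e$ complete to it, and every $(p-2)$-set $Q$ gets a helper $w_Q$. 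An inversion covers at most $p-2$ ground vertices unless it equals $e\cup\{w_e\}$. This gives $\inv{k}{p}(D)=\lceil (n-\nu)/(p-2)\rceil$, where $\nu$ is the maximum matching size.
\end{itemize}
In both cases the gap transfers because the packing target is $\Theta(n)$.

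Note also that the obstacle you anticipated does not arise: the lower bound needs no backbone connectivity at all. It only uses that an untouched ground vertex keeps degree $k-1$ on one side, together with the harmless assumption that every vertex of an inversion has a neighbour inside it.
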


The proof of Theorem~\ref{thm:inapproximability} is given in Section~\ref{sec:inapproximability}.
Recall that, by \Cref{inapprac}, there is no hope to approximate the number of inversions of unbounded size that are necessary to make a given digraph $k$-arc-strong within a constant factor for any $k \geq 1$. Moreover, when the objective is to make a given digraph acyclic, then there is also no polynomial-time constant factor approximation for the minimum number of bounded-size inversions due to the result of Guruswami~\cite{doi:10.1137/090756144}. Surprisingly, the situation is much brighter in our setting. We prove the following result.

\begin{theorem}
    \label{thm:APX}
    For all integers $k\geq 1$ and $p \geq 2$ and every $\epsilon>0$, there exists a polynomial-time algorithm that computes a $(4k-2+\epsilon)$-approximation for the number of necessary \leqp{p}-inversions to make a given digraph $k$-arc-strong. 
\end{theorem}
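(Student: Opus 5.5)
The plan is to sandwich the optimum between two quantities expressed through vertex sets rather than arc sets. The reason is that a single \leqp{p}-inversion can reverse up to $\binom{p}{2}$ arcs but involves at most $p$ vertices. For a digraph $D$, call a set $S\subseteq V(D)$ \emph{sufficient} if some reorientation of arcs with both endvertices in $S$ makes $D$ $k$-arc-strong, and let $s(D)$ be the minimum size of a sufficient set. Digons inside $S$ stay digons under any inversion, so "reorientation" only concerns the non-digon arcs.

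\textbf{Lower bound.} If $X_1,\dots,X_\ell$ is an optimal family of \leqp{p}-inversions, then $\bigcup_i X_i$ is sufficient. Hence $\ell\ge s(D)/p$.

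\textbf{Upper bound, first step.} Given a sufficient set $S$, fix a reorientation inside $D[S]$ that makes $D$ $k$-arc-strong and reverses an inclusion-minimal arc set $F$. Such an $F$ can be found with Frank's orientation theorem (Theorem~\ref{thm:frank}), used as in the proof of Theorem~\ref{thm:poly-2-inversions} but restricted to arcs inside $S$. The structural claim to prove is that in an inclusion-minimal $F$ every vertex is incident to at most $2k-1$ arcs of $F$. Suppose a vertex $v$ had $2k$ reversed arcs. An uncrossing argument on the tight sets of the final $k$-arc-strong digraph $D'$ should then show that one of these arcs can be reversed back while keeping $d^+_{D'}(Z)\ge k$ for all $Z$. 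The intuition is that tight sets containing $v$ but not its $F$-neighbours form a chain-like family that cannot block all $2k$ arcs simultaneously.

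\textbf{Upper bound, second step.} With maximum degree at most $2k-1$, the graph $(S,F)$ has a proper edge colouring with at most $2k-1$ colours (a greedy colouring may use up to $2(2k-1)-1$ colours, but the factor $2$ reappears in the grouping below). Each colour class is a matching. For a matching $M$, group its edges into blocks of $\lfloor p/2\rfloor$ edges and invert the union of their endvertices. The unwanted arcs inside each block also get reversed; to repair this, I would invert again the subfamilies that must stay unchanged. Alternatively, I would choose the blocks so that no other arc of $D$ has both ends in the block. This is feasible since the underlying graph restricted to a matching-block is sparse, and one can pick blocks greedily within a bounded-degree auxiliary conflict graph.

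\textbf{Counting.} Each colour class then costs about $2|S|/p$ inversions. With $2k-1$ classes the total is $(4k-2)|S|/p$, so with the lower bound $\ell\ge |S|/p$ we get the factor $4k-2$, provided $|S|\le (1+\epsilon')\,s(D)$.

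\textbf{Approximating $s(D)$.} The remaining task is to find a sufficient set of size at most $(1+\epsilon')s(D)$ in polynomial time. I would obtain it together with the $\epsilon$ in the statement. When $s(D)$ is bounded by a constant depending on $\epsilon$, $p$ and $k$, I would try all candidate sets $S$ by brute force; sufficiency of a given $S$ is testable by Frank's theorem. Otherwise I would use a local-improvement or LP-rounding argument in which the additive rounding loss is absorbed into $\epsilon\cdot s(D)$.

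\textbf{Main obstacle.} I expect the hardest step to be making the block inversions reverse exactly $F$, i.e.\ controlling the collateral arcs inside each inverted set, together with proving the degree bound $2k-1$ for minimal $F$. If exact control fails, I would relax the target. Since $D'$ need only be $k$-arc-strong and not equal to the chosen orientation, I would rerun Frank's algorithm on the partially modified digraph after each colour class, and argue that the deficiency drops monotonically.
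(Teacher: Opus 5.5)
Your argument fails at the structural claim in the first upper-bound step. An inclusion-minimal reversal set $F$ need not have maximum degree at most $2k-1$; in fact its degree is not bounded by any function of $k$.

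Take $k=1$ and $m\geq 2$ triangles $va_ib_i$ sharing the vertex $v$, oriented $v\to a_i$, $a_i\to b_i$, $v\to b_i$. The underlying graph is $2$-edge-connected. Since $v$ is a cut vertex, each triangle must become a directed cycle, which forces reversing either $vb_i$ or both $va_i$ and $a_ib_i$. Hence every feasible reversal set, minimal or not and whatever $S$ contains it, has at least $m$ arcs at $v$, and the decomposition of $F$ into $2k-1$ matchings does not exist.

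Such stars are harmless for the problem itself: inverting $\{v,b_1,\dots,b_{p-1}\}$ repairs $p-1$ triangles at once. They just have to be grouped differently. What is true is a density bound, Lemma~\ref{lem:technical_frames}: a minimum family of pairs dominated by $\mathcal{Z}$ has at most $(2k-1)(|Z|-1)$ pairs inside each $Z\in\mathcal{Z}$. It is proved via $2k$-frames, Mader's theorem and Proposition~\ref{ftuziu}.

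The grouping then needs the paper's second idea, which also resolves your collateral-arc problem. Fix a minimally $k$-arc-strong spanning subdigraph $D'$ of the target digraph. Only require the pairs grouped into one inversion to be pairwise independent in the underlying graph $G'$ of $D'$; such pairs may share vertices. Reversing arcs outside $D'$ does no harm. Once no $\lfloor p/2\rfloor$ pairwise independent pairs remain, Dalmazzo's bound (Theorem~\ref{thm:dalmazzo}) together with Ramsey's theorem shows that only a constant number $c_{k,p}$ of pairs is left. Your alternative of forbidding every other arc of $D$ inside a block fails outright in dense digraphs: in a tournament no two distinct pairs are independent.

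The second gap is the step that produces $S$ with $|S|\leq(1+\epsilon')s(D)$. Brute force covers only bounded $s(D)$, and ``local improvement or LP rounding'' names a technique rather than giving an argument. Nothing suggests that minimizing the vertex support of a feasible reorientation admits a PTAS.

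The detour through $s(D)$ is also unnecessary, and even with the correct density bound it loses too much. Comparing through the union $S$ gives only $|F|<(2k-1)p\cdot\inv{k}{p}(D)$. With $\lfloor p/2\rfloor$ pairs per inversion, this yields ratio $(4k-2)\frac{p}{p-1}$ for odd $p$, not $4k-2$.

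The paper instead proceeds as follows:
\begin{enumerate}
\item It computes an optimal \InvSeq{k}{2} $\mathcal{X}_2$ exactly by Frank's theorem (Theorem~\ref{thm:frank}).
\item It applies Lemma~\ref{lem:technical_frames} to each set of an optimal \InvSeq{k}{p} separately. This gives $|\mathcal{X}_2|\leq(2k-1)(p-1)\inv{k}{p}(D)$ (Lemma~\ref{lem:bounding_invk2_invkp}).
\item The grouping above outputs at most $|\mathcal{X}_2|/\lfloor p/2\rfloor+c_{k,p}$ inversions. This is ratio $(2k-1)(p-1)/\lfloor p/2\rfloor\leq 4k-2$ up to an additive constant.
\item The additive constant is absorbed into $\epsilon$ by brute-forcing the instances with bounded $\inv{k}{p}(D)$.
\end{enumerate}
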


Observe that by \Cref{thm:APX}, there exists a $(2+\epsilon)$-approximation algorithm for any $\epsilon>0$ in the case that we aim for a strongly connected digraph.
Actually, we will show more technical results that yield slightly better approximation ratios for many special cases. In particular, we obtain the following result.

\begin{theorem}
    \label{thm:APX_p=4}
    There exists a polynomial-time algorithm that computes a $(\frac{3}{2}+\epsilon)$-approximation for the minimum number of \leqp{4}-inversions to make a given digraph strongly connected. 
\end{theorem}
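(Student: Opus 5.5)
We prove Theorem~\ref{thm:APX_p=4} by reducing to a covering problem. First we compute, via Frank's orientation theorem (as in the proof of Theorem~\ref{thm:poly-2-inversions}), a set $F$ of arcs whose reversal makes $D$ strongly connected. We choose $F$ to minimise $|F|$ among all such sets, and we also insist that $F$ has structural properties that make it cheap to cover by small inversions. Let $\mathrm{OPT}$ be the minimum number of \leqp{4}-inversions that make $D$ strong.

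The lower bound comes from the optimal solution itself. An optimal family of $\mathrm{OPT}$ inversions, each of size at most $4$, changes the orientation of arcs only within its sets. The set $R$ of arcs whose orientation ends up flipped therefore lies inside the union of these sets. Moreover, $R$ is a feasible reversal set, so $|F|\le |R|$.

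The key structural fact I would establish is the following. Among reversal sets of minimum size making $D$ strong, one can choose $F$ to be a forest, oriented as a branching-like structure. The reason is that any cycle of reversed arcs that is not needed could be un-reversed while preserving strong connectivity, which is an uncrossing argument on tight sets. It follows that $|F|\le 3\cdot\mathrm{OPT}$: each optimal inversion set has at most $4$ vertices, and restricted to a forest it spans at most $3$ arcs.

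The algorithm then covers $F$ by vertex sets of size at most $4$, each of which induces a subtree of $F$. We must ensure that inverting such a set $X$ flips only the arcs of $F[X]$ plus arcs of $D[X]\setminus F$ whose flipping does no harm. For this we compute a cover of the forest by subtrees on at most $4$ vertices, where a tree with $m$ arcs can be split into roughly $m/3$ pieces plus lower-order terms. Handling the extra non-$F$ arcs inside each $X$ is done by a local-search or PTAS-style step: we enumerate constant-size configurations, and the $\epsilon$ comes from a standard local improvement for set packing or covering with sets of size $3$ (as in Hurkens–Schrijver type bounds).

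The main obstacle is pinning the ratio at $3/2$ rather than $3$. A pure arc-count argument gives only $|F|/\mathrm{OPT}\le 3$, against an algorithm cost of about $|F|/3$ sets in the best case but $|F|/2$ in the worst. So one must compare the greedy/local-search cover directly with the optimal family. The plan is to show that the optimal inversions induce a partition of (a suitable) $F$ into subtrees of size at most $3$ arcs. Then a $(\frac32+\epsilon)$-approximate set packing of these subtrees, via local search with improvements of size $O(1/\epsilon)$, achieves the claimed bound. I expect the delicate step to be justifying that $F$ can be chosen consistently with the optimal solution's flipped arc set, since $F$ is chosen before $\mathrm{OPT}$ is known.
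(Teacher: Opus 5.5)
The genuine gap is the upper bound. You never say how to group the arcs of $F$ so that the result is provably strongly connected. Inverting a set $X$ also flips every other arc of $D[X]$, and whether that hurts depends on all the other inversions. You give no argument that a local-search cover stays feasible and uses about $|F|/2$ sets. Hurkens--Schrijver-type bounds are about packing many disjoint sets and do not translate into a bound on the number of inversions. Without such an argument you only have one inversion per arc of $F$, i.e.\ ratio $3$.

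Your worry about $\frac32$ versus $3$ is misplaced: given $|F|\le 3\,\mathrm{OPT}$, at most $|F|/2+O(1)$ inversions already give $\frac32\,\mathrm{OPT}+O(1)$. Conversely, $|F|/3$ via subtrees is not achievable in general. In a spider with legs of length two, no subtree on four vertices contains two outer arcs, so about $|F|/2$ pieces are needed.

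The missing idea is to fix a minimal strongly connected spanning subdigraph $D'$ of $\Inv(D;F)$. By minimality of $F$ every reversed arc lies in $D'$, and only arcs of $D'$ need to survive. Merge two arcs of $F$ into one $(\le 4)$-inversion whenever they are \emph{independent} in $\UG(D')$, i.e.\ their at most four endvertices induce only these two edges of $\UG(D')$. Such an inversion leaves $D'$ intact, whatever it does to other arcs. If no independent pair is left, Ramsey's theorem shows that a large leftover contains either four pairwise dependent arcs through a common vertex, forcing a $K_5$ in $\UG(D')$ ($10>8$ arcs), or eight pairwise disjoint, pairwise linked ones, forcing at least $36>30$ arcs on $16$ vertices. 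Both contradict Dalmazzo's bound of $2(n'-1)$ arcs on any $n'$ vertices of a minimally strong digraph. Hence at most $|F|/2+c$ inversions are output. The $\epsilon$ comes from brute force on instances with $\mathrm{OPT}\le c/\epsilon$, not from local search.

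Your lower bound $|F|\le 3\,\mathrm{OPT}$ is true, but as written it is a non sequitur. $F$ is chosen independently of the optimum and its arcs need not lie inside the optimal inversion sets, so the fact that four vertices span at most three arcs of the forest $F$ does not bound $|F|$. Apply the forest property to the optimum instead. Let $R'\subseteq R$ be an inclusion-minimal reversal set, and suppose $R'$ contains a cycle $C$. Un-reverse the nonempty set of arcs of $C$ that point forward along $C$ in $\Inv(D;R')$, for a suitable traversal direction. This turns $C$ into a directed cycle and keeps the digraph strong: every cut crossed by $C$ then receives an arc of $C$ in each direction, and no other cut changes. This, not uncrossing, is the reason. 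So $R'$ is a forest, every arc of $R'$ lies in an optimal set of size at most $4$, and $|F|\le|R'|\le 3\,\mathrm{OPT}$. There is no need to choose $F$ ``consistently'' with the optimum. For $k=1$ this is a valid and simpler replacement for the paper's $2$-frame argument (Lemmas~\ref{lem:technical_frames} and~\ref{lem:bounding_invk2_invkp}).
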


We prove Theorems~\ref{thm:APX} and \ref{thm:APX_p=4} in Section~\ref{sec:APX_algorithm}.

\paragraph{Parameterized Complexity}

Besides approximation, parameterized complexity is another common way to make progress on NP-hard problems.  In the current setting, the inversion size $p$ and the solution size $\ell$ may serve as parameters. Observe that all the problems in consideration admit trivial XP algorithms in $p$ and $\ell$, which consist of brute force enumerating all appropriate inversions. Hence, it is natural to ask about FPT results. We prove two results in that regard. First, we consider the case that $p$ serves as a parameter and $\ell$ is fixed. We show that for any $k\ge 2$, there is no hope for an FPT algorithm even in oriented graphs.

\begin{restatable}{theorem}{Whardnessinvkpbyp}
    \label{thm:W1_hardness_invkp_by_p}
    For every $k \geq 2$, it is $W[1]$-hard with respect to $p$ to decide whether a given oriented graph can be made $k$-arc-strong by a single inversion.
\end{restatable}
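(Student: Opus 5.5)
We prove \Cref{thm:W1_hardness_invkp_by_p} by a parameterized reduction from \textsc{Multicolored Clique}, parameterized by the clique size $t$, which is $W[1]$-hard. The reason for this source problem is that a single inversion of a set $X$ of size $p$ is a subset selection, and the effect of inverting $X$ on a cut depends only on the arcs inside $X$. We therefore aim for an oriented graph $D$ and a value $p=f(t)$ such that inverting $X$ makes $D$ $k$-arc-strong exactly when $X$ encodes a multicolored clique: one vertex from each colour class together with the edge-gadgets between them.

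The construction has three ingredients.
\begin{itemize}
\item A large \emph{backbone}, for instance a $k$-arc-strong tournament on many vertices. This makes every cut that does not isolate a gadget automatically large enough, and it remains so whatever small inversion is applied.
\item For each vertex $v$ of the input graph, a \emph{vertex-gadget} $g_v$.
\item For each edge $uv$, an \emph{edge-gadget} $g_{uv}$, attached to the backbone so that it has in-degree exactly $k$ but out-degree exactly $k-1$ (or the reverse). It is then deficient by exactly one arc, and in $D$ the only deficient sets are edge-gadgets.
\end{itemize}
Each edge-gadget $g_{uv}$ is joined to $g_u$ and $g_v$ by arcs oriented into $g_{uv}$. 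If $X$ contains $g_{uv}$ together with $g_u$ and $g_v$, the inversion turns such arcs around and repairs the deficiency of $g_{uv}$. Inverting only part of this triple does not repair it. We use $k\ge 2$ here: it leaves room to tune degrees so that each single arc turned around matters.

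We also need that only edge-gadgets between selected vertices get fixed, and that every other inclusion in $X$ destroys connectivity. To enforce this, make every vertex-gadget and edge-gadget \emph{tight} with respect to the backbone, with exactly $k$ arcs in each direction. Then any vertex of a gadget that lies in $X$ without the intended partners loses an out-arc or an in-arc and creates a new deficient set. Colour classes are handled by adding, for each colour $i$, a set of $\binom{t}{2}$ deficient \emph{demand} gadgets that must be repaired. Set $p=t+\binom{t}{2}$ plus the gadget sizes, a function of $t$ only. A budget count then forces $X$ to use exactly one vertex per colour and exactly one edge per pair of colours. Since an edge-gadget is repaired only when both its endpoints are selected, the chosen vertices form a clique.

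The main obstacle is soundness. Inverting $X$ reverses \emph{all} arcs inside $X$, including the backbone arcs between chosen gadgets and arcs between two chosen vertex-gadgets, and these side effects could create new small cuts or repair deficiencies unintentionally. To control this, I would first try to make the backbone attachments go through private vertices that are never worth including, and to make all gadgets pairwise non-adjacent except along the intended incidences. I would then verify cut by cut, for $|X|\le p$, that
\[
d^+_{D'}(Z)=d^+_D(Z)+\bigl(\text{arcs of } D[X] \text{ entering } Z\bigr)-\bigl(\text{arcs of } D[X] \text{ leaving } Z\bigr),
\]
where $D'$ is $D$ with $X$ inverted, and that this is at least $k$ only for the intended configurations. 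Completeness is the easy direction: the canonical $X$ built from a multicolored clique visibly repairs every deficient set and breaks no tight one.
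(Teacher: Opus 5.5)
There is a genuine gap, and it is in where you put the deficiency. In your first bullet every edge-gadget $g_{uv}$ is deficient in $D$. Inverting $X$ can only change a cut $\delta(S)$ through arcs with both ends in $X$, so $X$ must meet every deficient set. There are $|E(G)|$ pairwise disjoint edge-gadgets but $|X|\le p=f(t)$. Hence once $|E(G)|>p$, all but boundedly many edge-gadgets stay deficient after any single inversion, and your $D$ is always a no-instance. In particular, the canonical $X$ built from a clique does \emph{not} ``repair every deficient set'', so even completeness fails. Your later paragraph makes edge-gadgets tight and moves the deficiency to ``demand gadgets''. This contradicts the first bullet. It also leaves unspecified the mechanism the whole reduction depends on: how one demand set can be repaired by any one of unboundedly many candidates, while all unused candidates stay harmless. (What is needed is one demand per colour and one per pair of colours, not $\binom{t}{2}$ per colour.)

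The missing idea is that each deficient set must \emph{contain all candidates of its class}, with each individual candidate tight. The paper reduces from normalised PSI, chosen for the ETH bound; Multicolored Clique would suffice for $W[1]$-hardness alone.
\begin{itemize}
\item For each colour $i$ there is a private $k$-arc-strong tournament $T_i$, joined to the backbone $\tilde T$ by $k$ arcs each way. Every $v\in V_i$ is tight to $T_i$. A vertex $x_i$ has $k$ arcs to $T_i$, only $k-1$ back, and an arc $x_iv$ for every $v\in V_i$. So $\{x_i\}$ is the colour demand, and it forces $x_i$ together with one out-neighbour into $X$.
\item Each $ij\in E(H)$ gets a tournament $T_{i,j}$ with $k$ arcs to $\tilde T$ but only $k-2$ back. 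Each edge $uv\in\delta_G(V_i,V_j)$ becomes a vertex $z_{u,v}$, tight to $T_{i,j}$, with arcs $z_{u,v}u$ and $z_{u,v}v$.
\item The pair demand is the set $V(T_{i,j})\cup Z_{i,j}$. It misses two in-arcs and is repaired exactly when some $z_{u,v}$ lies in $X$ together with both $u$ and $v$. This deficiency of $2$ is where $k\ge 2$ is used.
\end{itemize}
With $p=2r+|E(H)|$, soundness is a pure budget count. The colour demands consume $2r$ vertices. Each pair demand needs at least one more vertex inside $V(T_{i,j})\cup Z_{i,j}$. So its two required outside out-neighbours must be the representatives already chosen in $V_i$ and $V_j$, which gives $u_iu_j\in E(G)$.

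Tightness therefore plays the opposite role from the one you assign it. It makes unused candidates harmless, since their $k$ arcs each way go to a tournament outside $X$; it does not punish stray inclusions. Since no gadgets are adjacent except along the intended incidences, the side effects you worry about do not arise.
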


Observe that this result is incomparable with~\Cref{thm:NP_hardness_n-1_inversions}.
Next, we consider the case where $p$ is fixed and $\ell$ serves as the parameter. Here, the cases of oriented graphs and digraphs remain open. However, we show that for multidigraphs not even the simplest case $p=2$ is tractable.

\begin{restatable}{theorem}{Whardnessinvkpbyell}
    \label{thm:W1_hardness_invkp_by_ell}
    Given a multidigraph $D$, it is $W[1]$-hard with respect to $\ell$ to decide whether $D$ can be made $2$-arc-strong by at most $\ell$ \eqp{2}-inversions.
\end{restatable}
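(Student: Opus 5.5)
We reduce from a W[1]-hard problem parameterized by $\ell$, ideally \textsc{Multicolored Clique} with parameter $t$, and keep $\ell$ bounded by a function of $t$. An \eqp{2}-inversion of $\{u,v\}$ reverses every arc between $u$ and $v$, that is, all $\mu(u,v)$ parallel copies at once. So in a multidigraph, choosing which pairs to invert acts like choosing a small set of \emph{bundles} of parallel arcs, where a bundle of multiplicity $m$ moves $m$ units of in/out-degree across every cut it crosses. The goal is to use bundle multiplicities to encode adjacency constraints, so that an instance can be made $2$-arc-strong with at most $\ell$ inversions exactly when some choice of one vertex per colour class, together with the edges between them, is consistent.

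\textbf{Construction.} The plan is to build one gadget per colour class $i\in[t]$ and one per colour pair $\{i,j\}$.

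\emph{Selection gadgets.} For each colour class $V_i$ we create a set of vertices with a deficient cut, for instance a set $S_i$ with $d^-(S_i)=1$. The only bundles that can repair this cut within budget are bundles $e_{i,v}$, one for each $v\in V_i$, and inverting $e_{i,v}$ encodes selecting $v$.

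\emph{Edge gadgets.} For each pair $\{i,j\}$ we create a deficient set that is repaired only by inverting a pair corresponding to an edge $uv$ of the graph with $u\in V_i$ and $v\in V_j$.

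\emph{Consistency.} Each edge inversion for $uv$ should create new deficiencies unless the matching selection inversions $e_{i,u}$ and $e_{j,v}$ are also made. To achieve this, the edge bundle has a large multiplicity and crosses cuts whose slack is exactly compensated by the selection bundles. A convenient way to arrange this is to encode vertices by numbers. Give $v\in V_i$ an index $\mathrm{id}(v)\in[N]$, give the selection bundle multiplicity $\mathrm{id}(v)$ on one side, and give the edge bundle multiplicity $N-\mathrm{id}(v)$ on a cut shared with it. Then a cut goes from deficient to exactly $2$ precisely when the two identities match, in the style of the standard ``sum equals $N$'' verification gadgets.

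\emph{Budget.} We set $\ell=t+\binom{t}{2}$. Every other cut must have degree large enough in both directions (at least $3$ plus the maximum multiplicity change) that no combination of inversions breaks it.

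\textbf{Steps.}
\begin{enumerate}
\item Build the gadgets and background so that $D$ is $2k$-edge-connected in the relevant sense, with every deficient cut listed explicitly.
\item Completeness: from a multicolored clique, exhibit $\ell$ inversions and check every cut.
\item Soundness. First, each of the $\ell$ deficient gadgets needs its own inversion; this is a counting argument using disjointness of the deficient sets. Hence exactly one inversion is applied per gadget, of the intended type. Then the numeric matching cuts force the identities to agree, which yields a clique.
\end{enumerate}

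\textbf{Main obstacle.} The hardest part is the cut analysis in the soundness direction. We must show that no unintended inversion, such as a pair inside a gadget or a pair joining two gadgets, can fix several deficient cuts at once or circumvent the numeric checks. We must also make the encoding robust in both directions, since equality has to be forced by two inequalities: one cut in each direction, using bundles $\mathrm{id}(v)$ and $N-\mathrm{id}(v)$. I would first aim for a clean structural lemma: every minimal deficient set contains a gadget, and any single \eqp{2}-inversion changes $d^-(X)$ only for sets $X$ separating its two endpoints. Building the verification around this lemma should let us reason about each cut locally.
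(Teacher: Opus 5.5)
There is a genuine gap. Your outline describes the gadgets only by the properties they should have. The one concrete mechanism it offers for consistency, the numeric ``sum equals $N$'' check, cannot be realised by cut conditions, for two reasons.

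First, inverting a pair $\{a,b\}$ changes $d^-(X)$ by the same amount $\pm\bigl(m(a\to b)-m(b\to a)\bigr)$ for \emph{every} set $X$ separating $a$ from $b$. Here $m(a\to b)$ is the number of arcs from $a$ to $b$. With one inverted pair per edge gadget, the bundle of $uv$ therefore presents a single value, up to sign, to both the colour-$i$ and the colour-$j$ comparison. It cannot be $N-\mathrm{id}(u)$ for one and $N-\mathrm{id}(v)$ for the other.

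Second, after inverting a set $\mathcal{X}$ of pairs,
\[
d^-_{D'}(X)=\sum_{P\notin\mathcal{X}}m^-_X(P)+\sum_{P\in\mathcal{X}}m^+_X(P),
\]
where $m^+_X(P)$ and $m^-_X(P)$ count the arcs of $P$ leaving and entering $X$. So the non-chosen candidates keep contributing their entering arcs.

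Take a cut $X$ crossed by all bundles $e_{i,w}$ ($w\in V_i$) and all edge bundles between $V_i$ and $V_j$, with no other inverted pair crossing it. Require $d^-_{D'}(X)\ge 2$ to hold exactly when $\mathrm{id}(w^*)\ge\mathrm{id}(u_{f^*})$, where $w^*,f^*$ are the chosen vertex and edge and $u_f$ is the endpoint of $f$ in $V_i$. Let $\sigma_a$ and $\tau_a$ be the changes of $d^-(X)$ caused by inverting the bundle of the vertex with identity $a$, respectively of an edge with identity $a$.
\begin{itemize}
\item The matching cases $(a,a)$ and the failing cases $(a-1,a)$ force $\sigma_a\ge\sigma_{a-1}+1$.
\item The failing case $(N-1,N)$, together with $d^-_D(X)\ge(-\sigma_1)^{+}+(-\tau_N)^{+}$ (where $x^{+}=\max\{x,0\}$), then gives $\sigma_1^{+}+\tau_N^{+}+N-2\le 1$.
\end{itemize}
Hence $N\le 3$: such a cut distinguishes at most three identities, and symmetrically for $d^+$. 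If instead the cut is crossed by only one specific vertex bundle and one specific edge bundle, it cannot be deficient in $D$ without forcing precisely these two to be inverted. So the soundness analysis you postpone as the ``main obstacle'' is not a verification detail; the intended verification has no realisation. Your counting step also needs more than disjointness: a single pair joining a set lacking in-degree to a set lacking out-degree repairs both.

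What is missing is a way to transmit a single choice to many \emph{local} checks. The paper obtains this by forcing rather than by numerical encoding; it reduces from partitioned subgraph isomorphism, though \textsc{Multicolored Clique} would also do for W[1]-hardness.

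The key fact is Proposition~\ref{gguuuo}: if $d^+(v)=d^-(v)=2$ and $v$ is joined to $w_1$ by two parallel arcs and to $w_2,w_3$ by single arcs, then every solution inverts either all of $\{v,w_1\},\{v,w_2\},\{v,w_3\}$ or none of them.
\begin{itemize}
\item In the colour gadget, $d^-(a_c)=1$ forces some $\{a_c,x_{v_c,c_1}\}$ to be inverted. Forcing along the chain of $v_c$ then inverts all $4d_H(c)+1$ pairs of $\mathcal{R}_{v_c}$, among them $\{x_{v_c,c'},z_{v_c,c'}\}$ for every $c'\in N_H(c)$.
\item In the edge gadget, $d^+(u_{cc'})=0$ forces some $\{u_{cc'},t_f\}$ to be inverted, and hence the three pairs of $\mathcal{R}_f$. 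These contain $\{t_f,z_{v,c'}\}$ and $\{t_f,z_{v',c}\}$ for $f=vv'$.
\end{itemize}
The budget $\ell=11|E(H)|+|V(H)|$ equals the number of forced inversions, so nothing else is inverted. Consistency then reduces to a degree condition at a single vertex: once $\{x_{v_c,c'},z_{v_c,c'}\}$ is inverted, $z_{v_c,c'}$ has in-degree $1$ unless the edge selected for $cc'$ is incident to $v_c$. Thus the budget is far from one inversion per gadget, and parallel arcs are used to create forcing, not to carry identities.
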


Observe that this result is incomparable with \Cref{thm:NP_hardness_2inversion_multidigraphs}.
The proofs of Theorems~\ref{thm:W1_hardness_invkp_by_p} and~\ref{thm:W1_hardness_invkp_by_ell} are given in Section~\ref{sec:W1_hardness}.

\section{Preliminaries}
\label{sec:preliminaries}

Here, we give some preliminaries we need for our main results. More precisely, in \Cref{sec:notation}, we collect some notation and definitions, and in \Cref{sec:edge_connectivity_tools}, we give a collection of preliminary results used later on.

\subsection{Notation}
\label{sec:notation}

Let $k$ be a positive integer. The set of integers $\{1,\dots,k\}$ is denoted by $[k]$. 
We use $\mathbb{Z}_{\geq 0}$ and $\mathbb{R}_{\geq 0}$ for the set of nonnegative integers and nonnegative reals, respectively.
For any set $V$, we denote by $\binom{V}{k}$ (respectively $\binom{V}{\leq k}$) the set of subsets of $V$ of size exactly $k$ (respectively at most~$k$). Given two sets $U,V$, we denote by $U\XOR V$ the symmetric difference of $U$ and $V$, that is $U\XOR V = (U\cup V) \setminus (U\cap V)$.

\medskip

Let $G$ be a multigraph and let $k$ be an integer. For an edge $e=uv \in E(G)$, we say that $u$ and $v$ are the {\it endvertices} of $e$ and that $e$ {\it links} $u$ and $v$. For an edge $e \in E(G)$, its {\it multiplicity $\mu_G(e)$ in $G$} is the number of edges in $E(G)$ having the same endvertices as $e$.
A {\it simple edge} of $G$ is an edge of $G$ with multiplicity one. Given a vertex $v$, its neighborhood in $G$ is denoted by $N_G(v)$ and its closed neighborhood by $N_G[v]$ (that is, $N_G[v] = N_G(v)\cup \{v\}$). The graph $G$ is {\it $k$-edge-connected} if the removal of any set of at most $k-1$ edges of $G$ yields a connected graph. 
We say that $G$ is {\it bipartite} if it admits a partition $(A,B)$ of its vertex-set such that every edge $e\in E(G)$ has exactly one extremity in $A$. Such a partition is called a {\it bipartition} of $G$. A graph $G$ such that for every $\{u,v\}\in \binom{V(G)}{2}$, we have that $E(G)$ contains exactly one edge linking $u$ and $v$ is called a {\it complete graph} or a {\it clique}. For a positive integer $n$, we use $K_n$ for the unique clique on $n$ vertices. Given a coloring $\phi\colon E(K_n)\rightarrow S$ for some set $S$, we say that a subgraph $H$ of $K_n$ is {\it monochromatic} under $\phi$ if $|\phi(E(H))|=1$.

Given two vertices $u,v \in V(G)$, we denote by $\lambda_G(u,v)$ the maximum number of pairwise edge-disjoint paths from $u$ to $v$.
The {\it edge-connectivity} of $G$, denoted by $\lambda(G)$, is the largest integer $k$ such that $G$ is $k$-edge-connected, with the convention $\lambda(K_0) = \lambda(K_1) = +\infty$. 
Given two disjoint sets $X$ and $Y$ of vertices, we say that an edge $e\in E(G)$ {\it links} $X$ and $Y$ if $e$ links a vertex in $X$ and a vertex in $Y$. We denote the set of edges linking $X$ and $Y$ by $\Eset{G}{X}{Y}$, and its size by $d_G(X,Y)$.
Given a set of vertices $\emptyset \subsetneq S\subsetneq V(G)$, 
$\Eset{G}{S}{V(G)\setminus S}$
is called a {\it cut} of $G$, and for the sake of conciseness it is denoted by $\delta_G(S)$. The size of the cut $\delta_G(S)$ is denoted by $d_G(S)$.
The set of vertices of $V(G)\setminus S$ with a neighbor in $S$ are  denoted by $N_G(S)$.
For a single vertex $v$, we use $d_G(v)$ for $d_G(\{v\})$.

Note that $G$ is $k$-edge-connected for some positive integer $k$ if and only if all cuts of $G$ have size at least $k$. For a set $X \subseteq V(G)$, we use $G[X]$ for the subgraph of $G$ induced by $X$. The set $X\subseteq V(G)$ is called {\it independent} in $G$ if $E(G[X])=\emptyset$. We use $\alpha(G)$ for the maximum size of an independent set in $G$. Given two disjoint sets $X_1,X_2 \subseteq V(G)$, we use $G[X_1,X_2]$ for the subgraph of $G$ whose vertex set is $X_1 \cup X_2$ and that contains all edges of $E(G)$ that have one endvertex in $X_1$ and one endvertex in $X_2$. We say that $G[X_1,X_2]$ is {\it complete} if for every $x_1\in X_1$ and every $x_2 \in X_2$, we have that $E(G[X_1,X_2])$ contains at least one edge linking $x_1$ and $x_2$.

\medskip

In this paper, digraphs may contain two arcs in opposite directions between the same pair of vertices, but no parallel arcs, while a {\it multidigraph} may contain both opposite and parallel arcs. A pair of opposite arcs between $u$ and $v$ is called a {\it digon} and is denoted by $\dig{u,v}$.
A {\it simple arc} is an arc that does not belong to a digon.
An {\it oriented graph} is a digraph without any digon.
Let $D$ be a digraph. We say that an arc $a \in A(D)$ {\it links} two vertices $u,v \in A(D)$ if $a$ goes from $u$ to $v$ or $a$ goes from $v$ to $u$. If $a$ links two vertices $u$ and $v$, we say that $u$ and $v$ are the {\it endvertices} of $a$. {\it Reversing} an arc from a vertex $u$ to a vertex $v$ means replacing it by an arc from $v$ to $u$. The {\it underlying graph} $\UG(D)$ of $D$ is the multigraph obtained from $D$ by removing the orientations. Note that the underlying graph of $D$ contains two parallel edges between $u$ and $v$ whenever $\dig{u,v}$ is a digon of $D$.
We say that $D$ is {\it $k$-edge-connected} if its underlying graph is. Further, $D$ is {\it $k$-arc-strong} if the removal of any set of at most $k-1$ arcs of $D$ yields a strongly connected digraph. 
Given a set of vertices $\emptyset \subsetneq S\subsetneq V(D)$, the set of arcs going from $S$ to $V(D)\setminus S$ is a {\it dicut}, and it is denoted by $\delta^+_D(S)$. Further, we use  $\delta^-_D(S)$ for $\delta^+_D(V(D)\setminus S), d_D^+(S)$ for $|\delta^+_D(S)|$ and $d_D^-(S)$ for $|\delta^-_D(S)|$. 
Again, for a single vertex $v$, we use $d_D^+(v)$ and $d_D^-(v)$ for $d_D^+(\{v\})$ and $d_D^-(\{v\})$ respectively.
Similarly to the undirected case, observe that $D$ is $k$-arc-strong if and only if all dicuts of $D$ have size at least $k$. For some $a\in A(D)$, {\it duplicating $a$} is the operation of adding another arc with the same tail and the same head as $a$. An oriented graph whose underlying graph is a complete graph is called a {\it tournament}. We use $\alpha(D)$ for $\alpha(\UG(D))$.

Given two vertices $u,v \in V(D)$, we denote by $\lambda_D(u,v)$ the maximum number of pairwise arc-disjoint directed paths from $u$ to $v$. For a vertex $u$ and a set of vertices $Q$, we denote by $\lambda_{D}(u,Q)$ (resp. $\lambda_D(Q,u)$) the maximum number of pairwise arc-disjoint directed paths from $u$ to $Q$ (respectively from $Q$ to $u$). Given $Q\subseteq V(D)$, we say that $D$ is {\it $2$-arc-strong in} $Q$ if, for every pair of distinct vertices $u,v\in Q$, we have $\lambda_D(u,v) \geq 2$ and $\lambda_D(v,u)\geq 2$.

\medskip

The {\it inversion} of a set $X\subseteq V(D)$ of vertices consists of reversing the direction of every arc of $D[X]$, and we say that we {\it invert} $X$ in $D$.
The inversion of $X$ is an {\it \eqp{p}-inversion} (respectively \leqp{p}-inversion) if $X$ has size $p$ (respectively at most $p$).
After applying the inversion of $X$ in $D$, the resulting digraph is denoted by $\Inv(D, X)$.
If $\Xcal$ is a set of
subsets of $V(D)$, then $\Inv(D; \Xcal)$ is the digraph obtained after inverting each $X\in \mathcal{X}$ one after another (the order of the inversions being irrelevant).
Observe that, if $\dig{u,v}$ is a digon of $D$, then $D$ is isomorphic to $\Inv(D,\{u,v\})$. For a set $X \subseteq V(D)$ and a set $\mathcal{X}\subseteq 2^{V(D)}$, we say that the inversion of $\mathcal{X}$ {\it simulates} the inversion of $X$ if $\Inv(D;\mathcal{X})=\Inv(D,X)$. We here wish to emphasize that for a digraph $D'$, we consider that $D=D'$ holds if $V(D')=V(D)$ and for every ordered pair of vertices $(u,v)\in V(D)$, we have that $A(D)$ and $A(D')$ contain the same number of arcs from $u$ to $v$. In particular, inverting $\{u,v\}$ for a digon $\dig{u,v}$ results in the same digraph.
We say that an edge $e \in E(\UG(D))$ is {\it reversed} by a set $\mathcal{X}$ of inversions, if the corresponding an arc in $D$ is reversed. 

Given two integers $k\geq 1$ and $p\geq 2$, we say that $D$ is {\it $(k,p)$-invertible} if there exists a set $\Xcal$ of subsets of $V(D)$, each of size exactly $p$, such that $\Inv(D;\mathcal{X})$ is $k$-arc-strong. A collection $\Xcal\subseteq \binom{V(D)}{\leq p}$ such that $\Inv(D;\mathcal{X})$ is $k$-arc-strong is called a {\it \InvSeq{k}{p}} of $D$. 
We denote by $\inv{k}{p}(D)$ the minimum size of such a set, with the convention $\inv{k}{p}(D)=+\infty$ when no such set exists.

\subsection{Preliminary results}
\label{sec:edge_connectivity_tools}

We first recall the well-known result that the edge-connectivity of a graph can be computed in polynomial time, see for instance~\cite[Theorem~15.10]{schrijver2002}.

\begin{theorem}
    \label{thm:edge_connectivity_poly}
    The edge-connectivity of a graph can be computed in polynomial time.
\end{theorem}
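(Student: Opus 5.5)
The plan is to reduce edge-connectivity computation to a polynomial number of maximum-flow computations, using Menger's theorem. Let $G$ be a multigraph on vertex set $V$ with $n=|V|\geq 2$; if $n\leq 1$ the value is $+\infty$ by convention. The starting point is the identity
\[
\lambda(G)=\min_{\emptyset\subsetneq S\subsetneq V} d_G(S)=\min_{v\in V\setminus\{s\}} \min\{d_G(S) : s\in S,\ v\notin S\},
\]
where $s\in V$ is fixed arbitrarily. This holds because every cut $\delta_G(S)$ separates $s$ from some vertex $v$: we may replace $S$ by its complement if necessary so that $s\in S$.

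By the edge version of Menger's theorem, for each $v\neq s$ the inner minimum equals $\lambda_G(s,v)$, the maximum number of pairwise edge-disjoint $s$--$v$ paths. We compute this as a maximum flow. Replace each edge $uw$ by the two arcs $(u,w)$ and $(w,u)$, each with capacity equal to the multiplicity of the edge. Then run any polynomial-time max-flow algorithm, such as Edmonds--Karp, whose running time is polynomial in the number of vertices and distinct edges. The integrality of max flow, together with standard flow cancellation on antiparallel arcs, turns an integral flow into edge-disjoint undirected paths. The max-flow min-cut theorem identifies the value with the minimum $d_G(S)$ over sets $S$ with $s\in S$ and $v\notin S$.

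Taking the minimum over the $n-1$ choices of $v$ uses $n-1$ max-flow computations and yields $\lambda(G)$ in polynomial time. If $G$ is disconnected, some flow value is $0$, which is consistent. The only mildly delicate point is that multiplicities may be large. Encoding parallel edges as capacities, rather than as separate arcs, keeps the running time polynomial in the input size under either representation. Everything else is standard, which is why the statement is simply cited from Schrijver.
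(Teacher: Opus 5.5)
Your proof is correct. Fixing $s$ and taking the minimum of $n-1$ maximum-flow values computes $\min_S d_G(S)=\lambda(G)$. Encoding multiplicities as capacities lets a strongly polynomial algorithm such as Edmonds--Karp apply, and the max-flow min-cut theorem alone gives the cut value, so converting flows into edge-disjoint paths is not even needed.

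The paper gives no proof of its own and simply cites Schrijver's textbook; your reduction is the standard justification for that cited fact.
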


The following well-known result implies that a digraph $D$ is $(k,2)$-invertible if and only if it is $2k$-edge-connected.

\begin{theorem}[Nash-Williams~\cite{nashwilliamsCJM12}]
    \label{thm:nashwilliams}
    A multigraph $G$ has a $k$-arc-strong orientation if and only if it is $2k$-edge-connected. Moreover, if $G$ has at most $2$ edges between every pair of vertices, then there exists such a $k$-arc-strong orientation without any parallel arcs.
\end{theorem}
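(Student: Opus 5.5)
The statement to prove is Nash-Williams' theorem (Theorem~\ref{thm:nashwilliams}), together with the refinement that when edge multiplicities are at most $2$ the orientation can be chosen without parallel arcs.

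\textbf{Necessity.} Suppose $G$ has a $k$-arc-strong orientation $D$. Then every cut $\delta_G(S)$ contains $d^+_D(S)\geq k$ arcs leaving $S$ and $d^-_D(S)\geq k$ arcs entering $S$. Hence $d_G(S)\geq 2k$, so $G$ is $2k$-edge-connected.

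\textbf{Sufficiency.} I would use the standard splitting-off argument, via Mader's splitting theorem or Lovász's version for even edge-connectivity. Proceed by induction on $|E(G)|$ over minimal counterexamples. Take $G$ $2k$-edge-connected and minimally so (deleting edges only helps). By Mader's theorem $G$ then has a vertex $v$ of degree exactly $2k$ or $2k+1$; when all cuts are even, Lovász's theorem applies directly at a vertex of even degree. Pair the edges at $v$ by complete admissible splitting, keeping $2k$-edge-connectivity of $G-v$. Orient the resulting smaller graph $k$-arc-strongly by induction. For each split pair $uv,vw$ whose new edge $uw$ got orientation $u\to w$, orient $u\to v\to w$; a leftover edge of odd degree is oriented arbitrarily. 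Each cut $S$ not separating $v$ from the rest keeps at least $k$ arcs in each direction, because a directed path through $v$ crosses exactly as the split edge did. Singleton cuts at $v$ get at least $k$ arcs each way from the $k$ pairs. The main obstacle is the existence of the admissible complete splitting, which I would cite (Lovász/Mader) rather than reprove.

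An alternative route is Frank's orientation theorem, or an ear decomposition argument for $k=1$ (Robbins).

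\textbf{No parallel arcs.} Start from any $k$-arc-strong orientation and choose, among all of them, one minimizing the number of parallel arc pairs. Suppose $u,v$ are linked by two parallel arcs $u\to v$. Reversing one of them turns the pair into a digon and only changes $d^+(S)$ for sets $S$ separating $u$ from $v$ with $u\in S$, where it decreases by one. So the reversal keeps $k$-arc-strong connectivity unless some set $S$ with $u\in S$, $v\notin S$ has $d^+_D(S)=k$. In that case I would find a directed path from $v$ back to $u$ and reverse a cycle instead. Take the cycle formed by one parallel arc $u\to v$ and a $v$–$u$ path in $D$ minus that arc, which exists since $k$-arc-strong connectivity gives a $v$–$u$ path avoiding any single arc. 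Reversing a directed cycle preserves $d^+(S)$ for every $S$, so the new orientation is still $k$-arc-strong. It also creates the digon at $\{u,v\}$. The delicate point is that reversing the path may create new parallel pairs elsewhere. To control this, I would choose the path shortest and argue that arcs of a shortest path that have parallel copies of the same direction get resolved, so the potential (number of parallel pairs) strictly decreases. This last step is the one I expect to need the most care.
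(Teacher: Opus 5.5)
Your necessity argument and your splitting-off proof of sufficiency are fine. The paper just cites Nash-Williams for that part. Note that Mader's degeneracy theorem already gives a vertex of degree exactly $2k$ in a minimally $2k$-edge-connected graph, so the odd case never arises.

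\textbf{The gap.} It is in the parallel-arc refinement, at exactly the step you flagged. Reversing the cycle formed by one arc $u\to v$ and a $v$--$u$ path $P$ changes the number of parallel pairs by $d-q-1$. Here $q$ and $d$ are the numbers of arcs of $P$ lying in parallel pairs and in digons, respectively, since every digon arc on $P$ becomes a second copy of its partner. Choosing $P$ shortest gives no control over $d$, and $d\geq 1$ can be forced.

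\textbf{Counterexample.} Take $k=2$ and vertices $u,v,a_1,a_2$, with:
\begin{itemize}
\item two arcs $u\to v$;
\item digons on $\{u,a_1\}$, $\{u,a_2\}$ and $\{a_1,a_2\}$;
\item arcs $v\to a_1$ and $v\to a_2$.
\end{itemize}
This digraph is $2$-arc-strong. Reversing one arc $u\to v$ leaves $d^-(v)=1$, so you are in the tight case. Both in-arcs of $u$ are digon arcs, so every $v$--$u$ path ends with a digon arc. The shortest one, $v\to a_1\to u$, merely trades the parallel pair on $\{u,v\}$ for a new one on $\{u,a_1\}$, so your potential does not decrease.

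Worse, cycle reversals preserve every out-degree. Every parallel-free orientation of this graph has $d^+(u)=3$, whereas here $d^+(u)=4$. So in the tight case no sequence of cycle reversals alone can succeed. You would need a genuinely new termination measure for the interleaving of single reversals and cycle reversals, and the number of parallel pairs is not one.

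\textbf{How the paper avoids this.} It fixes the digons in advance. Orient every pair of parallel edges as a digon, giving a partial orientation $D_2$, and let $G_1$ be the graph of the remaining simple edges. Each digon contributes one arc in each direction to every cut, so
$d^+_{D_2}(X)+\frac{1}{2}d_{G_1}(X)=\frac{1}{2}d_G(X)\geq k$ for every $\emptyset\neq X\subsetneq V(G)$.
Jackson's generalisation of Nash-Williams' theorem then gives an orientation of $G_1$ completing $D_2$ to a $k$-arc-strong orientation. That orientation has no parallel arcs by construction.
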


The second part of the statement above is actually not stated in~\cite{nashwilliamsCJM12}, but it can be proved as follows. Start from $G$ and, for every pair of vertices $u,v$ with two edges between them, orient them in such a way that $\dig{u,v}$ is a digon. In the resulting partial orientation, for every set of vertices $X$, the number of arcs entering $X$ is equal to the number of arcs leaving $X$. Therefore, the following generalisation of Nash-Williams' Theorem due to Jackson~\cite{jacksonJGT12} (see also~\cite[Theorem~11.9.1]{bang2009}) shows that the partial orientation of $G$ can be completed into a $k$-arc-strong orientation of $G$.

\begin{theorem}[Jackson~\cite{jacksonJGT12}]
    Let $D=(V,A)$ and $G=(V,E)$ be a digraph and a graph on the same vertex set. If, for every $\emptyset \subsetneq X\subsetneq V$, 
    $d^+_D(X) + \frac{1}{2}d_G(X) \geq k$,
    then there exists an orientation $D'$ of $G$ such that $D\cup D'$ is $k$-arc-strong.
\end{theorem}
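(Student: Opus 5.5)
The plan is to derive the statement from Frank's theorem on orientations covering a crossing supermodular set function. Set $h(X)=k-d^-_D(X)$ for $\emptyset\subsetneq X\subsetneq V$. We look for an orientation $D'$ of $G$ with $d^-_{D'}(X)\geq h(X)$ for every such $X$. This suffices: it gives $d^-_{D\cup D'}(X)=d^-_D(X)+d^-_{D'}(X)\geq k$ for all proper nonempty $X$, so every dicut of $D\cup D'$ has size at least $k$, and $D\cup D'$ is $k$-arc-strong.

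First I check that $h$ is crossing supermodular. The in-degree function $X\mapsto d^-_D(X)$ of a digraph is submodular, i.e. $d^-_D(X)+d^-_D(Y)\geq d^-_D(X\cap Y)+d^-_D(X\cup Y)$, which follows from the standard counting of arcs between the four regions of $X$ and $Y$. Hence $k-d^-_D$ is supermodular, and in particular crossing supermodular on proper nonempty sets.

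Next I apply Frank's orientation theorem in the form: a graph $G$ has an orientation covering a crossing supermodular $h$ (with $h\le$ any needed bound, and $h(\emptyset)=h(V)=0$ by convention) if and only if, for every partition $\{V_1,\dots,V_t\}$ of $V$ into nonempty parts, the number $e_G(\mathcal{P})$ of edges between distinct parts satisfies both $e_G(\mathcal{P})\geq\sum_i h(V_i)$ and $e_G(\mathcal{P})\geq\sum_i h(V\setminus V_i)$. Both conditions follow from the hypothesis. Rewrite the hypothesis, applied to $V\setminus X$, as $d^-_D(X)+\tfrac12 d_G(X)\geq k$, which says $h(X)\leq \tfrac12 d_G(X)$. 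The original form $d^+_D(X)+\tfrac12 d_G(X)\geq k$ gives $h(V\setminus X)\leq\tfrac12 d_G(X)$. Summing over the parts, and using that every edge between parts is counted exactly twice in $\sum_i d_G(V_i)$, gives
\[
\sum_i h(V_i)\le \tfrac12\sum_i d_G(V_i)=e_G(\mathcal{P}),
\]
and likewise $\sum_i h(V\setminus V_i)\le e_G(\mathcal{P})$. (For $t=1$ both sums are empty, so the condition is trivial.) This verifies the conditions, and Frank's theorem yields the desired $D'$.

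The main obstacle is making sure the precise form of Frank's theorem fits, i.e. that it is the crossing version with the two partition conditions. The other approach I considered, taking a Nash-Williams well-balanced orientation and rounding, does not work: Nash-Williams' strong orientation theorem only controls local arc-connectivities $\lambda_{D'}(u,v)$ and does not guarantee $d^-_{D'}(X)\geq\lfloor d_G(X)/2\rfloor$ for every set $X$. If quoting Frank's theorem were undesirable, the fallback is to formulate the problem as a feasible submodular flow, using Edmonds–Giles on the crossing family with the same supermodular demand, and check Frank's feasibility condition directly. The verification reduces to the same partition inequalities computed above.
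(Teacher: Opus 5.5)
\paragraph{A genuine gap.} The paper does not prove this statement; it quotes it from Jackson. Your argument therefore has to stand on its own, and most of it does:
\begin{itemize}
\item covering $h(X)=k-d^-_D(X)$ by an orientation of $G$ suffices;
\item $h$ is crossing supermodular;
\item your bounds $h(X)\le\tfrac12 d_G(X)$ and $h(V\setminus X)\le\tfrac12 d_G(X)$ are correct.
\end{itemize}

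The gap is the black box. The characterization by the two partition inequalities alone is, as far as I know, Frank's theorem for \emph{nonnegative} crossing supermodular requirements. Your $h$ is negative on every $X$ with $d^-_D(X)>k$, and for such functions the statement you quote is false, because negative parts can mask a deficient part.

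For example, take $V=\{a,b,c\}$, $A(D)=\{ab,ac,bc,cb\}$, $E(G)=\emptyset$ and $k=1$. There are no crossing pairs on three vertices, so $h=1-d^-_D$ is crossing supermodular. Its values are $h(\{a\})=1$, $h(\{b\})=h(\{c\})=h(\{b,c\})=-1$ and $h(\{a,b\})=h(\{a,c\})=0$. One checks that $\sum_i h(V_i)\le 0=e_G(\mathcal{P})$ and $\sum_i h(V\setminus V_i)\le 0$ for every partition $\mathcal{P}$; for instance, for $\{\{a\},\{b,c\}\}$ both sums equal $1-1=0$. Yet no orientation covers $h$, since $d^-_D(\{a\})=0$. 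Your hedge ``$h\le$ any needed bound'' does not exclude this. So, as written, the proof applies a theorem outside its hypotheses.

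\paragraph{How to repair it.} Invoke a version that is valid for requirements of arbitrary sign. One option is Frank's theorem on $k$-arc-strong orientations of mixed graphs, where the inequalities range over subpartitions $\{V_1,\dots,V_t\}$ of $V$; equivalently, use $\max\{h,0\}$ in place of $h$ over partitions. Another option is the feasibility theorem for crossing submodular flows, whose condition also involves families built from partitions and co-partitions. Your fallback remark that this ``reduces to the same partition inequalities'' is not accurate as stated.

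Your estimate bounds each term separately, so it verifies these stronger conditions just as easily. Write $V_0=V\setminus\bigcup_i V_i$. Then
\[
\textstyle\sum_i h(V_i)\le\tfrac12\sum_i d_G(V_i)\le e_G(\{V_0,V_1,\dots,V_t\}),
\]
because an edge between two of the $V_i$ is counted twice in $\sum_i d_G(V_i)$, and an edge meeting $V_0$ is counted once. The same computation handles $\sum_i h(V\setminus V_i)$. For a general family, an edge that enters $a$ of its members under one orientation and $b$ under the other contributes $\tfrac12(a+b)\le\max\{a,b\}$. With the correct version of the theorem substituted, your derivation goes through.
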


We say that a multigraph $G$ is {\it minimally $k$-edge-connected} if $\lambda(G) = k$ and $\lambda(G-e) \leq k-1$ for every edge $e$ of $G$.
Note that, unless $G\in \{K_0,K_1\}$, we have that  $G$ is minimally $k$-edge-connected if and only if it is $k$-edge-connected and every edge of $G$ belongs to a cut of size $k$. The following well-known result of Mader draws a connection between the edge-connectivity and the density of a graph.
The graph $G$ is {\it $k$-degenerate} if every non-empty subgraph $H$ of $G$ contains a vertex of degree at most $k$.

\begin{theorem}[Mader~\cite{maderMA191}]
    \label{thm:mader}
    Every minimally $k$-edge-connected multigraph is $k$-degenerate. 
\end{theorem}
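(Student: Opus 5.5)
The plan is to prove the degeneracy condition directly for an arbitrary non-empty subgraph $H$ of a minimally $k$-edge-connected multigraph $G$. This avoids showing first that $G$ has a vertex of degree $k$ and then inducting: subgraphs of minimally $k$-edge-connected graphs need not be minimally $k$-edge-connected, so such an induction does not go through directly. If $E(H)=\emptyset$, every vertex of $H$ has degree $0\leq k$, so assume $H$ has an edge. We may assume $G\notin\{K_0,K_1\}$. Then, by the remark preceding the statement, every edge of $G$ lies in a cut of size exactly $k$. Call a set $\emptyset\subsetneq X\subsetneq V(G)$ \emph{tight} if $d_G(X)=k$. Every edge of $H$ thus crosses some tight set.

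Among all tight sets $X$ such that $\delta_G(X)$ contains an edge of $H$, choose one that is minimal under inclusion. The key claim is that no edge of $H$ has both endvertices in $X$. Given the claim, pick an edge of $H$ in $\delta_G(X)$ and let $w$ be its endvertex in $X$. Every edge of $H$ at $w$ then leaves $X$, so $d_H(w)\leq d_G(X)=k$, which gives the required vertex of small degree.

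To prove the claim, suppose $uv\in E(H)$ with $u,v\in X$. This edge lies in some tight cut $\delta_G(Y)$; after possibly swapping $u$ and $v$, assume $u\in Y$ and $v\notin Y$. Then $X\cap Y\ni u$ and $X\setminus Y\ni v$ are both non-empty proper subsets of $X$. The edge $uv$ crosses both $\delta_G(X\cap Y)$ and $\delta_G(X\setminus Y)$. It remains to show that one of these two sets is tight, which contradicts the minimality of $X$. This uncrossing step is the main obstacle, and I would handle it with the two standard inequalities for the cut function of an undirected graph:
\[
d_G(X\cap Y)+d_G(X\cup Y)\leq d_G(X)+d_G(Y)=2k
\]
and
\[
d_G(X\setminus Y)+d_G(Y\setminus X)\leq 2k.
\]

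If $X\cup Y\neq V(G)$, then both $X\cap Y$ and $X\cup Y$ are non-empty proper subsets. Each has cut size at least $k$ by $k$-edge-connectivity, so the first inequality forces $d_G(X\cap Y)=k$. If $X\cup Y=V(G)$, then $Y\setminus X=V(G)\setminus X\neq\emptyset$ and $X\setminus Y\ni v$ is non-empty. Neither set is all of $V(G)$, because $X$ and $Y$ are proper subsets, so the second inequality forces $d_G(X\setminus Y)=k$. In either case we obtain a tight proper subset of $X$ whose cut contains the $H$-edge $uv$, which is the desired contradiction. Hence $G$ is $k$-degenerate.
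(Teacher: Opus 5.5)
Your proof is correct. You choose $X$ inclusion-minimal among tight sets whose cut meets $E(H)$, then uncross with submodularity when $X\cup Y\neq V(G)$ and with posimodularity when $X\cup Y=V(G)$. This gives a tight proper subset of $X$ whose cut still contains $uv$, so no edge of $H$ lies inside $X$, and the endvertex $w$ satisfies $d_H(w)\le d_G(X)=k$.

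The paper gives no proof of this statement; it only cites Mader, so there is no argument to compare against. Your uncrossing argument is a complete, self-contained proof. Letting the minimal tight set depend on $H$ is what handles the quantifier over all subgraphs, which, as you note, an induction on $G$ alone would not.
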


In particular, if $G$ is a minimally $k$-edge-connected multigraph of order $n$, then it follows from the theorem above that 
$|E(G)|\leq k(n-1)$, and if $G$ is a graph, then $|E(G)| \leq kn - \binom{k+1}{2}$.
We also make use of the following similar proposition.

\begin{proposition}
    \label{prop:density_non_k_edge_connected}
    Let $G$ be a multigraph of order $n$ and $k\geq 1$ be an integer. If every subgraph $H$ of $G$ on at least two vertices satisfies $\lambda(H) \leq k$, then
    $|E(G)| \leq k(n-1)$.
\end{proposition}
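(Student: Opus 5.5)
I would prove Proposition~\ref{prop:density_non_k_edge_connected} by induction on $n$, splitting $G$ along a small cut and counting edges on each side.

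For the base cases, if $n=1$ then $G$ has no edges (loops play no role for cuts; I will assume $G$ is loopless, as elsewhere in the paper), so $|E(G)|=0=k(n-1)$. Now let $n\geq 2$. Apply the hypothesis to $H=G$ itself to get $\lambda(G)\leq k$. If $G$ is disconnected, take a component vertex set $S$ with $\emptyset\subsetneq S\subsetneq V(G)$, so $d_G(S)=0$. Otherwise $\lambda(G)\leq k$ gives a cut $\delta_G(S)$ with $\emptyset\subsetneq S\subsetneq V(G)$ and $d_G(S)\leq k$. In both cases there is a proper nonempty set $S$ with $d_G(S)\leq k$.

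Write $n_1=|S|$ and $n_2=n-|S|$, so both are at least $1$. The induced subgraphs $G[S]$ and $G[V(G)\setminus S]$ inherit the hypothesis, since every subgraph of them on at least two vertices is a subgraph of $G$. By induction, $|E(G[S])|\leq k(n_1-1)$ and $|E(G[V(G)\setminus S])|\leq k(n_2-1)$. Every edge of $G$ lies in one of the two sides or in the cut, so
\[
|E(G)| \leq k(n_1-1)+k(n_2-1)+k = k(n-1).
\]

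I do not expect a real obstacle here. The only points to check are that the hypothesis passes to induced subgraphs, which is immediate, and that the disconnected case is handled, which the case split above does. Note that the bound is on the total edge count, so multiple edges cause no difficulty, and Mader's Theorem~\ref{thm:mader} is not needed.
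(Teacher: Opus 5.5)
Your proof is correct and follows the paper's argument: induction on $n$, a cut of size at most $k$ obtained from $\lambda(G)\le k$, and the induction hypothesis applied to both induced sides. The only difference is your separate treatment of the disconnected case, which is harmless but redundant, since $\lambda(G)\le k$ already provides a cut of size at most $k$ in that case too.
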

\begin{proof}
    We proceed by induction on $n$, the result being trivial when $n\leq 1$. Since $\lambda(G) \leq k$, $G$ admits a cut $\delta_G(S)$ of size at most $k$. Let $G_1$ and $G_2$ be the subgraphs of $G$ induced by $S$ and $V(G)\setminus S$ respectively. It follows by induction that
    \[
        |E(G)| \leq |E(G_1)| + |E(G_2)|+k \leq k(|V(G_1)| + |V(G_2)| -1) = k(n-1).\qedhere
    \]
\end{proof}

Similarly, a digraph $D$ is {\it minimally $k$-arc-strong} if it is $k$-arc-strong and removing any arc of $D$ yields a digraph that is not $k$-arc-strong. Analogously to the theorem of Mader, Dalmazzo~\cite{dalmazzo1977} proved that every minimally $k$-arc-strong (multi)digraph has at most $2k(n-1)$ arcs. A short proof of this result, based on Edmonds' Branching Theorem~\cite{edmonds1973} and given in~\cite[Theorem~5.6.1]{bang2009}, implies that such a digraph is actually the union of $2k$ oriented trees, hence implying the following slightly more general result.

\begin{theorem}[Dalmazzo~\cite{dalmazzo1977}]
    \label{thm:dalmazzo}
    Let $D$ be a minimally $k$-arc-strong digraph. If $H$ is a subdigraph of $D$ of order $n$, then $H$ has at most $2k(n-1)$ arcs.
\end{theorem}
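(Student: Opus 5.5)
The plan is to show that the arc set of $D$ is covered by $2k$ spanning branchings, each of which is an oriented tree. Restricting these to $V(H)$ then bounds the number of arcs of $H$ directly.

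\textbf{Step 1 (Edmonds).} If $|V(D)|\le 1$ there is nothing to prove, so assume $D$ has at least two vertices, and fix a root $r\in V(D)$. Since $D$ is $k$-arc-strong, every set $X$ with $r\in X\subsetneq V(D)$ satisfies $d^+_D(X)\ge k$. By Edmonds' Branching Theorem, $D$ therefore contains $k$ pairwise arc-disjoint out-branchings $B_1^+,\dots,B_k^+$ rooted at $r$. Applying the same theorem to the reverse of $D$ gives $k$ pairwise arc-disjoint in-branchings $B_1^-,\dots,B_k^-$ rooted at $r$. The out-branchings need not be disjoint from the in-branchings, but this does not matter below.

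\textbf{Step 2 (the union is everything).} Let $D'$ be the spanning subdigraph with arc set $\bigcup_i A(B_i^+)\cup\bigcup_i A(B_i^-)$. We check that $D'$ is $k$-arc-strong by looking at an arbitrary set $\emptyset\subsetneq X\subsetneq V(D)$.
\begin{itemize}
\item If $r\notin X$, each out-branching $B_i^+$ contains an arc entering $X$, since it has a path from $r$ to any vertex of $X$. These $k$ arcs are distinct, so $d^-_{D'}(X)\ge k$. Likewise each $B_i^-$ contains an arc leaving $X$, so $d^+_{D'}(X)\ge k$.
\item If $r\in X$, the same argument applied to $V(D)\setminus X$ gives the same two inequalities.
\end{itemize}
Hence $D'$ is $k$-arc-strong. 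Since $D$ is minimally $k$-arc-strong, deleting any arc of $D$ outside $D'$ would leave a $k$-arc-strong digraph, which is impossible. Therefore $A(D)=A(D')$, that is, every arc of $D$ lies in one of the $2k$ branchings.

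\textbf{Step 3 (counting).} Let $H$ be a subdigraph of $D$ of order $n$. Every arc of $H$ lies in some $B\in\{B_1^\pm,\dots,B_k^\pm\}$. For each such $B$, the arcs of $B$ with both endvertices in $V(H)$ form a subgraph of a tree, so their underlying graph is a forest on $n$ vertices. A forest on $n$ vertices has at most $n-1$ edges, so each of the $2k$ branchings contributes at most $n-1$ arcs to $H$. Summing gives $|A(H)|\le 2k(n-1)$.

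Digons cause no difficulty: the two arcs of a digon are distinct arcs, and each is counted in whichever branching contains it. The only substantive step is Step 2, namely the observation that the union of the $2k$ branchings is already $k$-arc-strong, so minimality forces it to be all of $D$. Once that is in place, the bound is immediate.
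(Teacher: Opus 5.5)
Your proposal is correct and is essentially the argument the paper points to (Edmonds' Branching Theorem, as in Bang-Jensen and Gutin, Theorem~5.6.1). There, $k$ arc-disjoint out-branchings and $k$ arc-disjoint in-branchings from a common root form a $k$-arc-strong spanning subdigraph, so minimality makes $D$ the union of these $2k$ oriented trees, and restricting each tree to $V(H)$ leaves a forest with at most $n-1$ arcs, which is exactly how the paper gets the subdigraph version of Dalmazzo's bound.
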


We further make use of the following easy proposition.

\begin{proposition}
    \label{prop:showing_k_strong}
    Let $D$ be a digraph, $k\geq 1$ an integer, and $\emptyset \subsetneq W\subseteq V(D)$ such that $D[W]$ is $k$-arc-strong. If, for every $v\in V(D)\setminus W$, we have $\lambda_D(v,W) \geq k$ and $\lambda_D(W,v) \geq k$, then $D$ is $k$-arc-strong.
\end{proposition}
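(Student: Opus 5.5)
We show that every dicut $\delta^+_D(S)$ with $\emptyset\subsetneq S\subsetneq V(D)$ has size at least $k$; this suffices because, as noted in the preliminaries, $D$ is $k$-arc-strong if and only if all its dicuts have size at least $k$. The only tools needed are Menger's theorem, in the form relating $\lambda_D(v,W)$ and $\lambda_D(W,v)$ to dicuts separating $v$ from $W$, and the hypothesis that $D[W]$ is $k$-arc-strong. Fix such a set $S$ and split into two cases according to how $S$ meets $W$.

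\emph{Case 1: $S$ splits $W$}, that is, $S\cap W\neq\emptyset$ and $W\setminus S\neq\emptyset$. Then $S\cap W$ is a nonempty proper subset of $W$. Since $D[W]$ is $k$-arc-strong, at least $k$ arcs of $D[W]$ leave $S\cap W$ and enter $W\setminus S$. Each of these arcs goes from $S$ to $V(D)\setminus S$, so it lies in $\delta^+_D(S)$. Hence $d^+_D(S)\geq k$.

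\emph{Case 2: $S$ does not split $W$}, so either $W\subseteq S$ or $W\cap S=\emptyset$.
\begin{itemize}
\item If $W\subseteq S$, then $S\neq V(D)$ gives a vertex $v\in V(D)\setminus S$, and necessarily $v\notin W$. By hypothesis there are $k$ arc-disjoint directed paths from $W$ to $v$. Each such path starts in $S$ and ends outside $S$, so it uses at least one arc of $\delta^+_D(S)$. Because the paths are arc-disjoint, these arcs are distinct, and $d^+_D(S)\geq \lambda_D(W,v)\geq k$.
\item If $W\cap S=\emptyset$, pick $v\in S$, which is not in $W$. By hypothesis there are $k$ arc-disjoint directed paths from $v$ to $W$. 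Each starts in $S$ and ends in $W\subseteq V(D)\setminus S$, so each uses a distinct arc of $\delta^+_D(S)$, and $d^+_D(S)\geq \lambda_D(v,W)\geq k$.
\end{itemize}

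In every case $d^+_D(S)\geq k$, so $D$ is $k$-arc-strong. The argument is routine; the only point requiring care is the case distinction itself, namely checking that when $S$ does not split $W$ there is always a vertex outside $W$ on the appropriate side of the cut. This holds because $S\neq V(D)$ in the first sub-case and $S\neq\emptyset$ in the second.
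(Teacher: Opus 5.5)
Your proof is correct. The paper states this proposition as an easy observation without proof, and your dicut case analysis (whether $S$ splits $W$, contains $W$, or avoids $W$) is the standard argument it implicitly relies on. One minor point: you never need Menger's theorem, only its trivial direction, namely that $k$ arc-disjoint paths from a vertex of $S$ to a vertex of $V(D)\setminus S$ each contribute a distinct arc to $\delta^+_D(S)$.
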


We conclude this section with a short proof of Theorem~\ref{thm:poly-2-inversions}, that we restate here in the following more technical form for convenience.

\begin{theorem}
    \label{thm:frank}
    For every integer $k\geq 1$, there exists a polynomial-time algorithm that, given a $2k$-edge-connected digraph $D$, outputs $\inv{k}{2}(D)$ and a \InvSeq{k}{2} of $D$ of this size.
\end{theorem}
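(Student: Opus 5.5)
The plan is to reduce the problem to a minimum-cost orientation problem with a crossing supermodular requirement, and then solve that with Frank's orientation theorem via submodular flows~\cite{FRANK198297,bang2009}.

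\emph{Step 1: reduce inversions to reversals of simple arcs.} Let $\mathcal{X}\subseteq\binom{V(D)}{2}$ be a collection of \eqp{2}-inversions. Inverting $\{u,v\}$ only affects arcs linking $u$ and $v$, and these effects commute. Inverting the same pair twice cancels out. If $\dig{u,v}$ is a digon, or there is no arc between $u$ and $v$, then inverting $\{u,v\}$ leaves $D$ unchanged. Hence any minimum \InvSeq{k}{2} consists of distinct pairs, each spanned by a simple arc of $D$, and its effect is exactly to reverse those simple arcs. Conversely, reversing any set $F$ of simple arcs is achieved by $|F|$ \eqp{2}-inversions. So $\inv{k}{2}(D)$ equals the minimum $|F|$ over sets $F$ of simple arcs of $D$ whose reversal makes $D$ $k$-arc-strong, with all digons left in place.

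\emph{Step 2: formulate this as an orientation problem.} Let $D_0$ be the spanning subdigraph formed by the digons of $D$, and let $G_1$ be the underlying graph of the simple arcs. We look for an orientation $\vec G_1$ of $G_1$ such that $d^-_{\vec G_1}(X)\ge h(X):=k-d^-_{D_0}(X)$ for every $\emptyset\subsetneq X\subsetneq V$. The cost of orienting an edge is $0$ if it agrees with its orientation in $D$ and $1$ otherwise. Since $X\mapsto d^-_{D_0}(X)$ is submodular, $h$ is crossing supermodular, and it can be evaluated in polynomial time. Frank's theorem on minimum-cost orientations covering a crossing supermodular requirement, solved via submodular flows, then yields a minimum-cost feasible orientation in polynomial time whenever one exists. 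That orientation gives the optimal set $F$, and hence the inversions $\{\{u,v\}: uv\in F\}$.

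\emph{Step 3: feasibility.} Since $D$ is $2k$-edge-connected, the second part of Theorem~\ref{thm:nashwilliams} applies. Its proof via Jackson's theorem completes exactly the partial orientation in which every digon is kept as a digon. So a $k$-arc-strong orientation of $\UG(D)$ extending $D_0$ exists, and the optimisation problem in Step 2 is feasible.

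The step I expect to need the most care is applying Frank's framework correctly. Two points must be checked: that the requirement is indeed crossing supermodular for the crossing family of all nontrivial sets, and that the submodular-flow formulation runs in polynomial time with only oracle access to $h$. The key inequality is $h(X)+h(Y)\le h(X\cap Y)+h(X\cup Y)$ for crossing $X,Y$, which follows directly from submodularity of in-degree in $D_0$. The oracle needed by the algorithm reduces to minimum cut computations in $D_0$ together with the current orientation, which take polynomial time.
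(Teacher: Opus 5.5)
Your proof is correct, but it handles the digons differently from the paper.

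\textbf{The paper's route.} The paper applies Frank's minimum-weight $k$-arc-strong orientation theorem (Theorem~\ref{csadadf}) directly to the whole underlying multigraph $\UG(D)$, digon edges included. The weights are $0$ for keeping an edge's orientation from $D$, $1$ for reversing a simple arc, and a prohibitive $|A(D)|+1$ for reversing either arc of a digon. It then uses the second part of Theorem~\ref{thm:nashwilliams} to see that some $k$-arc-strong orientation has weight at most $|A(D)|$. Hence an optimal orientation never touches a digon, and its weight equals the minimum number of \eqp{2}-inversions.

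\textbf{Your route.} You delete the digons at the outset and fold them into the shifted requirement $h(X)=k-d^-_{D_0}(X)$. You then solve a unit-cost orientation problem on the simple edges only.

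\textbf{Comparison.} Your version enforces the digon constraint exactly rather than through a big-penalty argument, and it needs Nash-Williams only for feasibility. The price is that it relies on a more general form of Frank's result than the one the paper states: minimum-cost orientations covering a crossing supermodular function (equivalently, minimum-cost $k$-arc-strong orientations of mixed graphs), solved via submodular flows. You also need the check, which you correctly flag, that the required oracle reduces to minimum cut computations in $D_0$ together with the current orientation. The paper gets by with the plain undirected version of Frank's theorem, at the cost of the penalty argument.

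A small simplification for your Step~3: you do not need to open up the proof via Jackson's theorem. Since $D$ is a digraph, $\UG(D)$ has at most two edges between any pair of vertices. An orientation of $\UG(D)$ without parallel arcs must therefore turn every double edge into a digon. So the statement of Theorem~\ref{thm:nashwilliams} already gives a $k$-arc-strong orientation extending $D_0$.
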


Theorem~\ref{thm:frank} follows from a powerful orientation theorem of Frank. In order to state it formally, we need some preliminaries. Given a graph $G$, we let $Q(G)\subseteq E(G)\times V(G)$ be the set that contains $(e,v)$ for every $e \in E(G)$ and every endvertex $v$ of $e$. Given a graph $G$, a weight function $w:Q(G)\rightarrow \mathbb{Z}_{\geq 0}$, and an orientation $\vec{G}$ of $G$, the {\it weight} of $\vec{G}$ is the sum of $w(e,v)$ over all $e \in E(G)$ and endvertices $v$ of $e$ such that $e$ is oriented towards $v$ in $\vec{G}$. We are now ready to state the orientation theorem of Frank from \cite{frankNHMS66} and derive Theorem~\ref{thm:frank}.

\begin{theorem}[\cite{frankNHMS66}]
    \label{csadadf}
    Let $G$ be a $2k$-edge-connected graph for some integer $k\geq 1$ and $w\colon Q(G)\rightarrow \mathbb{Z}_{\geq 0}$ a weight function. Then a minimum weight $k$-arc-strong orientation of $G$ can be computed in polynomial time.
\end{theorem}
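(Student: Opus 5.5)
The plan is to reduce the minimum-weight orientation problem to a minimum-cost submodular flow problem. Polynomial-time algorithms for submodular flows with integral data are known (Edmonds--Giles, with the algorithmic results of Cunningham--Frank and Fujishige), and the integrality of the submodular flow polyhedron will give an integral, that is genuine, orientation.

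\emph{Step 1: an initial orientation.} Since $G$ is $2k$-edge-connected, Theorem~\ref{thm:nashwilliams} guarantees that a $k$-arc-strong orientation exists. I would compute one, $\vec{G}_0$, in polynomial time, using either the constructive proof of Nash-Williams or Frank's splitting-off based algorithm.

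\emph{Step 2: reorientations as a submodular flow.} Every other orientation is obtained from $\vec{G}_0$ by reversing a set of arcs, which I encode by a vector $x\in\{0,1\}^{A(\vec{G}_0)}$. Fix $\emptyset\subsetneq X\subsetneq V$. After reversing the arcs with $x(a)=1$, the in-degree of $X$ is
\[
d^-_{\vec{G}_0}(X) - x(\delta^-(X)) + x(\delta^+(X)).
\]
So the new orientation is $k$-arc-strong if and only if $x(\delta^-(X))-x(\delta^+(X))\le b(X)$ for all such $X$, where $b(X):=d^-_{\vec{G}_0}(X)-k$. The function $X\mapsto d^-_{\vec{G}_0}(X)$ is submodular, and subtracting a constant keeps it (crossing) submodular. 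Hence the feasible $x$ with $0\le x\le 1$ form exactly the integer points of a submodular flow polyhedron on the digraph $\vec{G}_0$ with bound function $b$.

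\emph{Step 3: the cost and its minimisation.} For an arc $a=uv$ of $\vec{G}_0$ coming from the edge $e$, reversing $a$ changes the weight by $c(a):=w(e,u)-w(e,v)$. Therefore the weight of the new orientation equals $w(\vec{G}_0)+\sum_a c(a)x(a)$, and it suffices to minimise $c^{\top}x$ over the submodular flow polyhedron. Edmonds--Giles proved that this linear system is totally dual integral, and this remains true for crossing submodular $b$ restricted to crossing families. Hence an optimal vertex is integral, so $x\in\{0,1\}^A$ is an optimal reorientation. Evaluating $b$ only needs the in-degrees of $\vec{G}_0$, which can be computed directly. Consequently the polynomial-time minimum-cost submodular flow algorithms apply, and they return the optimum.

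\emph{Main obstacle.} The delicate point is that $b$ is only crossing submodular on $2^V\setminus\{\emptyset,V\}$, not fully submodular, and that submodular flow algorithms need a suitable oracle. I would handle this with Frank's reduction of crossing submodular flows to ordinary ones. Alternatively, I would use the feasibility certificate already provided by $x=0$ together with negative-cycle-cancelling in the auxiliary exchange graph. In that approach, optimality of a feasible $x$ is equivalent to the nonexistence of a negative cycle, and the exchange capacities are computed by minimum-cut (max-flow) computations in the current orientation. These computations are polynomial, which keeps the whole procedure polynomial.
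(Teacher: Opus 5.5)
Your proposal is correct. The paper gives no proof of this statement and only cites Frank, whose 1982 algorithm is a polynomial-time method for (crossing) submodular flows. Your reduction is exactly the standard argument behind that result: encoding a reorientation of $\vec{G}_0$ by $x\in\{0,1\}^{A(\vec{G}_0)}$, using the crossing submodular bound $b(X)=d^-_{\vec{G}_0}(X)-k$ and the costs $c(a)=w(e,u)-w(e,v)$, and getting integrality from Edmonds--Giles.

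Only your fallback cycle-cancelling sketch would need more care. Cancelling an arbitrary negative cycle in the exchange graph need not preserve feasibility, and naive cancelling is only pseudo-polynomial for binary-encoded weights. Your main route through the known submodular flow algorithms does not depend on it.
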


\begin{proof}[Proof of Theorem~\ref{thm:frank}]
    Let $D$ be a digraph and let $G$ be its underlying graph. We define a weight function $w:Q(G)\rightarrow \mathbb{Z}_{\geq 0}$ in the following way: let $(e,v)\in Q(G)$. If $e$ is oriented towards $v$ in $D$, we set $w(e,v)=0$. If $e$ is not oriented towards $v$ in $D$ and $\mu_G(e)>1$, we set $w(e,v)=|A(D)|+1$. If $e$ is not oriented towards $v$ in $D$ and $\mu_G(e)=1$ then we set $w(e,v)=1$. This finishes the description of $w$. Clearly, $G$ and $w$ can be computed from $D$ in polynomial time. By \Cref{csadadf}, it hence suffices to prove, for every $1\leq \ell\leq |A(D)|$, that $G$ admits a $k$-arc-strong orientation of weight at most $\ell$ if and only if $\inv{k}{2}(D)\leq \ell$.

    First suppose that $\inv{k}{2}(D)\leq \ell$, so there exists a \InvSeq{k}{2} $\mathcal{X}$ of $D$ with $|\mathcal{X}|\leq \ell$. We may suppose that $\mathcal{X}$ is inclusionwise minimal with that property. Let $D'=\Inv(D;\mathcal{X})$ and observe that $D'$ is in particular an orientation of $G$. Moreover, by minimality of $\mathcal{X}$ and as $D$ is a digraph, for every $\{u,v\}\in \mathcal{X}$, we have that $A(D)$ contains exactly one arc linking $u$ and $v$. Hence, by construction, we have that the weight of $D'$ is at most $\ell$.

    Now suppose that $G$ has a $k$-arc-strong orientation $\vec{G}$ of weight at most $\ell$. Let $\mathcal{X}$ be the set that contains both endvertices of every $e \in E(G)$ that has a distinct orientations in $D$ and $\vec{G}$. By the construction of $w$ and \Cref{thm:nashwilliams}, we obtain that every arc in $D$ that is contained in a digon also exists in $\vec{G}$.  By construction, this yields that the size of $\mathcal{X}$ is exactly the weight of $\vec{G}$. Moreover, we have $\Inv(D;\mathcal{X})=\vec{G}$. In particular, we obtain that $\Inv(D;\mathcal{X})$ is $k$-arc-strong. This finishes the proof.
\end{proof}

\section{NP-hardness results}
\label{sec:NP_hardness}

In this section, we show how Theorems~\ref{thm:NP_hardness_2inversion_multidigraphs} and~\ref{thm:NP_hardness_n-1_inversions} are derived from two closely related known results. The two proofs are given in the two next subsections.

\subsection{Proof of Theorem~\ref{thm:NP_hardness_2inversion_multidigraphs}}

Here, we give the proof of \Cref{thm:NP_hardness_2inversion_multidigraphs}. Formally, we prove the NP-hardness  of the following problem.

\defproblem{Multidigraph (2,2)-Invertibility (M22INV)}{A multidigraph $D$.}{Is there a set $\mathcal{X}\subseteq \binom{V(D)}{2}$ such that $\Inv(D;\mathcal{X})$ is 2-arc-strong?}

The reduction relies on the NP-hardness of a problem introduced in \cite{HORSCH2021103292}. In a strongly connected digraph $D$, we say that an arc $a \in A(D)$ is {\it deletable} if $D\setminus a$ is strongly connected. In \cite{HORSCH2021103292}, the following problem was considered:

\defproblem{Deletability Orientation (DO)}{A graph $G$, a set $F \subseteq E(G)$.}{Is there a strongly connected orientation $\vec{G}$ of $G$ such that every element of $F$ corresponds to a deletable arc in $\vec{G}$?}

In \cite{HORSCH2021103292}, the following result was proved.

\begin{theorem}[\cite{HORSCH2021103292}]
    \label{estf}
    DO is NP-hard.
\end{theorem}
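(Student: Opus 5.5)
The statement is the NP-hardness result of Hörsch and Szigeti, so I would reprove it by a direct polynomial reduction from 3-SAT. First I would restate the constraint in terms of paths. Suppose $\vec{G}$ is strongly connected and an edge $e=uv\in F$ is oriented from $u$ to $v$. Then $e$ is deletable exactly when $\vec{G}$ contains a directed $u$--$v$ path avoiding $e$, i.e.\ when $\lambda_{\vec{G}}(u,v)\ge 2$. Each element of $F$ therefore asks for a second, "backup" path, and the reduction has to tie the existence of these backup paths to satisfying assignments.

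\textbf{Construction.} I would fix a root vertex $r$.
\begin{itemize}
\item For every variable $x$, I add a variable gadget. It is a cycle $C_x$ through $r$ that contains two distinguished vertices $t_x$ and $f_x$. By strong connectivity, $C_x$ must be oriented as a directed cycle in one of its two directions, since its internal vertices have degree $2$ apart from the attachments added below. I read one direction as $x=$ true and the other as $x=$ false.
\item For every clause $c=\ell_1\vee\ell_2\vee\ell_3$, I add an edge $e_c=u_cv_c\in F$. I force its orientation to be $u_c\to v_c$ using a small rigid substructure, for instance a chain of degree-$2$ vertices through $r$ whose orientation is pinned by a second $F$-edge constraint.
\item For each literal $\ell_i$ of $c$, I add a connector path from $u_c$ into the gadget of its variable and a connector path from that gadget back to $v_c$. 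The attachment points are chosen so that a directed $u_c\to v_c$ detour through $C_x$ exists if and only if $C_x$ is oriented in the direction that makes $\ell_i$ true.
\end{itemize}

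\textbf{Correctness.} There are two directions to check.
\begin{itemize}
\item If the formula is satisfiable, I orient each $C_x$ according to the assignment. I orient each connector so that it hangs off a true literal's gadget, so the backup path for $e_c$ exists. Connectors attached to false literals are oriented so that they still lie on directed cycles through $r$, which keeps $\vec{G}$ strongly connected.
\item Conversely, a valid orientation determines an assignment through the directions of the cycles $C_x$. I then argue that any backup $u_c$--$v_c$ path must leave through a connector and traverse some $C_x$ in the correct direction.
\end{itemize}

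\textbf{Main obstacle.} The hard part is ruling out spurious backup paths. A detour might enter one variable gadget, pass through $r$, and return through another gadget or through another clause's connectors. To block this, I would use separation: every connector and gadget would meet the rest of the graph only at cut vertices or through $2$-edge cuts, so that any $u_c\to v_c$ path through $r$ is forced to use a specific pair of connector paths. If unrestricted paths through $r$ cannot be excluded, I would first try replacing $r$ by a separate hub for each clause, linked to the gadgets only through degree-$2$ chains. This keeps the only available detours local to a single literal's attachment.
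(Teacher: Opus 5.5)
The paper does not prove this statement; it imports it from H\"orsch and Szigeti. Your sketch therefore has to stand on its own, and as written the reduction is unsound, not merely incomplete.

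Your reformulation is correct. If $e_c$ is oriented $u_c\to v_c$ in a strongly connected $\vec{G}$, it is deletable iff some $u_c$--$v_c$ dipath avoids it. Equivalently, there is no $S$ with $u_c\in S$, $v_c\notin S$ and $\delta^+_{\vec{G}}(S)=\{e_c\}$.

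The key claim, however, cannot hold: that a detour through $C_x$ exists iff $C_x$ is directed so that $\ell_i$ is true. A directed cycle is strongly connected in both of its directions. So once the connector at $u_c$ points into $C_x$ and the connector at $v_c$ points out of $C_x$, the detour exists whichever way $C_x$ runs. The existence of a detour is decided by the connectors alone, and nothing ties them to $C_x$. On a directed cycle every attachment vertex already has an in-arc and an out-arc, so the connector edge there is free.

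Consequently, nothing in your construction distinguishes satisfiable from unsatisfiable formulas. For any formula, direct all cycles arbitrarily. Give one literal per clause the connector orientation you use for true literals, and orient the rest as in your forward direction. Then $\vec{G}$ is strongly connected and every $e_c$ has a backup path.

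Your proposed fix does not help. A cut vertex such as $r$ never obstructs reachability in a strongly connected digraph, so detours through $r$ are always available rather than a corner case. Per-clause hubs leave the cycle-direction problem untouched.

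There are two further gaps:
\begin{itemize}
\item Once connectors attach to $C_x$, its attachment vertices have degree at least $3$. The segments of $C_x$ between them can then be oriented independently, so the truth value is not even well defined.
\item Reversing every arc preserves both strong connectivity and deletability. Hence $e_c$ can only be pinned relative to other gadgets, and you give no mechanism for this.
\end{itemize}

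The missing idea is a way to force orientations, and in DO the available forcing tools are cut constraints:
\begin{itemize}
\item a path through degree-$2$ vertices is directed consistently;
\item a $2$-edge cut is crossed exactly once in each direction by any strongly connected orientation;
\item if $e\in F$ lies in a $3$-edge cut $\{e,e',e''\}$, deletability of $e$ forces $e'$ and $e''$ to be oppositely directed.
\end{itemize}
A correct reduction must build two gadgets out of constraints of this kind: a variable gadget that propagates one consistent bit to all occurrences, and a clause gadget. They must be arranged so that a clause is falsified exactly when $\{e_c\}$ can be a one-arc dicut as above. You flag this as the \emph{main obstacle} but leave it open, and it is the entire content of the theorem.
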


We wish to remark that in \cite{HORSCH2021103292}, the result is stated for multigraphs. However, the result for graphs is an immediate consequence of the reduction.
We are now ready to prove the following restatement of \Cref{thm:NP_hardness_2inversion_multidigraphs}.

\begin{theorem}
    M22INV is NP-hard.
\end{theorem}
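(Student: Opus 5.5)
We reduce from DO, which is NP-hard by \Cref{estf}. Let $(G,F)$ be an instance of DO, with $G$ a graph. We build a multidigraph $D$ on vertex set $V(G)$ as follows. For every edge $uv\in E(G)\setminus F$, $D$ gets two parallel arcs from $u$ to $v$ (fixing the orientation arbitrarily). For every edge $uv\in F$, $D$ gets a single arc from $u$ to $v$. Clearly $D$ can be built in polynomial time. Note that inverting $\{u,v\}$ reverses every arc between $u$ and $v$ at once. Since $G$ is simple, the pairs of parallel arcs coming from $E(G)\setminus F$ are the only arcs linking their endvertices. So every multidigraph of the form $\Inv(D;\mathcal{X})$ with $\mathcal{X}\subseteq\binom{V(D)}{2}$ is obtained from an orientation $\vec G$ of $G$ by doubling the arcs corresponding to $E(G)\setminus F$. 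Conversely, every orientation $\vec G$ arises this way: invert $\{u,v\}$ exactly for those edges $uv$ whose orientation in $\vec G$ differs from the one chosen in $D$. Write $D_{\vec G}$ for the multidigraph associated with $\vec G$.

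It remains to show that $D_{\vec G}$ is $2$-arc-strong if and only if $\vec G$ is strongly connected and every arc corresponding to $F$ is deletable. For this we use the dicut characterization: $D_{\vec G}$ is $2$-arc-strong if and only if $d^+_{D_{\vec G}}(S)\geq 2$ for every $\emptyset\subsetneq S\subsetneq V$. A dicut of $D_{\vec G}$ contributes $2$ for each arc of $\vec G$ from $E(G)\setminus F$ leaving $S$, and $1$ for each arc from $F$ leaving $S$. Hence $d^+_{D_{\vec G}}(S)\leq 1$ happens exactly in two cases. Either no arc of $\vec G$ leaves $S$, meaning $\vec G$ is not strong. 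Or exactly one arc of $\vec G$ leaves $S$ and it belongs to $F$, meaning this $F$-arc is not deletable. Conversely, suppose some $F$-arc $a$ is not deletable in a strong $\vec G$. Then $\vec G\setminus a$ has a set $S$ with no leaving arc, so $a$ is the unique arc of $\vec G$ leaving $S$, and $d^+_{D_{\vec G}}(S)=1$. This gives the equivalence.

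Combining both parts, $(G,F)$ is a yes-instance of DO if and only if $D$ is a yes-instance of M22INV. The only point requiring care is that \Cref{estf} holds for simple graphs, which the paper remarks follows from the reduction in \cite{HORSCH2021103292}. Simplicity is what ensures that each inversion of size 2 acts on the arcs of a single edge of $G$ (doubled or not), so no further obstacle is expected.
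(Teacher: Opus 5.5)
Your proposal is correct and uses the same reduction as the paper: one arc per $F$-edge, two parallel arcs per edge of $E(G)\setminus F$, and the same correspondence between the digraphs $\Inv(D;\mathcal{X})$ and orientations of $G$ with the non-$F$ arcs doubled. The only difference is cosmetic: you verify the equivalence via the dicut characterization of $2$-arc-strength, whereas the paper argues via arc deletion (it observes that $D'\setminus a$ contains $\vec G$, or $\vec G\setminus \vec f$, up to duplicated arcs), which amounts to the same argument.
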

\begin{proof}
    We prove the result by a reduction from DO, which is NP-hard by \Cref{estf}. Let $(G,F)$ be an instance of DO and let $\{v_1,\ldots,v_n\}$ be an arbitrary enumeration of $V(G)$. We now construct an instance $D$ of M22INV. First, we set $V(D)=V(G)$. Next, for every $i,j \in [n]$ with $i<j$ such that $F$ contains an edge linking $v_i$ and $v_j$, we let $A(D)$ contain an arc from $v_i$ to $v_j$. Further, for every $i,j \in [n]$ with $i<j$ such that $E(G)\setminus F$ contains an edge linking $v_i$ and $v_j$, we let $A(D)$ contain two parallel arcs from $v_i$ to $v_j$. This finishes the description of $D$. It is easy to see that $D$ can be computed from $G$ in polynomial time. It hence suffices to prove that $D$ is a yes-instance of M22INV if and only if $(G,F)$ is a yes-instance of DO.

First suppose that $(G,F)$ is a yes-instance of DO, so there exists a strongly connected orientation $\vec{G}$ of $G$ in which every $f \in F$ corresponds to a deletable arc $\vec{f}$. We now let $\mathcal{X}$ contain the set $\{v_i,v_j\}$ for all $i,j \in [n]$ with $i<j$ such that $A(\vec{G})$ contains at least one arc from $v_j$ to $v_i$. 
Let $D'=\Inv(D;\mathcal{X})$. We show that $D'$ is $2$-arc-strong. Let $a \in A(D')$ and let $i,j \in [n]$ with $i<j$ such that $v_i$ and $v_j$ are the endvertices of $a$. It follows by construction that $v_i$ and $v_j$ are linked by an edge $e \in E(G)$. If $e \in E(G)\setminus F$, then we obtain that $\vec{G}$ is a directed subgraph of $D' \setminus a$. As $\vec{G}$ is strongly connected, so is $D' \setminus a$. If $e \in F$, then we obtain that $\vec{G}\setminus \vec{e}$ is a directed subgraph of $D' \setminus a$. As $\vec{G}\setminus \vec{e}$ is strongly connected, so is $D' \setminus a$. In either case, it follows that $D' \setminus a$ is strongly connected. It follows that $D'$ is
2-arc-strong and hence $D'$ is a yes-instance of M22INV.

Now suppose that $D$ is a yes-instance of M22INV, so there exists a collection $\mathcal{X}\subseteq \binom{V(D)}{2}$  such that $D'$ is 2-arc-strong, where $D'=\Inv(D;\mathcal{X})$. We now define an orientation $\vec{G}$ of $G$. Let $e \in E(G)$ and let $i,j \in [n]$ with $i<j$ such that $v_i$ and $v_j$ are the endvertices of $e$. If $\{v_i,v_j\}\in \mathcal{X}$, we orient $e$ from $v_j$ to $v_i$. Otherwise, we orient $e$ from $v_i$ to $v_j$. Let $\vec{G}$ be the thus obtained orientation. First observe that $D'$ can be obtained from $\vec{G}$ by duplicating certain arcs. Hence, as $D'$ is strongly connected, so is $\vec{G}$. Now consider some $f \in F$ and let $\vec{f}$ be the corresponding arc in $\vec{G}$. Further, let $a$ be the unique arc in $A(D')$ that has the same endvertices as $f$. Then $D'\setminus a$ can be obtained from $\vec{G}\setminus \vec{f}$ by duplicating certain arcs. As $D'$ is 2-arc-strong, we have that $D'\setminus a$ is strongly connected and hence so is $\vec{G}\setminus \vec{f}$. It follows that $(G,F)$ is a yes-instance of DO.
\end{proof}

\subsection{Proof of Theorem~\ref{thm:NP_hardness_n-1_inversions}}
\label{subsec:NP_hardness}

Here, we give the proof of Theorem~\ref{thm:NP_hardness_n-1_inversions} that we first recall here for convenience.

\thmNPpush*

Given a digraph $D$ and a vertex $v$, {\it pushing $v$ in $D$} means flipping the orientation of every arc  of $D$ incident to $v$. For a set $X\subseteq V(D)$ of vertices, we denote by $\Push(D;X)$ the digraph obtained after pushing all vertices in $X$, one after the other. Note that the resulting orientation does not depend on the order in which the vertices of $X$ are pushed.
We consider the following problem with the corresponding hardness result due to Klostermeyer~\cite{klostermeyerAC51}.

\defproblem{Strong Pushing}{An oriented graph $D$.}{Is there a set $X\subseteq V(D)$ such that $\Push(D;X)$ is strongly connected?}

\begin{theorem}[\cite{klostermeyerAC51}]
    \label{thm:NP_hardness_strong_pushing}
    The {\sc Strong Pushing} problem is NP-hard.
\end{theorem}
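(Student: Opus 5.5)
Since this result is quoted from Klostermeyer, the paper will not reprove it. If I had to prove it, I would first restate the problem as a cut-reversal problem. For $X\subseteq V(D)$, $\Push(D;X)$ is exactly $D$ with every arc of the cut $\delta_{\UG(D)}(X)$ reversed, because an arc with both ends in $X$ is flipped twice. So {\sc Strong Pushing} asks whether some cut of $\UG(D)$ can be reversed so that the result is strongly connected. The advantage of this form is that a solution is described by a $2$-colouring of the vertices, namely membership in $X$. Moreover, $X$ and $V(D)\setminus X$ give the same digraph. This symmetry suggests reducing from a problem that is also invariant under complementation, and I would use \textsc{Not-All-Equal 3-SAT}.

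The construction would go as follows. Each variable $x$ becomes a vertex $v_x$, and the truth value of $x$ is read from whether $v_x\in X$. I would also add a reference vertex $r$ and normalise $r\notin X$, which is possible by the complementation symmetry. All arcs between $r$ and the variable vertices, together with some auxiliary paths, are chosen so that strong connectivity is preserved whatever happens to them. For each clause $C=(\ell_1,\ell_2,\ell_3)$, I would add a small clause gadget attached to $v_{\ell_1},v_{\ell_2},v_{\ell_3}$ and to $r$. The arc directions in the gadget are chosen with two properties in mind. First, if the three literals receive the same value, the gadget becomes a source or a sink after pushing, in every choice of membership of its internal vertices. Second, if the literal values are not all equal, there is a choice of internal vertices making the gadget, together with the rest of $\Push(D;X)$, strongly connected. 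Negated literals would be handled by attaching the clause to the variable vertex through an arc of opposite orientation.

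The main obstacle is designing these gadgets. Pushing the internal gadget vertices gives the would-be solver extra freedom, and that freedom must not allow an unsatisfied clause to be repaired. I would first try a single clause vertex $c$ with one arc to or from each of its three literal vertices, oriented so that $c$ has in-degree $0$ or out-degree $0$ exactly when the three literals agree. I would then have to check the case where $c$ itself is pushed, which exchanges the roles of "all true" and "all false" and is therefore harmless for NAE. For the converse direction, one takes a not-all-equal assignment, defines $X$ from it, and verifies that every vertex lies on a cycle through $r$.

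Finally, this theorem is used for Theorem~\ref{thm:NP_hardness_n-1_inversions} through an identity that does not depend on the gadgets. Inverting $V(D)\setminus\{v\}$ reverses every arc not incident to $v$, so it equals pushing $v$ and then reversing every arc of the digraph. Reversing all arcs preserves strong connectivity. Hence sequences of \eqp{(n-1)}-inversions correspond to push sets up to global reversal, which gives the reduction.
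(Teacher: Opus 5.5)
The paper gives no proof of this statement. It is quoted from Klostermeyer and used only through the identity relating $\Push(D;\{v\})$ to the inversion of $V\setminus\{v\}$, and your last paragraph reproduces that identity correctly. Your NAE-3SAT reduction is therefore an independent argument, and most of it is sound.

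\textbf{The clause side works.} The arc between $c$ and $v_{\ell_i}$ points into $c$ exactly when the value of $\ell_i$ equals $[c\in X]$. Hence $c$ is a source or a sink if and only if the three values agree, whatever $c$'s own membership. This needs two conditions:
\begin{enumerate}
\item $c$ must have \emph{no} further arcs, so drop the attachment of the clause gadget to $r$ from your general description. With an extra arc $r\to c$, an all-true clause is rescued by putting $c$ into $X$: the literal arcs then all enter $c$, and the reversed arc $c\to r$ leaves it.
\item The three variables of a clause must be distinct, otherwise you create parallel arcs or a digon and $D$ is no longer an oriented graph. This is harmless, since NAE-3SAT stays NP-hard in its monotone form with three distinct variables per clause, i.e.\ $2$-colouring of $3$-uniform hypergraphs.
\end{enumerate}

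\textbf{The gap is the variable side.} You only describe it as being ``chosen so that strong connectivity is preserved whatever happens to them''. The direction from an NAE assignment to a strongly connected $\Push(D;X)$ rests entirely on this, and it is not automatic in an oriented graph. For instance, take an auxiliary path $r\to a\to v_x\to b\to r$ with $v_x\in X$ and $r\notin X$. Then exactly one of the two arcs at $a$ is reversed, whatever you do with $a$, so $a$ becomes a source or a sink.

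\textbf{A core that works.} For each variable $x$, add a directed triangle $v_x\to a_x\to b_x\to v_x$ together with the arcs $r\to a_x$ and $b_x\to r$. Put $a_x,b_x$ into $X$ exactly when $v_x\in X$.
\begin{itemize}
\item If $x$ is false, nothing changes, and $r\to a_x\to b_x\to r$ and $a_x\to b_x\to v_x\to a_x$ are directed cycles.
\item If $x$ is true, the triangle is untouched while the two arcs at $r$ are reversed, giving the directed cycle $r\to b_x\to v_x\to a_x\to r$.
\end{itemize}
So $r$ together with all triangles induces a strongly connected subdigraph for every assignment. Under an NAE assignment, each clause vertex has an in-neighbour and an out-neighbour in this core, so the whole digraph is strongly connected. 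The converse direction only uses that clause vertices are neither sources nor sinks, so the triangles do not interfere. With this core specified, and with the clause vertices touching only their three distinct literal vertices, your reduction becomes a complete proof.
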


Now Theorem~\ref{thm:NP_hardness_n-1_inversions} follows from Theorem~\ref{thm:NP_hardness_strong_pushing} together with the following.

\begin{lemma}
    Let $D=(V,A)$ be a digraph. Then $D$ is a yes-instance of {\sc Strong Pushing} if and only if $D$ is $(1,n-1)$-invertible.
\end{lemma}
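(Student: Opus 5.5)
The plan is to show that, on an $n$-vertex digraph, the effect of inverting $V\setminus\{v\}$ differs from the effect of pushing $v$ by a global reversal of all arcs. Since reversing every arc preserves strong connectivity, the two families of operations then produce the same digraphs up to reversal.

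\textbf{Key identity.} Let $R$ denote the operation of reversing all arcs of $D$. I will verify that $\Inv(D,V\setminus\{v\})=R(\Push(D;\{v\}))$. An arc with both ends in $V\setminus\{v\}$ is reversed on both sides. An arc incident to $v$ is untouched by the inversion. On the other side it is flipped by the push and then flipped back by $R$, so it is also unchanged.

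Since $R$ commutes with inversions and pushes, and $R^2$ is the identity, composing gives the following. For a multiset $\mathcal{X}=\{V\setminus\{v\}: v\in S'\}$ of $(=\!(n-1))$-sets indexed by a multiset $S'$ of vertices, we have $\Inv(D;\mathcal{X})=R^{|S'|}(\Push(D;S))$. Here $S$ is the set of vertices appearing an odd number of times in $S'$, because repeated pushes cancel in pairs. Every $(=\!(n-1))$-subset is of the form $V\setminus\{v\}$, so this covers all collections of $(=\!(n-1))$-inversions.

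\textbf{Two directions.}
\begin{itemize}[label={--}]
\item If $\Push(D;X)$ is strongly connected, take $\mathcal{X}=\{V\setminus\{v\}:v\in X\}$. Then $\Inv(D;\mathcal{X})$ equals $\Push(D;X)$ or its reverse, and either is strongly connected. So $D$ is $(1,n-1)$-invertible.
\item Conversely, suppose $\Inv(D;\mathcal{X})$ is strongly connected. Then $\Push(D;S)$ is strongly connected, up to reversal, for the set $S$ of parity-odd vertices. So $D$ is a yes-instance of {\sc Strong Pushing}.
\end{itemize}

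\textbf{Loose ends.} The case $\mathcal{X}=\emptyset$ matches $X=\emptyset$, which is consistent with the identity. Degenerate small $n$ is harmless, since the identity holds for every $n\geq 2$. The argument is essentially mechanical. The only real point of care is the bookkeeping of the parity of the number of inversions (the $R$ factor) and of multiplicities (the odd set $S$). Finally, the reduction maps an oriented graph instance to itself, which gives Theorem~\ref{thm:NP_hardness_n-1_inversions} via Theorem~\ref{thm:NP_hardness_strong_pushing}.
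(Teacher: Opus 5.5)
Your proposal is correct and follows essentially the same route as the paper. The paper likewise proves the single-vertex identity $\Push(D;\{v\})=\Inv(H,V\setminus\{v\})$, where $H$ is the reversal of $D$, extends it by induction with the parity of $|X|$ deciding whether a global reversal remains, and concludes because reversing all arcs preserves strong connectivity; your multiset and odd-multiplicity bookkeeping is only a harmless extra generality.
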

\begin{proof}
    Let $H$ be the digraph obtained from $D$ by reversing all arcs of $D$. 
    Let $v$ be any vertex of $D$, then observe that $\Push(D;\{v\})$ is exactly $\Inv(H, V\setminus \{v\})$. 
    With an easy induction, we thus obtain that, for every set $X\subseteq V(D)$,
    \[
    \Push(D;X) = \left\{
    \begin{array}{ll}
        \Inv(D;\mathcal{X}) & \text{if $|X|$ is even,}\\
        \Inv(H;\mathcal{X}) & \text{otherwise,}
    \end{array}
    \right.
    \]
    where $\mathcal{X} = \{V\setminus x : x\in X\}$.
    Clearly, a digraph is strongly connected if and only if the digraph obtained from it by reversing all its arcs is strongly connected. The statement thus follows from the equality above.
\end{proof}

\section{Characterising the \texorpdfstring{$(k,p)$}{(k,p)}-invertible digraphs}
\label{sec:invertibility}

This section is devoted to the proof of Theorems~\ref{thm:invertibility_even_case} and~\ref{thm:invertibility_odd_case} and Corollary~\ref{cor:kernel_k+p}.
That is, for every pair of integer $k\geq 1$ and $p\geq 2$, we give an exact characterisation of the $(k,p)$-invertible digraphs of order $n$, assuming that $n$ is an integer large enough compared to $k$ and $p$. 
When $p$ is even, it turns out that all (sufficiently large) digraphs are $(k,p)$-invertible. This follows from the following known proposition, see Propositions~3.4 and~3.5 in~\cite{bangjensen2025makingorientedgraphacyclic}.

\begin{proposition}[\cite{bangjensen2025makingorientedgraphacyclic}]
    \label{prop:simulating_2_inversions}
    Let $p\geq 2$ be an even integer and $D$ be a digraph of order $n\geq p+2$. If $D$ contains a digon or a pair of non-adjacent vertices, then any \eqp{2}-inversion can be simulated by a set of \eqp{p}-inversions.
\end{proposition}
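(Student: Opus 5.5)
The plan is to reduce everything to parity. Inverting a family $\mathcal{X}$ of sets reverses an arc with endvertices $x,y$ exactly when the number of members of $\mathcal{X}$ containing both $x$ and $y$ is odd. Hence $\mathcal{X}$ simulates the inversion of $\{u,v\}$ as soon as two conditions hold. First, the pair $\{u,v\}$ is covered an odd number of times. Second, every other pair is covered an even number of times, except possibly pairs that are non-adjacent in $D$ or form a digon of $D$, since reversing such pairs does not change $D$. So I will build a small calculus of "flippable pair sets" modulo $2$, using only $(=p)$-inversions, in three steps.

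\emph{Step 1: inverting a $(p+1)$-set.} Let $W$ be a set of $p+1$ vertices and invert all $p+1$ of its $p$-subsets. A pair inside $W$ lies in exactly $\binom{p-1}{p-2}=p-1$ of them, which is odd because $p$ is even, and no pair leaving $W$ is covered. So this family simulates the inversion of $W$. This is the only place where the parity of $p$ is used.

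\emph{Step 2: stars and cherries.} Let $x$ be a vertex and $T$ a $p$-set not containing $x$. Inverting $T\cup\{x\}$ (via Step 1) and then $T$ reverses exactly the pairs $\{x,t\}$ with $t\in T$; call this the star from $x$ to $T$. Now take $y\neq z$, both different from $x$, and choose a $(p-1)$-set $R\subseteq V(D)\setminus\{x,y,z\}$; this is possible since $n\geq p+2$. Combining the star from $x$ to $R\cup\{y\}$ with the star from $x$ to $R\cup\{z\}$ reverses exactly the two pairs $\{x,y\}$ and $\{x,z\}$. So every "cherry", meaning a path of length $2$ in the complete graph on $V(D)$, can be reversed using $(=p)$-inversions.

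\emph{Step 3: conclusion.} Let $\{a,b\}$ be the digon or non-adjacent pair given by the hypothesis. If $\{a,b\}=\{u,v\}$, the empty family works. If the two pairs share a vertex, say $b=v$, the cherry $\{u,v\},\{v,a\}$ does the job, because reversing $\{a,v\}$ is harmless. If they are disjoint, add the cherry $\{u,v\},\{v,a\}$ to the cherry $\{v,a\},\{a,b\}$. Modulo $2$ the pair $\{v,a\}$ cancels, and only $\{u,v\}$ and $\{a,b\}$ are reversed, which again simulates inverting $\{u,v\}$.

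The main point to check carefully is Step 2. There I must verify that the auxiliary sets have the right sizes and avoid the relevant vertices, which is exactly where $n\geq p+2$ is needed. I must also check that the mod-$2$ cancellation is legitimate, i.e.\ that the effect of a family of inversions depends only on the coverage parity of each pair. This holds because each inversion reverses every arc inside the set independently, and inversions commute.
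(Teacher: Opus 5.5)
Your proof is correct and complete. The paper does not prove this proposition itself; it imports it from Propositions~3.4 and~3.5 of \cite{bangjensen2025makingorientedgraphacyclic}. Your argument is therefore a genuine self-contained proof rather than a reconstruction of one in the text.

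The parity bookkeeping is sound and the auxiliary sets have the sizes you claim. Also, $n\geq p+2$ is needed only once, to find the $(p-1)$-set $R$ avoiding $x,y,z$; Steps~1 and~2 only need $n\geq p+1$.

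You should state one point explicitly. The statement asks for a \emph{set} of $p$-subsets, so after your mod-$2$ calculus you keep only the sets of odd multiplicity. For instance, in Step~2 the set $T$ occurs twice and cancels, so the star from $x$ to $T$ is just $\{(T\setminus\{t\})\cup\{x\} : t\in T\}$. Likewise, your cherry is the family $\{(R\setminus\{r\})\cup\{x,w\} : r\in R,\ w\in\{y,z\}\}$ of $2(p-1)$ sets.

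Your route is in the same spirit as the paper's own arguments for odd $p$. Those are the binomial-parity computation in Lemma~\ref{lemma:simulating_3_inversions} and the composition of free pairs through a third vertex in the proof of Lemma~\ref{lemma:3_invertibility}.

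Isolating the cherry also makes the role of the hypothesis transparent. Cherries generate every even-size set of pairs, so the digon or non-adjacent pair only serves to absorb one extra flipped pair. This is unavoidable when $p\equiv 0 \pmod 4$: every $p$-set then contains an even number $\binom{p}{2}$ of pairs, so no family of $(=p)$-inversions reverses exactly one arc of a tournament. By contrast, for $p\equiv 2 \pmod 4$, one $p$-set (which has an odd number of pairs) combined with cherries would simulate any $(=2)$-inversion even without the hypothesis.
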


We can now derive Theorem~\ref{thm:invertibility_even_case}, that we first recall here for convenience.

\feasabilityeven*

\begin{proof}
    First suppose that $D$ contains two non-adjacent vertices or at least one digon. Then any \eqp{2}-inversion can be simulated by a set of \eqp{p}-inversions by Proposition~\ref{prop:simulating_2_inversions}. The statement now follows from \Cref{thm:nashwilliams}. Otherwise, $D$ is a tournament. Since $D$ has order at least $2k+2$, removing an arbitrary arc of $D$ yields a $2k$-edge-connected digraph $D'$ with two non-adjacent vertices. By the above, we obtain that $D'$ can be made $k$-arc-strong by applying a set of \eqp{p}-inversions. The same set also makes $D$ $k$-arc-strong.
\end{proof}

When $p$ is odd, the situation is different, as explained in the introduction. 
Recall that, for a positive integer $k$, a {\it $k$-obstruction} is any digraph $D$, with underlying graph $G$, admitting a partition $(X_1,\dots,X_r,Y)$ of its vertex set into non-empty parts such that, for $X=\bigcup_{i\in [r]} X_i$, we have:
\begin{enumerate}[label=(\roman*)]
    \item for every $i\in [r]$, $d_G(X_i) = 2k$;
    \label{enumitem:kobstruction:i}
    \item for every $x\in X$ and $y\in Y$, there is exactly one edge linking $x$ and $y$ in $G$;
    \label{enumitem:kobstruction:ii}
    \item $d^+_D(X) - \frac{1}{2}|X|\cdot |Y|$ is an odd integer.
    \label{enumitem:kobstruction:iii}
\end{enumerate}
Recall that an illustration of $k$-obstructions was provided in Figure~\ref{fig:k_obstructions}. Given a $k$-obstruction, we call a partition $(X_1,\ldots,X_r,Y)$ of $V(D)$ satisfying Properties~\ref{enumitem:kobstruction:i},~\ref{enumitem:kobstruction:ii} and~\ref{enumitem:kobstruction:iii} a {\it $k$-certificate} for $D$.
Our goal is now to prove Theorem~\ref{thm:invertibility_odd_case}, that we first recall here for convenience.

\feasabilityodd*

We first give the following simple result that will be reused in \Cref{subsec:3_invertibility}.
\begin{proposition}
    \label{g7zuhoi}
    Let $k$ be a positive integer, $D$ be a $k$-arc-strong digraph, $G$ the underlying graph of $D$, and $(X_1,\ldots,X_r,Y)$ a partition of $V(D)$ satisfying Properties~\ref{enumitem:kobstruction:i} and~\ref{enumitem:kobstruction:ii}. Then $d_D^+(X)=\frac{1}{2}|X||Y|$.
\end{proposition}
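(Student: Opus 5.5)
The plan is to use the tightness of the cuts $\delta_G(X_i)$ to pin down the directed cut sizes exactly. Since $D$ is $k$-arc-strong, for every $i\in[r]$ we have $d_D^+(X_i)\geq k$ and $d_D^-(X_i)\geq k$. Every edge of $G$ crossing $X_i$ corresponds to exactly one arc of $D$, a digon contributing two parallel edges, hence two arcs. Therefore
\[
d_D^+(X_i)+d_D^-(X_i)=d_G(X_i)=2k,
\]
and so $d_D^+(X_i)=d_D^-(X_i)=k$ for every $i\in[r]$.

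Next I will use Property~\ref{enumitem:kobstruction:ii} to write $d_D^+(X)$ and $d_D^-(X)$ in terms of the parts. For $i\neq j$, an arc from $X_i$ to $X_j$ leaves $X_i$ but stays inside $X$, so it is not counted in $d_D^+(X)$. Summing over $i$, and letting $a$ be the number of arcs between distinct parts $X_i$ and $X_j$, I get
\[
\sum_{i\in[r]} d_D^+(X_i)=d_D^+(X)+a,\qquad \sum_{i\in[r]} d_D^-(X_i)=d_D^-(X)+a .
\]
Indeed, each arc between different $X$-parts is counted exactly once in $\sum_i d_D^+(X_i)$, at its tail part, and exactly once in $\sum_i d_D^-(X_i)$, at its head part. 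The first step gives $\sum_i d^+_D(X_i)=\sum_i d^-_D(X_i)=rk$, so $d_D^+(X)=d_D^-(X)$.

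Finally, Property~\ref{enumitem:kobstruction:ii} says that there is exactly one edge of $G$ between each $x\in X$ and each $y\in Y$. Hence $d_G(X)=|X||Y|$, that is, $d_D^+(X)+d_D^-(X)=|X||Y|$. Combined with the equality $d_D^+(X)=d_D^-(X)$ from the previous step, this yields $d_D^+(X)=\frac12|X||Y|$.

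The only delicate point is the bookkeeping in the middle step: I need to check that the arcs between distinct $X_i$ cancel symmetrically, which they do because each such arc has one tail part and one head part. Nothing deeper seems to be needed; in particular, no earlier theorem is required beyond the definition of $k$-arc-strong.
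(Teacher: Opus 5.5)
Your proof is correct and follows essentially the same route as the paper: the tight cuts give $d^+_D(X_i)=d^-_D(X_i)=k$, a double count then forces $d^+_D(X)=d^-_D(X)$, and Property~\ref{enumitem:kobstruction:ii} gives $d^+_D(X)+d^-_D(X)=|X||Y|$. The only cosmetic difference is in the double count: the paper sums vertex out- and in-degrees over $X$ and cancels $|E(G[X])|$, whereas you sum the part cuts and cancel the arcs between distinct parts.
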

\begin{proof}
    Property~\ref{enumitem:kobstruction:i} and $D$ being $k$-arc-strong guarantee that, for every $i\in [r]$ we have $d_{D}^+(X_i) = d^-_{D}(X_i) = k$.  Therefore, for $X=\bigcup_{i\in [r]}X_i$, we have
    \[
    \sum_{x\in X} d^+_{D}(x) = \sum_{i\in [r]} \Big( d^+_{D}(X_i) + |E(G[X_i])|\Big) = \sum_{i\in [r]} \Big( d^-_{D}(X_i) + |E(G[X_i])|\Big) = \sum_{x\in X} d^-_{D}(x),
    \]
    and in particular
    \[
    |E(G[X])| + d^+_{D}({X}) = \sum_{x\in X} d^+_{D}(x) = \sum_{x\in X} d^-_{D}(x) = |E(G[X])| + d^-_{D}(X),
    \]
    which implies that the number of arcs going from $X$ to $Y$ is equal to the number of arcs going from $Y$ to $X$. Since the number of edges in between $X$ and $Y$ in $G$ is precisely $|X|\cdot |Y|$ by Property~\ref{enumitem:kobstruction:ii}, we have 
    $d_{D}^+(X) = \frac{1}{2}|X|\cdot |Y|$
\end{proof}

We are now ready to prove the easy direction of Theorem~\ref{thm:invertibility_odd_case}, namely that $k$-obstructions are not $(k,p)$-invertible.

\begin{lemma}
    \label{lem:obstructions_are_not_inverible}
    For any odd integer $p\geq 3$, if a digraph $D$ is a $k$-obstruction then it is not $(k,p)$-invertible. 
\end{lemma}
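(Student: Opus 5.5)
We argue by a parity invariant. Suppose, towards a contradiction, that $D$ is a $k$-obstruction with $k$-certificate $(X_1,\dots,X_r,Y)$, and that $\mathcal{X}\subseteq\binom{V(D)}{p}$ is such that $D'=\Inv(D;\mathcal{X})$ is $k$-arc-strong. Inversions do not change the underlying graph, so the partition $(X_1,\dots,X_r,Y)$ still satisfies Properties~\ref{enumitem:kobstruction:i} and~\ref{enumitem:kobstruction:ii} in $D'$. Proposition~\ref{g7zuhoi} then gives $d^+_{D'}(X)=\frac12|X||Y|$, so $d^+_{D'}(X)-\frac12|X||Y|=0$ is even. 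It is therefore enough to show that a single \eqp{p}-inversion keeps the parity of $d^+(X)$ unchanged. Once that holds, $d^+_{D}(X)-\frac12|X||Y|$ would also be even, which contradicts Property~\ref{enumitem:kobstruction:iii}.

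Now fix a set $Z$ with $|Z|=p$, and put $a=|Z\cap X|$ and $b=|Z\cap Y|$, so $a+b=p$ is odd. Inverting $Z$ only affects arcs between $X$ and $Y$ whose endpoints both lie in $Z$. By Property~\ref{enumitem:kobstruction:ii}, each pair $(x,y)$ with $x\in Z\cap X$ and $y\in Z\cap Y$ is joined by exactly one edge, so there are exactly $ab$ such arcs. If $s$ of them go from $X$ to $Y$, then after the inversion $ab-s$ of them go from $X$ to $Y$. The change in $d^+(X)$ is therefore $ab-2s$, which has the same parity as $ab$. Since $a+b$ is odd, exactly one of $a,b$ is even, so $ab$ is even. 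Hence every \eqp{p}-inversion preserves the parity of $d^+(X)$.

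Putting these together, $d^+_D(X)\equiv d^+_{D'}(X)\pmod 2$. The quantity $\frac12|X||Y|$ depends only on the partition, so $d^+_D(X)-\frac12|X||Y|$ and $d^+_{D'}(X)-\frac12|X||Y|=0$ have the same parity, which is the contradiction.

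One step needs care: both differences must be integers before we compare parities. Property~\ref{enumitem:kobstruction:iii} says the first one is an integer, so $\frac12|X||Y|$ is an integer, and then the second one is an integer too. The proof has no real obstacle. The key observation is that odd $p$ forces $ab$ to be even, and the one fact from earlier in the paper that we need is Proposition~\ref{g7zuhoi}.
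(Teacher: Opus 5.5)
Your proof is correct and follows the paper's own argument. An odd-size inversion reverses exactly $ab$ edges between $X$ and $Y$, with $a+b=p$ odd, so $ab$ is even and the parity of $d^+(X)-\frac12|X||Y|$ never changes; Proposition~\ref{g7zuhoi} forces this quantity to be $0$ in any $k$-arc-strong orientation of $G$. The paper phrases this as ``each inversion yields another $k$-obstruction, and no $k$-obstruction is $k$-arc-strong,'' and your $ab-2s$ computation supplies the step the paper only asserts.
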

\begin{proof}
    Let $D$ be an arbitrary $k$-obstruction with underlying graph $G$, and let $(X_1,\dots,X_r,Y)$ be a $k$-certificate for $D$. 
    Observe that, $p$ being odd, starting from $D$, any \eqp{p}-inversion reverses an even number of edges between $X$ and $Y$, yielding another $k$-obstruction. Therefore, $D$ being arbitrary, we only have to prove that $D$ is not $k$-arc-strong.
    
    For this, let $D'$ be an arbitrary $k$-arc-strong orientation of $G$. By~\Cref{g7zuhoi}, we have 
    $d_{D'}^+(X) = \frac{1}{2}|X|\cdot |Y|$, and
    Property~\ref{enumitem:kobstruction:iii} guarantees that $D\neq D'$. Since $D'$ was taken arbitrarily, it follows that $D$ is not $k$-arc-strong. 
\end{proof}

Now Theorem~\ref{thm:invertibility_odd_case} follows from the combination of Theorem~\ref{thm:nashwilliams}, Lemma~\ref{lem:obstructions_are_not_inverible}, and the following result.

\begin{restatable}{lemma}{leminvertibility}
    \label{lem:invertibility}
    Let $k$ be a positive integer, $p\geq 3$ an odd integer, and $D$ a $2k$-edge-connected digraph of order $n\geq \max\{4k+2,p+2\}$. If $D$ is not $(k,p)$-invertible, then it is a $k$-obstruction.
\end{restatable}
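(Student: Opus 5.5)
We argue by understanding which reorientations are reachable by odd-size inversions. Since inversions commute and each one is an involution, the set of digraphs reachable from $D$ by \eqp{p}-inversions is $D$ modified by an $\mathbb{F}_2$-linear space $\mathcal{L}_p$ of edge subsets of $G=\UG(D)$. Here each inversion of $X$ contributes the indicator of $E(G[X])$, and a digon contributes nothing, since reversing both of its arcs leaves $D$ unchanged. The first step is to describe $\mathcal{L}_p$ for $p$ odd and $n\ge p+2$, in the spirit of Proposition~\ref{prop:simulating_2_inversions}. Combining inversions of $X\cup\{a\}$, $X\cup\{b\}$ and sets differing in two vertices, I expect $\mathcal{L}_p$ to contain every symmetric difference of the stars $\delta_G(u)$ and $\delta_G(v)$ restricted to pairs, and more generally every ``switching'' combination. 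Concretely, the target claim is that a set $F$ of simple edges can be reversed if and only if $F$ satisfies a single parity condition. Modulo $2$-inversions, an odd inversion of $X$ equals the sum of the edges of $G[X]$, and since $|X|$ is odd, one inversion changes the parity of $|F\cap\delta(S)|$ for every cut in a controlled way. The cleanest formulation should be that $\mathcal{L}_p$ is the space spanned by all edge pairs $\{e,f\}$ of simple edges, plus all single edges $uv$ where $u,v$ have a common non-neighbour or the pair lies suitably with respect to a digon. Equivalently, a single simple edge $uv$ is reversible unless the local structure is rigid: every other vertex is joined to both $u$ and $v$ by single simple edges. I will verify this lemma by explicit small combinations of \eqp{p}-inversions, using $n\ge p+2$ to find enough spare vertices.

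Next, by Theorem~\ref{thm:nashwilliams} (with the digon-preserving second part), $G$ has a $k$-arc-strong orientation $D'$ agreeing with $D$ on digons. Let $F$ be the set of simple edges oriented differently in $D$ and $D'$. If $F\in\mathcal{L}_p$ we are done. Otherwise, by the lemma, $|F|$ is odd and no single edge is freely reversible, and we would like to modify $D'$ into another $k$-arc-strong orientation $D''$ differing from $D'$ on an odd set of edges. Reversing a directed cycle of $D'$ preserves $k$-arc-strongness only when all its cuts stay large, so instead we reverse one edge lying in no tight cut, or a directed cycle that crosses no tight cut. Consider the family of tight sets, those $S$ with $d_G(S)=2k$, for which $d^+_{D'}(S)=k$ is forced. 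If some cycle of odd length in $G$ can be reversed in $D'$ while staying $k$-arc-strong, then flipping it changes the parity and we are done. The structure theory of tight sets (crossing tight sets are uncrossable) should show that the only way every parity-changing modification is blocked is when the maximal tight sets $X_1,\dots,X_r$ not containing a fixed part partition $V\setminus Y$, every edge leaving the $X_i$ goes to $Y$ with multiplicity one, and $Y$ is complete to $X$. This gives Properties~\ref{enumitem:kobstruction:i} and~\ref{enumitem:kobstruction:ii}. Property~\ref{enumitem:kobstruction:iii} then follows from Proposition~\ref{g7zuhoi} applied to $D'$ together with the parity obstruction on $F$, which is exactly the parity of reversals between $X$ and $Y$.

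The main obstacle is this last structural step: showing that if every parity-odd change to a $k$-arc-strong orientation destroys $k$-arc-strongness, then the tight-set structure is exactly a $k$-certificate. The bound $n\ge 4k+2$ should enter via Mader's Theorem~\ref{thm:mader} and Proposition~\ref{prop:density_non_k_edge_connected}. These guarantee that $G$ is not minimally $2k$-edge-connected in a way that prevents the existence of an edge, or a short odd structure, lying in no tight cut, unless the dense complete bipartite pattern between $X$ and $Y$ occurs. My first attempt would be to take an edge $e$ in no tight cut (exists by density unless the obstruction structure holds), apply Jackson's theorem to reorient $G$ with $e$ reversed relative to $D'$, and check that parity changes by one.
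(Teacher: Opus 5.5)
Your reformulation is sound: the reachable digraphs form the coset $D+\mathcal{L}_p$, and you want a $k$-arc-strong orientation in it that agrees with $D$ on digons. But the description of $\mathcal{L}_p$ that the rest of your argument rests on is false. Take $D$ a tournament on $n\ge\max\{4k+2,p+2\}$ vertices. It satisfies the hypotheses and is not a $k$-obstruction, since every cut has size at least $n-1>2k$, so the lemma asserts it is invertible.

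Inverting an odd set $X$ reverses $E(K_X)$, in which every vertex has even degree $|X|-1$. Hence $\mathcal{L}_p$ lies in the cycle space of $K_n$, which has codimension $n-1$. So no pair of edges, adjacent or not, is reachable, and ``$F\notin\mathcal{L}_p$ iff $|F|$ is odd'' fails. Worse, for $p\equiv 1\pmod 4$ every generator has $\binom{p}{2}$ edges, an even number, so even triangles (which lie in $\mathcal{L}_3$) are not in $\mathcal{L}_p$. Thus $\mathcal{L}_p$ genuinely depends on $p \bmod 4$ and on the structure of $D$. This is why the paper proves $\mathcal{L}_3\subseteq\mathcal{L}_p$ only under $\alpha(D)\ge 3$ (\Cref{lemma:simulating_3_inversions}) and needs a separate argument when $\alpha(D)\le 2$. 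As stated, your parity bookkeeping (``$|F|$ is odd'', ``parity changes by one'') has no content.

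The structural step, which you flag yourself, is the entire substance of the lemma, and it is only asserted. The tools you propose do not deliver it.
\begin{itemize}
\item Reversing in $D'$ an edge lying in no $2k$-cut of $G$ can destroy $k$-arc-strongness, since a set $S$ with $d_G(S)\ge 2k+1$ may still have $d^+_{D'}(S)=k$.
\item Applying Jackson's theorem instead does give an orientation with $e$ reversed, but it differs from $D'$ on an uncontrolled set, so you lose track of the coset.
\item The premise that an edge in no tight cut exists ``by density unless the obstruction structure holds'' is false. In any $2k$-regular $2k$-edge-connected graph on $n\ge 4k+2$ vertices, every edge lies in a vertex star of size $2k$. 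Yet such a graph is never a $k$-obstruction: completeness of $G[X,Y]$ forces $|X|\le 2k$, hence $|Y|\ge 2k+2$ and $d_G(X_1)>2k$.
\end{itemize}

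In the paper, edges in tight cuts are handled not by slack but by being \emph{free}. A free edge is individually reversible by inverting a triple through a common non-neighbour, and freeness is transitive along paths. The missing engine is the minimal-counterexample induction on $|A(D)|$. Deleting an edge either breaks $2k$-edge-connectivity, or produces a smaller non-invertible digraph, which is then a $k$-obstruction and pins down the structure around that edge. From a non-free edge one builds $(X,Y)$ and gets a $2k$-regular partition of $X$ by uncrossing (\Cref{eedqe}). Property (iii) then follows from a closest-reachable-digraph argument showing that the $X$--$Y$ parity is the only invariant (\Cref{lemma:3_invertibility}). For $p\ge 5$ and $\alpha(D)\le 2$, the same induction forces every edge into a $2k$-cut, and \Cref{thm:mader} together with $\alpha(D)\le 2$ contradicts $n\ge 7$.
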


The proof of \Cref{lem:invertibility} is done in three steps, presented respectively in the next three subsections. In Section~\ref{subsec:3_invertibility}, we prove that \Cref{lem:invertibility} holds for $p=3$. In Section~\ref{subsec:simulating_3_inversions}, we prove that, for every odd integer $p\geq 3$, and in any digraph $D$ of order $n\geq p+2$ admitting an independent set of size $3$, every \eqp{3}-inversion can be simulated by a set of \eqp{p}-inversions. We put the pieces together and prove \Cref{lem:invertibility} in Section~\ref{subsec:invertibility}. 

\medskip

Finally, in Section~\ref{sec:identifying_obstructions}, we prove that one can decide in polynomial time whether a given digraph $D$ is a $k$-obstruction. This then allows to derive Corollary~\ref{cor:kernel_k+p}.

\subsection{Characterising the non \texorpdfstring{$(k,3)$}{(k,3)}-invertible digraphs}
\label{subsec:3_invertibility}

This subsection is devoted to the proof of the following lemma, which implies the case $p=3$ of \Cref{lem:invertibility}. 

\begin{lemma}
    \label{lemma:3_invertibility}
    Let $D$ be a $2k$-edge-connected digraph of order $n \geq 4k+2$. If $D$ is not $(k,3)$-invertible, then it is a $k$-obstruction.
\end{lemma}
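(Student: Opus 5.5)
The first step is to translate $(k,3)$-invertibility into a parity-constrained orientation problem. Let $G=\UG(D)$ and let $E_s$ be the set of simple edges of $G$. Inverting a triple flips exactly the simple edges of $G$ inside that triple. Inverting a digon, or a pair of non-adjacent vertices, does not change $D$, so such pairs are "free". Over $\mathbb{F}_2$, the triangles of $K_n$ span the cycle space of $K_n$. Hence a set $F\subseteq E_s$ can be reversed by \eqp{3}-inversions if and only if $F$, together with some set of free pairs, forms an even subgraph of $K_n$. By duality, this holds if and only if $|F\cap \delta_{K_n}(S)|$ is even for every \emph{complete cut} $S$, by which I mean a set $\emptyset\subsetneq S\subsetneq V$ such that every pair in $\delta_{K_n}(S)$ is a simple edge of $G$. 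Consequently, $D$ is $(k,3)$-invertible if and only if $G$ has a $k$-arc-strong orientation $D'$ that keeps all digons of $D$ and satisfies
\[
d^+_{D'}(S)\equiv d^+_D(S) \pmod 2 \quad\text{for every complete cut } S.
\]
I expect to check this equivalence in a short claim.

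Next, I will start from an orientation $D'$ given by the digon-preserving part of \Cref{thm:nashwilliams}, chosen among all such orientations to minimise the number of violated complete cuts. Reversing directed cycles does not help here, since it preserves every $d^+(S)$. The modification I will use is reversing a directed $u$--$v$ path $P$. This changes $d^+(S)$ by exactly $\pm1$ when $S$ separates $u$ from $v$, and leaves it unchanged otherwise. The reversal keeps $D'$ $k$-arc-strong unless some tight set $Z$ (one with $d^+_{D'}(Z)=k$) satisfies $u\in Z$ and $v\notin Z$. So, for a violated complete cut $S$, I want to find $u\in S$ and $v\notin S$ such that no tight set separates $u$ from $v$ in this way, and such that no other complete cut is split incorrectly. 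For the second condition I will need the structure of the family of complete cuts. Since each complete cut $S$ has $d_G(S)=|S|(n-|S|)\geq n-1$, I expect these cuts to form a nested or laminar-type family, and I would prove that first.

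The main obstacle is showing that when no valid pair $(u,v)$ exists, the tight sets cover the relevant side in a way that produces a $k$-certificate. Here I will use $n\geq 4k+2$. A complete cut $S$ satisfies $d_G(S)=|S|(n-|S|)$, while a tight set $Z$ with $d_G(Z)=2k$ is "small" in the sense that it cannot have many vertices completely joined to its complement. I would take the maximal tight sets (both in- and out-tight) lying on one side of the violated cut. Uncrossing (submodularity of $d^+$) should make them disjoint, and I will call them $X_1,\dots,X_r$. I would then show that they have $d_G(X_i)=2k$: the sets must be tight in both directions, since otherwise a path could exit through them. Let $Y$ be the remaining vertices. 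The failure of all path reversals should then force every $X$--$Y$ pair to be a single edge, giving Property~(ii). Finally, \Cref{g7zuhoi} gives $d^+_{D'}(X)=\frac12|X||Y|$. Combined with the violated parity on the complete cut between $X$ and $Y$, this yields Property~(iii), so $D$ is a $k$-obstruction.

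I expect the uncrossing and covering argument in the last step to need the most care. In particular, the inequality $n\geq 4k+2$ should be exactly what rules out a tight set whose complement is also small, which is the degenerate case where the cut has $|S|(n-|S|)\le 2k$.
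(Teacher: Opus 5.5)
\textbf{Verdict: there is a genuine gap.} Your Step~1 is correct and is a nice reformulation. Triangles span the cycle space of $K_n$, so by duality a set of simple edges can be reversed by \eqp{3}-inversions if and only if it meets every complete cut evenly. In particular, the paper's free pairs are exactly the pairs separated by no complete cut.

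\textbf{Complete cuts are not laminar.} Your guess about their structure is wrong, and this breaks your extremal choice. If $S$ and $T$ are complete, then $\delta_{K_n}(S\cap T)\subseteq\delta_{K_n}(S)\cup\delta_{K_n}(T)$. So complete cuts are closed under intersection, union and complement; in a tournament every cut is complete, so they cross freely. They are exactly the unions of the classes $A_1,\dots,A_m$ of the relation ``not separated by a complete cut'', and any two vertices in distinct classes are joined by a simple edge. The parity defect is additive over classes. The number $t$ of violated classes is even, since $\sum_i d^+(A_i)=\sum_{i<j}|A_i||A_j|$ for every orientation. A union $\bigcup_{i\in I}A_i$ is violated if and only if $I$ contains an odd number of violated classes. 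Hence every bad $k$-arc-strong orientation has exactly $2^{m-1}$ violated sets $S$, whatever $t$ is. The quantity you minimise is therefore constant, and a reversal between two violated classes is invisible to it unless $t=2$. Counting violated classes instead repairs this. Note also that reversed paths must avoid digon arcs, or a digon becomes two parallel arcs.

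\textbf{Step~3 is asserted, not proved.} This step carries the whole content of the lemma.
\begin{itemize}
\item A reversal from $u$ to $v$ is blocked by a set $Z$ with $u\in Z$, $v\notin Z$ and $d^+_{D'}(Z)=k$. This is a one-sided condition relative to the current orientation, and $d_G(Z)$ may be far larger than $2k$.
\item Nothing in your sketch turns such sets into sets with $d_G(X_i)=2k$; ``otherwise a path could exit through them'' is not an argument.
\item Nothing shows that the leftover $Y$ is completely joined to $X$ by simple edges.
\item Nothing ensures that $\delta(Y)$ is one of the violated cuts, which Property~(iii) needs.
\end{itemize}
By \Cref{g7zuhoi}, the set $Y$ of any $k$-certificate is a union of classes whose out-degree is the same in \emph{every} $k$-arc-strong orientation. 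So from local stuckness of one orientation you would have to produce such a global invariant set, and exclude stuck orientations that yield none. This is exactly where the size hypothesis must work, and you have not located it: $|S|(n-|S|)>2k$ only needs $n\geq 2k+2$.

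\textbf{How the paper obtains the $2k$-cuts.} It uses a different mechanism. Take a counterexample minimising $|A(D)|$. If deleting the arcs on a non-free simple edge kept $G$ $2k$-edge-connected, the smaller digraph would be non-invertible, hence a $k$-obstruction. Analysing its certificate then shows the edge is free. So every non-free edge lies in an undirected $2k$-cut of $G$. The bound $n\geq 4k+2$ enters through two facts:
\begin{itemize}
\item a non-free simple edge $xz$ satisfies $N_G[x]\cup N_G[z]=V$, since a common non-neighbour $w$ would let $\{x,z,w\}$ simulate the flip;
\item two disjoint sets of undirected degree $2k$, with a common vertex complete to both, dominate at most $4k+1$ vertices.
\end{itemize}
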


Before giving the main proof of \Cref{lemma:3_invertibility}, we give a preliminary result that will be reused in \Cref{sec:identifying_obstructions}. To this end, given a graph $G$ and an integer $k$, a subpartition $(X_1,\ldots,X_r)$ of $V(G)$ is called {\it $k$-regular} if $d_G(X_i)=k$ holds for all $i \in [r]$. For some $X \subseteq V(G)$, a {\it $k$-regular partition} of $X$ is a partition of $X$ that is a $k$-regular subpartition of $V(G)$. We prove the following simple result.
\begin{proposition}
    \label{eedqe}
    Let $k$ be a positive integer, $G$ a $k$-edge-connected graph and $X \subseteq V(G)$. Then the following two statements are equivalent:
    \begin{enumerate}[label=(\roman*)]
        \item there exists a $k$-regular partition of $X$ in $G$,
        \label{enumitem:eedqe:1}
        \item for every $x \in X$, there exists a set $S_x\subseteq X$ with $x \in S_x$ and $d_G(S_x)=k$.
        \label{enumitem:eedqe:2}
    \end{enumerate}
\end{proposition}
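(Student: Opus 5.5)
The implication (i)$\Rightarrow$(ii) is immediate. Given a $k$-regular partition $(X_1,\dots,X_r)$ of $X$, each $x\in X$ lies in a unique part $X_i$, and we set $S_x=X_i$. Then $x\in S_x\subseteq X$ and $d_G(S_x)=k$.

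For (ii)$\Rightarrow$(i) we uncross. Call a set $S\subseteq X$ with $d_G(S)=k$ \emph{tight}. Let $\mathcal{S}$ be the family of inclusionwise maximal tight subsets of $X$. By (ii), every $x\in X$ lies in some tight set, so $\mathcal{S}$ covers $X$. Note also that $\emptyset\neq S\subsetneq V(G)$ whenever $d_G(S)=k$ and $k\geq1$, so every tight set is a proper nonempty subset and defines a genuine cut.

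It then suffices to show that two distinct members $A,B\in\mathcal{S}$ are disjoint. In that case $\mathcal{S}$ is a partition of $X$ whose parts all have cut size $k$, which is exactly a $k$-regular partition of $X$.

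Suppose instead that $A\cap B\neq\emptyset$. We apply submodularity of the cut function:
\[
d_G(A\cup B)+d_G(A\cap B)\leq d_G(A)+d_G(B)=2k.
\]
Since $A\cap B\neq\emptyset$ and $A\cup B\subseteq X\subseteq V(G)$, both sets are nonempty. If $A\cup B\neq V(G)$, then $k$-edge-connectivity gives $d_G(A\cup B)\geq k$ and $d_G(A\cap B)\geq k$. Combined with the inequality, both equal $k$. So $A\cup B$ is a tight subset of $X$ strictly containing $A$ (it is strict because $A\neq B$ and both are maximal), contradicting the maximality of $A$.

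The remaining case, $A\cup B=V(G)$, forces $X=V(G)$. Here we use posimodularity instead:
\[
d_G(A\setminus B)+d_G(B\setminus A)\leq d_G(A)+d_G(B)=2k.
\]
Both $A\setminus B$ and $B\setminus A$ are nonempty by maximality, and proper, so each has cut size at least $k$. Hence $d_G(B\setminus A)=k$.

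Since $A\cup B=V(G)$, we have $B\setminus A=V(G)\setminus A$. Then $\{A,\,V(G)\setminus A\}$ is a partition of $X=V(G)$ whose two parts both have cut size $d_G(A)=k$, so (i) holds directly in this case. (Alternatively, one could choose the family more carefully from the start, but this direct exit is simplest.)

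The main point needing care is exactly this degenerate case $A\cup B=V(G)$, where submodularity alone cannot be applied because the cut $\delta_G(V(G))$ is not defined. I expect handling it via posimodularity, as above, to settle it cleanly.
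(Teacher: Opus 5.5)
Your proof is correct and uses the same uncrossing argument as the paper; the only difference is the extremal choice, since the paper takes a minimum-size family of tight sets covering $X$ and merges two intersecting members into their union. Your separate treatment of $A\cup B=V(G)$ is actually more careful than the paper's argument, which silently uses $d_G(X_i\cup X_j)\geq k$ and so breaks when $X=V(G)$ (e.g.\ the tight sets $\{1,2,3\}$ and $\{3,4,1\}$ of the $4$-cycle $1234$ with $k=2$, whose union is $V(G)$). Posimodularity is unnecessary there: $d_G(V(G)\setminus A)=d_G(A)=k$ already gives the partition $\{A,V(G)\setminus A\}$, and this observation lets you dispose of the case $X=V(G)$ at the outset.
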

\begin{proof}
Clearly, \ref{enumitem:eedqe:1} implies \ref{enumitem:eedqe:2}. For the other direction, let 
    $\Xcal = \{X_1,\dots,X_r\}$ be a minimum-size set such that $\bigcup_{i=1}^r X_i=X$ and, for every $i\in [r]$, we have $d_G(X_i) = k$. Such a collection exists by \ref{enumitem:eedqe:2}. Moreover, if $\Xcal$ is a partition of $X$, there is nothing to prove. Assume that this is not the case, that is $X_i\cap X_j \neq \emptyset$ for some $i,j \in [r]$ with $i\neq j$. We then have
        \[
            d_G(X_i \cap X_j) + d_G(X_i \cup X_j) \leq d_G(X_i) + d_G(X_j)  = 2k.
        \]
        Since $G$ is $k$-edge-connected, this implies $d_G(X_i \cap X_j) = d_G(X_i \cup X_j) = k$. Therefore, removing $\{X_i,X_j\}$ from $\Xcal$ and adding $\{X_i \cup X_j\}$ contradicts the choice of $\Xcal$, and the statement follows.
\end{proof}
We are now ready to give the main proof of \Cref{lemma:3_invertibility}.
\begin{proof}[Proof of \Cref{lemma:3_invertibility}]
    Assume for a contradiction that there exists a $2k$-edge-connected digraph of order $n$ at least $4k+2$ that is neither $(k,3)$-invertible nor a $k$-obstruction. Among all such counterexamples, let $D=(V,A)$ be one of for which $|A|$ is minimum, and we let $G$ denote its underlying graph. 
    We say that a pair $\{x,y\} \subseteq V$ of vertices is {\it free} if one of the following holds:
    \begin{itemize}[itemsep=0pt]
        \item $x$ and $y$ are non-adjacent,
        \item $\dig{x,y}$ is a digon of $D$, or
        \item there is exactly one edge $e$ between $x$ and $y$ in $G$, and there exists a set of triplets  $\Xcal = \{X_1,\dots,X_\ell\}$ such that $D$ and $\Inv(D;\Xcal)$ differ exactly on the orientation of $e$. 
    \end{itemize}
    We make use of the following observation repeatedly.
    \begin{claim}
        \label{claim:free_triplets}
        For any triplet $\{x,y,z\} \subseteq V$, if $\{x,y\}$ and $\{y,z\}$ are free pairs, then $\{x,z\}$ is also a free pair.
    \end{claim}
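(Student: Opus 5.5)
The plan is to produce, for each case, an explicit set $\Xcal$ of triplets whose inversion changes $D$ exactly on the single edge $xz$. The only available tool is the free-pair property for $\{x,y\}$ and $\{y,z\}$, together with the triple $\{x,y,z\}$ itself.

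First assume that $x$ and $z$ are non-adjacent or form a digon. Then $\{x,z\}$ is free by definition and there is nothing to prove. So assume there is exactly one edge $e$ between $x$ and $z$ in $G$. The key observation is that inverting the triplet $T=\{x,y,z\}$ reverses exactly the arcs of $D[T]$. Among these, the arcs linking $x$ and $y$ and the arcs linking $y$ and $z$ must be undone, while $e$ is kept reversed. For the pair $\{x,y\}$ there are three possibilities.

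\begin{itemize}
\item If $x,y$ are non-adjacent, nothing between them changes.
\item If $\dig{x,y}$ is a digon, reversing both of its arcs yields the same digraph. This uses the paper's convention that $D$ is compared via arc multiplicities for ordered pairs.
\item If there is exactly one edge $f$ between $x$ and $y$, then freeness gives a set $\Xcal_{xy}$ of triplets whose inversion changes exactly the orientation of $f$.
\end{itemize}

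Define $\Xcal_{yz}$ analogously, with $\Xcal_{yz}=\emptyset$ in the first two cases, and similarly $\Xcal_{xy}=\emptyset$ in the first two cases for $\{x,y\}$.

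The candidate set is $\Xcal=\{T\}\cup\Xcal_{xy}\cup\Xcal_{yz}$, taken as a multiset or equivalently as a sequence. Since inversions commute and each acts on an arc by flipping it according to the parity of the number of sets containing both of its endvertices, the net effect on every arc is additive modulo~$2$. I would verify arc by arc as follows.
\begin{itemize}
\item An arc not in $D[T]$ is flipped an even number of times in total, because $\Xcal_{xy}$ and $\Xcal_{yz}$ each have zero net effect on every arc other than $f$ (respectively $f'$).
\item The arc $f$ (if simple) is flipped once by $T$ and once by $\Xcal_{xy}$, hence is unchanged.
\item The arc $f'$ between $y$ and $z$ behaves in the same way.
\item A digon between $x$ and $y$ or between $y$ and $z$ has both of its arcs flipped, so the digraph is unchanged on that pair.
\item The edge $e$ is flipped once by $T$. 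It is left untouched by $\Xcal_{xy}$ and $\Xcal_{yz}$, since each of those changes only $f$ or $f'$. Hence $e$ is reversed in total.
\end{itemize}

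The only subtlety, and the point I would be careful about, is the digon case. Inverting a digon does not change $D$ as a digraph under the stated equality convention, yet one must make sure that ``$D$ and $\Inv(D;\Xcal)$ differ exactly on the orientation of $e$'' is read with that convention. One must also note that repeated sets in $\Xcal$ can be cancelled in pairs, so that $\Xcal$ can be taken to be an honest set of triplets. With this, $\Inv(D;\Xcal)$ differs from $D$ exactly on $e$, so $\{x,z\}$ is free.
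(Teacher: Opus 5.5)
Your proof is correct and is essentially the paper's argument. The paper inverts $\Xcal\XOR\Ycal\XOR\{\{x,y,z\}\}$, where $\Xcal$ and $\Ycal$ simulate the inversions of $\{x,y\}$ and $\{y,z\}$ (taking the empty set for non-adjacent pairs or digons); this is exactly your multiset $\{T\}\cup\Xcal_{xy}\cup\Xcal_{yz}$ after cancelling repeated sets in pairs. One small nit: for a digon outside $T$, freeness does not force its arcs to be flipped an even number of times, but this is harmless because flipping a digon leaves $D$ unchanged.
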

    \begin{proofclaim}
        Consider two sets $\Xcal$ and $\Ycal$ simulating the inversion of $\{x,y\}$ and $\{y,z\}$, respectively.
        Observe that such sets exist as the empty set has this property for a pair linked by no arc or a digon. Then, inverting $\Xcal \XOR \Ycal \XOR \{\{x,y,z\}\}$ in $D$ simulates the inversion of $\{x,z\}$.
    \end{proofclaim}

    We first derive the following claim from the choice of $D$.
    \begin{claim}
        \label{claim:every_e_belongs_to_mincut}
        Every edge $e=\{u,v\}$ of $G$ satisfies at least one of the following properties:
        \begin{enumerate}[itemsep=0pt, label={\rm (\alph*)}]
            \item $e$ belongs to a cut of size $2k$; 
            \label{enumitem:claim:every_e_belongs_to_mincut:1}
            \item $\dig{u,v}$ is a digon of $D$, and $e$ belongs to a cut of size at most $2k+2$ of $G$; or
            \label{enumitem:claim:every_e_belongs_to_mincut:2}
            \item $\{u,v\}$ is free and for every $a\in \{u,v\}$ there exists a cut $\delta_G(S_a)$ of size $2k+1$ containing $e$ such that $a\in S_a$ and $|S_a|\leq 2k$.
            \label{enumitem:claim:every_e_belongs_to_mincut:3}
        \end{enumerate}
    \end{claim}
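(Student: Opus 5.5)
The plan is to exploit the minimality of $|A|$ by deleting one arc corresponding to $e$. Suppose for a contradiction that some edge $e=\{u,v\}$ satisfies none of \ref{enumitem:claim:every_e_belongs_to_mincut:1}, \ref{enumitem:claim:every_e_belongs_to_mincut:2}, \ref{enumitem:claim:every_e_belongs_to_mincut:3}. Let $a$ be an arc of $D$ corresponding to $e$ and set $D'=D\setminus a$, with underlying graph $G'=G-e$.

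Since \ref{enumitem:claim:every_e_belongs_to_mincut:1} fails, every cut of $G$ containing $e$ has size at least $2k+1$. Hence $G'$ is still $2k$-edge-connected. It has the same order $n\geq 4k+2$ and fewer arcs, so by the choice of $D$ it is either $(k,3)$-invertible or a $k$-obstruction. The invertible case is immediate. If $\Xcal$ is a set of triplets with $\Inv(D';\Xcal)$ $k$-arc-strong, then $\Inv(D;\Xcal)$ is $\Inv(D';\Xcal)$ plus one extra arc between $u$ and $v$. (If $e$ was in a digon, the digon is preserved; otherwise the single arc is flipped consistently.) Adding an arc preserves $k$-arc-strong connectivity, contradicting that $D$ is a counterexample. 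So we may fix a $k$-certificate $(X_1,\dots,X_r,Y)$ of $D'$. We then distinguish where $e$ lies with respect to it.

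If both endpoints of $e$ lie in the same $X_i$ or both lie in $Y$, then all cut values $d(X_i)$, all $X$--$Y$ edge counts and $d^+(X)$ are unchanged. So the same partition is a $k$-certificate for $D$, a contradiction. If $e$ links $X$ and $Y$, then $G'$ has exactly one edge there, so $G$ has two. Thus $e$ comes from a digon $\dig{u,v}$, and $\delta_G(X_i)$ (for the $X_i$ containing the $X$-endpoint) is a cut of size $2k+1\leq 2k+2$ containing $e$, so \ref{enumitem:claim:every_e_belongs_to_mincut:2} holds, a contradiction. The remaining case is $u\in X_i$, $v\in X_j$ with $i\neq j$. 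Then $\delta_G(X_i)$ and $\delta_G(X_j)$ both have size $2k+1$ and contain $e$. If $e$ is in a digon, \ref{enumitem:claim:every_e_belongs_to_mincut:2} holds again. Otherwise $e$ is a simple edge and we aim for \ref{enumitem:claim:every_e_belongs_to_mincut:3} with $S_u=X_i$, $S_v=X_j$. The size bound follows since every vertex of $X_i$ sends $|Y|\geq 1$ edges to $Y$, giving $|X_i|\leq |X_i||Y|\leq d_{G'}(X_i)=2k$, and likewise for $X_j$.

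It remains to show that $\{u,v\}$ is free, which I expect to be the main obstacle. By \Cref{claim:free_triplets}, it suffices to find a vertex $w$ such that $\{u,w\}$ and $\{w,v\}$ are free, for instance $w$ non-adjacent to both $u$ and $v$, or joined to them by digons.

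First attempt: use degree counting together with $n\geq 4k+2$. The vertex $u$ has at most $2k+1$ neighbours outside $X_i$, since $d_G(X_i)=2k+1$, and similarly for $v$ outside $X_j$. Moreover $|X_i|,|X_j|\leq 2k$. This should leave a vertex in some other part $X_l$ non-adjacent to both, or a convenient vertex inside $X_i\cup X_j$.

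Fallback, if the counting is too tight (for instance when $Y$ is large): work directly with the triplet $\{u,v,y\}$ for $y\in Y$, which flips $e$ together with $uy$ and $vy$. Then try to cancel the flips of $uy$ and $vy$ by further triplets, using freeness of pairs inside $Y$ or inside the $X_l$'s, combined via \Cref{claim:free_triplets}.
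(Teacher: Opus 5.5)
Everything before the freeness step is correct and follows the paper's argument. The paper deletes all arcs between $u$ and $v$ rather than a single one, which rules out the $X$--$Y$ case instead of sending it to (b). Your variant works equally well.

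\textbf{The gap is at the freeness step.} You never prove that $\{u,v\}$ is free, and the counting you sketch cannot prove it. The bounds you list are $|N_G(u)\setminus X_i|\le 2k+1$ and $|X_i|\le 2k$, and the same for $v$. These only give $|N_G[u]\cup N_G[v]|\le 8k+2$, which does not beat $n\ge 4k+2$. So no vertex non-adjacent to both $u$ and $v$ is forced. The fallback with triplets $\{u,v,y\}$ is not developed into an argument, and it is not needed.

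\textbf{The missing idea is to use that $Y$ is complete to $X_i\cup X_j$.} This lets you bound the neighbourhood of the whole part $X_i$, not just of $u$.

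Fix $y\in Y$. Since $e$ links $X_i$ and $X_j$, the $X$--$Y$ edges of $G$ are those of $G'$. So $y$ is joined to each vertex of $X_i$ by exactly one edge, and $|X_i|$ of the $2k+1$ edges of $\delta_G(X_i)$ end at $y$. Every other vertex of $N_G(X_i)$ uses a further edge of the cut, so $|N_G(X_i)\setminus\{y\}|\le 2k+1-|X_i|$. Hence $|X_i\cup N_G(X_i)|\le 2k+2$, and likewise $|X_j\cup N_G(X_j)|\le 2k+2$.

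Now $N_G[u]\cup N_G[v]\subseteq X_i\cup N_G(X_i)\cup X_j\cup N_G(X_j)$. The two sets on the right share at least three vertices. They share $u$ and $v$, because the edge $e$ puts $u\in N_G(X_j)$ and $v\in N_G(X_i)$. They also share $y$. Therefore $|N_G[u]\cup N_G[v]|\le 4k+1<n$.

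So some vertex $w$ is non-adjacent to both $u$ and $v$. Then $\{u,w\}$ and $\{w,v\}$ are free, and \Cref{claim:free_triplets} shows that $\{u,v\}$ is free. This is exactly the paper's computation. It is tight against $n\ge 4k+2$, so all three shared vertices and the completeness of $Y$ to $X_i\cup X_j$ are genuinely needed.
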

    \begin{proofclaim}
        Let $D'$ be obtained from $D$ by deleting all arcs whose set of endvertices is $\{u,v\}$ and let $G'$ be the underlying graph of $D'$. 
        
        If $\lambda(G') \leq 2k-1$, then~\ref{enumitem:claim:every_e_belongs_to_mincut:1} or~\ref{enumitem:claim:every_e_belongs_to_mincut:2} is satisfied. Henceforth, we assume that $D'$ is $2k$-edge-connected. Note that $D'$ is not $(k,3)$-invertible, for otherwise $D$ would be as well. Hence, by minimality of $|A|$, $D'$ must be a $k$-obstruction.
        Therefore, there exists a $k$-certificate $(X_1,\dots,X_r,Y)$ of $D'$.

        The vertices $u$ and $v$ do not belong to the same part of $(X_1,\dots,X_r,Y)$, for otherwise $D$ itself would be a $k$-obstruction. Moreover, there is no edge between $u$ and $v$ in $G'$, so neither $u$ nor $v$ belongs to $Y$. It follows that $u\in X_i$ and $v\in X_j$ for some $i\neq j$. Observe that
        \[
            d_{G}(X_i) = d_{G'}(X_i) + \mu_G(e)  = 2k+ \mu_G(e),
        \]
        where $\mu_G(e)$ is the multiplicity of $e$ in $G$. Now, if $\dig{u,v}$ is a digon of $D$, then~\ref{enumitem:claim:every_e_belongs_to_mincut:2} follows. Henceforth, we may thus assume that $\{u,v\}$ is a simple edge of $G$. In particular, we have
        \[
            d_{G}(X_i) =2k+1.
        \]
        Moreover, since $G'[X_i,Y]$ is complete and $|Y|\geq 1$, we have $|X_i|\leq 2k$. Symmetrically, $X_j$ contains $v$, the cut $\delta_G(X_j)$ has size $2k+1$ and contains $e$, and $|X_j|\leq 2k$. 
        
        It remains to prove that $e$ is free to show that~\ref{enumitem:claim:every_e_belongs_to_mincut:3} is satisfied. Assume for a contradiction that $e$ is not free. If there exists a vertex $w$ that is non-adjacent to both $u$ and $v$, then $e$ is free by Claim~\ref{claim:free_triplets}. Hence, $N_G[u] \cup N_G[v]  = V$. In particular, this implies
        \[
            X_i \cup X_j \cup N_G(X_i) \cup N_G(X_j) = V.
        \]
        Let $y$ be an arbitrary vertex of $Y$. Recall that $X_i\cup X_j$ is complete to $Y$. It follows that
        \begin{align*}
            |N_G(X_i) \cup N_G(X_j)| &= \Big|\big(N_G(X_i)\cup N_G(X_j)\big)\setminus \{y\}\Big| + 1\\
            &\leq \big|N_G(X_i)\setminus \{y\}\big| + \big|N_G(X_j)\setminus \{y\}\big| + 1\\
            &\leq d_G(X_i) + d_G(X_j) - |X_i\cup X_j|+1.
        \end{align*}
        Moreover, there is at least one edge (namely $e$) between $X_i$ and $X_j$. Therefore,
        \begin{align*}
            n&= |X_i \cup X_j \cup N_G(X_i) \cup N_G(X_j)|\\
            &\leq |X_i \cup X_j| +  |N_G(X_i) \cup N_G(X_j)| -2\\
            &\leq d_G(X_i) + d_G(X_j) - 1\\
            &= 4k+1,
        \end{align*}
        a contradiction.
    \end{proofclaim}
    
    Clearly, $G$ contains a non-free edge, for otherwise any $k$-arc-strong orientation $D'$ of $G$ not containing parallel arcs can be obtained from $D$ by reversing every arc of $A(D)\setminus A(D')$.
    Thus, let us fix an edge $uw$ of $G$ that is not free. By Claim~\ref{claim:every_e_belongs_to_mincut}, there exists a cut $\delta_G(U)$ of $G$ of size $2k$ containing $uw$. Let $W=V\setminus U$, and suppose without loss of generality that $u\in U$ and $w\in W$.  We define:
    \begin{itemize}[itemsep=0pt]
        \item $U_0 = U\setminus N_G(w)$,
        \item $U_1 = (U\cap N_G(w))\setminus N_G[u]$, and
        \item $U_2 = U \cap N_G(w)\cap N_G(u)$,
    \end{itemize}
    and similarly:
    \begin{itemize}[itemsep=0pt]
        \item $W_0 = W\setminus N_G(u)$,
        \item $W_1 = (W\cap N_G(u))\setminus N_G[w]$, and
        \item $W_2 = W\cap N_G(u) \cap N_G(w)$. 
    \end{itemize}

    Observe that $u$ dominates $U_0$ and $w$ dominates $W_0$, for otherwise there exists some vertex outside $N[u]\cup N[w]$, and then $\{u,w\}$ would be free by Claim~\ref{claim:free_triplets}, a contradiction.
    
    By symmetry of the roles of $u$ and $w$, we assume without loss of generality that $|W_0|\geq |U_0|$. We finally let $Y=\{w\} \cup W_1$ and $X= V\setminus Y$, see Figure~\ref{fig:U_W_decomposition} for an illustration.

    \begin{figure}[htbp]
    \centering
	\begin{tikzpicture}[region/.style={draw=black,rounded corners, semithick,inner sep=5pt, outer sep=2pt}]
        \draw[gray, dashed] (0,3) -- (0,-2.5);
		\node[vertex, g-blue, label=left:$u$] (u) at (-2.375,0) {};
		\node[vertex, red, label=right:$w$] (w) at (2.375,0) {};
        
        \node[svertex, g-blue] (u1a) at (-3.8,2) {};
        \node[svertex, g-blue] (u1b) at (-3.4,2) {};
        \node[svertex, g-blue] (u1c) at (-3,2) {};
        
        \node[svertex, g-blue] (u2a) at (-1.75,2) {};
        \node[svertex, g-blue] (u2b) at (-1.35,2) {};
        \node[svertex, g-blue] (u2c) at (-0.95,2) {};

        \node[svertex, g-blue] (u0a) at (-3.175,-1.5) {};
        \node[svertex, g-blue] (u0b) at (-2.775,-1.5) {};
        \node[svertex, g-blue] (u0c) at (-2.375,-1.5) {};
        \node[svertex, g-blue] (u0d) at (-1.975,-1.5) {};
        \node[svertex, g-blue] (u0e) at (-1.575,-1.5) {};
        
        \node[svertex, g-blue] (w0a) at (3.175,-1.5) {};
        \node[svertex, g-blue] (w0b) at (2.775,-1.5) {};
        \node[svertex, g-blue] (w0c) at (2.375,-1.5) {};
        \node[svertex, g-blue] (w0d) at (1.975,-1.5) {};
        \node[svertex, g-blue] (w0e) at (1.575,-1.5) {};
       
        \node[svertex, red] (w1a) at (3.8,2) {};
        \node[svertex, red] (w1b) at (3.4,2) {};
        \node[svertex, red] (w1c) at (3,2) {};
        
        \node[svertex, g-blue] (w2a) at (1.75,2) {};
        \node[svertex, g-blue] (w2b) at (1.35,2) {};
        \node[svertex, g-blue] (w2c) at (0.95,2) {};
		
        \begin{scope}[on background layer]
        \node[region,g-blue, fill=g-blue!5, fit=(u0a)(u0e), label=below:{$U_0$}] (U0) {};
        \node[region,g-blue, fill=g-blue!5, fit=(w0a)(w0e), label=below:{$W_0$}] (W0) {};
        \node[region,g-blue, fill=g-blue!5, fit=(u1a)(u1b)(u1c), label=above:{$U_1$}] (U1) {};
        \node[region,g-blue, fill=g-blue!5, fit=(u2a)(u2b)(u2c), label=above:{$U_2$}] (U2) {};
        \node[region,red, fill=red!5, fit=(w1a)(w1b)(w1c), label=above:{$W_1$}] (W1) {};
        \node[region,g-blue, fill=g-blue!5, fit=(w2a)(w2b)(w2c), label=above:{$W_2$}] (W2) {};
        \end{scope}

        \draw[tedge] (u) -- (w);
        
        \draw[tedge, dotted] (u) -- (U1);
        \draw[tedge] (u) -- (U2);
        \draw[tedge] (u) -- (W2);
        \draw[tedge] (u) -- (W1);
        \draw[tedge] (u) -- (U0);
        \draw[tedge, dotted] (u) -- (W0);
        
        \draw[stedge, white] (w) -- (U1);
        \draw[tedge] (w) -- (U1);
        \draw[stedge, white] (w) -- (U2);
        \draw[tedge] (w) -- (U2);
        \draw[stedge, white] (w) -- (W2);
        \draw[tedge] (w) -- (W2);
        \draw[stedge, white] (w) -- (W1);
        \draw[tedge, dotted] (w) -- (W1);
        \draw[stedge, white] (w) -- (U0);
        \draw[tedge, dotted] (w) -- (U0);
        \draw[tedge] (w) -- (W0);
	\end{tikzpicture}
    \caption{The partition of $V$ into $(U_0,U_1,U_2,\{u\},W_0,W_1,W_2,\{w\})$. The dashed vertical line illustrates the partition $(U,W)$. The sets $X$ and $Y$ contain the blue and red vertices, respectively. Between a vertex and a set of vertices, a solid line illustrates that all possible edges are present and a dotted line that none are present.}
    \label{fig:U_W_decomposition}
    \end{figure}
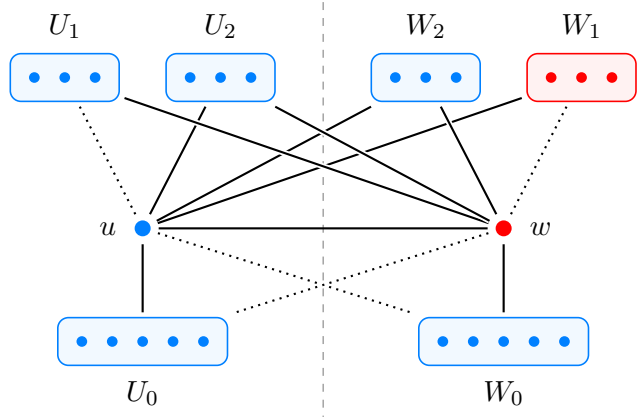

    Our next goal is to show that for any $x\in X$ and $y\in Y$, $E(G)$ contains exactly one edge linking $x$ and $y$. We deduce it from the next sequence of claims.

    \begin{claim}
        \label{claim:freeness_U_1}
        Every pair $\{s,t\} \subseteq U$ is free.
    \end{claim}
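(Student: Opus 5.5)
The plan is to reduce everything to \Cref{claim:free_triplets} together with the fact that non-adjacent pairs are free by definition. Let $s,t\in U$ be distinct. If some vertex $z\notin\{s,t\}$ is adjacent to neither $s$ nor $t$ in $G$, then $\{s,z\}$ and $\{z,t\}$ are free, so $\{s,t\}$ is free by \Cref{claim:free_triplets}. The whole proof therefore comes down to exhibiting a common non-neighbour of $s$ and $t$. I will look for it mainly in $W$, where adjacency to $U$ is controlled by the small cut $\delta_G(U)$. If no single such $z$ exists, I will fall back on longer chains of free pairs.

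\textbf{Counting.} Every $W$-neighbour of $s$ or of $t$ is the endpoint of a distinct edge of $\delta_G(U)$. Hence at most $2k$ vertices of $W$ are adjacent to $s$ or $t$. If neither $s$ nor $t$ equals $u$, the edge $uw$ is not among these edges, so the bound improves to $2k-1$. The structure of the cut also gives
\[
1+|U_1|+|U_2|+|W_1|+|W_2|\leq 2k ,
\]
since the cut contains $uw$, one edge from $w$ to each vertex of $U_1\cup U_2$, and one edge from $u$ to each vertex of $W_1\cup W_2$. Combining this with $n\geq 4k+2$ and the normalisation $|W_0|\geq |U_0|$ should yield a lower bound on $|W|$. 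Whenever $|W|$ exceeds the number of $W$-vertices adjacent to $s$ or $t$, the required $z$ exists.

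\textbf{Structural cases.} I will use the structure already established alongside the counting. If $s,t\in U_0$, then $w$ is adjacent to neither of them, so $z=w$ works directly. If only $s\in U_0$, then $\{s,w\}$ is free, and it suffices to make $\{w,t\}$ free through some vertex of $W_0$ not adjacent to $t$; such a vertex is not adjacent to $w$ unless it lies in $N(w)$, which is handled by the same count. The pairs involving $u$ need separate care, since $u$ dominates $U_0$. Here I will route through $W_0$, whose vertices are non-adjacent to $u$, so one only needs some vertex of $W_0$ that is not adjacent to $t$. The number of $W_0$-vertices adjacent to $t$ is at most the number of cut edges at $t$, and $|W_0|\geq (n-2k-1)/2\geq k+1$ follows from $n\leq 2|W_0|+2k+1$. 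Comparing these two quantities should settle the case.

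\textbf{Main obstacle.} The hard step is the counting in the tight regime, where $|W|$ may be only about $k+1+|W_1|+|W_2|$ while $t$ sends many cut edges into $W_0$. If the direct bound fails, I will use the fact that cut edges spent on $t$ cannot also be spent on the other vertices of $U_1\cup U_2$ or on $s$. In that regime I will choose the intermediate vertex inside $U$ instead: for example, a vertex of $U_0$ or $U_1$ not adjacent to $t$, reached via two applications of \Cref{claim:free_triplets}. The goal of this case analysis is to trade off the cut edges at $s$ and $t$ against the sizes of $U_0$ and $W_0$.
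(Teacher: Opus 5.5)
\textbf{Gap: the common non-neighbour is never shown to exist.} Your reduction is the right one. A common non-neighbour $z$ of $s$ and $t$ makes $\{s,z\}$ and $\{z,t\}$ free, so $\{s,t\}$ is free by \Cref{claim:free_triplets}; equivalently, inverting $\{s,t,z\}$ flips only $st$. But the proposal never establishes that such a $z$ exists. Each counting step ends at ``should yield'' or ``should settle'', and the ``tight regime'' is deferred to an unspecified fallback. Several of the routes you sketch also fail:
\begin{enumerate}[label=(\roman*)]
\item Just before this claim the paper observes that $w$ dominates $W_0$. So no vertex of $W_0$ is non-adjacent to $w$, and your route $s\to w\to t$ through $W_0$ has no intermediate vertex.
\item For $s\in U_0$ and $t=u$, that route asks you to prove $\{w,u\}$ free. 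This is false by the choice of $uw$.
\item For pairs containing $u$, the bound $|W_0|\geq k+1$ against ``the cut edges at $t$'' is not enough. With $\ell:=|U_1|+|U_2|+|W_1|+|W_2|$ small, up to $2k-1-\ell$ cut edges may run from $t$ into $W_0$, which can exceed $k+1$.
\end{enumerate}

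\textbf{The missing global count.} One count removes all the case distinctions. The cut $\delta_G(U)$ contains $\ell+1$ distinct edges: $uw$, an edge from $w$ to each vertex of $U_1\cup U_2$, and an edge from $u$ to each vertex of $W_1\cup W_2$. None of these has an endpoint in $W_0$, so $d_G(U,W_0)\leq 2k-1-\ell$. On the other hand, $n=|U_0|+|W_0|+\ell+2$ and $|W_0|\geq |U_0|$ give $|W_0|\geq \frac{1}{2}(n-\ell-2)\geq 2k-\frac{\ell}{2}$. This is strictly larger than $2k-1-\ell$. Hence some $w'\in W_0$ has no neighbour in $U$ at all. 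This single vertex is a common non-neighbour of every pair $\{s,t\}\subseteq U$, including pairs containing $u$ or vertices of $U_0$; this is exactly the paper's argument. The count also shows your ``tight regime'' is not an obstacle. $|W_0|$ is small only when $\ell$ is large, and then almost no cut edges remain to reach $W_0$.
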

    \begin{proofclaim}
        Let $\ell = |U_1| + |U_2| + |W_2|+|W_1|$. Since $w$ dominates $U_1\cup U_2 \cup \{u\}$ and $u$ dominates $W_1\cup W_2 \cup \{w\}$, observe that 
        \[
            d_{G}(U) \geq \ell+1 + d_G(U,W_0).
        \]
        This implies that $W_0$ contains a vertex $w'$ such that $N_G(w') \cap U = \emptyset$. Indeed, if this is not the case, then every vertex in $W_0$ has a neighbor in $U$, and using the fact that $|W_0| \geq |U_0|$ we obtain
        \begin{align*}
            d_G(U) \geq \ell+1 + |W_0|\geq \ell+1 +\frac{1}{2}(n-\ell-2)\geq \frac{1}{2}n > 2k,
        \end{align*}
        a contradiction.
        Therefore, for every edge $\{s,t\} \subseteq U$, the inversion $\{s,t,w'\}$ simulates the inversion of $\{s,t\}$, showing the claim. 
    \end{proofclaim}
    
    \begin{claim}
        \label{claim:freeness_U_2}
        The set $U_0$ is empty.
    \end{claim}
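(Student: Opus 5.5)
The plan is to argue by contradiction using transitivity of freeness (Claim~\ref{claim:free_triplets}) together with Claim~\ref{claim:freeness_U_1}. Recall that the edge $uw$ was chosen to be non-free, and that this is the only fact about $uw$ we need to contradict.

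Suppose that $U_0$ is non-empty, and pick a vertex $x \in U_0$. The argument then has three steps.
\begin{enumerate}
    \item By the definition $U_0 = U\setminus N_G(w)$, the vertex $x$ is not adjacent to $w$. Since $x \in U$ and $w \in W$, we have $x \neq w$, so $\{x,w\}$ is a pair of non-adjacent vertices. By the first item in the definition of a free pair, $\{x,w\}$ is free.
    \item We have $x \neq u$ because $u \in N_G(w)$, whereas $x \notin N_G(w)$. Both $x$ and $u$ lie in $U$, so Claim~\ref{claim:freeness_U_1} shows that $\{u,x\}$ is free. (We also know that $u$ dominates $U_0$, so $ux$ is an edge, but this is not needed; freeness is all we use.)
    \item Apply Claim~\ref{claim:free_triplets} to the triplet $\{u,x,w\}$ with $x$ in the middle role. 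Since $\{u,x\}$ and $\{x,w\}$ are both free, the pair $\{u,w\}$ is free.
\end{enumerate}
This contradicts the choice of $uw$ as a non-free edge, so $U_0=\emptyset$.

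The only point needing care is checking that $u$, $x$, $w$ are three distinct vertices, so that Claim~\ref{claim:free_triplets} applies to a genuine triplet; this is exactly what steps 1 and 2 record. There is no substantial obstacle here, since the real work was done in Claim~\ref{claim:freeness_U_1}, whose proof already used the assumption $|W_0|\geq|U_0|$.
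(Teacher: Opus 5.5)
Your proof is correct and is essentially the paper's argument: a vertex of $U_0$ is non-adjacent to $w$, so it forms a free pair with $w$, and it forms a free pair with $u$ by Claim~\ref{claim:freeness_U_1}; Claim~\ref{claim:free_triplets} then makes $\{u,w\}$ free, contradicting the choice of $uw$. Your check that $u$, $x$, $w$ are distinct is a point the paper leaves implicit.
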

    \begin{proofclaim}
        Assume that $U_0 \neq \emptyset$ and let $u'\in U_0$. By definition, $u'$ is not adjacent to $w$, and by Claim~\ref{claim:freeness_U_1} $\{u,u'\}$ is free. Therefore, $\{u,w\}$ is free by Claim~\ref{claim:free_triplets}, a contradiction.  
    \end{proofclaim}

    By Claim~\ref{claim:freeness_U_2}, we have $X = \{u\} \cup U_1\cup U_2\cup W_0 \cup W_2$. Moreover, it follows from Claim~\ref{claim:freeness_U_1} that $w$ is adajcent to all vertices in $X$.
    We now show through two claims that, furthermore, $G[X,Y]$ is complete, and that the edges between $X$ and $Y$ are non-free. 

    \begin{claim}
        \label{claim:X_w_non_free_1}
        For every vertex $s\in W_0 \cup W_2$, the pair $\{u,s\}$ is free. 
    \end{claim}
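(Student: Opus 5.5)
For $s\in W_0$ there is nothing to prove: by definition of $W_0$, $s$ is not adjacent to $u$, so $\{u,s\}$ is free. The real work is for $s\in W_2$, where $s$ is adjacent to both $u$ and $w$. Here I will not build a simulating family of triplets by hand. I will use Claim~\ref{claim:free_triplets} instead: it suffices to find one intermediate vertex $t$ with both $\{u,t\}$ and $\{t,s\}$ free.

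There are two natural sources for $t$.
\begin{enumerate}[label=(\alph*)]
\item A vertex $t\in U\setminus\{u\}$ not adjacent to $s$. Then $\{u,t\}$ is free by Claim~\ref{claim:freeness_U_1}, and $\{t,s\}$ is free because $t$ and $s$ are non-adjacent.
\item A vertex $t\in W_0$ not adjacent to $s$. Then $\{u,t\}$ is free since $t\notin N_G(u)$, and $\{t,s\}$ is free for the same reason as in (a).
\end{enumerate}
So the proof reduces to showing that $s$ cannot be adjacent to every vertex of $(U\setminus\{u\})\cup W_0$.

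Assume therefore that $s$ dominates $(U\setminus\{u\})\cup W_0$, and aim for a contradiction by counting, in the spirit of Claims~\ref{claim:every_e_belongs_to_mincut} and~\ref{claim:freeness_U_1}. By Claim~\ref{claim:freeness_U_2}, $U_0=\emptyset$, so $w$ is adjacent to every vertex of $U$. Moreover, $s$ is adjacent to every vertex of $U$: to $u$ because $s\in W_2$, and to the rest by assumption. Since $s\neq w$, these give $2|U|$ distinct edges of $\delta_G(U)$. Furthermore $u$ is adjacent to all of $W_1\cup W_2\cup\{w\}$, which contributes $|W_1|+|W_2|-1$ further cut edges beyond $uw$ and $us$. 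From $d_G(U)=2k$ this yields $|U|\le k$ and $|W_1|+|W_2|\le 2k-2|U|+1$. Using $n\ge 4k+2$, it follows that $W_0$ is large, namely of size at least about $2k$, and all of it is adjacent to $s$ and to $w$.

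I would then look for the contradiction in a cut around $W_0$, for instance $\delta_G(W_0)$ or $\delta_G(U\cup W_1\cup W_2\cup\{w\})$. I would also try to exploit the vertex $w'\in W_0$ with $N_G(w')\cap U=\emptyset$ from the proof of Claim~\ref{claim:freeness_U_1}. Concretely, the inversion of $\{u,s,w'\}$ reverses $us$ and $sw'$ but no edge $uw'$, since $u$ and $w'$ are non-adjacent. So $\{u,s\}$ is free as soon as $\{s,w'\}$ is free. I would try to obtain the freeness of $\{s,w'\}$ via a further intermediate vertex that is non-adjacent to $w'$, which exists in $U$ by the choice of $w'$, combined with Claim~\ref{claim:free_triplets}.

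This last step, the case where $s$ dominates $(U\setminus\{u\})\cup W_0$, is the main obstacle. If the direct counting does not give a contradiction, I would use the freeness of $\{u,s\}$ argument through $w'$ described above instead.
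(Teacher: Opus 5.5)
Your reduction is sound, and your option (b) is exactly the mechanism the paper uses. If $w'\in W_0$ is not adjacent to $s$, then $G[\{u,s,w'\}]$ contains only the edge(s) $us$, so inverting $\{u,s,w'\}$ simulates inverting $\{u,s\}$. Option (a) is not needed. However, the proposal stops at the step that carries all the content: you never show that $s$ has a non-neighbour in $W_0$, and neither route you suggest works. A large cut such as $\delta_G(W_0)$ or $\delta_G(U\cup W_1\cup W_2\cup\{w\})$ is no contradiction in itself, since nothing forbids large cuts in $G$. The detour through $w'$ is circular. To make $\{s,w'\}$ free via some $t\in U$, you need $\{t,s\}$ free. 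Under your standing assumption $s$ is adjacent to every vertex of $U$, so this is the same problem again, and for $t=u$ it is literally the original pair.

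The missing idea is to apply Claim~\ref{claim:every_e_belongs_to_mincut}, the consequence of the minimality of $D$, to the edge $sw$, which exists because $s\in W_2$. You already have every ingredient:
\begin{itemize}
\item $w$ dominates $W_0$, since otherwise $\{u,w\}$ would be free.
\item $|W_0|\ge 2k+1$. Your count gives $|W_0|\ge 2k+|U|$. More simply, $d_G(U)\ge |U_1|+|U_2|+1+|W_1|+|W_2|$, together with $U_0=\emptyset$ and $n\ge 4k+2$, already gives it without any assumption on $U\setminus\{u\}$.
\end{itemize}
Suppose $s$ dominated $W_0$. Then the $2$-paths $(s,w',w)$ for $w'\in W_0$, together with the $\mu_G(sw)$ direct edges, give $\lambda_G(s,w)\ge 2k+1+\mu_G(sw)$. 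So every cut containing $sw$ has size at least $2k+2$, and at least $2k+3$ if $\dig{s,w}$ is a digon. This rules out all three alternatives (a)--(c) of Claim~\ref{claim:every_e_belongs_to_mincut}, a contradiction.

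The exact bound matters here, so your phrase ``at least about $2k$'' is too loose. With only $|W_0|\ge 2k$, a simple edge $sw$ could still lie in a cut of size $2k+1$, as alternative (c) allows, and no contradiction would follow.
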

    \begin{proofclaim}
        The statement is clear when $s\in W_0$, as by definition $u$ has no neighbor in $W_0$. Let us thus fix an arbitrary vertex $s$ in $W_2$. We claim that $s$ has a non-neighbor $w'$ in $W_0$. To see this, recall that 
        \[
            |U_1| + |U_2| + |W_2| +|W_1|\leq d_G(U) -1 \leq 2k-1.
        \]
        Since $U_0$ is empty by \Cref{claim:freeness_U_2} and $n \geq 4k+2$, we thus have
        \[
            |W_0| = n - (|U_1| + |U_2| + |W_2| +|W_1|) - 2 \geq 2k+1.
        \]
        Hence, if $s$ dominates $W_0$ then there exist, in $G$, at least $2k+1+\mu_G(sw)$ pairwise edge-disjoint paths going from $s$ to $w$, namely all $2$-paths $(s,w',w)$ for $w'\in W_0$ and the $\mu_G(sw)$ $1$-paths $(s,w)$.
        Hence, every cut of $G$ separating $s$ and $w$ has size at least $2k+1+\mu_G(sw)$, a contradiction to Claim~\ref{claim:every_e_belongs_to_mincut}.
        Let us thus fix a vertex $w'\in W_0 \setminus N_G(s)$. Since $W_0 \cap N_G(u) = \emptyset$, it follows that inverting $\{u,s,w'\}$ simulates the inversion of $\{u,s\}$. 
    \end{proofclaim}

    \begin{claim}
        \label{claim:X_w_non_free_2}
        For every $x\in X$, and $y\in Y$, the pair $\{x,y\}$ is not free.
    \end{claim}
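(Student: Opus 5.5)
The plan is to argue by contradiction with Claim~\ref{claim:free_triplets}, using the pair $\{u,w\}$, which was fixed to be non-free. Recall that $X=\{u\}\cup U_1\cup U_2\cup W_0\cup W_2$ and $Y=\{w\}\cup W_1$. The idea is to connect any $x\in X$ and any $y\in Y$ to the pair $\{u,w\}$ through a chain of free pairs. If $\{x,y\}$ were free, this chain would make $\{u,w\}$ free, which is the desired contradiction.

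First, I collect which pairs are already known to be free.
\begin{itemize}
    \item Every pair inside $U$ is free, by Claim~\ref{claim:freeness_U_1}.
    \item Every pair $\{u,s\}$ with $s\in W_0\cup W_2$ is free, by Claim~\ref{claim:X_w_non_free_1}.
    \item Combining these two via Claim~\ref{claim:free_triplets} (through $u$), every pair $\{x,x'\}$ with $x,x'\in X$ is free. Here I use that $u\in U$, so $u$ connects $U_1\cup U_2$ with $W_0\cup W_2$.
\end{itemize}
So $X$ behaves as a ``free clique'': any two of its vertices form a free pair.

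Next, I need a free connection on the $Y$ side. For $y\in W_1$, the pair $\{y,w\}$ is free because $y\notin N_G[w]$ by the definition of $W_1$, so the two vertices are non-adjacent.

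Now suppose, for a contradiction, that $\{x,y\}$ is free for some $x\in X$ and $y\in Y$.
\begin{itemize}
    \item If $y=w$: the pairs $\{u,x\}$ and $\{x,w\}$ are both free, so Claim~\ref{claim:free_triplets} makes $\{u,w\}$ free. (When $x=u$ this is immediate.)
    \item If $y\in W_1$: the pairs $\{x,y\}$ and $\{y,w\}$ are free, so $\{x,w\}$ is free. Then $\{u,x\}$ and $\{x,w\}$ give that $\{u,w\}$ is free.
\end{itemize}
In both cases this contradicts the choice of $\{u,w\}$ as a non-free edge.

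The main care point is that Claim~\ref{claim:free_triplets} needs a triplet of three distinct vertices. So I handle degenerate coincidences separately, namely $x=u$ and $y=w$, where the conclusion is immediate. I also note that freeness covers non-adjacent pairs and digons, so no multiplicity issue arises when composing free pairs. Beyond that, the argument is a routine chaining.
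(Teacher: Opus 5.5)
Your proof is correct and follows essentially the same route as the paper. The paper first shows that $\{x,w\}$ is non-free for every $x\in X$, using that $\{u,x\}$ is free (by Claim~\ref{claim:freeness_U_1} or Claim~\ref{claim:X_w_non_free_1}) together with Claim~\ref{claim:free_triplets}, and then handles $y\in W_1$ through the free non-adjacent pair $\{w,y\}$; your extra observation that every pair inside $X$ is free is true but not needed.
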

    \begin{proofclaim}
        First assume that $y=w$. If $x=u$, then $\{u,w\}$ is not free by assumption. If $x\in X\setminus \{u\}$, by either Claim~\ref{claim:freeness_U_1} or Claim~\ref{claim:X_w_non_free_1}, we get that $\{u,x\}$ is free. Therefore, since $\{u,w\}$ is non-free, by Claim~\ref{claim:free_triplets} it follows that $\{w,x\}$ is non-free.

        Now assume that $y\neq w$, that is $y\in W_1$. We already proved that $\{x,w\}$ is not free and, by definition of $W_1$, $y$ is not adjacent to $w$, so $\{w,y\}$ is free. Therefore, by Claim~\ref{claim:free_triplets}, $\{x,y\}$ cannot be free. The claim follows.
    \end{proofclaim}
    
    It follows from Claim~\ref{claim:X_w_non_free_2} that $G[X,Y]$ is complete, and that it contains only simple edges, hence showing that the partition $(X,Y)$ satisfies Property~\ref{enumitem:kobstruction:ii} of $k$-obstructions.
    We now show that $X$ can be partitioned in such a way that Property~\ref{enumitem:kobstruction:i} is satisfied as well.
    \begin{claim}
        \label{claim:partition_X}
        There exists a $2k$-regular partition of $X$ in $G$. 
    \end{claim}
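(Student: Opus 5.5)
We use \Cref{eedqe} with $2k$ in place of $k$; this is legitimate because $G$ is $2k$-edge-connected. It then suffices to show that every $x\in X$ lies in some $S_x\subseteq X$ with $d_G(S_x)=2k$. Fix $x\in X$ and consider the edge $xw$ of $G$. By \Cref{claim:X_w_non_free_2} it is simple and non-free. Hence in \Cref{claim:every_e_belongs_to_mincut}, option (b) fails because $xw$ is not a digon, and option (c) fails because $\{x,w\}$ is not free. So option (a) holds: there is $S\subseteq V$ with $x\in S$, $w\notin S$ and $d_G(S)=2k$. This $S$ need not avoid $Y$, so the work is to show it does, or can be modified to.

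Suppose $S\cap Y$ contains some $y\in W_1$. Since $G[X,Y]$ is complete, $y$ is adjacent to every vertex of $X$. We compare with $w$, which also sees all of $X$ and lies in $V\setminus S$. I would count edge-disjoint paths across the cut from the common neighbourhood. Every vertex of $X\setminus S$ is adjacent to $y\in S$. Every vertex of $X\cap S$ is adjacent to $w\notin S$. Each such adjacency is a distinct edge of $\delta_G(S)$. Therefore $d_G(S)\ge |X\setminus S|+|X\cap S|=|X|$.

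Next we bound $|X|$ from below. Since $|Y|=1+|W_1|\le 2k$ (recall $|U_1|+|U_2|+|W_1|+|W_2|\le 2k-1$) and $n\ge 4k+2$, we get $|X|\ge 2k+2>2k$. This contradicts $d_G(S)=2k$. Hence $S\cap Y=\emptyset$, and $S\subseteq X$ is the required $S_x$.

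The main obstacle is making this counting airtight. When $y\in S$, the edges from $y$ to $X\setminus S$ and the edges from $w$ to $X\cap S$ are indeed all distinct cut edges, so the count is fine. We must also handle the case $S\cap Y=\emptyset$ from the start, which is immediate. So the whole argument reduces to the inequality $|X|>2k$, which follows from the size bound on $Y$ derived in the proof of \Cref{claim:X_w_non_free_1}. Applying \Cref{eedqe} then yields the $2k$-regular partition of $X$.
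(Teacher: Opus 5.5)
Your proof is correct and follows essentially the paper's argument. You apply \Cref{eedqe}, use \Cref{claim:every_e_belongs_to_mincut} to get a $2k$-cut through a non-free $X$--$Y$ edge, and rule out $S\cap Y\neq\emptyset$ via $d_G(S)\ge |X|\ge 2k+2$; the only cosmetic differences are that you fix the $Y$-endpoint to be $w$ instead of an arbitrary $y\in Y$, and you bound $|Y|\le 2k$ through $|W_1|\le 2k-1$ rather than through $Y$ being complete to $u$.
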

    \begin{proofclaim}
        By \Cref{eedqe}, it suffices to prove that, for every $x\in X$, there exists a set $S_x \subseteq X$ containing $x$ such that $d_G(S_x) = 2k$. Let $y$ be an arbitrary vertex in $Y$. By Claim~\ref{claim:X_w_non_free_2}, $\{x,y\}$ is not free, which together with Claim~\ref{claim:every_e_belongs_to_mincut} implies that $\{x,y\}$ belongs to a cut of size $2k$, that is, there exists a set $S_x\subseteq V(D)$ such that $x\in S_x$, $y\notin S_x$, and $d_G(S_x)= 2k$. It remains to justify that $S_x\subseteq X$. Assume for a contradiction that $Y\cap S_x \neq \emptyset$. Since $Y$ is complete to $X$, we have
        \begin{align*}
            d_G(S_x) &\geq \underbrace{|Y\setminus S_x|}_{\geq 1}\cdot  \,|X\cap S_x| + \underbrace{|Y\cap S_x|}_{\geq 1} \cdot \,|X\setminus S_x|\\ 
            &\geq |X|\\
            &= n-|Y|.
        \end{align*}
        Recall that $Y = W_1\cup \{w\}$ is complete to $\{u\}$. Therefore, 
        \[
            |Y|\leq d_G(U) \leq 2k,
        \]
        which together with the inequality above implies
        \[
            d_G(S_x) \geq n-2k \geq 2k+1,
        \]
        a contradiction.
    \end{proofclaim}

    From now on, let us fix a $2k$-regular partition $(X_1,\dots,X_r)$ of $X$ in $G$.
    Recall that, by definition of being a counterexample, $D$ is not a $k$-obstruction. Since the partition $(X_1,\dots,X_r,Y)$ of $V$ satisfies Properties~\ref{enumitem:kobstruction:i} and~\ref{enumitem:kobstruction:ii}, either $|X|\cdot |Y|$ is odd, or $d_D^+(X) - \frac{1}{2}|X|\cdot |Y|$ is an even integer. Using the degree condition of Property~\ref{enumitem:kobstruction:i}, we exclude the former case.

    \begin{claim}
        \label{claim:XY_even}
        We have that $|X|\cdot |Y|$ is even, and that $d_D^+(X) - \frac{1}{2}|X|\cdot |Y|$ is an even integer.
    \end{claim}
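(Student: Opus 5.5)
The plan is a parity count over the $2k$-regular partition $(X_1,\dots,X_r)$ of $X$ fixed above, combined with the fact that $(X_1,\dots,X_r,Y)$ already satisfies Properties~\ref{enumitem:kobstruction:i} and~\ref{enumitem:kobstruction:ii}.

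First I check that the partition $(X_1,\dots,X_r,Y)$ of $V$ has only non-empty parts, so that it is a legitimate candidate $k$-certificate. We have $w\in Y$ and $u\in X$, so $X$ and $Y$ are non-empty. The parts $X_i$ are non-empty by definition of a partition.

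Next I show that $|X|\cdot|Y|$ is even by double counting. For each $i\in[r]$, split the cut $\delta_G(X_i)$ into the edges going to $Y$ and the edges going to $X\setminus X_i$:
\[
2k = d_G(X_i) = d_G(X_i,Y) + d_G(X_i,X\setminus X_i).
\]
By Claim~\ref{claim:X_w_non_free_2}, $G[X,Y]$ is complete with only simple edges, so $d_G(X_i,Y)=|X_i|\cdot|Y|$. Summing over $i\in[r]$ gives
\[
2kr = |X|\cdot|Y| + 2\,|F|,
\]
where $F$ is the set of edges of $G$ joining two distinct parts $X_i$ and $X_j$. Each edge of $F$ is counted exactly twice in the sum, once from each side. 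Hence $|X|\cdot|Y| = 2kr-2|F|$ is even, and therefore $d_D^+(X)-\frac{1}{2}|X|\cdot|Y|$ is an integer.

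Finally, $D$ is a counterexample, so it is not a $k$-obstruction, and hence $(X_1,\dots,X_r,Y)$ is not a $k$-certificate. Properties~\ref{enumitem:kobstruction:i} and~\ref{enumitem:kobstruction:ii} hold by Claim~\ref{claim:partition_X} and Claim~\ref{claim:X_w_non_free_2}, so Property~\ref{enumitem:kobstruction:iii} must fail. Since $d_D^+(X)-\frac{1}{2}|X|\cdot|Y|$ is an integer, failing Property~\ref{enumitem:kobstruction:iii} means exactly that this integer is even.

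There is no real obstacle here. The only point needing care is that the edges between $X$ and $Y$ are simple, which is what makes $d_G(X_i,Y)=|X_i|\cdot|Y|$ exact rather than just an upper bound.
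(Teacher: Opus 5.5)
Your proof is correct and follows essentially the same route as the paper's: the same parity double count of $d_G(X_i)=2k$, split into the $|X_i|\cdot|Y|$ simple edges to $Y$ plus edges between distinct parts, followed by the observation that Property (iii) must fail because $D$ is not a $k$-obstruction. The paper phrases the count slightly less directly, by assuming $|Y|$ is odd and deducing that $|X|$ is even, whereas your identity $2kr=|X|\cdot|Y|+2|F|$ gives the conclusion in one step.
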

    \begin{proofclaim}
        Assume that $|Y|$ is odd, and let us show that $|X|$ is even. Let $G'$ be the graph with vertex set $X$ and containing the edges of $G$ whose extremities belong to different classes of the partition $(X_1,\dots,X_r)$. 
        Note that, for every $i\in [r]$, 
        \[
        d_{G'}(X_i) = d_{G}(X_i)-|X_i|\cdot |Y| = 2k-|X_i|\cdot |Y|,
        \]
        which is even if and only if $|X_i|$ is even. Therefore, we have:
        \[
            2\cdot |E(G')| = \sum_{x\in X} d_{G'}(x)=\sum_{i=1}^rd_{G'}(X_i)
            \equiv  \sum_{i=1}^r|X_i| \pmod 2.
        \]
        Since $|X| = \sum_{i=1}^r |X_i|$, we obtain that $|X|$ is even.
    \end{proofclaim}
    
    In what remains, let $D'$ be an arbitrary $k$-arc-strong digraph whose underlying graph is $G$. Among all digraphs reachable from $D$ by a set of \eqp{3}-inversions, let $\tilde{D}$ be one closest to $D'$ in the following sense:
    \begin{enumerate}[label=(\arabic*)]
        \item the number $m_1$ of edges of $G[X,Y]$ oriented differently in $D'$ and $\tilde{D}$ is minimum; and
        \label{enumitem:choice_Dtilde:1}
        \item with respect to~\ref{enumitem:choice_Dtilde:1}, the number $m_2$ of edges of $G[X]$ oriented differently in $D'$ and $\tilde{D}$ is minimum.
        \label{enumitem:choice_Dtilde:2}
    \end{enumerate}
    
    Assume first that $m_1 \geq 1$. By \Cref{g7zuhoi}, $D'$ being $k$-arc-strong implies that $d_{D'}^+(X) = \frac{1}{2}|X|\cdot |Y|$. By Claim~\ref{claim:XY_even}, $d_{D}^+(X) - \frac{1}{2}|X|\cdot |Y|$ is even, and so is $d_{\tilde{D}}^+(X) - \frac{1}{2}|X|\cdot |Y|$ as any \eqp{3}-inversion reverses an even number of edges between $X$ and~$Y$.
    Therefore, $d_{D'}^+(X)$ and $d_{\tilde{D}}^+(X)$ have the same parity, and in particular $m_1\geq 2$. Let $E_1$ be the set of edges of $G[X,Y]$ whose orientations differ in $D'$ and $\tilde{D}$. 
    If there exist two distinct incident edges $\{a,b\},\{b,c\}\in E_1$, then inverting $\{a,b,c\}$ on $\tilde{D}$ yields a digraph reachable from $D$ by a set of \eqp{3}-inversions which contradicts~\ref{enumitem:choice_Dtilde:1}.
    Therefore, the edges of $E_1$ are pairwise disjoint. Let $\{a,b\},\{c,d\}$ be any two distinct edges of them, then inverting successively $\{a,b,c\}$ and $\{b,c,d\}$ leads again to a contradiction to~\ref{enumitem:choice_Dtilde:1}.

    Assume now that $m_2\geq 1$, and let $xz$ be an edge of $G[X]$ oriented differently in $D'$ and $\tilde{D}$, and let $i,j$ be such that $x\in X_i$, $z\in X_j$. Clearly, by~\ref{enumitem:choice_Dtilde:2}, $xz$ is not free. This implies that $N_G[x]\cup N_G[z] = V$, which in particular implies
    \[
        X_i \cup X_j \cup N_G(X_i) \cup N_G(X_j) = V.
    \]
    Since $Y$ dominates $X_i \cup X_j$, we have
    \begin{align*}
            n&=|X_i \cup X_j \cup N_G(X_i) \cup N_G(X_j)|\\
            &\leq |X_i \cup X_j| + |N_G(X_i) \cup N_G(X_j)|\\
            &\leq |X_i \cup X_j| + d_G(X_i) + d_G(X_j) - (|X_i \cup X_j|-1)\\
            &= d_G(X_i) + d_G(X_j)+1\\
            &= 4k+1,
    \end{align*}
    a contradiction.
    Therefore, we have $m_1=m_2=0$. Since $\tilde{D}\neq D'$ (as $D$ is not $(k,3)$-invertible), there exists in $G$ an edge between two vertices $y_1,y_2\in Y$. Since $|X|\geq n-2k \geq 2k+2$, there exist $2k+3$ pairwise edge-disjoint paths from $y_1$ to $y_2$, namely all paths the form $(y_1,x,y_2)$ for $x\in X$ and the $1$-path $(y_1,y_2)$. This contradicts Claim~\ref{claim:every_e_belongs_to_mincut} and concludes the proof of the lemma.
\end{proof}

\subsection{Simulating \texorpdfstring{\eqp{3}}{(=3)}-inversions with \texorpdfstring{\eqp{p}}{(=p)}-inversions}
\label{subsec:simulating_3_inversions}

In this section, we show that, for an odd integer $p$, \eqp{3}-inversions can often be simulated by a set of \eqp{p}-inversions. This allows us to derive \Cref{lem:invertibility} from Lemma~\ref{lemma:3_invertibility} in the next section.
\begin{lemma}
    \label{lemma:simulating_3_inversions}
    Let $p\geq 3$ be an odd integer and $D$ be a digraph of order $n\geq p+2$. If $\alpha(D) \geq 3$, then for every triplet $S=\{x,y,z\} \subseteq V(D)$, the inversion of $S$ in $D$ can be simulated by a set of \eqp{p}-inversions.
\end{lemma}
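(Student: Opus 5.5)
The idea is to argue in the $\mathbb{F}_2$-vector space of subsets of $\binom{V(D)}{2}$. Write $K(X)=\binom{X}{2}$ for $X\subseteq V(D)$. Inverting a family $\mathcal{X}$ reverses exactly the adjacent pairs lying in an odd number of members of $\mathcal{X}$. Non-adjacent pairs and digons are unaffected by reversal. So it suffices to find a family $\mathcal{X}\subseteq\binom{V(D)}{p}$ whose sum $\sum_{X\in\mathcal{X}}K(X)$ differs from $K(S)$ only on pairs that are non-adjacent or digons. Let $I=\{a,b,c\}$ be an independent set of size $3$. Then $K(I)$ consists of non-adjacent pairs and may be added freely. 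The case $p=3$ is trivial, so assume $p\geq 5$; in all cases $n\geq p+2\geq 5$.

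\textbf{Step 1: producing $4$-cycles.} Let $v,w,q,q'$ be four distinct vertices and let $R$ be a set of size $p-2$ disjoint from them. This is possible since $n\geq p+2$. Put $Q=R\cup\{q\}$ and $Q'=R\cup\{q'\}$, both of size $p-1$. Over $\mathbb{F}_2$,
\[
K(Q\cup\{v\})+K(Q\cup\{w\})+K(Q'\cup\{v\})+K(Q'\cup\{w\}) = \{vq,vq',wq,wq'\},
\]
because $K(Q\cup\{v\})+K(Q\cup\{w\})$ is the double star from $\{v,w\}$ to $Q$, and the contributions of $R$ cancel. Hence any $4$-cycle of $K_n$ is the reversal pattern of four \eqp{p}-inversions.

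\textbf{Step 2: reduction to an Eulerian set.} Choose any $T\subseteq V(D)\setminus S$ with $|T|=p-3$, and invert $X=S\cup T$, which has size $p$. The error is $F=K(X)+K(S)$, which equals $K(T)$ together with all pairs between $S$ and $T$. In $F$, every vertex of $T$ has degree $(p-4)+3=p-1$ and every vertex of $S$ has degree $p-3$. Both are even since $p$ is odd, so $F$ is an Eulerian edge set of $K_n$. If $|F|$ is odd, replace $F$ by $F+K(I)$. This is still Eulerian, since $K(I)$ is a triangle, it now has even size, and it differs from $F$ only on non-adjacent pairs. It remains to show that $F$ lies in the span of the $4$-cycles of $K_n$; Step 1 then lets us cancel $F$ with \eqp{p}-inversions.

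\textbf{Step 3, the main obstacle: $4$-cycles of $K_n$ with $n\geq 5$ span all Eulerian edge sets of even size.}
\begin{itemize}
\item First decompose the Eulerian set into edge-disjoint cycles.
\item An even cycle $v_1v_2\cdots v_{2t}$ is the sum of the $4$-cycles $v_1v_{2i}v_{2i+1}v_{2i+2}$, using chords from $v_1$ as in a fan triangulation by quadrilaterals.
\item Any odd cycle is congruent, modulo even cycles, to a triangle (split off the chord from $v_1$ to $v_3$). Since the total size is even, the odd cycles come in pairs, so it suffices to show that the sum of two triangles lies in the span.
\item Triangles sharing an edge: $xyz+xyz'$ is the $4$-cycle $xzyz'$.
\item Two arbitrary triangles can be connected through a chain of triangles, each sharing an edge with the next. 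This is possible in $K_n$ since $n\geq 4$.
\end{itemize}
Hence the sum of two triangles is a sum of $4$-cycles, which proves the spanning claim. This spanning lemma, together with checking that the vertex count $n\geq p+2$ suffices for every $4$-cycle used in Step 1 (it needs four named vertices plus $p-2$ others), is where care is needed. The rest is parity bookkeeping.
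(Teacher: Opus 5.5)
Your argument is correct, and it takes a genuinely different route from the paper's.

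The paper works with explicit families. For $p\equiv 3\pmod 4$ it inverts all sets $S\cup X_i$, where $X_i$ ranges over the $(p-3)$-subsets of a fixed $(p-1)$-set disjoint from $S$, and checks the parities through binomial coefficients. For $p\equiv 1\pmod 4$ this count fails. The paper therefore first simulates $(=5)$-inversions in the same way, then combines three of them to flip two disjoint triangles at once, and finally uses the independent triple $I$ in a case analysis on $|S\cap I|$.

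You instead pass to the cycle space of $K_n$ over $\mathbb{F}_2$. Your four-set construction puts every $4$-cycle in the span of the $(=p)$-inversions. The standard fact that $4$-cycles span the even-size Eulerian edge sets then reduces everything to a single parity count.

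What your approach buys:
\begin{itemize}
\item It is uniform and explains the paper's case split. Indeed, $|F|=\binom{p-3}{2}+3(p-3)$ is odd exactly when $p\equiv 1\pmod 4$, which is precisely when $I$ is needed. For $p\equiv 3\pmod 4$, neither your argument nor the paper's Case~1 uses $\alpha(D)\geq 3$.
\item It shows that $I$ only corrects one parity obstruction. In fact you could drop Step~2 entirely, since $K(S)+K(I)$ is already an even-size Eulerian set. In that form the argument no longer uses that $p$ is odd.
\end{itemize}

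What the paper's approach buys in exchange: completely explicit families whose correctness is a direct binomial-coefficient check, with no appeal to cycle-space facts.
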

\begin{proof}
    We assume that $p\geq 5$, the statement being trivial for $p=3$.
    Let $G$ denote the underlying graph of $D$.
    We distinguish two cases, depending on the value of $p \bmod 4$.
    
    \medskip
    
    \noindent {\bf Case 1:} $p \equiv 3\pmod 4$.

    \medskip

    Let $X$ be an arbitrary set of $p-1$ vertices that is disjoint from $S$, which exists as $n\geq p+2$. Let $X_1,\dots,X_{\ell}$ be all subsets of $X$ of size precisely $p-3$, and for each $i\in [\ell]$, let $Y_i=X_i\cup S$. By construction, each $Y_i$ has size exactly $p$. Let us briefly show that inverting the $Y_i$'s simulates the inversion of $S$ by considering all possible cases for an edge $e\in E(G)$:
    \begin{itemize}[itemsep=0pt]
        \item if $e$ is incident to a vertex of $V(D) \setminus (S\cup X)$, then $e$ is not reversed;
        \item if $e$ is incident to a vertex in $X$ and a vertex in $S$, then $e$ is reversed
        $\binom{p-2}{p-4} = \frac{1}{2}(p-2)(p-3)$
        times, which is an even number as $p-3\equiv 0 \pmod 4$;
        \item if $e$ belongs to $G[X]$, then it is reversed
        $\binom{p-3}{p-5} = \frac{1}{2}(p-3)(p-4)$ 
        times, which is an even number again; and
        \item if $e$ belongs to $G[S]$ then it is reversed $\binom{p-1}{p-3} = \frac{1}{2}(p-1)(p-2)$ times,
        which is odd as $p-2$ is odd and $p-1\equiv 2 \pmod 4$.
    \end{itemize}
    
    \noindent {\bf Case 2:} $p \equiv 1\pmod 4$.

    \medskip
    
    In this case, using an argument similar to the previous case, we obtain that \eqp{5}-inversions can be simulated by a set of \eqp{p}-inversions. 
    \begin{claim}
        \label{claim:simulating_5_inversions}
        For any set $R \subseteq V(D)$ of size $5$, 
        there exists a set of \eqp{p}-inversions simulating the inversion of $R$.
    \end{claim}
    \begin{proofclaim}
        Let $X$ be an arbitrary set of $p-3$ vertices disjoint from $R$, which exists as $n\geq p+2$. Let $X_1,\dots,X_{\ell}$ be all subsets of $Z$ of size precisely $p-5$, and for each $i\in [\ell]$, let $Y_i=X_i\cup R$. An analysis identical to the one from Case~1 shows that inverting $\Ycal = \{Y_1,\dots,Y_{\ell}\}$ simulates the inversion of $R$.
    \end{proofclaim}

    Using Claim~\ref{claim:simulating_5_inversions}, we obtain the following.
    
    \begin{claim}
        \label{claim:simulating_disjoint_3_inversions}
        For any two disjoint sets of triplets $R,R' \subseteq V(D)$, 
        there exists a set of \eqp{p}-inversions reversing the arcs of $D[R]$ and $D[R']$ and no other arc.
    \end{claim}
    \begin{proofclaim}
        Let us denote $R=\{x,y,z\}$. For every $u\in R$, let $\mathcal{X}_u$ be a set of \eqp{p}-inversions simulating the inversion of $(R\setminus u) \cup R'$, which exists by Claim~\ref{claim:simulating_5_inversions}. It is straightforward to check that
        $\mathcal{X}_x \XOR \mathcal{X}_y \XOR  \mathcal{X}_z$
        satisfies the desired statement.
    \end{proofclaim}
    
    We now make use of the hypothesis $\alpha(D)\geq 3$. Let us fix an independent set $I$ of size~$3$. Observe that the statement of the lemma is trivial when $I=S$, and that it follows from Claim~\ref{claim:simulating_disjoint_3_inversions} for any set $S$ disjoint from $I$. It thus remains to consider the sets $S$ intersecting $I$ on one or two vertices.

    Consider first any set $S$ with $|S\cap I| = 2$. Let $S'$ be any set of size $3$ disjoint from $S\cup I$, which exists as $n\geq p+2\geq 7$, while $|S\cup I|= 4$. By Claim~\ref{claim:simulating_disjoint_3_inversions}, applied with $R=I$ and $R'=S'$, there exists a set $\mathcal{X}$ of \eqp{p}-inversions simulating the inversion of $S'$. Applying the claim a second time to $R=S$ and $R'=S'$ yields a second set $\mathcal{X}'$ of \eqp{p}-inversions reversing the arcs of $D[S']$ and $D[S]$. Inversing  $\mathcal{X} \XOR \mathcal{X'}$ thus simulates the inversion of $S$.

    Now assume that $S$ is such that $|S\cap I|=1$ and let $S'$ be any set of size $3$ disjoint from $S$ and containing two vertices of $I$, which exists as $n\geq p+2\geq 6$. Since $|S'\cap I|=2$, we already justified that there exists a set $\mathcal{X}$ of \eqp{p}-inversions simulating the inversion of $S'$. Again, by Claim~\ref{claim:simulating_disjoint_3_inversions} applied with $R=S$ and $R'=S'$, there exists a set $\mathcal{X}'$ of \eqp{p}-inversions reversing the arcs of $D[S']$ and $D[S]$. Inverting $\mathcal{X} \XOR \mathcal{X'}$ thus simulates the inversion of $S$. The lemma follows.
\end{proof}

\subsection{Deriving \texorpdfstring{\Cref{lem:invertibility}}{Lemma 25}}
\label{subsec:invertibility}

With Lemmas~\ref{lemma:3_invertibility} and~\ref{lemma:simulating_3_inversions} in hand, we now derive \Cref{lem:invertibility}, which we first recall here for convenience.

\leminvertibility*

\begin{proof}
    Assume for a contradiction that, for some integer $k\geq 1$ and some odd integer $p\geq 3$, there exists a $2k$-edge-connected digraph $D$ of order $n\geq \max \{4k+2,p+2\}$ that is neither $(k,p)$-invertible nor a $k$-obstruction. Among all such digraphs, let $D=(V,A)$ be one for which $|A|$ is minimum and let $G$ be its underlying graph.

    Combining Lemmas~\ref{lemma:3_invertibility} and~\ref{lemma:simulating_3_inversions}, we have $p\geq 5$ and $\alpha(D) \leq 2$. We next show that the deletion of any arc destroys the $2k$-edge-connectivity of $D$.
    \begin{claim}\label{duqwhuq}
        For every $a \in A(D)$, we have that $D-a$ is not $2k$-edge-connected. 
    \end{claim}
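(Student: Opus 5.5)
Suppose, for a contradiction, that some arc $a\in A(D)$ has $D-a$ still $2k$-edge-connected, and use the minimality of $|A|$. The digraph $D-a$ has the same order $n\geq\max\{4k+2,p+2\}$ and strictly fewer arcs. Moreover, $D-a$ is not $(k,p)$-invertible. Indeed, any set $\Xcal$ of \eqp{p}-inversions with $\Inv(D-a;\Xcal)$ $k$-arc-strong also gives that $\Inv(D;\Xcal)$ is $k$-arc-strong. This is because $\Inv(D;\Xcal)$ is obtained from $\Inv(D-a;\Xcal)$ by adding one arc (the image of $a$), and adding arcs preserves $k$-arc-strong connectivity. One small point needs care: if $a$ lies in a digon $\dig{u,v}$ and the image of $a$ would become parallel to the remaining arc, we must still have a digraph. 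But adding an arc to a $k$-arc-strong multidigraph keeps it $k$-arc-strong, and the resulting object is exactly $\Inv(D;\Xcal)$, which is a digraph since inversions preserve the property of having no parallel arcs. So the implication holds. Hence, by the choice of $D$ as a minimum counterexample, $D-a$ must be a $k$-obstruction.

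Next, fix a $k$-certificate $(X_1,\dots,X_r,Y)$ of $D-a$, and let $u,v$ be the endvertices of $a$, with $G'=\UG(D-a)$. We derive a contradiction using $\alpha(D)\leq 2$ (already established) together with $n\geq 4k+2$. By Property~\ref{enumitem:kobstruction:ii}, $G'[X,Y]$ is complete with simple edges. Since $d_{G'}(X_i)=2k$ and $|Y|\geq 1$, every $X_i$ satisfies $|X_i|\leq 2k$.

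The main step is to bound $|Y|$ and $r$ using $\alpha\leq 2$, mirroring the counting in Claim~\ref{claim:every_e_belongs_to_mincut}. First, $Y$ and each $X_i$ are subsets of $V$, and two vertices from distinct parts $X_i,X_j$ can be adjacent only via the at most $2k$ edges leaving $X_i$. If $r\geq 3$, pick $x_i\in X_i$ for three indices. These three vertices would form an independent set of size $3$ in $G$, since only one extra edge ($a$) was added, unless edges leaving the $X_i$'s link them. Choosing the $x_i$ to avoid the at most $2k$ boundary neighbours is possible if the parts are large. For small parts, I would instead count: $\sum_i d_{G'}(X_i)=2kr$ must cover all edges of $G'[X,Y]$, which number $|X||Y|$. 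Also, $\alpha(G')\leq 3$, since removing one edge raises $\alpha$ by at most one. I would therefore aim to show that $G'[X]$ is nearly complete: each $x\in X_i$ has at least $|X|-|X_i|-1$ neighbours outside $X_i$, apart from at most a few non-neighbours. This yields $2k\geq d_{G'}(X_i)\geq |X_i|(|Y|+|X|-|X_i|-c)$ for a small constant $c$, which forces $n=|X|+|Y|$ to be at most about $2k+c$, contradicting $n\geq 4k+2$.

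The obstacle is making the near-completeness of $G'[X]$ precise from $\alpha(G)\leq 2$. The approach is to take a non-neighbour pair in $G'[X]$ across different parts, and note that a third vertex $y\in Y$ is adjacent to both, so it cannot directly extend them to an independent triple. Instead, one can use that non-adjacent pairs are free. Then, as in Claim~\ref{claim:every_e_belongs_to_mincut}, one can use the identity $n\leq |X_i\cup X_j|+|N(X_i)\cup N(X_j)|$ together with domination from $N[x]\cup N[z]=V$, which follows from $\alpha\leq2$ for any edge or non-edge $xz$. The resulting estimate $n\leq d_G(X_i)+d_G(X_j)+1\leq 4k+3$, with the cut sizes in $G$ being at most $2k+1$, should be tightened using the single extra edge $a$. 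That computation is where I expect the real work to lie.
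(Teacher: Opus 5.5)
Your first paragraph is correct and matches the paper: if $D-a$ were $(k,p)$-invertible, the same inversions would work for $D$. So by minimality $D-a$ is a $k$-obstruction, with some certificate $(X_1,\dots,X_r,Y)$. After that, however, you never reach a contradiction, and your sketch rests on claims that are false.

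Nothing in $\alpha(D)\le 2$ makes $G'[X]$ nearly complete; the disjoint union of two cliques has independence number $2$. In fact the certificate forces the opposite. Since $X_i$ already sends $|X_i|\cdot|Y|$ of its $2k$ boundary edges to $Y$, it has at most $2k-|X_i|\cdot|Y|$ edges to $X\setminus X_i$. So $G'$ is very sparse between distinct parts, and your inequality $2k\ge |X_i|(|Y|+|X|-|X_i|-c)$ has no basis. Also, $N_G[x]\cup N_G[z]=V$ follows from $\alpha(D)\le 2$ only for \emph{non-adjacent} $x,z$, not for edges. Free pairs are defined via \eqp{3}-inversions inside the proof of Lemma~\ref{lemma:3_invertibility} and play no role here. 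Finally, your closing bound $n\le d_G(X_i)+d_G(X_j)+1\le 4k+3$ does not contradict $n\ge 4k+2$. The tightening you postpone is the whole proof.

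What is missing is to use sparsity to find a non-adjacent pair, then $\alpha(D)\le 2$ for domination, together with a case analysis on where $a$ lies.
\begin{itemize}
\item \textbf{Finding the pair.} For $x_1\in X_1$ there is a non-neighbour $x_2\in X\setminus X_1$. Otherwise $V=X_1\cup N_G(X_1)$, and since a vertex of $Y$ absorbs $|X_1|$ edges of $\delta_G(X_1)$, we get $n\le d_G(X_1)+1\le 2k+2$.
\item \textbf{The basic count.} With $x_2\in X_2$, $\alpha(D)\le 2$ gives $V=X_1\cup X_2\cup N_G(X_1)\cup N_G(X_2)$. A common neighbour $y\in Y$ then yields $n\le d_G(X_1)+d_G(X_2)+1$. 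This is at most $4k+1$ unless $a$ has exactly one endvertex in $X_1$ or in $X_2$.
\item \textbf{$a$ joins $X_1\cup X_2$ to $Y$.} Then $a$ lies in a digon, because $G'[X,Y]$ is still complete. Hence $y$ absorbs one more edge, which compensates.
\item \textbf{$a$ joins two parts, relabelled $X_1,X_2$, with a non-adjacent pair between them.} Both endvertices of $a$ are double-counted, which saves $2$.
\item \textbf{$a$ joins $X_1,X_2$ and $G[X_1,X_2]$ is complete.} A similar count with a third part forces $|X_1|=|X_2|=1$. You then rerun the argument on two parts $X_3,X_4$ not meeting $a$; their existence uses $|Y|\le 2k+1$.
\end{itemize}
Each branch gives $n\le 4k+1$. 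Without this careful choice of parts, the cuts of size $2k+1$ created by adding $a$ back make the naive count too weak by up to $2$.
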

    \begin{proofclaim}
    Suppose for the sake of a contradiction that there exists some $a \in A$ such that $D-a$ is $2k$-edge-connected.
    By choice of $D$,we obtain that $D-a$ is a $k$-obstruction.
    Let $(X_1,\dots,X_r,Y)$ be a $k$-certificate of $D-a$. Observe then that for every $i \in [r]$, we have $d_G(X_i)=2k$, unless $X_i$ contains exactly one endvertex of $a$, in which case $d_G(X_i)=2k+1$. Since $Y$ is complete to $X=\bigcup_{i\in [r]}X_i$, we get that $|Y| \leq 2k+1$, for otherwise $d_G(X_1) \geq 2k+2$. Moreover, we have $|X_i|\leq d_G(X_i) \leq 2k+1$ for every $i\in [r]$. 
    We now distinguish two cases.
    \begin{description}
        \item[Case 1:] {\it The endvertices of $a$ belong to the same part of $(X_1,\dots,X_r,Y)$, or at least one of them belongs to $Y$.}

        Let $x_1\in X_1$ be an arbitrary vertex. There exist a vertex $x_2\in X\setminus X_1$ non-adjacent to $x_1$ in $G$, for otherwise $V = X_1 \cup N_G(X_1)$
        which, using the fact that $Y$ is complete to $X_1$, implies that
        \[
            n = |X_1|+ |N_G(X_1)| \leq |X_1| +d_G(X_1) - (|X_1|-1) = d_G(X_1)+1 \leq 2k+2,
        \]
        a contradiction. Assume without loss of generality that $x_2\in X_2$. Since $x_1$ and $x_2$ are non-adjacent and $\alpha(D) \leq 2$, we have $V= N[x_1] \cup N[x_2]$, and in particular
        \[
            V = X_1 \cup N_G(X_1) \cup X_2 \cup N_G(X_2).
        \]
        If $d_G(X_1) = d_G(X_2)= 2k$ then 
        \[
            n\leq |X_1\cup X_2| + d_G(X_1) + d_G(X_2) - (|X_1\cup X_2|-1) = d_G(X_1) + d_G(X_2)+1 \leq 4k+1,
        \]
        a contradiction. Hence, as we are in Case~1, $a$ is incident to a vertex in $X_1\cup X_2$ and a vertex $y\in Y$. Note that, in this case, $a$ belongs to a digon, as after removing $a$ we get that $X$ is still complete to $Y$. Therefore, some vertex $y\in Y$ is incident to at least $|X_1\cup X_2|+1$ edges with an extremity in $X_1\cup X_2$. We thus have
        \begin{align*}
            n& \leq |X_1 \cup X_2| + |N_G(X_1\cup X_2)\setminus \{y\}| +1 \\
            &\leq |X_1 \cup X_2| + d_G(X_1\cup X_2) -(|X_1\cup X_2|+1) + 1\\
            &\leq d_G(X_1) + d_G(X_2) \\
            &= 4k+1,
        \end{align*}
        a contradiction. This concludes Case~1.

        \item[Case 2:] {\it The extremities of $a$ belong to $X_i$ and $X_j$ for some $i,j \in [r]$ with $i\neq j$.}

        Assume without loss of generality that $i=1$ and $j=2$. 
        Since $a$ is incident to a vertex of $X_1$ and a vertex of $X_2$, note that 
        \begin{align*}
            |X_1 \cup X_2 \cup N_G(X_1) \cup N_G(X_2)|\leq |X_1 \cup X_2|+ |N_G(X_1) \cup N_G(X_2)| - 2.
        \end{align*}
        Let $y$ be an arbitrary vertex of $Y$. 
        If there exist two vertices $x_1\in X_1$ and $x_2\in X_2$ that are non-adjacent, then, as $\alpha(D)\leq 2$, we have $V=N_G[x_1] \cup N_G[x_2]$, and in particular
        \[
            V = X_1 \cup N_G(X_1) \cup X_2 \cup N_G(X_2).
        \]
        Together with the previous inequality, it follows that
        \begin{align*}
            n&\leq |X_1 \cup X_2|+ |N_G(X_1) \cup N_G(X_2)| - 2\\
            &\leq |X_1 \cup X_2|+ |(N_G(X_1) \cup N_G(X_2))\setminus \{y\}| - 1\\
            &\leq  |X_1|+|X_2|+ |N_G(X_1) \setminus \{y\}|+ |N_G(X_2) \setminus \{y\}| - 1\\
            &\leq d_G(X_1) + d_G(X_2) - 1\\
            &= 4k+1,
        \end{align*}
        a contradiction.
        Hence, $G[X_1,X_2]$ is complete. 
        
        We claim that $|X_1|=|X_2|=1$. Assume for a contradiction, and by symmetry, that $|X_1|\geq 2$, and let $x_1\in X_1$ be an arbitrary vertex of $X_1$. There exists a vertex $x_3\in X\setminus X_1$ that is not adjacent to $x_1$, for otherwise $V=X_1\cup N_G(X_1)$, which again implies
        \[
            n \leq d_G(X_1) +1= 2k+2,
        \]
        a contradiction. Since $G[X_1,X_2]$ is complete, we assume without loss of generality that $x_3\in X_3$. Again, we have $V = X_1 \cup N_G(X_1) \cup X_3 \cup N_G(X_3)$. Since the vertices in $X_1$ are all adjacent to the vertices in $X_2$ and $|X_1|\geq 2$, we have
        \begin{align*}
            n &\leq |X_1 \cup X_3|+ |N_G(X_1) \cup N_G(X_3)| \\
            &\leq |X_1 \cup X_3|+ d_G(X_1) + d_G(X_3) -(|X_1\cup X_3| - 1) - (|X_1|-1)\\
            &\leq  d_G(X_1) + d_G(X_3)\\
            &= 4k+1,
        \end{align*}
        a contradiction. Therefore $|X_1|=|X_2|=1$. Fix an arbitrary vertex $x\in X_3$, which exists as $|Y\cup X_1\cup X_2|\leq 2k+3 \leq n-1$. Let $x_4$ be a vertex in $X\setminus (X_1 \cup X_2 \cup X_3)$ that is non adjacent to $x_3$, which exists for otherwise $V=X_1 \cup X_2 \cup X_3 \cup N_G(X_3)$ and
        \[
            n \leq 2 + |X_3|+|N_G(X_3)| \leq 2+ d_G(X_3) +1 = 2k+3,
        \]
        a contradiction. We assume without loss of generality that $x_4\in X_4$. Since $a$ is not incident to any vertex in $X_3\cup X_4$, as in the previous case we have
        \begin{align*}
            n&\leq |X_3\cup X_4| + d_G(X_3) + d_G(X_4) - (|X_3\cup X_4|-1)\\
            &\leq 4k+1,
        \end{align*}
        a contradiction.\qedhere
    \end{description}
 \end{proofclaim}
   By \Cref{duqwhuq}, every edge of $G$ belongs to a cut of size $2k$. By Theorem~\ref{thm:mader}, $G$ contains a vertex $u$ of degree at most $2k$, and it has degree exactly $2k$ as $G$ is $2k$-edge-connected. Let $C=V\setminus N_G[u]$. Since $\alpha(G)\leq 2$, $G[C]$ contains a complete graph of order at least
    \[
        n-(d_G(u)+1) \geq 2k+1.
    \]
    as a spanning subgraph.
    Assume first that two distinct vertices $x,y\in C$ have a neighbor in $N_G(u)$. Then there exist $2k+1$ pairwise edge-disjoint path from $x$ to $y$, namely the $1$-path $(x,y)$, the $2$-paths of the form $(x,z,y)$ for $z\in C\setminus \{x,y\}$, and a path of length at most $4$ whose internal vertices belong to $N_G[u]$. This is a contradiction, as every edge belongs to a cut of size $2k$. A similar argument shows that no two vertices in $C$ are linked by two parallel edges, so $G[C]$ is a complete graph.

    Hence, as $G$ is connected, there is exactly one vertex $x$ in $C$ with neighbors in $N_G(u)$. Let $y\in C\setminus x$. Since $d_G(y)=2k$, by symmetry of the roles of $y$ and $u$ we deduce that $N_G[u]$ is a clique $C'$ of size $2k+1$ of $G$. By symmetry of the roles of $C'$ and $C$, only one vertex $x'$ of $C'$ has neighbors in $C$.

    Observe that removing the edges between $x$ and $x'$ disconnects $G$.
    Since $G$ is $2k$-edge-connected, we deduce that $k=1$ and $\dig{x,x'}$ is a digon of $D$. Hence $n=6$, a contradiction as $n\geq p+2\geq 7$. The result follows.
\end{proof}

\subsection{Identifying \texorpdfstring{$k$}{k}-obstructions}
\label{sec:identifying_obstructions}

The objective of this section is to derive Corollary~\ref{cor:kernel_k+p}. With \Cref{lem:invertibility} on hand, the last ingredient we need is a polynomial-time algorithm to decide whether a given $2k$-edge-connected digraph is a $k$-obstruction. More precisely, the technical difficulty is to prove the following result.

\begin{lemma}\label{mainident}
    Given a $2k$-edge-connected digraph $D$ of order $n$ for some positive integers $k$ and $n$ with $n \geq 4k+2$, one can decide in polynomial time whether $D$ is a $k$-obstruction. 
\end{lemma}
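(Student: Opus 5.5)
The plan is to reduce the recognition problem to trying polynomially many candidate sets $Y$, and then checking Properties~\ref{enumitem:kobstruction:i}--\ref{enumitem:kobstruction:iii} for each candidate in polynomial time. Let $G$ be the underlying graph. The first step is a structural observation. Suppose $(X_1,\dots,X_r,Y)$ is a $k$-certificate. Every vertex of $X$ is joined by single edges to all of $Y$. Hence $|Y|\le d_G(X_1)=2k$. Moreover, $|X_i|\le 2k$ for every $i$, since $d_G(X_i)\ge |X_i||Y|$. Because $|Y|\le 2k$ while $n\ge 4k+2$, $X$ is large. Fix any $x_0\in X$. Then $Y$ is contained in $N_G(x_0)$, and in fact $Y$ is a subset of the vertices joined to $x_0$ by a simple edge.

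This bound does not by itself give polynomially many guesses when $k$ is not fixed. I would therefore pin down $Y$ differently. Guess a single vertex $y_0\in Y$ and a single vertex $x_0\in X$; there are $O(n^2)$ choices. Every vertex $y\in Y$ is adjacent to all of $X$, and $|X|\ge n-2k\ge 2k+2$. So any two vertices $y,y'\in Y$ have at least $2k+2$ edge-disjoint paths between them, of the form $(y,x,y')$. Conversely, consider a vertex $v\in X_i$. Then $\delta_G(X_i)$ separates $v$ from $y_0$ with only $2k$ edges. Hence $\lambda_G(v,y_0)=2k$ when $v\in X$, and $\lambda_G(v,y_0)\ge 2k+2$ when $v\in Y$. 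So $Y$ is determined by $y_0$ as
\[
Y=\{v : \lambda_G(v,y_0)\ge 2k+1\},
\]
where we include $y_0$ itself. This set can be computed by max-flow computations, using \Cref{thm:edge_connectivity_poly}-type tools. In particular, guessing $x_0$ becomes unnecessary: I only iterate over $y_0\in V$.

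For each candidate $Y$, with $X=V\setminus Y$ nonempty, I check Property~\ref{enumitem:kobstruction:ii} directly. Then I decide whether a $2k$-regular partition of $X$ exists using \Cref{eedqe}. Concretely, for each $x\in X$ I must decide whether some $S\subseteq X$ with $x\in S$ has $d_G(S)=2k$. This is a minimum cut computation: contract $Y$ into a single sink $t$ and compute the minimum $x$--$t$ cut. This cut value is $2k$ exactly when such an $S$ exists, since $G$ is $2k$-edge-connected and so every cut is at least $2k$. By the merging argument in the proof of \Cref{eedqe}, I obtain a partition whenever these tests succeed. It is the union of the tight sets, refined by uncrossing; alternatively, the existence test alone suffices for the decision.

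Finally, Property~\ref{enumitem:kobstruction:iii} depends only on $X$ and $Y$, not on the partition, so it is a direct count. The step I expect to be the main obstacle is justifying that the characterization of $Y$ via $\lambda_G(\cdot,y_0)$ is exact. Specifically, I need a vertex of $X$ never to reach $y_0$ with $2k+1$ edge-disjoint paths, which holds because $\delta_G(X_i)$ has size $2k$. I also need the candidate computed from a wrong guess never to produce a false positive. The latter is harmless, because I verify all three properties for the computed partition explicitly. Therefore the algorithm answers yes only on genuine certificates, and it finds one whenever one exists, by trying the correct $y_0$.
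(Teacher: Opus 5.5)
Your proof is correct. Its overall architecture matches the paper's: enumerate polynomially many candidates for $Y$, then test each one by checking Properties~(ii) and~(iii) directly, and by deciding whether $X$ has a $2k$-regular partition via \Cref{eedqe} and one minimum-cut computation per vertex. This testing step is exactly Proposition~\ref{ctufhij} and Lemma~\ref{rtfugihjkm}.

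You differ from the paper in how the candidates for $Y$ are pinned down.

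\textbf{The paper's route.} Proposition~\ref{zvuzbnml} shows that whenever $|Y|\ge 2$, $Y$ is exactly the set of vertices of degree at least $2k+1$. The case $|Y|=1$ is handled by trying all $n$ singletons. The degree bound $d_G(x)\le 2k$ for $x\in X$ uses that two vertices of a digraph are joined by at most two edges of $G$.

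\textbf{Your route.} You recover $Y$ from any one of its vertices $y_0$ as $\{y_0\}\cup\{v:\lambda_G(v,y_0)\ge 2k+1\}$. This works in both directions:
\begin{itemize}
\item inside $Y$, the paths $(y,x,y_0)$ for $x\in X$, together with $|X|\ge n-2k\ge 2k+2$, give $\lambda_G(y,y_0)\ge 2k+2$;
\item for $v\in X_i$, the cut $\delta_G(X_i)$ caps $\lambda_G(v,y_0)$ at $2k$.
\end{itemize}

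\textbf{Comparison.} The paper's criterion is cheaper to evaluate, since it needs only degrees, and it yields a single explicit candidate when $|Y|\ge 2$. The price is the case split on $|Y|$. Your criterion treats $|Y|=1$ and $|Y|\ge 2$ uniformly, at the cost of $O(n^2)$ additional max-flow computations. It also never uses the bounded edge multiplicity of $G$, so your argument would carry over unchanged to multidigraphs, where the degree characterization can fail.
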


In order to prove Lemma~\ref{mainident}, we first need one more extra definition.
Namely, if for some digraph $D$ and $Y \subseteq V(D)$, there exists a partition $(X_1,\ldots,X_r)$ of $V(D)\setminus Y$ such that $(X_1,\ldots,X_r,Y)$ is a $k$-certificate for $D$, we say that $Y$ can be {\it extended} to a $k$-certificate for~$D$.

The strategy to prove Lemma~\ref{mainident} consists of proving that the problem can be decided in polynomial time if $Y$ is already known and then showing that only a small number of candidates for $Y$ needs to be considered. For the first part, we need to show that we can decide in polynomial time whether a given set of vertices admits a $2k$-regular partition. 
Using \Cref{eedqe}, we show that this problem can be reduced to a small number of minimum cut computations.

\begin{proposition}\label{ctufhij}
    Given a positive integer $k$, a $k$-edge-connected graph $G$ and a set $X \subseteq V(G)$, one can decide in polynomial time whether $X$ admits a $k$-regular partition.
\end{proposition}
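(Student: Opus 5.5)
By \Cref{eedqe}, $X$ admits a $k$-regular partition if and only if every $x\in X$ lies in some set $S_x\subseteq X$ with $d_G(S_x)=k$. So the plan is to test this condition separately for each $x\in X$. Each test will be a single minimum cut computation, which gives polynomially many flow computations in total (see \Cref{thm:edge_connectivity_poly} and standard max-flow/min-cut algorithms).

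If $X=V(G)$, the condition is never satisfied, since a cut must be proper. Indeed, the whole set $V(G)$ is not a cut, and any $S_x\subsetneq V(G)$ with $x\in S_x$ is a proper subset. With $X=V(G)$, the requirement is simply that for every $x$ there is a proper set $S_x\ni x$ with $d_G(S_x)=k$. So in that case I would compute, for each $x$, the minimum cut separating $x$ from some other vertex and compare it to $k$. One should double-check the conventions: a partition of $V(G)$ into sets of degree $k$ needs $r\geq 2$, and this is consistent with the test, since the sets are proper. Of course, if $X=V(G)$ has at most one vertex, things are degenerate and I would treat that case by hand.

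For $X\subsetneq V(G)$, fix $x\in X$. I would build the auxiliary graph $G_x$ obtained from $G$ by contracting $V(G)\setminus X$ into a single vertex $t$. Contraction preserves the edges leaving every set containing $x$ and contained in $X$, and loops created by the contraction are discarded. The sets $S$ with $x\in S\subseteq X$ correspond exactly to the $x$–$t$ cuts in $G_x$, with $d_{G_x}(S)=d_G(S)$. Hence we compute $\lambda_{G_x}(x,t)$ with one max-flow computation. Because $G$ is $k$-edge-connected, every such cut has size at least $k$, so $\lambda_{G_x}(x,t)\geq k$. Moreover, a set $S_x$ as in \Cref{eedqe}\ref{enumitem:eedqe:2} exists if and only if $\lambda_{G_x}(x,t)=k$. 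The algorithm accepts if and only if this equality holds for all $x\in X$.

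Correctness then follows directly from \Cref{eedqe}. The running time is $|X|$ max-flow computations on a graph no larger than $G$; multiplicities are handled as capacities, so the computation is polynomial even for multigraphs. The only point needing care is the equivalence between minimum $x$–$t$ cuts in $G_x$ and the sets $S_x\subseteq X$, together with the edge case $X=V(G)$ handled above. I expect no genuine obstacle here. If a certificate partition is wanted, it can be recovered as in the proof of \Cref{eedqe}, by repeatedly uncrossing and merging overlapping tight sets; the submodularity argument given there shows that unions of intersecting tight sets are again tight.
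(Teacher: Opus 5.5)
Your proposal is correct and is essentially the paper's proof: reduce via \Cref{eedqe} to a per-vertex test, contract $V(G)\setminus X$ into a single vertex, and use one max-flow computation per $x\in X$ to check that the local edge-connectivity equals $k$ (the paper checks $\leq k$, which is the same given $k$-edge-connectivity). In the case $X=V(G)$, where the contraction is undefined and which the paper passes over silently, your final test is correct (it amounts to checking $\lambda(G)=k$), but your opening sentence there, claiming the condition is ``never satisfied'', contradicts what follows and should be deleted.
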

\begin{proof}
    By \Cref{eedqe}, it suffices to check whether for every $x \in X$, there exists a set $S_x\subseteq X$ such that $x \in S_x$ and $d_G(S_x)=k$.

    Fix some $x \in X$ and let $G'$ be the graph obtained from $G$ by contracting $V(G)\setminus X$ into a single vertex $y$. As $G$ is $k$-edge-connected and by construction, it now suffices to check whether $\lambda_{G'}(x,y)\leq k$. This can be done in polynomial time using a standard minimum cut algorithm, see for example \cite{schrijver2002}.

We do this for every $x \in X$. As the algorithm runs in polynomial time for every $x \in X$ and $|X|\leq |V(G)|$, the entire algorithm runs in polynomial time.
\end{proof}

We now conclude that we can decide efficiently whether a given set can be extended to a $k$-certificate.

\begin{lemma}\label{rtfugihjkm}
    Given an integer $k$, a $2k$-edge-connected digraph $D$ with $n \geq 4k+2$ and a set $Y \subseteq V(D)$, one can decide in polynomial time whether $Y$ can be extended to a $k$-certificate.
\end{lemma}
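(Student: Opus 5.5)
Set $X=V(D)\setminus Y$ and let $G=\UG(D)$. The plan is to check the three defining properties of a $k$-certificate one at a time, noting that only Property~\ref{enumitem:kobstruction:i} depends on how $X$ is partitioned, and only through whether some $2k$-regular partition exists.

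First, the trivial cases. If $Y=\emptyset$ or $X=\emptyset$, we answer no, since all parts of a certificate must be non-empty. Next we check Property~\ref{enumitem:kobstruction:ii} directly: for every pair $x\in X$, $y\in Y$, the multiplicity of the edge $xy$ in $G$ must be exactly one. This takes $O(n^2)$ time.

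Second, Property~\ref{enumitem:kobstruction:iii}. It depends only on $X$ and $Y$, not on the partition of $X$. We compute $d^+_D(X)$ and check that $d^+_D(X)-\frac12|X||Y|$ is an odd integer. If $|X||Y|$ is odd, this quantity is not an integer, and we answer no.

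Third, Property~\ref{enumitem:kobstruction:i}. It asks exactly for a partition $(X_1,\dots,X_r)$ of $X$ into non-empty parts with $d_G(X_i)=2k$, that is, a $2k$-regular partition of $X$ in $G$. Since $D$ is $2k$-edge-connected, $G$ is too, so we may apply \Cref{ctufhij} with $2k$ in place of $k$ and decide in polynomial time whether such a partition exists. The parts of any partition are non-empty by definition, so nothing more is needed.

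Correctness is then immediate. $Y$ extends to a $k$-certificate if and only if all three checks succeed, because the three properties are independent once $Y$ is fixed: any $2k$-regular partition of $X$ works equally well for Properties~\ref{enumitem:kobstruction:ii} and~\ref{enumitem:kobstruction:iii}. The only step with real content is the existence of a regular partition, and \Cref{ctufhij} (resting on \Cref{eedqe} and min-cut computations) already handles it. I therefore expect no genuine obstacle. The only care needed is to make sure a contraction-based cut $S_x$ stays inside $X$ and is a proper non-empty subset; this is already ensured within \Cref{ctufhij}, because $Y\neq\emptyset$ supplies the contracted vertex. The hypothesis $n\ge 4k+2$ is not needed here.
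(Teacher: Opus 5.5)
Your proposal is correct and matches the paper's proof: check Properties (ii) and (iii) directly, since they depend only on $X$ and $Y$, and then apply Proposition~\ref{ctufhij} with $2k$ in place of $k$ to decide whether $X$ admits a $2k$-regular partition. Your explicit rejection of the degenerate cases $X=\emptyset$ or $Y=\emptyset$, and your remark that $n\geq 4k+2$ is never used, are harmless additions that the paper leaves implicit.
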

\begin{proof}
    Let $X=V(D)\setminus Y$ and $G$ be the underlying graph of $D$. We first check whether $G$ contains exactly one edge linking $x$ and $y$ for all $x \in X$ and $y \in Y$ and whether $d^+_D(X) - \frac{1}{2}|X|\cdot |Y|$ is an odd integer. Clearly, $G$ and $X$ can be computed in polynomial time and both checks can be executed in polynomial time. If  any of these checks fail, we correctly report that $Y$ cannot be extended to a $k$-certificate. Otherwise, by the definition of $k$-certificates, it suffices to check whether $X$ admits a $2k$-regular partition. By Proposition~\ref{ctufhij}, this can also be checked in polynomial time. Hence the entire algorithm runs in polynomial time.
\end{proof}
    
The next result restricts the set of candidates for the set $Y$ that need to be considered. It shows that $Y$ either consists of a single vertex or is an explicitly specified set.
\begin{proposition}\label{zvuzbnml}
    Let $k$ be a positive integer, $D$ a $2k$-edge-connected digraph of order $n$ with $n \geq 4k+2$, let $G$ be the underlying graph of $D$ and $(X_1,\ldots,X_r,Y)$ a $k$-certificate for $D$ with $|Y|\geq 2$. Then $Y=\{v\in V(G):d_G(v)\geq 2k+1\}$.
\end{proposition}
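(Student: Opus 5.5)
We show both inclusions: every vertex of $Y$ has degree at least $2k+1$, and every vertex of $X=V(G)\setminus Y$ has degree at most $2k$. The plan rests on two size estimates coming from Properties~\ref{enumitem:kobstruction:i} and~\ref{enumitem:kobstruction:ii}. Fix any $i\in[r]$. Since $X_i$ is complete to $Y$ by simple edges, we get $|X_i|\cdot|Y|\leq d_G(X_i)=2k$. With $|Y|\geq 2$ this yields $|X_i|\leq k$ and $|Y|\leq 2k$. The same bound gives $|X|=n-|Y|\geq n-2k\geq 2k+2$.

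\emph{Vertices of $Y$.} Each $y\in Y$ is adjacent to every vertex of $X$, so
\[
d_G(y)\geq |X|\geq 2k+2\geq 2k+1.
\]
This direction is immediate.

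\emph{Vertices of $X$.} Let $x\in X_i$. Every edge at $x$ either leaves $X_i$ or stays inside $G[X_i]$. Those leaving $X_i$ number at most $d_G(X_i)=2k$, but we must also account for edges inside $X_i$, and these need care. A crude bound on $d_G(x)$ via $|X_i|-1$ internal neighbours might exceed $2k$ once digons are present. So we argue more cleanly by counting the edges of $\delta_G(X_i)$ that are not incident to $x$:
\begin{itemize}
\item every other vertex $x'\in X_i\setminus\{x\}$ contributes its $|Y|\geq 2$ edges to $Y$;
\item $x$ itself contributes $|Y|$ edges to $Y$, plus its edges to $X\setminus X_i$.
\end{itemize}
Hence
\[
2k=d_G(X_i)\geq |Y|\cdot|X_i| + d_G(x,X\setminus X_i).
\]
It remains to bound $d_G(x,X_i\setminus\{x\})$, which is at most $2(|X_i|-1)$ since $D$ has no parallel arcs. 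This gives
\[
d_G(x)\leq |Y| + d_G(x,X\setminus X_i) + 2(|X_i|-1)\leq 2k-|Y|(|X_i|-1)+2(|X_i|-1)\leq 2k,
\]
using $|Y|\geq 2$. So every vertex of $X$ has degree at most $2k$, completing the proof.

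The only delicate step is this last degree bound for $x\in X$. It is where the hypothesis $|Y|\geq 2$ is essential: the trade-off between the multiplicity $2$ of digons inside $X_i$ and the at least two edges to $Y$ per vertex is exactly balanced. When $|Y|=1$ this fails, which is why single-vertex $Y$ must be handled separately.
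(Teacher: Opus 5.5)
Your proof is correct and is essentially the paper's argument. Both bound $|Y|\le 2k$ through $|X_i|\,|Y|\le d_G(X_i)=2k$, deduce $d_G(y)\ge |X|\ge 2k+1$ for $y\in Y$, and, for $x\in X_i$, offset the at most $2(|X_i|-1)$ edges from $x$ into $X_i$ against the at least $|Y|(|X_i|-1)$ edges from $X_i\setminus\{x\}$ to $Y$ in $\delta_G(X_i)$. One wording slip: you say you count "the edges of $\delta_G(X_i)$ not incident to $x$", but your bound includes $x$'s own edges, so you are really counting all of $\delta_G(X_i)$; this does not affect the argument.
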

\begin{proof}
    First suppose that $|Y|\geq 2k+1$. Then, as  $(X_1,\ldots,X_r,Y)$ is a $k$-certificate, we have $d_G(X_1)\geq |X_1|\cdot |Y|\geq |Y|\geq 2k+1$, a contradiction. It follows that $|Y|\leq 2k$.

   Let $X=\bigcup_{i \in [r]}X_i$ and consider some $y \in Y$. As $G$ contains exactly one edge linking $y$ and $x$ for all $x \in X$ and by $|Y|\leq 2k$ and $n \geq 4k+2$, we have $d_G(y)\geq |X|=n-|Y|\geq 2k+1$.

   Now consider some $x \in X$. By symmetry, we may suppose that $x \in X_1$. As $|Y|\geq 2$ and $(X_1,\ldots,X_r,Y)$ is a $k$-certificate, this yields 
   \begin{align*}
       2k=d_G(X_1)&= d_G(x)-d_{G[X_1]}(x)+d_{G-x}(X_1 \setminus \{x\})\\
       &\geq d_G(x)-2\cdot|X_1 \setminus \{x\}|+|Y|\cdot|X_1 \setminus \{x\}|\\
       &\geq d_G(x).
   \end{align*}
   Hence, the statement follows.
\end{proof}

We are now ready to give the main proof Lemma~\ref{mainident}.
\begin{proof}[Proof of Lemma~\ref{mainident}]
    Let a $2k$-edge-connected digraph $D$ be given and let $G$ be its underlying graph. 
    We check for every $v \in V(D)$ whether $\{v\}$ can be extended to a $k$-certificate in $D$. Moreover, we check whether $\{v \in V(D):d_G(v)\geq 2k+1\}$ can be extended to a $k$-certificate in $D$. If one of these sets can be extended to a $k$-certificate, we report that $D$ is a $k$-obstruction. Otherwise, we report that $D$ is not a $k$-obstruction. It follows from Proposition~\ref{zvuzbnml} that the output of the algorithm is correct. Moreover, by Lemma~\ref{rtfugihjkm}, all of these checks can  be executed in polynomial time. As we only execute a polynomial number of checks, it follows that the entire algorithm runs in polynomial time.
\end{proof}

We are finally able to derive Corollary~\ref{cor:kernel_k+p}, that we first recall here for convenience.

\corkernelkplusp*

\begin{proof}
    Let a digraph $D$, an integer $p\ge 2$, and a positive integer $k$ be given. If $n < \max\{p+2,4k+2\}$, then $D$ itself is a kernel of the desired size. We may hence suppose that $n \geq  \max\{p+2,4k+2\}$. By \Cref{lem:invertibility}, we obtain that $D$ is a yes-instance of {\sc Invertibility} if and only if $D$ is $2k$-edge-connected and $D$ is not a $k$-obstruction. By Theorem~\ref{thm:edge_connectivity_poly} and Lemma~\ref{mainident}, we can decide in polynomial time whether this is the case. If this is the case, we output a rotative tournament on $2k+1$ vertices and otherwise, we output an edgeless graph on two vertices. It is not difficult to see that this is indeed a kernel of appropriate size.
\end{proof}

\section{Inapproximability}
\label{sec:inapproximability}

This section is dedicated to proving that for any fixed $p\geq 3$,
the problem consisting of computing $\inv{k}{p}$ does not admit a PTAS. More formally, we prove Theorem~\ref{thm:inapproximability}, that we recall here for convenience.

\thminapproximability*

The proof of Theorem~\ref{thm:inapproximability} is separated into two parts. First in Section~\ref{sec:inapproximability:p=3}, we prove the statement for $p=3$ by a reduction from the problem of packing copies of $P_3$ in a bipartite graph, see Theorem~\ref{inappr1}. Next, in Section~\ref{sec:inapproximability:pgeq4}, we prove the statement for $p\geq 4$ by a reduction from a restricted version of the hypergraph matching problem, see Theorem~\ref{inappr2}. Theorems~\ref{inappr1} and~\ref{inappr2} together imply Theorem~\ref{thm:inapproximability}.

\subsection{The case \texorpdfstring{$p=3$}{p=3}}
\label{sec:inapproximability:p=3}
 
This section is dedicated to proving Theorem~\ref{thm:inapproximability} for the case $p=3$. We first introduce the problem we reduce from.
Given a graph $G$, a {\it $P_3$-packing} in $G$ is a collection of pairwise vertex-disjoint subgraphs of $G$ each of which is isomorphic to $P_3$. 
We consider the following problem.

\defproblem{$P_3$-Packing (P3P)}{A graph $G$ and an integer $t$.}{Does $G$ admit a $P_3$-packing of size $t$?}

An instance $(G,t)$  of P3P is called {\it bipartite} if $G$ is bipartite. Our reduction is based on the following inapproximability result due to Monnot and Toulouse~\cite{MONNOT2007677}.
\begin{lemma}[\cite{MONNOT2007677}]
    \label{etrztuzibjn}
    There exist $\epsilon'>0$ and $c>0$ such that, unless $P=NP$, there is no polynomial-time algorithm that, given a bipartite instance $(G,t)$ of P3P with $t \geq c|V(G)|$, outputs `yes' if $G$ admits a $P_3$-packing of size $t$ and outputs `no' if $G$ does not admit a $P_3$-packing of size $(1-\epsilon')t$.
\end{lemma}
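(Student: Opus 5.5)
This statement is quoted from Monnot and Toulouse~\cite{MONNOT2007677} and is not proved in the paper. If I had to reprove it, I would give a gap-preserving (L-)reduction from a bounded-occurrence matching problem that already has a gap. The source problem would be \textsc{3-Dimensional Matching} with every element occurring in at most $B$ triples, where it is NP-hard (Petrank's gap version, building on Kann's APX-hardness) to distinguish:
\begin{itemize}
\item instances with $q$ elements per class that admit a perfect matching, from
\item instances in which every matching has at most $(1-\delta)q$ triples,
\end{itemize}
for some constants $B$ and $\delta>0$. The target is a family of bipartite graphs $G$ together with $t=|V(G)|/3$. In the yes case $G$ should have a perfect $P_3$-packing, which covers every vertex. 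This automatically gives $t\geq c|V(G)|$ with $c=1/3$.

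\textbf{Construction.} I would follow the Kirkpatrick--Hell style reduction.
\begin{itemize}
\item Create one vertex for each element.
\item For each triple $T=\{x,y,z\}$, attach a constant-size bipartite gadget $H_T$ to $x,y,z$. The bipartition is chosen so that element vertices and their gadget neighbours lie on opposite sides.
\item $H_T$ must have exactly two ways to be covered perfectly by $P_3$'s that stay inside $H_T\cup\{x,y,z\}$. In the \emph{used} way, $H_T\cup\{x,y,z\}$ is covered. In the \emph{unused} way, $H_T$ alone is covered.
\end{itemize}
For a concrete gadget I would try, for each element of $T$, a pendant path segment joined to a central structure. One then checks by a short case analysis that no other perfect local covering exists. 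The yes direction is then immediate: a perfect matching $M$ gives a perfect packing by using the gadgets of triples in $M$ and leaving all other gadgets unused. Since the number of triples is at most $Bq$, we have $|V(G)|=\Theta(q)$.

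\textbf{Converse direction.} Start from a packing $\Pcal$ of size at least $(1-\epsilon')t$. Call a gadget \emph{clean} if $\Pcal$ restricted to it is exactly one of the two canonical coverings, with every path contained in $H_T\cup\{x,y,z\}$. Each uncovered vertex, and each path crossing between gadgets, spoils only a bounded number of gadgets, because gadgets have constant size and elements occur at most $B$ times. Hence at most $O(\epsilon' q)$ gadgets are unclean.
\begin{itemize}
\item The clean \emph{used} gadgets cover pairwise disjoint element triples, so they form a matching.
\item Every element not covered by that matching is either uncovered or covered through an unclean gadget.
\end{itemize}
So the matching has size at least $q-O(\epsilon' q)$. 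Choosing $\epsilon'$ small compared to $\delta/B$ contradicts the no case, and this transfers the gap.

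\textbf{Main obstacle.} The hard part is designing the gadget and proving its local rigidity. We need that every perfect covering of $H_T$, allowing paths that poke out through $x,y,z$, is one of the two canonical ones. We also need a quantitative version: a packing that is $\Delta$ vertices short inside a gadget neighbourhood can be locally repaired into a canonical state at a cost of $O(\Delta)$ element vertices. I would first try a gadget in which the three element attachments hang off a central $P_3$-rigid path of length divisible by three, and verify the repair bound by case analysis.
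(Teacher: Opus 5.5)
The paper does not prove this lemma; it quotes it from Monnot and Toulouse. Your outline therefore has to stand on its own, and it has a genuine gap: the gadget $H_T$ is never constructed, and the whole converse direction rests on a property of it that you name but do not verify. In addition, the counting step ``each uncovered vertex, and each path crossing between gadgets, spoils only a bounded number of gadgets; hence $O(\epsilon' q)$ gadgets are unclean'' does not follow. The hypothesis $|\mathcal{P}|\geq (1-\epsilon')|V(G)|/3$ bounds only the number $U\leq \epsilon'|V(G)|$ of uncovered vertices. Nothing in your argument bounds the number of crossing paths, which in your construction are exactly the paths $(h,x,h')$ with $h\in V(H_T)$, $h'\in V(H_{T'})$ and $x\in T\cap T'$.

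Here is the property you actually need, and why it is not automatic. Take $|V(H_T)|\equiv 0\pmod 3$ and suppose $V(H_T)$ is fully covered. Let $c$ be the number of crossing paths meeting $H_T$; each covers one vertex of $H_T$ and has a distinct element of $T$ as its centre. Let $a$ be the number of further elements of $T$ absorbed by paths inside $H_T\cup T$. Counting vertices gives $a\equiv c\pmod 3$, so $(c,a)\in\{(0,0),(0,3),(1,1),(3,0)\}$.
\begin{itemize}
\item When $c=0$, parity alone forces $a\in\{0,3\}$, which is all the matching extraction uses. So uniqueness of the two canonical coverings is beside the point.
\item The states $(1,1)$ and $(3,0)$ survive parity. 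Two gadgets $T,T'\ni x$ both in state $(1,1)$ jointly cover $x$, one element of $T\setminus\{x\}$ and one element of $T'\setminus\{x\}$, with no uncovered vertex at all. That set need not be a triple of the instance, so if $H_T$ admitted such states, even the plain perfect-versus-imperfect reduction would be unsound.
\end{itemize}
What you must prove for a concrete bipartite $H_T$ is therefore: \emph{in every $P_3$-packing of $G$ covering all of $V(H_T)$, no path meeting $H_T$ leaves $H_T\cup T$.}

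Given this property, every unclean gadget contains an uncovered vertex of its own. Hence at most $U$ gadgets are unclean, at most $U+3U=O(\epsilon' q)$ elements are unmatched, and your gap transfer goes through. Your ``local repair'' statement is then unnecessary. As formulated it would in any case need a global accounting, since repairing one gadget changes the status of element vertices shared with up to $B-1$ other gadgets. Until you exhibit such a gadget and check the property, the proposal is a plausible plan rather than a proof.
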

The main difficulty of our reduction is contained in the following result which shows how to compute a corresponding instance of the inversion problem when an appropriate instance of P3P is given.

\begin{lemma}\label{tf86zguoi}
    Let $k$ be a fixed positive integer. In polynomial time, given a bipartite graph $G$ one can compute an oriented graph $D$ whose size is polynomial in the size of $G$ such that for $n=|V(G)|$, $x=\inv{k}{3}(D)$ and $y$ being the maximum size of a $P_3$-packing in $G$, we have $x=\lceil\frac{n-y}{2}\rceil$.
\end{lemma}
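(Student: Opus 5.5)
The plan is to build $D$ so that every vertex of $G$ becomes a unit degree defect that some inversion must repair. We then check that a single \leqp{3}-inversion repairs three defects only when it is exactly a copy of $P_3$ in $G$, and at most two otherwise. The count then matches the packing bound: a $P_3$-packing of size $y$ repairs $3y$ defects with $y$ inversions, and the remaining $n-3y$ defects are paired up, giving
\[
y+\left\lceil \tfrac{n-3y}{2}\right\rceil=\left\lceil \tfrac{n-y}{2}\right\rceil .
\]

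\textbf{Construction.} Let $(A,B)$ be a bipartition of $G$.
\begin{enumerate}[label=(\arabic*)]
\item Start from a large $k$-arc-strong core tournament $T$, for instance a rotative tournament on $\Theta(k+n)$ vertices, and distinguish one hub vertex $h\in V(T)$.
\item Add a vertex for each $v\in V(G)$. For every edge $uv$ of $G$ with $u\in A$ and $v\in B$, add the arc $u\to v$.
\item For every $u\in A$, add the arc $u\to h$; for every $v\in B$, add the arc $h\to v$.
\item Add further arcs between $V(G)$ and $T\setminus\{h\}$ so that every $u\in A$ has $d^-(u)=k-1$ and large out-degree, and every $v\in B$ has $d^+(v)=k-1$ and large in-degree. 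All these extra arcs should be robust, meaning every cut separating a single defect vertex from $T$ stays well above $k$ apart from the one deficient direction.
\end{enumerate}
The only obstructions to $k$-arc-strong connectivity are then the $n$ singleton dicuts, each short by exactly one arc. I would verify this with \Cref{prop:showing_k_strong}, taking $W=V(T)$.

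\textbf{Upper bound.} Fix a $P_3$-packing of size $y$.
\begin{itemize}
\item A path $u\,w\,v$ of the packing with centre $w\in B$ is handled by inverting $\{u,w,v\}$. This reverses $u\to w$ and $v\to w$, so $u$ and $v$ each gain an in-arc and $w$ gains two out-arcs.
\item A path with centre in $A$ is handled symmetrically.
\item Leftover defects are paired and each pair is handled by one inversion using $h$: $\{u,u',h\}$ for two vertices of $A$, $\{v,v',h\}$ for two of $B$, and $\{u,v,h\}$ for a mixed pair. In the mixed case the arc $u\to v$, if present, is reversed as well; this harms neither endpoint, since $u$ gains an in-arc and $v$ an out-arc.
\item If one defect is left over, a single further inversion fixes it.
\end{itemize}
The hub $h$ loses some in- and out-arcs in the process, but it is absorbed in the highly connected core, so the result is $k$-arc-strong.

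\textbf{Lower bound.} I will define the potential $\Phi$ as the total deficiency $\sum_{u\in A}\max(0,k-d^-(u))+\sum_{v\in B}\max(0,k-d^+(v))$, which equals $n$ initially. The key step is to show two things about a single \leqp{3}-inversion $S$. First, $S$ decreases $\Phi$ by at most $|S\cap V(G)|$, because an inversion changes the relevant degree of a vertex by a net amount at most $2$ but the deficiency by at most $1$. Second, a decrease of $3$ forces $S\subseteq V(G)$, and in that case every vertex of $S$ must gain in the right direction. Arcs only go from $A$ to $B$ inside $V(G)$, so that second situation happens exactly when $G[S]$ contains a $P_3$. A case check over the possible intersections of $S$ with $A$, $B$, $\{h\}$ and $T$ should confirm both facts. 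One also has to rule out that an inversion touching $T\setminus\{h\}$ helps more, which is why the attachments to $T$ are chosen so that no triple containing a core vertex reverses two useful arcs at once.

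\textbf{Main obstacle.} The hardest part is the final accounting when the triple inversions in an optimal solution overlap, or when earlier inversions undo one another, so that a defect is repaired, broken and repaired again. I would handle this by choosing an optimal solution that first maximizes the number of "good" triples, meaning triples inducing a $P_3$ on defect vertices that are still deficient when the triple is applied. I would then argue that any two good triples sharing a vertex can be exchanged for a configuration that is no worse in which they are disjoint. This is legitimate because the second triple's drop at the shared vertex would be at most $2$. It follows that the good triples form a $P_3$-packing, of size $t\le y$. Writing $x$ for the size of the solution, at most $t$ inversions reduce $\Phi$ by $3$ and the rest by at most $2$, so
\[
n\le 3t+2(x-t), \quad\text{hence}\quad x\ge \left\lceil \tfrac{n-t}{2}\right\rceil\ge\left\lceil \tfrac{n-y}{2}\right\rceil .
\]
The size bound on $D$ is clear from the construction, since $|V(T)|$ is polynomial in $n$ and $k$ is fixed.
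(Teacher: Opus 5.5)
Your construction and upper bound are fine. A single hub $h$ works because the leftover-pair triples pairwise meet only in $h$, and you treat an odd leftover with a $2$-set. The paper instead uses a private vertex $w_Q$ for each pair, which is a cosmetic difference.

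The gap is in the lower bound. Your two claims about how one inversion changes $\Phi$ hold only for an inversion applied to $D$ itself, where every deficiency equals $1$; along a sequence they fail. For instance, take $k\geq 2$, $u\in A$, and $t\in V(T)\setminus\{h\}$ an in-neighbour of $u$. Inverting $\{u,t\}$ raises the deficiency of $u$ to $2$. A subsequent inversion of $S=\{u,b,h\}$ with $b\in N_G(u)$ then lowers $\Phi$ by $3>|S\cap V(G)|$, although $S\not\subseteq V(G)$. Hence $n\leq 3t+2(x-t)$ does not follow. The exchange argument that is supposed to make the ``good'' triples pairwise disjoint is only asserted, not proved. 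Note also that ``good'' depends on an arbitrary order of the inversions, whereas $\Inv(D;\mathcal{X})$ depends only on the set $\mathcal{X}$. This suggests $\Phi$ is the wrong invariant.

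The missing idea is to count \emph{coverage} instead of repairs. Let $\mathcal{X}$ be an optimal inverting set, so $|\mathcal{X}|=x$.
\begin{enumerate}[label=(\arabic*)]
\item Remove from each set of $\mathcal{X}$ every vertex with no neighbour inside it; this does not change $\Inv(D;\mathcal{X})$.
\item A vertex of $V(G)$ lying in no set keeps its deficient degree $k-1$. So every vertex of $V(G)$ lies in some set of $\mathcal{X}$.
\item A set $X$ containing three vertices of $V(G)$ is contained in $V(G)$. Since $G$ is bipartite and $G[X]$ has no isolated vertex, $G[X]$ is a $P_3$.
\item Let $\mathcal{X}_1$ be a maximal pairwise disjoint family of such sets. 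It is a $P_3$-packing, so $|\mathcal{X}_1|\leq y$.
\item By maximality, every set of $\mathcal{X}\setminus\mathcal{X}_1$ contains at most two vertices of $V(G)\setminus\bigcup\mathcal{X}_1$.
\end{enumerate}
Therefore $n\leq 3|\mathcal{X}_1|+2(x-|\mathcal{X}_1|)$, which gives $x\geq\lceil (n-y)/2\rceil$. This is the paper's argument. It applies verbatim to your construction, with no need to analyse overlaps or the order of inversions.
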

\begin{proof}
    Let a bipartite graph $G$ be given. We first compute a bipartition $(A,B)$ of  $G$. We now construct an oriented graph $D$ as follows. First, we let $V(D)$ contain $V(G)$. Next, for every $v \in V(G)$, we let $V(D)$ contain a set $\{w_v^1,\ldots,w_v^k,z_v^1,\ldots,z_v^{k-1}\}$ of $2k-1$ new vertices. Further, for every $Q\in \binom{V(G)}{2}$, we let $V(D)$ contain a vertex $w_Q$. We let $W$ be the set that consists of $w_v^{i}$ for all $v \in V(G)$ and $i\in [k], z_{v}^{i}$ for all $v \in V(G)$ and $i \in [k-1]$ and $w_Q$ for all $Q\in \binom{V(G)}{2}$. 
    
    We then add arbitrary arcs to $D$ such that $D[W]$ is a $k$-arc-strong tournament. 
    Further, for every $a \in A$ and $i \in [k]$, we add an arc from $a$ to $w_a^{i}$. For every $a \in A$ and $i \in [k-1]$, we add an arc from $z_a^{i}$ to $a$. For every $b \in B$ and $i \in [k]$, we add an arc from $w_b^{i}$ to $b$. For every $b \in B$ and $i \in [k-1]$, we add an arc from $b$ to $z_b^{i}$. Next, for every $Q \in \binom{V(G)}{2}$ and every $a\in Q \cap A$, we add an arc from $a$ to $w_Q$ and for every $Q \in \binom{V(G)}{2}$ and every $b\in Q \cap B$, we add an arc from $w_Q$ to $b$. Finally, for every $a\in A$ and $b \in B$ such that $a$ and $b$ are linked by an edge of $G$, we let $A(D)$ contain an arc from $a$ to $b$.

    By construction, we have that $D$ is an oriented graph. Next, it is well-known that a bipartition of a bipartite graph can be computed in polynomial time. Hence, as $k$ is fixed and by construction, we obtain that $D$ can be constructed from $G$ in polynomial time. It remains to prove that $x=\lceil \frac{n-y}{2}\rceil$.

    First let $\mathcal{P}$ be a $P_3$-packing of size $y$ in $G$. We let $\mathcal{X}_1\subseteq \binom{V(D)}{3}$ be the set that consists of $V(P)$ for every $P \in \mathcal{P}$. 
    Let $V(\mathcal{P}) = \bigcup_{P\in \Pcal} V(P)$.
    Observe that, as the paths in $\mathcal{P}$ are vertex-disjoint, we have $|V(G)\setminus V(\mathcal{P})|=n-3|\mathcal{P}|=n-3y$. Hence there exists a set $\mathcal{Q}\subseteq \binom{V(G)}{2}$ of size $\lceil\frac{n-3y}{2}\rceil$ such that $\bigcup_{Q\in  \mathcal{Q}}Q=V(G)\setminus V(\mathcal{P})$. We now let $\mathcal{X}_2\subseteq \binom{V(D)}{3}$ be the set that consists of $Q \cup w_Q$ for every $Q \in \mathcal{Q}$. We set $\mathcal{X}=\mathcal{X}_1 \cup \mathcal{X}_2$. Observe that, as $y$ is integer, we have $|\mathcal{X}|=|\mathcal{X}_1|+|\mathcal{X}_2|=y+\lceil\frac{n-3y}{2}\rceil=\lceil\frac{n-y}{2}\rceil$.

    Let $D'=\Inv(D;\mathcal{X})$. We now show that $D'$ is $k$-arc-strong. First observe that $|X\cap W|\leq 1$ holds for every $X \in \mathcal{X}$. It hence follows that $D'[W]=D[W]$ and hence $D'[W]$ is $k$-arc-strong by construction. Hence, by Proposition~\ref{prop:showing_k_strong}, it suffices to prove that $\lambda_{D'}(v,W)\geq k$ and $\lambda_{D'}(W,v)\geq k$ hold for every $v \in V(G)$. Fix some $v \in V(G)$. By symmetry, we may suppose that $v \in A$. By construction, for every $i \in [k]$ and $X \in \mathcal{X}$, we have that $X$ does not contain $w_v^{i}$. It follows that $A(D')$ contains an arc from $v$ to $w_v^{i}$ for $i \in [k]$. This yields that $\lambda_{D'}(v,W)\geq k$. Next observe that for every $i \in [k-1]$ and $X \in \mathcal{X}$, we have that $X$ does not contain $z_v^{i}$. It follows that $A(D')$ contains an arc from $z_v^{i}$ to $v$ for $i \in [k-1]$. By \Cref{prop:showing_k_strong}, it hence suffices to prove that $D'$ contains a directed path from $W$ to $v$ that does not use any of these arcs. 
    
    First suppose that $v \in V(G)\setminus V(\mathcal{P})$.
    Then, by construction, there exist some $Q \in \mathcal{Q}$ with $v \in Q$, and exactly one set $X \in \mathcal{X}$ such that $\{v,w_Q\}\subseteq X$. It follows that $A(D')$ contains an arc from $w_Q$ to $v$, forming the desired directed path. Now suppose that $v \in V(P)$ for some $P \in \mathcal{P}$. Then, as $P$ is a path, there exists some $b \in N_P(v)$. Observe that $b \in B$ as $G$ is a bipartite graph. Then there exists exactly one $X \in \mathcal{X}$ such that $\{v,b\}\subseteq X$. It follows that $A(D')$ contains an arc from $b$ to $v$. Moreover, observe that for every $X \in \mathcal{X}$, we have that $X$ does not contain $w_b^{1}$. It follows that $A(D')$ contains an arc from $w_b^{1}$ to $b$. The concatenation of these two arcs is a directed path of the desired form. It follows that $\lambda_{D'}(W,v)\geq k$. We obtain that $D'$ is $k$-arc-strong. This yields $x\leq |\mathcal{X}|=\lceil\frac{n-y}{2}\rceil$.
    
    \medskip

    Now let $\mathcal{X}\subseteq \binom{V(D)}{3}$ be a set such that $\Inv(D;\mathcal{X})$ is $k$-arc-strong with $|\mathcal{X}|=x$. Without loss of generality, we may suppose that for every $X \in \mathcal{X}$ and every $v \in X$, there exists some $v'\in X\cap N_{\UG(D)}(v)$. Let $D'=\Inv(D;\mathcal{X})$. Now let $\mathcal{X}_0$ consist of all elements $X$ of $\mathcal{X}$ that satisfy $|X|=3$ and $X \subseteq V(G)$. Further, let $\mathcal{X}_1$ be a maximal set of elements of $\mathcal{X}_0$ that are pairwise disjoint and let $\mathcal{P}$ be the collection of subgraphs of $G$ that consists of $G[X]$ for every $X \in \mathcal{X}_1$. For every $P\in \mathcal{P}$, as $|V(P)|=3$, by the assumption on $\mathcal{X}$ and as $G$ is bipartite, we obtain that $P$ is a 3-path. As the sets in $\mathcal{X}_1$ are pairwise disjoint by assumption, we obtain that $\mathcal{P}$ is 3-path packing in $G$.

    Now let $\mathcal{X}_2\subseteq 2^{V(G)}$ be the set that contains $X \cap(V(G)\setminus V(\mathcal{P}))$ for every $X\in \Xcal\setminus \mathcal{X}_1$. By assumption, we have that $|X|\leq 2$ for every $X \in \mathcal{X}_2$.
    \begin{claim}\label{yxcfvg}
        $\bigcup_{X\in \Xcal_1 \cup \Xcal_2} X =V(G)$.
    \end{claim}
    \begin{proofclaim}
        Clearly, we have $\bigcup_{X\in \Xcal_1 \cup \Xcal_2}X\subseteq V(G)$. Now suppose for the sake of a contradiction that there exists some $v \in V(G)$ that is not contained in $\bigcup_{X\in \Xcal_1 \cup \Xcal_2}X$.  We then obtain by construction that $v$ is not contained in $X$ for any $X \in \mathcal{X}$. Without loss of generality, we may suppose that $v \in A$. This yields $d_{D'}^-(v)=d_D^-(v)=k-1$, a contradiction to $D'$ being $k$-arc-strong.
    \end{proofclaim}
    By Claim~\ref{yxcfvg}, together with the fact that $|X|=3$ for all $X \in \mathcal{X}_1$ and $|X|\leq 2$ for all $X \in \mathcal{X}_2$, we have $n \leq 3|\mathcal{X}_1|+2|\mathcal{X}_2|$. Moreover, as $\mathcal{P}$ is a 3-path packing in $G$, we have $|\mathcal{X}_1|=|\mathcal{P}|\leq y$. This yields $x=|\mathcal{X}|=|\mathcal{X}_1|+|\mathcal{X}_2|\geq |\mathcal{X}_1|+\frac{n-3|\mathcal{X}_1|}{2}=\frac{n-|\mathcal{X}_1|}{2}\geq \frac{n-y}{2}$. As $x$ is integer, we obtain that $x \geq \lceil\frac{n-y}{2}\rceil$. This finishes the proof.
\end{proof}

We are now ready to prove the following more technical restatement of Theorem~\ref{thm:inapproximability} for $p=3$. 
With Lemma~\ref{tf86zguoi} at hand, it remains to verify its algorithmic consequences.

\begin{theorem}
    \label{inappr1}
    There exists $\epsilon>0$ such that, for every $k \geq 1$, unless $P=NP$, there does not exist a polynomial-time algorithm that, given a $2k$-edge-connected oriented graph $D$ and a positive integer $t$, returns `yes' if $\inv{k}{3}(D)\leq t$ and `no' if $\inv{k}{3}(D)\geq (1+\epsilon)t$.
\end{theorem}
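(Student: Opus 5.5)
The plan is a gap-preserving reduction from bipartite P3P, combining Lemma~\ref{etrztuzibjn} with the construction of Lemma~\ref{tf86zguoi}. Let $\epsilon'>0$ and $c>0$ be the constants of Lemma~\ref{etrztuzibjn}, and fix $k\geq 1$. Given a bipartite instance $(G,t)$ with $t\geq c|V(G)|$, we compute the oriented graph $D$ of Lemma~\ref{tf86zguoi} in polynomial time. We then set $t'=\lceil \frac{n-t}{2}\rceil$ with $n=|V(G)|$. The algorithm for the inversion problem is run on $(D,t')$, and its answer is returned.

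\textbf{Two preliminary checks.} First, $D$ must be $2k$-edge-connected, as the statement demands. This holds because $D$ is made $k$-arc-strong by some set of inversions: the first half of the proof of Lemma~\ref{tf86zguoi} exhibits one (with $y\ge 0$), so $x<\infty$. Since inversions do not change the underlying graph, Theorem~\ref{thm:nashwilliams} gives that $\UG(D)$ is $2k$-edge-connected. Second, we may assume $n\ge 3$ and $t\le n/3$, since otherwise the P3P instance is trivial to decide. In particular $t'\geq 1$ is a positive integer.

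\textbf{Completeness.} If $G$ has a $P_3$-packing of size $t$, then $y\geq t$. Lemma~\ref{tf86zguoi} then gives $\inv{k}{3}(D)=\lceil\frac{n-y}{2}\rceil\le t'$, so the algorithm answers `yes'.

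\textbf{Soundness.} Suppose $G$ has no $P_3$-packing of size $(1-\epsilon')t$, so $y<(1-\epsilon')t$. Then
\[
\inv{k}{3}(D)\geq \frac{n-y}{2}> \frac{n-t+\epsilon' t}{2}.
\]
We want this to be at least $(1+\epsilon)t'$, where $t'\le \frac{n-t+1}{2}$. It therefore suffices that
\[
\epsilon' t-1\geq \epsilon (n-t+1).
\]
Using $t\geq cn$, the right-hand side is at most $\epsilon n\leq \frac{\epsilon}{c}t$. So the condition holds if $\epsilon'-\frac{\epsilon}{c}\geq \frac{1}{t}$. Taking $\epsilon=\frac{c\epsilon'}{2}$, this reduces to $t\geq 2/\epsilon'$. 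This holds whenever $n\geq \frac{2}{c\epsilon'}$; smaller instances have constant size and are solved by brute force. In this case the algorithm answers `no'.

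The resulting algorithm would therefore solve the gap problem of Lemma~\ref{etrztuzibjn}, implying $P=NP$. The constant $\epsilon$ depends only on $\epsilon'$ and $c$, not on $k$, as required.

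The main obstacle is only this bookkeeping: controlling the rounding in $\lceil\cdot\rceil$ and the additive $+1$. The linear lower bound $t\ge c|V(G)|$ is what lets the additive gap $\epsilon' t/2$ become a multiplicative gap, with small instances handled separately.
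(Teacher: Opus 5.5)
Your proposal is correct and follows essentially the paper's route: the same reduction from Lemma~\ref{etrztuzibjn} via the construction of Lemma~\ref{tf86zguoi}, with target $\lceil\frac{n-t}{2}\rceil$, and the bound $t\geq cn$ used to turn the additive gap into a multiplicative one. Your small-case handling (answering \emph{no} when $t>n/3$, brute force on constant-size instances with $n<\frac{2}{c\epsilon'}$) is slightly cleaner than the paper's case split, and your explicit check that $D$ is $2k$-edge-connected fills in a point the paper leaves implicit.
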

\begin{proof}
    Let $\epsilon=\min\{\frac{1}{2}\epsilon'c,\frac{1}{2}\}$, where $\epsilon'$ and $c$ are the constants from Lemma~\ref{etrztuzibjn} and fix some $k \geq 1$. Suppose that there exists a polynomial-time algorithm $A$ that, given a $2k$-edge-connected oriented graph $D$ and a positive integer $t$, returns `yes' if $\inv{k}{3}(D)\leq t$ and `no' if $\inv{k}{3}(D)\geq (1+\epsilon)t$. We will show that $P=NP$ using Lemma~\ref{etrztuzibjn}. Let $(G,t')$ be a bipartite instance of P3P with $n=|V(G)|$ and $t'\geq cn$. We describe a polynomial-time algorithm that outputs `yes' if $G$ admits a $P_3$-packing of size $t'$ and outputs `no' if $G$ does not admit a $P_3$-packing of size $(1-\epsilon')t'$.
    
    First, if $t'\leq \frac{2}{\epsilon}$, we solve the problem by a brute force approach and output an appropriate answer, which can clearly be done in time $O(n^{6/\epsilon})$. Henceforth, we may hence suppose that $t'> \frac{2}{\epsilon}$.    
    Next, if $n-t'\leq \frac{4}{\epsilon}$, then we output `no'. In order to justify this, suppose for the sake of a contradiction that $G$ admits a $P_3$-packing $\mathcal{P}$ of size $t'$. As $\mathcal{P}$ is a $P_3$-packing, we obtain $2t'=\sum_{P \in \mathcal{P}}|V(P)|-t'\leq n-t'\leq \frac{4}{\epsilon}$, so $t'\leq \frac{2}{\epsilon}$, a contradiction. Hence our output is of the desired form. From now on, we may suppose that $n-t'\geq \frac{4}{\epsilon}$ and hence that $\epsilon \lceil\frac{n-t'}{2}\rceil\geq 2$.
    
    By Lemma~\ref{tf86zguoi}, in polynomial time, one can compute an oriented graph $D$ whose size is polynomial in the size of $G$ such that for $n=|V(G)|$, $x=\inv{k}{3}(D)$ and $y$ being the maximum size of a $P_3$-packing in $G$, we have $x=\lceil\frac{n-y}{2}\rceil$. We now apply $A$ to $D$ and $t=\lceil\frac{n-t'}{2}\rceil$ and output the output of $A$. This finishes the description of our algorithm. As the size of $D$ is polynomial in the size of $G$ and $A$ runs in polynomial time by assumption, we obtain that our entire algorithm runs in polynomial time. It remains to prove that the output is of the desired form.

    First suppose that $y\geq t'$. It then follows that $x=\lceil\frac{n-y}{2}\rceil\leq \lceil\frac{n-t'}{2}\rceil=t$. It follows by assumption that $A$ outputs `yes' and so our algorithm outputs `yes'.

    Now suppose that $y\leq (1-\epsilon')t'$. 
    As $t'\geq cn$, by definition of $\epsilon$ and since $\epsilon \lceil\frac{n-t'}{2}\rceil\geq 2$, we obtain that
    \begin{align*}
        x=\left\lceil\frac{n-y}{2}\right\rceil
        &\geq \frac{n-(1-\epsilon')t'}{2}\\
        &\geq \lceil\frac{n-t'}{2}\rceil-1+\frac{\epsilon't'}{2}\\
        &\geq \lceil\frac{n-t'}{2}\rceil-1+\frac{\epsilon'cn}{2}\\
        &\geq  \lceil\frac{n-t'}{2}\rceil-1+\epsilon n\\
        &\geq  \lceil\frac{n-t'}{2}\rceil+2 \epsilon \lceil \frac{n-t'}{2}\rceil-2\\
        &\geq  \lceil\frac{n-t'}{2}\rceil+2 \epsilon \lceil \frac{n-t'}{2}\rceil-\epsilon\lceil \frac{n-t'}{2}\rceil\\
        &=(1+\epsilon)\lceil\frac{n-t'}{2}\rceil.
    \end{align*}
    Hence, by assumption, we have that $A$ outputs `no' and hence our algorithm outputs `no'. Hence, the output of our algorithm is always appropriate. It follows from Lemma~\ref{etrztuzibjn} that $P=NP$.
    \end{proof}
    
\subsection{The case \texorpdfstring{$p\geq 4$}{p at least 4}}
\label{sec:inapproximability:pgeq4}

Here, we deal with the remaining cases, that is, $p \geq 4$. While we follow similar ideas as for the case $p=3$, we start from a problem in hypergraphs. A {\it hypergraph matching} is a collection of pairwise disjoint hyperedges. We consider the following algorithmic problem.

\defproblem{Hypergraph Matching (HM)}{A hypergraph $\mathcal{H}$ and an integer $t$.}{Does $\mathcal{H}$ admit a hypergraph matching of size $t$?}

A hypergraph $\mathcal{H}$ is called {\it $s$-uniform} for some positive integer $s$ if $|e|=s$ holds for all $e \in E(\mathcal{H})$. 
An instance $(\mathcal{H},t)$ of HM is called $s$-uniform if $\mathcal{H}$ is $s$-uniform.
For our reduction, we need the following result on the inapproximability of HM in a restricted case, which is due to Chleb\'ik and Chleb\'ikov\'a~\cite{CHLEBIK2006320}. 

\begin{lemma}[\cite{CHLEBIK2006320}]
    \label{tzzigzgi}
    Unless $P=NP$, there does not exist a polynomial-time algorithm whose input is a 3-uniform instance $(\mathcal{H},t)$ of HM such that every $v \in V(\mathcal{H})$ is contained in at most two hyperedges of $E(\mathcal{H})$, that outputs `yes' if $\mathcal{H}$ admits a hypergraph matching of size $t$ and `no' if $\mathcal{H}$ does not admit a hypergraph matching of size $\frac{94}{95}t$.
\end{lemma}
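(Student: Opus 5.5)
The plan is to get this from the inapproximability of bounded-occurrence three-dimensional matching, not to build a new gap reduction. The core point is that the restricted problem in the statement contains, as a special case, the problem 3DM-2 studied by Chleb\'ik and Chleb\'ikov\'a. An instance of 3DM-2 is a set $T\subseteq A\times B\times C$ of triples in which every element of $A\cup B\cup C$ occurs in exactly two triples, and the task is to find a maximum set of pairwise disjoint triples. Reading each triple as a hyperedge on the vertex set $A\cup B\cup C$ gives a $3$-uniform hypergraph $\mathcal{H}$ in which every vertex lies in at most two hyperedges. Matchings of $\mathcal{H}$ are exactly the sets of pairwise disjoint triples. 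The translation is computable in linear time and preserves the optimum value exactly, so any gap for 3DM-2 carries over verbatim to the restricted version of HM.

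The steps, in order, are as follows. First, recall the gap statement of~\cite{CHLEBIK2006320}. There, a reduction from a bounded-occurrence hybrid linear-equation problem (a H\aa stad-type gap problem), routed through amplifier graphs, produces 3DM-2 instances with a computable threshold $t$ and the following property. In the completeness case a matching of size $t$ exists; in the soundness case every matching has size below $\frac{94}{95}t$. Second, observe that a polynomial-time algorithm separating these two cases for our restricted HM would, composed with the trivial translation above, separate them for 3DM-2. That would decide the underlying NP-hard gap problem, giving $P=NP$. Third, check the degenerate details: repeated triples should be removed or kept as parallel hyperedges consistently, and ``at most two'' is implied by ``exactly two''.

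If one insisted on a self-contained proof, the hard step is reproducing the gap-preserving reduction itself. This is the construction of gadgets from $3$-regular amplifiers that turn each variable occurrence into a ring of triples, plus the accounting showing that a loss of a constant fraction of satisfied equations forces a loss of a $\frac{1}{95}$ fraction of the matching. I would not redo this and would invoke the cited theorem directly. The only genuine verification left is that the constant $\frac{94}{95}$ in their theorem refers to the ratio of the matching sizes in the two cases, and is not some other normalisation such as a ratio of unmatched elements.
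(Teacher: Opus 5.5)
Your proposal is correct and takes the same route as the paper, which gives no argument of its own and simply imports this gap statement from Chleb\'ik and Chleb\'ikov\'a. Your added observation, that a 3DM-2 instance read as a hypergraph on $A\cup B\cup C$ is a 3-uniform instance of HM in which every vertex lies in exactly two hyperedges and which has the same matchings, is precisely the implicit step needed to apply their result here, and the caveat you flag (that their constant must be read as a gap between maximum matching sizes in decision form) is indeed the only point to check against the source.
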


In order to make use of this result, we need to show that we can make the extra assumption that the target value in the given instance is linear in its number of vertices. To this end, we need the following simple result.

\begin{proposition}\label{ysetrxdtcfgv}
    Let $\mathcal{H}$ be a 3-uniform hypergraph on $n$ vertices such that every $v \in V(\mathcal{H})$ appears in at least one and in at most two hyperedges of $E(\mathcal{H})$. Then $\mathcal{H}$ admits a hypergraph matching of size at least $\frac{1}{9}n$.
\end{proposition}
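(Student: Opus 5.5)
A greedy argument should suffice. I would take a maximal matching $\mathcal{M}$ of $\mathcal{H}$ and use maximality together with the degree bound to show that $\mathcal{M}$ is large.

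First, by maximality, every hyperedge $e\in E(\mathcal{H})$ meets at least one hyperedge of $\mathcal{M}$, so every edge meets $V(\mathcal{M})=\bigcup_{f\in\mathcal{M}}f$. This set has exactly $3|\mathcal{M}|$ vertices. Each of these vertices lies in at most two hyperedges of $E(\mathcal{H})$, and one of those is the matching edge containing it. So each vertex of $V(\mathcal{M})$ lies in at most one hyperedge outside $\mathcal{M}$. Since every edge meets $V(\mathcal{M})$, we can charge each edge of $E(\mathcal{H})\setminus\mathcal{M}$ to a vertex of $V(\mathcal{M})$ that it contains, and this charging is injective. Hence
\[
|E(\mathcal{H})| \leq |\mathcal{M}| + 3|\mathcal{M}| = 4|\mathcal{M}|.
\]

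Second, I would bound $|E(\mathcal{H})|$ from below using the assumption that every vertex lies in at least one hyperedge. Each hyperedge covers exactly three vertices, so $n\le 3|E(\mathcal{H})|$, that is, $|E(\mathcal{H})|\geq n/3$. Combining the two inequalities gives $|\mathcal{M}|\ge n/12$, which is not yet the claimed $n/9$.

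To reach $\frac19 n$, I would count vertices directly instead of edges. Every vertex $v\notin V(\mathcal{M})$ lies in some hyperedge $e$. That edge is not in $\mathcal{M}$, and by maximality it meets $V(\mathcal{M})$. Now assign each such $v$ to a vertex of $e\cap V(\mathcal{M})$. Each $u\in V(\mathcal{M})$ lies in at most one non-matching edge, and that edge has at most $2$ vertices outside $V(\mathcal{M})$. So each $u$ receives at most $2$ vertices. Therefore $n-3|\mathcal{M}|\le 2\cdot 3|\mathcal{M}|$, which gives $n\le 9|\mathcal{M}|$, as required.

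The only delicate step is the injectivity and charging in the last count, namely that each matched vertex is charged by at most two unmatched vertices. This is exactly where the bound of two on vertex degrees is used.
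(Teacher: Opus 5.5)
Your final charging argument is correct and is essentially the paper's proof: the paper deletes an arbitrary hyperedge $e_0$ together with all hyperedges meeting it and inducts, which amounts to building a maximal matching greedily. In that step at most $3+3\cdot 2=9$ vertices disappear, the same count of $9$ vertices per matching edge that you carry out on a fixed maximal matching; your first edge-counting attempt giving $n/12$ is superfluous and can be dropped.
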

\begin{proof}
    Suppose otherwise and that $\mathcal{H}$ is a minimum counterexample to that statement. Let $e_0$ be an arbitrary hyperedge in $E(\mathcal{H})$ and let $\mathcal{H}_0$ be obtained from $\mathcal{H}$ by deleting all vertices contained in $e_0$ and all hyperedges sharing at least one vertex with $e_0$. We further obtain $\mathcal{H}_1$ from $\mathcal{H}_0$ by deleting all vertices in $V(\mathcal{H}_0)$ that are not contained in any hyperedge of $E(\mathcal{H}_0)$. 
    
    By construction and as $\mathcal{H}$ is 3-uniform, we have $|V(\mathcal{H}_0)|=|V(\mathcal{H})|-3$. Next, observe that, as every vertex in $V(\mathcal{H})$ is contained in at least one hyperedge of $E(\mathcal{H})$, every vertex in $V(\mathcal{H}_0) \setminus V(\mathcal{H}_1)$ is contained in a hyperedge in $E(\mathcal{H})$ that intersects $e_0$ in at least one vertex. As every vertex in $V(\mathcal{H})$ is contained in at most two hyperedges of $E(\mathcal{H})$ and $|e_0|=3$, there at most 3 such hyperedges. Hence, as $\mathcal{H}$ is 3-uniform, we obtain that $|V(\mathcal{H}_0) \setminus V(\mathcal{H}_1)|\leq 6$. It follows that $|V(\mathcal{H}) \setminus V(\mathcal{H}_1)|= |V(\mathcal{H}) \setminus V(\mathcal{H}_0)|+|V(\mathcal{H}_0) \setminus V(\mathcal{H}_1)|\leq 3+6=9$. Next, as $E(\mathcal{H}_1)\subseteq E(\mathcal{H})$, we obtain that $\mathcal{H}_1$ is 3-uniform and every $v \in V(\mathcal{H}_1)$ is contained in at most two hyperedges in $E(\mathcal{H}_1)$. Moreover, by construction, we have that every $v \in V(\mathcal{H}_1)$ is contained in at least one hyperedge in $E(\mathcal{H}_1)$. Hence, as $\mathcal{H}_1$ is smaller than $\mathcal{H}$, we obtain that $\mathcal{H}_1$ admits a hypergraph matching $M_1$ of size at least $\frac{1}{9}|V(\mathcal{H}_1)|$. Let $M=M_1 \cup e_0$. Observe that, as $e_0$ is disjoint from all hyperedges in $E(\mathcal{H}_1)$ by construction, we have that $M$ is a hypergraph matching in $\mathcal{H}$. Moreover, we have $|M|=|M_0|+1\geq \frac{1}{9}|V(\mathcal{H}_1)|+1=\frac{1}{9}(|V(\mathcal{H}_1)|+9)\geq \frac{1}{9}|V(\mathcal{H}_1)|$. This contradicts $\mathcal{H}$ being a counterexample.
\end{proof}

We are now ready to show that the extra assumption can be imposed on the hypergraph in case $s=3$.

\begin{lemma}\label{dtfuziguohipjo}
    Unless $P=NP$, there does not exist a polynomial-time algorithm whose input is a 3-uniform instance $(\mathcal{H},t)$ of HM such that $t \geq \frac{1}{9}n$ and every $v \in V(\mathcal{H})$ is contained in at most two hyperedges of $E(\mathcal{H})$, and that outputs `yes' if $\mathcal{H}$ admits a hypergraph matching of size $t$ and `no' if $\mathcal{H}$ does not admit a hypergraph matching of size at least $\frac{94}{95}t$.
\end{lemma}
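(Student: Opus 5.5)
We argue by contradiction: we assume that an algorithm $A$ as in the statement exists, and use it to build an algorithm for the problem of Lemma~\ref{tzzigzgi}, which then yields $P=NP$. Let $(\mathcal{H},t)$ be a 3-uniform instance of HM in which every vertex lies in at most two hyperedges, and let $n=|V(\mathcal{H})|$. The preprocessing consists of deleting all vertices of $\mathcal{H}$ that lie in no hyperedge. This gives a hypergraph $\mathcal{H}'$ on $n'\leq n$ vertices with the same hyperedge set, so $\mathcal{H}'$ has exactly the same matchings as $\mathcal{H}$. Moreover, every vertex of $\mathcal{H}'$ lies in at least one and at most two hyperedges, so Proposition~\ref{ysetrxdtcfgv} applies to $\mathcal{H}'$ and shows that its maximum matching size $\nu$ satisfies $\nu\geq \frac{1}{9}n'$.

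We then split according to the value of $t$.
\begin{itemize}
\item If $t\geq \frac{1}{9}n'$, the instance $(\mathcal{H}',t)$ meets all hypotheses of the present lemma. We run $A$ on it and return its answer. This answer is correct for $(\mathcal{H},t)$ because $\mathcal{H}$ and $\mathcal{H}'$ have identical matchings.
\item If $t<\frac{1}{9}n'$, then $\nu\geq \frac{1}{9}n'>t$. Taking any $t$ hyperedges of a maximum matching shows that $\mathcal{H}$ has a matching of size $t$, so we correctly output `yes'.
\end{itemize}
In both cases the output satisfies the requirement of Lemma~\ref{tzzigzgi}: `yes' whenever a matching of size $t$ exists, and `no' whenever no matching of size $\frac{94}{95}t$ exists. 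The whole procedure runs in polynomial time, which contradicts Lemma~\ref{tzzigzgi} unless $P=NP$.

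The only step that needs care is the removal of isolated vertices, since Proposition~\ref{ysetrxdtcfgv} requires every vertex to lie in at least one hyperedge. Because this removal changes neither the hyperedges nor the matchings, it costs nothing; the only effect is that $n$ is replaced by $n'\leq n$ in the bound $t\geq\frac{1}{9}n'$, and this is exactly the form of the hypothesis we then need for $\mathcal{H}'$. As a side remark, the final inequality $|M|\geq\frac19|V(\mathcal{H}_1)|+1$ in the proof of Proposition~\ref{ysetrxdtcfgv} should read $\geq\frac19|V(\mathcal{H})|$, which follows from $|V(\mathcal{H})\setminus V(\mathcal{H}_1)|\leq 9$; we rely on the proposition only in that corrected form.
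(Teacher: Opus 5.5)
Your proof is correct and is essentially the paper's argument: remove the vertices lying in no hyperedge, answer `yes' directly when $t<\frac{1}{9}|V(\mathcal{H}')|$ (which Proposition~\ref{ysetrxdtcfgv} justifies), and otherwise run $A$ on $(\mathcal{H}',t)$. Putting the boundary case $t=\frac{1}{9}|V(\mathcal{H}')|$ in the branch that calls $A$, as you do, avoids the strict-versus-non-strict sloppiness in the paper's write-up; your side remark is also right, since the chain at the end of the proof of Proposition~\ref{ysetrxdtcfgv} should conclude with $\frac{1}{9}|V(\mathcal{H})|$ (using $|V(\mathcal{H})\setminus V(\mathcal{H}_1)|\leq 9$), and $|M_0|$ there should read $|M_1|$.
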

\begin{proof}
    Suppose that there exists a polynomial-time algorithm $A$ whose input is a 3-uniform instance $(\mathcal{H},t)$ of HM such that $t \geq \frac{1}{9}n$ and every $v \in V(\mathcal{H})$ is contained in at most two hyperedges of $E(\mathcal{H})$, and that outputs `yes' if $H$ admits a hypergraph matching of size $t$ and `no' if $\mathcal{H}$ does not admit a hypergraph matching of size at least $\frac{94}{95}t$. We will show that $P=NP$ using Lemma~\ref{tzzigzgi}. Let $(\mathcal{H},t)$  be a 3-uniform instance of HM such that every $v \in V(\mathcal{H})$ is contained in at most two hyperedges of $E(\mathcal{H})$. Let $\mathcal{H}'$ be obtained from $\mathcal{H}$ by deleting all vertices that are not contained in any hyperedge of $\mathcal{H}$. Observe that every vertex in $V(\mathcal{H}')$ is contained in at least one and at most 2 hyperedges of $\mathcal{H}'$. Next, if $t \leq \frac{1}{9}|V(\mathcal{H}')|$, we let our algorithm output `yes'. We may hence suppose that $t \geq \frac{1}{9}|V(\mathcal{H}')|$. We can now apply $A$ to $(\mathcal{H}',t)$ and output the output of $A$. As $\mathcal{H}'$ can be computed from $\mathcal{H}$ in polynomial time, we can check in polynomial time whether $t \geq \frac{1}{9}|V(\mathcal{H}')|$ holds in polynomial time and $A$ runs in polynomial time, we obtain that our entire algorithm runs in polynomial time.

    For the correctness of our algorithm, first suppose that $\mathcal{H}$ admits a hypergraph matching of size $t$. If $t\leq \frac{1}{9}|V(\mathcal{H}')|$, then our algorithm outputs `yes' by construction. We may hence suppose that $t\geq \frac{1}{9}|V(\mathcal{H}')|$. Then $A$ is applied to $(\mathcal{H}',t)$. As $M$ is also a hypergraph matching of size $t$ in $\mathcal{H}'$, we obtain that $A$ outputs `yes' and hence our algorithm outputs `yes'. In either case, our algorithm outputs `yes'.

    Now suppose that $\mathcal{H}$ does not admit a hypergraph matching of size at least $\frac{94}{95}t$. As every hypergraph matching in $\mathcal{H}'$ is also a hypergraph matching in $\mathcal{H}$, it follows that $\mathcal{H}'$ does also not admit a hypergraph matching of size at least $\frac{94}{95}t$. It follows from Proposition~\ref{ysetrxdtcfgv} that $t\geq \frac{94}{95}t\geq \frac{1}{9}|V(\mathcal{H}')|$. Hence $A$ is applied to $(\mathcal{H}',t)$. Again, as every hypergraph matching in $\mathcal{H}'$ is also a hypergraph matching in $\mathcal{H}$ and by the assumption on $A$, it follows that $A$ outputs `no'. Hence our algorithm outputs `no'.
    This yields that our algorithm has the desired properties and $P=NP$ follows by Lemma~\ref{tzzigzgi}.
\end{proof}

We are now ready to generalize the result on the adapted version of HM from 3-uniform hypergraphs to $s$-uniform hypergraphs for arbitrary $s\geq 4$.

\begin{lemma}\label{dutfzigui}
    For every integer $s\geq 3$, there exist $\epsilon'_s>0$ and $c_s>0$ such that, unless $P=NP$, there is no polynomial-time algorithm that, given an $s$-uniform instance $(\mathcal{H},t)$ of HM with $t \geq c_sn$, outputs `yes' if $\mathcal{H}$ admits a hypergraph matching of size $t$ and `no' if $\mathcal{H}$ does not admit a hypergraph matching of size at least $(1-\epsilon'_s)t$.
\end{lemma}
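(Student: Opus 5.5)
We argue by a gap-preserving reduction from the restricted $3$-uniform problem of Lemma~\ref{dtfuziguohipjo}, padding each hyperedge with fresh vertices. Fix $s\geq 4$ and let $(\mathcal{H},t)$ be a $3$-uniform instance with $t\geq \frac{1}{9}|V(\mathcal{H})|$ in which every vertex lies in at most two hyperedges. For every hyperedge $e\in E(\mathcal{H})$, we introduce a set $Z_e$ of $s-3$ new vertices, private to $e$, and replace $e$ by $e\cup Z_e$. The resulting hypergraph $\mathcal{H}'$ is $s$-uniform and can be built in polynomial time.

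The key observation is that matchings correspond exactly. Two padded hyperedges $e\cup Z_e$ and $f\cup Z_f$ with $e\neq f$ intersect if and only if $e$ and $f$ intersect, because the padding sets are pairwise disjoint and disjoint from $V(\mathcal{H})$. Hence the maximum matching sizes of $\mathcal{H}$ and $\mathcal{H}'$ coincide, and so the gap between $t$ and $\frac{94}{95}t$ is preserved exactly. We may therefore take $\epsilon'_s=\frac{1}{95}$ and keep the same target value $t$.

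It remains to check the linear lower bound on $t$ relative to $n'=|V(\mathcal{H}')|$. We have $n'=|V(\mathcal{H})|+(s-3)|E(\mathcal{H})|$. Since every vertex lies in at most two hyperedges, $|E(\mathcal{H})|\leq \frac{2}{3}|V(\mathcal{H})|$. First delete isolated vertices, as in the proof of Lemma~\ref{dtfuziguohipjo}; this changes nothing, and afterwards $n'\leq (1+\frac{2(s-3)}{3})|V(\mathcal{H})|$. Combined with $t\geq \frac19|V(\mathcal{H})|$, this gives $t\geq c_s n'$ for $c_s=\frac{1}{9}\cdot\frac{3}{2s-3}=\frac{1}{3(2s-3)}$.

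Given a polynomial-time algorithm for the $s$-uniform gap problem, we would run it on $(\mathcal{H}',t)$ and thereby decide the $3$-uniform gap problem, which contradicts Lemma~\ref{dtfuziguohipjo} unless $P=NP$. For $s=3$, the statement is Lemma~\ref{dtfuziguohipjo} itself, with $c_3=\frac19$. The only real care point is the bound on $n'$, which relies on the degree-at-most-two condition to control the number of hyperedges; the rest is routine bookkeeping.
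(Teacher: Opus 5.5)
Your proposal is correct and follows essentially the same route as the paper: pad each hyperedge with $s-3$ private new vertices, observe that matchings of $\mathcal{H}$ and $\mathcal{H}'$ correspond exactly (so $\epsilon'_s=\frac{1}{95}$ and the target $t$ are unchanged), and use the degree-two condition to bound $|V(\mathcal{H}')|$ linearly in $|V(\mathcal{H})|$. The only difference is bookkeeping: the paper weakens $|E(\mathcal{H})|\le\frac23|V(\mathcal{H})|$ to $|E(\mathcal{H})|\le|V(\mathcal{H})|$ and takes $c_s=\frac{1}{9(s-2)}$, whereas you keep the $\frac23$ and get the slightly larger $c_s=\frac{1}{3(2s-3)}$; your isolated-vertex deletion is unnecessary, since the edge bound holds regardless, but it is harmless.
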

\begin{proof}
    Clearly, be Lemma~\ref{dtfuziguohipjo}, it suffices to prove the statement for $s \geq 4$.
    Fix some $s\geq 4$ and let $\epsilon'_s=\frac{1}{95}$ and $c_s=\frac{1}{9(s-2)}$. Suppose that there exists a polynomial-time algorithm $A$ that, given an $s$-uniform instance $(\mathcal{H}',t')$ of HM with $t' \geq c_sn$, outputs `yes' if $\mathcal{H}'$ admits a hypergraph matching of size $t'$ and `no' if $\mathcal{H}'$ does not admit a hypergraph matching of size at least $(1-\epsilon'_s)t'$. We will show that $P=NP$ using Lemma~\ref{dtfuziguohipjo}. Let $(\mathcal{H},t)$ be a 3-uniform instance of HM with $t \geq \frac{1}{9}|V(\mathcal{H})|$ and such that every $v \in V(\mathcal{H})$ is contained in at most two hyperedges of $E(\mathcal{H})$. Observe that, as $\mathcal{H}$ is 3-uniform and every $v \in V(\mathcal{H})$ is contained in at most two hyperedges of $E(\mathcal{H})$, we have 
    \[
    |E(\mathcal{H})|=\frac{1}{3}\sum_{e \in E(\mathcal{H})}\sum_{v \in e}1=\frac{1}{3}\sum_{v \in V(\mathcal{H})}\sum_{\substack{e \in E(\mathcal{H})\\ v \in e}}1\leq \frac{2}{3}|V(\mathcal{H})|\leq |V(\mathcal{H})|.
    \]
    We now construct a hypergraph $\mathcal{H'}$ in the following way. We let $V(\mathcal{H}')$ contain $V(\mathcal{H})$ and $s-3$ new vertices $x_e^1,\ldots,x_e^{s-3}$ for every $e \in E(\mathcal{H})$. Next, for every $e \in E(\mathcal{H})$, we let $E(\mathcal{H}')$ contain a hyperedge $f_e$ with $f_e=e\cup \{x_e^1,\ldots,x_e^{s-3}\}$. This finishes the description of $\mathcal{H}'$. It is easy to see that $\mathcal{H}'$ can be computed from $\mathcal{H}$ in polynomial time. Moreover, for every $e \in E(\mathcal{H})$, we have $|f_e|=|e|+|\{x_e^1,\ldots,x_e^{s-3}\}|=3+(s-3)=s$, so $\mathcal{H'}$ is $s$-uniform. 
    Finally, observe that, as $|E(\mathcal{H})|\leq V(\mathcal{H})$, we have $|V(\mathcal{H}')|=|V(\mathcal{H})|+(s-3)|E(\mathcal{H})|\leq (s-2)|V(\mathcal{H})|$. It follows that $t\geq \frac{1}{9}|V(\mathcal{H})|\geq c_s|V(\mathcal{H}')|$. We can hence apply $A$ to $(\mathcal{H}',t)$ and output the output of $A$. This finishes the description of our algorithm. As $\mathcal{H}'$ can be computed from $\mathcal{H}$ in polynomial time and $A$ runs in polynomial time, we obtain that the entire algorithm runs in polynomial time.

    In order to show that our algorithm has the desired properties, first suppose that $\mathcal{H}$ admits a hypergraph matching $M$ of size $t$. Let $M'$ be the subset of $E(\mathcal{H}')$ that contains $f_e$ for every $e \in M$. Observe that $M'$ is a hypergraph matching in $\mathcal{H}'$ of size $t$. It follows by assumption that $A$ outputs `yes' and hence our algorithm outputs `yes'.

    Now suppose that $\mathcal{H}$ does not admit a hypergraph matching of size at least $(1-\epsilon'_s)t$. Suppose for the sake of a contradiction that $\mathcal{H}'$ admits a hypergraph matching $M'$ of size at least $(1-\epsilon'_s)t$. Let $M$ be the collection of subsets of $V(\mathcal{H})$ that consists of $f \cap V(\mathcal{H})$ for every $f \in M$. Then $M$ is a hypergraph matching of size at least $(1-\epsilon'_s)t$ in $\mathcal{H}$, a contradiction to the assumption. It follows that $\mathcal{H}'$ does not admit a hypergraph matching of size at least $(1-\epsilon'_s)t$. Hence, by our assumption, $A$ outputs `no' and hence our algorithm outputs `no'. 
    Hence our algorithm has the desired properties and $P=NP$ follows by Lemma~\ref{dtfuziguohipjo}.
\end{proof}

The following result is the main technical contribution of our reduction. We show how to compute a corresponding instance of the inversion problem when given an appropriate instance of HM.

\begin{lemma}\label{ytdexzcu}
    For all fixed integers $s \geq 3$ and $k\geq 1$, given an $s$-uniform hypergraph $\mathcal{H}$, in polynomial time, one can compute an oriented graph $D$ whose size is polynomial in the size of $\mathcal{H}$ such that for $x$ being the maximum size of a hypermatching in $\mathcal{H}$ and $y=\inv{k}{s+1}(D)$, and $n=|V(\mathcal{H})|$, we have $y=\lceil\frac{n-x}{s-1}\rceil$.
\end{lemma}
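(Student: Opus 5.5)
We mimic the construction of Lemma~\ref{tf86zguoi}, replacing $P_3$'s by hyperedges and pairs by $(s-1)$-subsets. Let $V=V(\mathcal{H})$ with $|V|=n$. For every $v\in V$ we add vertices $w_v^1,\dots,w_v^k$ and $z_v^1,\dots,z_v^{k-1}$, with arcs $v\to w_v^i$ for $i\in[k]$ and $z_v^i\to v$ for $i\in[k-1]$. For every $Q\in\binom{V}{s-1}$ we add a vertex $w_Q$ with arcs $v\to w_Q$ for all $v\in Q$. The set $W$ of all new vertices is made into a $k$-arc-strong tournament. Every $v\in V$ then has in-degree exactly $k-1$, so some inversion must contain $v$. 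Since $s$ is fixed, $|W|$ is polynomial.

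To force inversions inside $V$ to correspond to hyperedges, we add for every hyperedge $e$ a vertex $h_e\in W$ with arcs $v\to h_e$ for all $v\in e$, and we keep $V$ independent. Inverting $e\cup\{h_e\}$ (size $s+1$) then gives every $v\in e$ an in-arc from $h_e\in W$. Inverting $Q\cup\{w_Q\}$ (size $s$) gives every $v\in Q$ an in-arc from $w_Q$. In both cases all arcs $v\to w_v^i$ and $z_v^i\to v$ are untouched, and each inversion meets $W$ in a single vertex, so $D'[W]=D[W]$. Proposition~\ref{prop:showing_k_strong} then yields $k$-arc-strong connectivity exactly as in Lemma~\ref{tf86zguoi}. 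For the upper bound, take a maximum matching $M$ ($|M|=x$), one inversion per $e\in M$, and cover the remaining $n-sx$ vertices with $\lceil\frac{n-sx}{s-1}\rceil$ sets $Q$. This gives $x+\lceil\frac{n-sx}{s-1}\rceil=\lceil\frac{n-x}{s-1}\rceil$ inversions.

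For the lower bound, let $\mathcal{X}$ be an optimal \InvSeq{k}{s+1}. As in Claim~\ref{yxcfvg}, every $v\in V$ must lie in some $X\in\mathcal{X}$ that actually reverses an arc at $v$. Since $v$ has in-degree $k-1$, some arc at $v$ must end up entering $v$ in $D'$, so $v$ lies in an inversion together with a vertex of $W$. Each $X$ covers at most $s$ vertices of $V$, and it covers exactly $s$ only if $X=T\cup\{h\}$ with $|T|=s$ and $h$ adjacent to all of $T$. That forces $h=h_e$ and $T=e$, since a vertex $w_Q$ is adjacent to only $s-1$ vertices of $V$ and $w_v^i,z_v^i$ to a single one.

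The main obstacle is making this covering count rigorous when inversions overlap or contain several $W$-vertices. The plan is to pick a maximal pairwise disjoint family $\mathcal{X}_1$ of inversions of the form $e\cup\{h_e\}$, which gives a matching with $|\mathcal{X}_1|\le x$. Every other inversion covers at most $s-1$ uncovered vertices, so $n\le s|\mathcal{X}_1|+(s-1)|\mathcal{X}\setminus\mathcal{X}_1|$, which yields $|\mathcal{X}|\ge\frac{n-x}{s-1}$. One subtlety needs checking: a single inversion containing $h_e$ together with only part of $e$ could also fix vertices. That case is fine, since such an inversion still covers at most $s-1$ vertices of $V$ whenever it is not exactly $e\cup\{h_e\}$. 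A second check is that inversions containing two $W$-vertices, which might reverse tournament arcs, also cover at most $s-1$ vertices of $V$, because their size is at most $s+1$. Taking ceilings, since the number of inversions is an integer, completes the proof.
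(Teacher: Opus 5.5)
Your proposal is correct and follows essentially the paper's proof. The paper uses the same gadget: a vertex $w_e$ per hyperedge, a vertex $w_Q$ per $(s-1)$-subset, $k$ and $k-1$ private $W$-neighbours per vertex of $V(\mathcal{H})$, and $W$ a $k$-arc-strong tournament, only with all arcs reversed, so that each vertex of $V(\mathcal{H})$ has out-degree $k-1$ rather than in-degree $k-1$. Its lower bound is the same counting: only $e\cup\{w_e\}$ can meet $s$ vertices of $V(\mathcal{H})$, one takes a maximal disjoint subfamily $M_0$ of these, and then $n\le s|M_0|+(s-1)(|\mathcal{X}|-|M_0|)$.
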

\begin{proof}
    Let an $s$-uniform hypergraph $\mathcal{H}$ be given. We now construct an oriented graph $D$.
    We first let $V(D)$ contain $V(\mathcal{H})$. Next, for every $e \in E(\mathcal{H})$, we let $V(D)$ contain a vertex $w_e$ and we set $W_1=\{w_e: e \in E(\mathcal{H})\}$. Next, for every $Q \in \binom{V(\mathcal{H})}{s-1}$, we let $V(D)$ contain a vertex $w_Q$ and we set $W_2=\{w_Q: Q \in \binom{V(\mathcal{H})}{s-1}\}$. We further let $V(D)$ contain a set $W_3^v$ of $k$ new vertices for every $v \in V(D)$ and we set $W_3=\bigcup_{v \in V(\mathcal{H})}W_3^v$. We also let $V(D)$ contain a set $W_4^v$ of $k-1$ new vertices for every $v \in V(\mathcal{H})$ and we set $W_4=\bigcup_{v \in V(\mathcal{H})}W_4^v$.  We set $W=W_1\cup W_2 \cup W_3 \cup W_4$ and $V(D)=V(\mathcal{H})\cup W$. 
    
    Next, we let $A(D)$ contain a set of arcs such that $D[W]$ is a $k$-arc-strong tournament. Further, for every $e \in E(\mathcal{H})$ and every $v \in e$, we let $A(D)$ contain an arc from $w_e$ to $v$. Next, for every $Q \in \binom{V(\mathcal{H})}{s-1}$ and every $v \in Q$, we let $A(D)$ contain an arc from $w_Q$ to $v$. Further, for every every $v \in V(\mathcal{H})$ and $w \in W_3^v$, we let $A(D)$ contain an arc from $w$ to $v$. Finally, for every every $v \in V(\mathcal{H})$ and every $w \in W_4^v$, we let $A(D)$ contain an arc from $v$ to $w$. This finishes the description of $D$. It is easy to see that $D$ is an oriented graph and that $D$ can be constructed from $\mathcal{H}$ in polynomial time. It remains to prove that $y=\lceil\frac{n-x}{s-1}\rceil$.

    First, let $M$ be a hypergraph matching in $\mathcal{H}$ of size $x$. Let $V_1=\bigcup_{e\in M} e$ and $V_2=V(\mathcal{H})\setminus V_1$. We now let $\mathcal{X}_1\subseteq \binom{V(D)}{s+1}$ be the collection of sets that contains $e \cup w_e$ for every $e \in M$. Observe that $|V_2|=n-|V_1|=n-sx$. 
    It follows that there exists a collection $\mathcal{Q}$ of sets in $\binom{V(\mathcal{H})}{s-1}$ with $|\Qcal|=\lceil\frac{n-sx}{s-1}\rceil$ such that $\bigcup_{Q\in \mathcal{Q}}Q=V_2$. 
    Let $\mathcal{X}_2\subseteq \binom{V(D)}{s}$ be the collection of sets that contains $Q \cup w_Q$ for every $Q \in \mathcal{Q}$ and let $\mathcal{X}=\mathcal{X}_1 \cup \mathcal{X}_2$. Observe that, as $x$ is integer, we have $|\mathcal{X}|=|\mathcal{X}_1|+|\mathcal{X}_2|=x+\lceil \frac{n-sx}{s-1}\rceil=\lceil \frac{n-x}{s-1}\rceil$. 

    Let $D'=\Inv(D;\mathcal{X})$. 
    We will show that $D'$ is $k$-arc-strong. First observe that $D'[W]=D[W]$ and hence by assumption $D'[W]$ is $k$-arc-strong. 
    Hence, by Proposition~\ref{prop:showing_k_strong}, it suffices to prove that for every $v \in V(\mathcal{H})$, we have $|N^+_{D'}(v) \cap W| \geq k$ and $|N^-_{D'}(v) \cap W| \geq k$. Fix some $v \in V(\mathcal{H})$. 
    First observe that $A(D)$ contains $k$ arcs from $W_3$ to $v$ and that $\bigcup_{X\in \Xcal}X \cap W_3=\emptyset$. 
    It follows that these arcs are also contained in $A(D')$. 
    Next observe that $A(D)$ contains $k-1$ arcs from $v$ to $W_4$ and that $\bigcup_{X\in \Xcal}X \cap W_4=\emptyset$. Again, these arcs are contained in $A(D')$. 
    By \Cref{prop:showing_k_strong}, it hence suffices to prove that $D'$ contains an arc from $v$ to $W_1\cup W_2$. First suppose that there exists some $e \in M$ with $v \in e$. Then $\mathcal{X}$ contains exactly one set $X$ with $\{v,w_e\}\subseteq X$. It follows that $A(D')$ contains an arc from $v$ to $w_e$. Otherwise, by definition, there exists some $Q \in \mathcal{Q}$ such that $v \in Q$. Then $\mathcal{X}$ contains exactly one set $X$ with $\{v,w_Q\}\subseteq X$. It follows that $A(D')$ contains an arc from $v$ to $w_Q$. We obtain that $D'$ is $k$-arc-strong. This yields $y\leq |\mathcal{X}|=\lceil \frac{n-x}{s-1}\rceil$.
    
    \medskip

    Now let $\mathcal{X}$ be a \InvSeq{k}{s+1} of $D$ of size $y$. Without loss of generality, we may suppose that for every $X \in \mathcal{X}$ and every $v \in X$, there exists some $v'\in X \cap N_{\UG(D)}(v)$. Let $\mathcal{X}_1$ contain all $X \in \mathcal{X}$ with $|X\cap V(\mathcal{H})|\geq s$ and $\mathcal{X}_2=\mathcal{X}\setminus \mathcal{X}_1$.
    \begin{claim}
        Let $X \in \mathcal{X}_1$. Then $X=e \cup \{w_e\}$ for some $e \in E(\mathcal{H})$.
    \end{claim}
    \begin{proof}
        If $|X\cap W|\geq 2$, we obtain $|X \cap V(\mathcal{H})|\leq (s+1)-|X\cap W|\leq s-1$, a contradiction to $X \in \mathcal{X}_1$.  If $X\cap W=\emptyset$, then, by construction, we have that $X$ is an independent set in $\UG(D)$, a contradiction to the assumption. Finally, if $X\cap W$ consists of exactly one vertex $w$, then, by assumption and as $V(\mathcal{H})$ is an independent set in $\UG(D)$, we obtain that all vertices in $X\setminus w$ are adjacent to $w$ in $\UG(D)$. As $s \geq 3$ and by construction, the statement follows.
    \end{proof}

    Let $M$ consist of all hyperedges $e \in E(\mathcal{H})$ such that $e \cup \{w_e\} \in \mathcal{X}_1$. Further, let $M_0$ be a maximal collection of elements of $M$ that are pairwise disjoint. Let $M_1$ be the collection of sets that consists of $e \setminus \bigcup_{f\in M_0}f$ for every $e \in M\setminus M_0$. Further, let $M_2$ be the collection of sets that consists of $X \cap V(\mathcal{H})$ for all $X \in \mathcal{X}_2$. Observe that $|m|=s$ for all $m \in M_0$ and $|m|\leq s-1$ for all $m \in M_1 \cup M_2$.
    \begin{claim}\label{dfipei}
        We have $M_0 \cup M_1 \cup M_2=V(\mathcal{H})$.
    \end{claim}
    \begin{proofclaim}
        By definition, we have $M_0 \cup M_1 \cup M_2\subseteq V(\mathcal{H})$. 
        Now suppose that there exists some $v \in V(\mathcal{H})\setminus M_0\cup M_1 \cup M_2$. 
        By construction, it follows that $v \in V(\mathcal{H})\setminus \bigcup_{X\in\mathcal{X}}X$. We hence have $d_{\Inv(D;\mathcal{X})}^+(v)=d_D^+(v)=k-1$, a contradiction to $\Inv(D;\mathcal{X})$ being $k$-arc-strong.
    \end{proofclaim}
    
    By Claim~\ref{dfipei} and the preceding remark, we obtain that $s|M_0|+(s-1)(|M_1|+|M_2|)\geq n$ and hence $|M_0|+|M_1|+|M_2|\geq \frac{n-|M_0|}{s-1}$. By the choice of $\mathcal{X}$ and as $M_0$ is a hypergraph matching in $\mathcal{H}$, we obtain
    \begin{align*}
        y&=|\mathcal{X}|\\
        &\geq |M_0|+|M_1|+|M_2|\\
        &\geq \frac{n-|M_0|}{s-1}\\
        &\geq \frac{n-x}{s-1}.
    \end{align*}
    As $y$ is integer, we obtain $y \geq \lceil\frac{n-x}{s-1}\rceil$. This finishes the proof.
\end{proof} 

We are now ready to prove the following more technical restatement of Theorem~\ref{thm:inapproximability} for $p \geq 4$. The proof relies on Lemma~\ref{ytdexzcu} and it is very similar to the conclusion of Theorem~\ref{inappr1}.

\begin{theorem}\label{inappr2}
    For every fixed $p\geq 4$, there exists $\epsilon_p>0$ such that, for every $k \geq 1$, unless $P=NP$, there does not exist a polynomial-time algorithm that, given a $2k$-edge-connected oriented graph $D$ and a positive integer $t$, returns `yes' if $\inv{k}{p}(D)\leq t$ and `no' if $\inv{k}{p}(D)\geq (1+\epsilon_p)t$.
\end{theorem}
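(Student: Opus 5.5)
We mirror the proof of Theorem~\ref{inappr1}, now combining Lemma~\ref{dutfzigui} (with $s=p-1\geq 3$) and Lemma~\ref{ytdexzcu}. Fix $p\geq 4$, set $s=p-1$, and let $\epsilon'_s,c_s$ be the constants from Lemma~\ref{dutfzigui}. We will choose $\epsilon_p=\min\{\frac{\epsilon'_s c_s}{2},\frac12\}$; the constant may need to be shrunk by a factor depending on $s$. Fix $k\geq 1$ and suppose there is a polynomial-time algorithm $A$ separating $\inv{k}{p}(D)\leq t$ from $\inv{k}{p}(D)\geq(1+\epsilon_p)t$. Given an $s$-uniform instance $(\mathcal{H},t')$ of HM with $n=|V(\mathcal{H})|$ and $t'\geq c_sn$, we will decide the gap version of HM in polynomial time, which by Lemma~\ref{dutfzigui} implies $P=NP$.

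The algorithm proceeds as follows. If $t'$ is bounded by a constant depending on $\epsilon_p$ and $s$, we solve the instance by brute force in polynomial time. Otherwise, if $n-t'$ is bounded by a suitable constant, we answer `no': a matching of size $t'$ covers $st'$ vertices, so $(s-1)t'\leq n-t'$, which contradicts $t'$ being large. In the remaining case we build $D$ from Lemma~\ref{ytdexzcu}, which gives $\inv{k}{s+1}(D)=\lceil\frac{n-x}{s-1}\rceil$, where $x$ is the maximum matching size. We then run $A$ on $(D,t)$ with $t=\lceil\frac{n-t'}{s-1}\rceil$ and return its answer. We also need $D$ to be $2k$-edge-connected, as the statement requires. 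If Lemma~\ref{ytdexzcu} does not guarantee this, we note that $\inv{k}{p}(D)<\infty$ (the construction exhibits an inverting set), and this already forces the underlying graph to be $2k$-edge-connected by Theorem~\ref{thm:nashwilliams}.

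For correctness: if $x\geq t'$, then $\inv{k}{p}(D)\leq t$ and $A$ answers `yes'. If instead $x\leq(1-\epsilon'_s)t'$, then
\[
\inv{k}{p}(D)\geq \frac{n-t'+\epsilon'_st'}{s-1}\geq t-1+\frac{\epsilon'_sc_sn}{s-1}.
\]
Since $t\leq n$, this is at least $t-1+\frac{2\epsilon_p}{s-1}t$. The main technical point is matching these constants so that the final bound is at least $(1+\epsilon_p)t$. The fix is to choose $\epsilon_p=\frac{\epsilon'_sc_s}{2(s-1)}$ (capped at $\frac12$), so that the gain is at least $2\epsilon_p t$. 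The additive $-1$ is then absorbed by the second threshold, namely requiring $\epsilon_p t\geq 1$, which gives $\inv{k}{p}(D)\geq(1+\epsilon_p)t$. Hence $A$ answers `no', and the reduction runs in polynomial time since $D$ has polynomial size.
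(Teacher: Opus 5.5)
Your proposal is correct and follows the paper's proof essentially step for step: the same two thresholds on $t'$ and $n-t'$, the same target $t=\lceil\frac{n-t'}{p-2}\rceil$, and the same gap computation combining Lemmas~\ref{ytdexzcu} and~\ref{dutfzigui}. The differences are cosmetic. Shrinking $\epsilon_p$ by the factor $s-1$ is only an artefact of your crude bound $t\leq n$; the paper uses $t\leq\frac{n}{p-2}+1$ together with $\epsilon_p\leq\frac12$, so your first choice $\epsilon_p=\min\{\frac{\epsilon'_sc_s}{2},\frac12\}$ already works once the second threshold is raised to $\epsilon_p t\geq 2$. Also, you explicitly check that $D$ is $2k$-edge-connected via $\inv{k}{p}(D)<\infty$, which the paper leaves implicit.
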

\begin{proof}
    Let $\epsilon_p=\min\{\frac{1}{2}\epsilon'_{p-1}c_{p-1},\frac{1}{2}\}$, where $\epsilon'_{p-1}$ and $c_{p-1}$ are the constants from Lemma~\ref{dutfzigui} and fix some $k \geq 1$. Suppose that there exists a polynomial-time algorithm $A$ that, given a $2k$-edge-connected oriented graph $D$ and a positive integer $t$, returns `yes' if $\inv{k}{p}(D)\leq t$ and `no' if $\inv{k}{p}(D)\geq (1+\epsilon_p)t$. We will show that $P=NP$ using Lemma~\ref{dutfzigui}. Let $(\mathcal{H},t')$ be a $(p-1)$-uniform instance of  hypergraph matching with $t'\geq  c_{p-1}n$ where $n=|V(\mathcal{H})|$.\medskip
    
    First, if $t'\leq \frac{2}{\epsilon_p}$, we solve the problem by a brute force approach and output an appropriate answer. We may hence suppose that $t'> \frac{2}{\epsilon_p}$. Next suppose that $n-t'\leq \frac{2(p-2)}{\epsilon_p}$. We then output `no'. In order to justify this, suppose for the sake of a contradiction that $\mathcal{H}$ admits a hypergraph matching $M$ of size $t'$. As $\mathcal{H}$ is $(p-1)$-uniform and $M$ is a hypergraph matching, we obtain $t'(p-2)=\sum_{e \in M}|e|-t'\leq n-t'\leq \frac{2(p-2)}{\epsilon_p}$, so $t'\leq \frac{2}{\epsilon_p}$, a contradiction. Hence our output is of the desired form. From now on, we may suppose that $n-t'\geq \frac{2(p-2)}{\epsilon_p}$ and hence that $\epsilon_p \lceil\frac{n-t'}{p-2}\rceil\geq 2$.
    
    \medskip
    
    By Lemma~\ref{ytdexzcu}, in polynomial time, one can compute an oriented graph $D$ whose size is polynomial in the size of $\mathcal{H}$ such that for $x$ being the maximum size of a hypermatching in $\mathcal{H}$ and $y=\inv{k}{p}(D)$, we have $y=\lceil\frac{n-x}{p-2}\rceil$. We now apply $A$ to $D$ and $t=\lceil\frac{n-t'}{p-2}\rceil$ and output the output of $A$. This finishes the description of our algorithm. As the size of $D$ is polynomial in the size of $\mathcal{H}$ and $A$ runs in polynomial time by assumption, we obtain that our entire algorithm runs in polynomial time. It remains to prove that the output is of the desired form.

    First suppose that $x\geq t'$. It then follows that $y=\lceil\frac{n-x}{p-2}\rceil\leq \lceil\frac{n-t'}{p-2}\rceil=t$. It follows by assumption that $A$ outputs `yes' and so our algorithm outputs `yes'.

    Next suppose that $x\leq (1-\epsilon'_{p-1})t'$. 
    As $t'\geq c_{p-1}n$, by definition of $\epsilon_p$ and $\epsilon_p \lceil\frac{n-t'}{p-2}\rceil\geq 2$, we obtain that
    \begin{align*}
        y=\left\lceil\frac{n-x}{p-2}\right\rceil&\geq \frac{n-(1-\epsilon'_{p-1})t'}{p-2}\\
        &\geq \lceil\frac{n-t'}{p-2}\rceil-1+\frac{\epsilon'_{p-1}t'}{p-2}\\
        &\geq \lceil\frac{n-t'}{p-2}\rceil-1+\frac{\epsilon'_{p-1}c_{p-1}n}{p-2}\\
        &\geq  \lceil\frac{n-t'}{p-2}\rceil-1+\frac{2\epsilon_{p}n}{p-2}\\
        &\geq  \lceil\frac{n-t'}{p-2}\rceil+2 \epsilon_p \lceil \frac{n-t'}{p-2}\rceil-2\\
        &\geq  \lceil\frac{n-t'}{p-2}\rceil+2 \epsilon_p \lceil \frac{n-t'}{p-2}\rceil-\epsilon_p\lceil \frac{n-t'}{p-2}\rceil\\
        &=(1+\epsilon_p)\lceil\frac{n-t'}{p-2}\rceil.
    \end{align*}
    Hence, by assumption, we have that $A$ outputs `no' and hence our algorithm outputs `no'. Hence the output of our algorithm is always appropriate. It follows from Lemma~\ref{dutfzigui} that $P=NP$.
    \end{proof}

\section{Approximation algorithms}
\label{sec:APX_algorithm}

In the previous section, we proved that, unless $P=NP$, for every pair of integers $p\geq 3$ and $k\geq1$, the problem of computing $\inv{k}{p}$ does not admit a PTAS. In this section, we show, on the positive side, that it is actually approximable. In particular, we prove \Cref{thm:APX,thm:APX_p=4}. Formally, we prove the following more general and stronger statement. 

\begin{restatable}{theorem}{thmAPXalgo}
    \label{thm:APX_algo}
    For all fixed integers $p\geq 3$ and $k\geq 1$, there exists a polynomial-time algorithm that, given a $2k$-edge-connected digraph $D$, outputs a \InvSeq{k}{p} $\mathcal{X}$ of $D$ such that 
    \[
        |\Xcal|\leq \eta(p,k) \cdot \inv{k}{p}(D) +c_{k,p},
    \]
    where $\eta(p,k) =\frac{\min\left\{\binom{p}{2},(2k-1)(p-1)\right\}}{\lfloor p/2\rfloor}$ and $c_{k,p}$ is a constant depending only on $p$ and $k$. 
\end{restatable}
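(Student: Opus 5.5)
The plan is to compare everything to a single-arc-reversal relaxation that can be solved exactly with Frank's weighted orientation theorem (Theorem~\ref{csadadf}). The algorithm has two phases. First, compute a set $F$ of arcs of minimum size whose reversal makes $D$ $k$-arc-strong. This is done exactly as in the proof of Theorem~\ref{thm:frank}: arcs inside digons get weight $|A(D)|+1$ and simple arcs get weight $1$, so $|F|$ equals the minimum weight of a $k$-arc-strong orientation of $\UG(D)$. Second, pack the arcs of $F$ into \leqp{p}-inversions, each inversion taking care of up to $\lfloor p/2\rfloor$ arcs of $F$ that form a matching. The ratio $\eta(p,k)$ is then a lower bound divided by a packing rate, and the additive constant $c_{k,p}$ absorbs rounding and boundary effects. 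To get the $(4k-2+\epsilon)$ form of Theorem~\ref{thm:APX}, I would solve instances with $\inv{k}{p}(D)\le c_{k,p}/\epsilon$ by brute force over at most that many sets of size at most $p$, which takes polynomial time for fixed $k$, $p$ and $\epsilon$. In the remaining instances the additive constant is at most $\epsilon\cdot\inv{k}{p}(D)$.

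For the lower bound, I will prove $|F|\le \min\{\binom{p}{2},(2k-1)(p-1)\}\cdot\inv{k}{p}(D)$. Fix an optimal \InvSeq{k}{p} $\Xcal$ and let $D^*=\Inv(D;\Xcal)$. The arcs of $D$ whose orientation differs in $D^*$ are each inside some $X\in\Xcal$. Hence there are at most $\binom{p}{2}$ of them per set, which gives the first term. For the second term, take a minimally $k$-arc-strong spanning subdigraph $H$ of $D^*$. By Theorem~\ref{thm:dalmazzo}, $H[X]$ has at most $2k(|X|-1)$ arcs for every $X\in\Xcal$. Only the reversed arcs lying in $H$ need to be reversed: every other arc can be given the orientation it has in $D$, since adding arcs to $H$ keeps it $k$-arc-strong. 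This yields a $k$-arc-strong orientation differing from $D$ in at most $\sum_X 2k(|X|-1)$ simple arcs. To improve $2k$ to $2k-1$, I would use that $H$ is a union of $2k$ branchings, so one branching's contribution inside $X$ can be saved. Concretely, for each $X$ one of the $2k$ forests restricted to $X$ can be re-rooted, or replaced by the arcs of $D$, without losing $k$-arc-strength. This is the point I am least sure of, and I would check it against the explicit Edmonds decomposition.

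For the upper bound, I will show that $F$ can be realised by at most $|F|/\lfloor p/2\rfloor + c_{k,p}$ inversions of size at most $p$, so that dividing gives $\eta(p,k)$. The naive idea is to group $\lfloor p/2\rfloor$ pairwise disjoint arcs of $F$ and invert the union of their endvertex sets. The danger is that this union also spans other arcs of $D$, which then get flipped unintentionally. My first attempt would exploit the freedom in choosing $F$ and the grouping. Note that $\UG(D)$ may be dense, so sparsity alone will not help. Instead, among all optimal orientations and groupings, choose one in which every group $X$ spans no arc outside $F$ except arcs in digons, which are harmless since inverting a digon changes nothing. When a group does span an extra arc, I would either recolour it (include it in $F$ and remove a different arc, an exchange argument using the structure of tight cuts) or split the group. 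Each split is charged to a matching edge of $\UG(D)[V(F)]$, and I would argue that only $O_{k,p}(1)$ such conflicts are unavoidable. A fallback is to lose only a constant number of inversions by finishing with single arc-reversals, i.e.\ \eqp{2}-inversions, for the leftover arcs.

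I expect this packing step to be the main obstacle, namely guaranteeing that grouped inversions flip exactly the intended arcs, or only harmless extra arcs, while keeping the rate at $\lfloor p/2\rfloor$ arcs per inversion up to an additive constant. The lower bound, the brute-force treatment of small optima, and the use of Frank's algorithm are comparatively routine. For Theorem~\ref{thm:APX_p=4} I would specialise to $k=1$ and $p=4$, where $\eta=\min\{6,3\}/2=3/2$.
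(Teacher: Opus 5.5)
Your overall architecture matches the paper's: compute $\inv{k}{2}(D)$ exactly via Frank's theorem, prove $|F|\le\min\{\binom{p}{2},(2k-1)(p-1)\}\cdot\inv{k}{p}(D)$, then pack $F$ at rate $\lfloor p/2\rfloor$ up to an additive constant. However, both nontrivial steps have gaps.

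\textbf{The packing step.} You rightly call this the crux, but it is missing its key idea, and the invariant you propose is the wrong one. Requiring that a group span no arc of $D$ outside $F$ cannot be achieved in dense digraphs: in a tournament, the union of two disjoint pairs of $F$ always spans four further simple arcs. Nothing in your exchange/charging sketch bounds these conflicts by $O_{k,p}(1)$, and the fallback of finishing with \eqp{2}-inversions has no bound on the leftovers either.

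The point is that arcs of $D$ may be flipped freely as long as a certificate of $k$-arc-strength survives. The paper fixes a minimally $k$-arc-strong spanning subdigraph $D'$ of $\Inv(D;F)$ with underlying graph $G'$. By minimality of $F$, every pair of $F$ is a simple arc of $D'$. It then greedily groups $\lfloor p/2\rfloor$ pairs of $F$ that are pairwise independent in $G'$: for any two of them $\{x,y\},\{u,v\}$, the graph $G'[\{x,y,u,v\}]$ contains only the edges $xy$ and $uv$ (the pairs may share a vertex). Then the only edges of $G'$ inside the union of a group are the group's own pairs. Hence the output still contains $D'$ and is $k$-arc-strong, whatever happens to arcs outside $D'$.

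So sparsity does help, but in $G'$, not in $\UG(D)$. Suppose more than $R(\lfloor p/2\rfloor,4k,8k)-1$ pairs are left when the greedy stops, so no $\lfloor p/2\rfloor$ of them are pairwise independent. Colour each two leftover pairs as independent, dependent and intersecting, or dependent and disjoint. Ramsey's theorem then gives one of two configurations:
\begin{itemize}
\item $4k$ pairs through a common vertex whose other ends are pairwise adjacent, i.e.\ a $K_{4k+1}$ in $G'$;
\item $8k$ disjoint, pairwise dependent pairs, i.e.\ at least $8k+\binom{8k}{2}$ edges on $16k$ vertices.
\end{itemize}
Both contradict \Cref{thm:dalmazzo}.

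\textbf{The $(2k-1)(p-1)$ lower bound.} Your Dalmazzo argument is correct but yields only $|F|\le 2k(p-1)\cdot\inv{k}{p}(D)$; ``saving one branching inside each $X$'' is not an argument. Without the improvement you get a ratio of about $4k$ rather than $4k-2$ for odd $p$, and $3$ rather than $\frac32$ for $k=1$, $p=4$. So neither \Cref{thm:APX} nor \Cref{thm:APX_p=4} follows.

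The paper (\Cref{lem:bounding_invk2_invkp}) takes an optimal \InvSeq{k}{p} $\Zcal$ and a minimum set of pairs, dominated by $\Zcal$, whose inversion yields $k$-arc-strength. It bounds the pairs inside each $Z\in\Zcal$ separately by partitioning $Z$ into the $2k$-frames of $\UG(D[Z])$.
\begin{itemize}
\item By minimality, each pair joining two frames is an edge between frames. The contracted graph has no $2k$-edge-connected subgraph on two or more vertices, hence at most $(2k-1)(|\mathcal{F}|-1)$ edges (\Cref{prop:frames}).
\item Inside a frame $F$, take a minimally $2k$-edge-connected spanning subgraph $H$ of $\UG(D[F])$ and a $k$-arc-strong orientation $\vec H$. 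By \Cref{ftuziu}, both $\vec H$ and its reverse can be imposed on $F$. Minimality therefore bounds the pairs inside $F$ by half the simple edges of $H$, i.e.\ by $k(|F|-1)$ via \Cref{thm:mader}.
\end{itemize}
Summing gives $(2k-1)(|Z|-1)$. The factor-two saving comes from the freedom to reverse an entire $k$-arc-strong orientation of a $2k$-edge-connected piece, which a fixed global branching decomposition does not give you.
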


Given a digraph $D$ of order $n$ and positive integers $c$ and $p$, it can be checked in  time $n^{pc+O(1)}$ whether $\inv{k}{p}(D) \leq c$. Therefore, we derive the following results, which are of independent interest depending on the respective values of $k$ and $p$. Observe that \Cref{thm:APX_algo} implies  Corollaries \ref{cor:APX_algo_kstrong_ksmaller_p} and ~\ref{cor:APX_algo_kstrong_klarger_p} and that Theorems~\ref{thm:APX} and~\ref{thm:APX_p=4} follow from Corollary~\ref{cor:APX_algo_kstrong_klarger_p}.

\begin{corollary}
    \label{cor:APX_algo_kstrong_ksmaller_p}
    For all fixed integers $p \geq 3$, $k \geq 1$ and real number $\epsilon>0$, there exists a polynomial-time algorithm that, given an digraph $D$, computes an $\alpha$-approximation of $\inv{k}{p}(D)$, where:
    \[
    \alpha = \left\{
    \begin{array}{ll}
        p-1 + \epsilon & \text{if $p$ is even,}\\
        p +\epsilon & \text{otherwise.}
    \end{array}
    \right.
    \]
\end{corollary}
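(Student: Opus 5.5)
The plan is to derive Corollary~\ref{cor:APX_algo_kstrong_ksmaller_p} directly from Theorem~\ref{thm:APX_algo}, using the brute-force observation stated just before the corollary to absorb the additive constant $c_{k,p}$.

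\textbf{Step 1 (ratio).} Since $\eta(p,k)\leq \binom{p}{2}/\lfloor p/2\rfloor$, we bound the ratio by cases on parity. If $p$ is even, $\binom{p}{2}/(p/2)=p-1$. If $p$ is odd, $\binom{p}{2}/((p-1)/2)=p$. So in both cases $\eta(p,k)$ is at most the claimed constant $\alpha-\epsilon$.

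\textbf{Step 2 (algorithm and preprocessing).} Given $D$, first decide $2k$-edge-connectivity by Theorem~\ref{thm:edge_connectivity_poly}. If $D$ fails, then $\inv{k}{p}(D)=+\infty$ by Theorem~\ref{thm:nashwilliams} (inversions do not change the underlying graph), and we report this. Otherwise $\inv{k}{p}(D)$ is finite. Set $c=\lceil c_{k,p}/\epsilon\rceil$, a constant depending only on $k,p,\epsilon$. Test whether $\inv{k}{p}(D)\leq c$ by enumerating all collections of at most $c$ sets of size at most $p$, in time $n^{pc+O(1)}$, which is polynomial for fixed $k,p,\epsilon$. If such a collection exists, the enumeration returns an optimal solution exactly.

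\textbf{Step 3 (large optimum).} If instead $\inv{k}{p}(D)>c\geq c_{k,p}/\epsilon$, run the algorithm of Theorem~\ref{thm:APX_algo}. It outputs $\mathcal{X}$ with
\[
|\mathcal{X}|\leq \eta(p,k)\,\inv{k}{p}(D)+c_{k,p} < \bigl(\eta(p,k)+\epsilon\bigr)\inv{k}{p}(D)\leq \alpha\,\inv{k}{p}(D).
\]

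There is no real obstacle here. The only point needing care is that the running time of the brute-force step depends on $\epsilon$, which is allowed because $\epsilon$ is fixed.
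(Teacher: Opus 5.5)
Your proposal is correct and follows the paper's own route. The paper also bounds $\eta(p,k)\leq\binom{p}{2}/\lfloor p/2\rfloor$ in Theorem~\ref{thm:APX_algo}, which gives $p-1$ for even $p$ and $p$ for odd $p$, and absorbs the additive constant $c_{k,p}$ with the $n^{pc+O(1)}$ brute-force check when the optimum is small. Your handling of inputs that are not $2k$-edge-connected is a detail the paper leaves implicit.
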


\begin{corollary}
    \label{cor:APX_algo_kstrong_klarger_p}
    For all fixed integers $p \geq 3$, $k \geq 1$ and real number $\epsilon>0$, there exists a polynomial-time algorithm that, given an digraph $D$, computes an $\alpha$-approximation of $\inv{k}{p}(D)$, where:
    \[
    \alpha = \left\{
    \begin{array}{ll}
        (4k-2)(1-\frac{1}{p}) + \epsilon & \text{if $p$ is even,}\\
        4k-2 +\epsilon & \text{otherwise.}
    \end{array}
    \right.
    \]
\end{corollary}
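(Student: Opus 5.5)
The plan is to read this off from Theorem~\ref{thm:APX_algo}: bound $\eta(p,k)$ using only the second term of the minimum, and remove the additive constant $c_{k,p}$ by brute force on instances whose optimum is small.

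First we handle infeasible instances. Since $p\geq 3$, \leqp{p}-inversions include \eqp{2}-inversions. By Theorem~\ref{thm:nashwilliams}, including its second part, which gives a $k$-arc-strong orientation without parallel arcs, $D$ has a \InvSeq{k}{p} if and only if $\UG(D)$ is $2k$-edge-connected. The forward direction holds because inversions do not change the underlying graph. For the converse, we invert each pair $\{u,v\}$ joined by a simple arc whose orientation differs from the target orientation; these pairs have size $2\leq p$. By Theorem~\ref{thm:edge_connectivity_poly}, $2k$-edge-connectivity can be tested in polynomial time. If it fails, we correctly report $\inv{k}{p}(D)=+\infty$. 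From now on, $D$ is $2k$-edge-connected.

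Next we bound the multiplicative factor. We use $\eta(p,k)\leq \frac{(2k-1)(p-1)}{\lfloor p/2\rfloor}$. If $p$ is even, $\lfloor p/2\rfloor=p/2$, so
\[
\eta(p,k)\leq \frac{2(2k-1)(p-1)}{p}=(4k-2)\Big(1-\frac{1}{p}\Big).
\]
If $p$ is odd, $\lfloor p/2\rfloor=(p-1)/2$, so
\[
\eta(p,k)\leq \frac{2(2k-1)(p-1)}{p-1}=4k-2.
\]
Write $\alpha_0$ for the right-hand side in each case, so $\alpha=\alpha_0+\epsilon$.

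Finally we absorb the additive constant. Let $C=\lceil c_{k,p}/\epsilon\rceil$, which depends only on $k$, $p$ and $\epsilon$. We enumerate all collections of at most $C$ subsets of $V(D)$, each of size at most $p$. For each collection we check $k$-arc-strong connectivity of the inverted digraph in polynomial time, for instance by $O(n)$ maximum-flow computations. This takes time $n^{pC+O(1)}$, which is polynomial for fixed $k,p,\epsilon$.

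If the enumeration finds a \InvSeq{k}{p} of size at most $C$, we return one of minimum size, which is exact. Otherwise $\inv{k}{p}(D)>C\geq c_{k,p}/\epsilon$. In that case we run the algorithm of Theorem~\ref{thm:APX_algo}, and its output satisfies
\[
|\Xcal|\leq \eta(p,k)\,\inv{k}{p}(D)+c_{k,p}< (\alpha_0+\epsilon)\,\inv{k}{p}(D).
\]
Both branches run in polynomial time, which proves the corollary.

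There is no real obstacle here, since all the difficulty lies in Theorem~\ref{thm:APX_algo}. The only points needing care are to pick the $(2k-1)(p-1)$ branch of the minimum defining $\eta$, and to treat infeasible instances separately, because Theorem~\ref{thm:APX_algo} assumes $2k$-edge-connectivity.
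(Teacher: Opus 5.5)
Your proposal is correct and matches the paper's argument: bound $\eta(p,k)$ by the $(2k-1)(p-1)/\lfloor p/2\rfloor$ branch of Theorem~\ref{thm:APX_algo}, and absorb $c_{k,p}$ by testing $\inv{k}{p}(D)\leq C$ by brute force in time $n^{pC+O(1)}$ with $C=\lceil c_{k,p}/\epsilon\rceil$. Your separate handling of inputs that are not $2k$-edge-connected, via Theorem~\ref{thm:nashwilliams}, fills in a detail the paper leaves implicit.
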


The remainder of this section is dedicated to the proof of Theorem~\ref{thm:APX_algo}. Roughly speaking, the proof of Theorem~\ref{thm:APX_algo} is based on relating  $\inv{k}{p}$ and  $\inv{k}{2}$. We first show in \Cref{lem:bounding_invk2_invkp} that $\inv{k}{2}$ is not larger than $\inv{k}{p}$ by more than a factor of $(2k-1)(p-1)$. After, in the main proof of \Cref{thm:APX_algo}, we show that given a sufficiently large \InvSeq{k}{2}, we can efficiently compute a \InvSeq{k}{p} that is smaller by a factor of roughly $\lfloor p/2 \rfloor$. 

We start with the following observation.

\begin{proposition}\label{ftuziu}
    Let $k$ be a positive integer, $G$ a $2k$-edge-connected graph, $H$ a $2k$-edge-connected  subgraph of $G$, and $\vec{G},\vec{H}$ be $k$-arc-strong orientations of $G$ and $H$, respectively. Let $\vec{G}_1$ be the unique orientation of $G$ in which all edges in $E(H)$ have the orientation they have in $\vec{H}$ and all remaining edges have the orientation they have in $\vec{G}$. Then $\vec{G}_1$ is $k$-arc-strong.
\end{proposition}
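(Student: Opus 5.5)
The plan is to verify the cut condition directly: a digraph is $k$-arc-strong if and only if every dicut has size at least $k$. So I would fix an arbitrary set $\emptyset \subsetneq S \subsetneq V(G)$ and show $d^+_{\vec{G}_1}(S)\geq k$. The bound $d^-_{\vec{G}_1}(S)\geq k$ then follows by applying the same argument to $V(G)\setminus S$. The key point is that every arc of $\vec{G}_1$ is oriented either as in $\vec{G}$ or as in $\vec{H}$, so I will charge the dicut to whichever of the two orientations is "responsible" for the whole cut. I split into two cases according to how $S$ meets $V(H)$.

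\emph{Case 1: $V(H)\subseteq S$ or $V(H)\cap S=\emptyset$.} Then no edge of $H$ has one endvertex in $S$ and the other outside $S$. Hence every edge of $\delta_G(S)$ lies in $E(G)\setminus E(H)$ and is oriented in $\vec{G}_1$ exactly as in $\vec{G}$. This gives $\delta^+_{\vec{G}_1}(S)=\delta^+_{\vec{G}}(S)$, and therefore $d^+_{\vec{G}_1}(S)\geq k$ because $\vec{G}$ is $k$-arc-strong.

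\emph{Case 2: $S_H:=S\cap V(H)$ is a nonempty proper subset of $V(H)$.} Then $\delta_H(S_H)$ is a cut of $H$. Since $\vec{H}$ is $k$-arc-strong, at least $k$ arcs of $\vec{H}$ leave $S_H$. Each such arc corresponds to an edge of $H$ with tail in $S$ and head in $V(G)\setminus S$. That edge keeps its $\vec{H}$-orientation in $\vec{G}_1$, so it contributes to $\delta^+_{\vec{G}_1}(S)$. Hence $d^+_{\vec{G}_1}(S)\geq d^+_{\vec{H}}(S_H)\geq k$.

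There is no real obstacle in this argument. The only care needed is the bookkeeping identity that the edges of $H$ crossing $S$ in $G$ are exactly the edges crossing $S_H$ in $H$. Note also that the $2k$-edge-connectivity hypotheses are only needed to guarantee that $\vec{G}$ and $\vec{H}$ exist, by Theorem~\ref{thm:nashwilliams}.
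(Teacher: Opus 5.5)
Your proposal is correct and follows essentially the same route as the paper: it checks every out-cut and splits into the case where $S$ contains or avoids all of $V(H)$, where the out-cut is bounded by $\vec{G}$, and the case where $S$ splits $V(H)$, where it is bounded by $\vec{H}$. Your bookkeeping remark, that the $H$-edges crossing $S$ in $G$ are exactly the edges of $\delta_H(S\cap V(H))$, is precisely what the paper's one-line inequality $d^+_{\vec{G}_1}(X)\geq d^+_{\vec{H}}(X)$ uses implicitly.
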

\begin{proof}
    Let $\emptyset \subsetneq X \subsetneq V(G)$. If $V(H)\subseteq X$ or $V(H)\cap X = \emptyset$, we have $d_{\vec{G}_1}^+(X)= d_{\vec{G}}^+(X)\geq k$. Otherwise, we have $d_{\vec{G}_1}^+(X)\geq d_{\vec{H}}^+(X)\geq k$. In either case, we have $d_{\vec{G}_1}^+(X)\geq k$, so $\vec{G}_1$ is $k$-arc-strong.   
\end{proof}

Our proof of Theorem~\ref{thm:APX_algo} relies on the notion of {\it frames}, which is an analogue of the so-called {\it blocks} for the edge-connectivity. Given an integer $k\geq 1$, a {\it $k$-frame} $F$ of a multigraph $G$ is a subset of vertices of $G$ such that $G[F]$ is $k$-edge-connected and $F$ is inclusion-wise maximal with respect to this property.
Recall that, by definition, a single vertex induces a $k$-edge-connected graph.
We now prove the following easy proposition on $k$-frames.

\begin{proposition}
    \label{prop:frames}
    Let $G$ be a multigraph and $k\geq 1$ be an integer. The $k$-frames $F_1,\dots,F_r$ of $G$ form a partition of $V(G)$. Moreover, contracting the $k$-frames of $G$ yields a multigraph $G'$ with at most $(k-1)(|V(G')|-1)$ edges.
\end{proposition}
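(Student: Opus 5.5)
The proof has two parts: first that the $k$-frames partition $V(G)$, then an edge count for the contracted graph.

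For the partition, observe that every vertex lies in some $k$-frame, since $\{v\}$ induces a $k$-edge-connected graph by convention and can be extended to an inclusion-wise maximal set with this property. It therefore suffices to show that two distinct $k$-frames $F_1,F_2$ are disjoint. Suppose instead that $F_1\cap F_2\neq\emptyset$. We show that $G[F_1\cup F_2]$ is $k$-edge-connected, which contradicts maximality. Take a cut $\delta(S)$ of $G[F_1\cup F_2]$ with $\emptyset\subsetneq S\subsetneq F_1\cup F_2$. Since $F_1\cup F_2$ is not contained in $S$, and the union is connected through the common part, $S$ separates $F_1$ or $F_2$. More precisely, if $S\cap F_1$ is a proper nonempty subset of $F_1$, then the edges of $G[F_1]$ crossing it already give at least $k$ edges in the cut. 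Otherwise $F_1\subseteq S$ or $F_1\cap S=\emptyset$, and symmetrically for $F_2$. Because $F_1\cap F_2\neq\emptyset$, both frames then lie on the same side, so $S$ is empty or everything, which is impossible. Hence the cut has size at least $k$, and the frames partition $V(G)$.

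For the edge bound, let $G'$ be obtained by contracting each frame $F_i$ to a vertex $f_i$, and delete the loops; only edges between distinct frames survive. The plan is to apply Proposition~\ref{prop:density_non_k_edge_connected} to $G'$ with the integer $k-1$. This requires every subgraph $H'$ of $G'$ on at least two vertices to satisfy $\lambda(H')\le k-1$, and it is enough to check this for induced subgraphs.

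Suppose some $G'[T]$ with $|T|\ge 2$ were $k$-edge-connected. Let $U=\bigcup_{f_i\in T}F_i$. We claim $G[U]$ is $k$-edge-connected. Take a cut $S$ of $G[U]$. If $S$ splits some $F_i$, then $G[F_i]$ alone contributes at least $k$ edges. Otherwise $S$ is a union of frames, so it corresponds to a proper nonempty subset of $T$, and the cut size equals the corresponding cut in $G'[T]$, which is at least $k$. Thus $G[U]$ is $k$-edge-connected and strictly contains a frame, contradicting maximality.

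Proposition~\ref{prop:density_non_k_edge_connected} then gives $|E(G')|\le (k-1)(|V(G')|-1)$. When $k=1$ the hypothesis reads $\lambda(H)\le 0$, meaning every such subgraph is disconnected, and the argument above still shows this, so no special case is needed. The main care point is that edge-connectivity of $G'[T]$ transfers correctly to $G[U]$; this works because $G'$ keeps every inter-frame edge with its multiplicity.
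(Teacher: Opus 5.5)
Your proof is correct and follows essentially the same route as the paper. You verify that the union of two intersecting frames is $k$-edge-connected, which the paper only calls straightforward. For the edge bound you prove, in contrapositive form, the same fact the paper does: a subgraph of $G'$ on at least two vertices that were $k$-edge-connected would lift to a $k$-edge-connected $G[U]$ strictly containing a frame. You then invoke Proposition~\ref{prop:density_non_k_edge_connected} with $k-1$ exactly as the paper does.

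One small remark on $k=1$: Proposition~\ref{prop:density_non_k_edge_connected} is stated only for $k\geq 1$, so applying it with $k-1=0$ is formally outside its statement. Its proof works verbatim for $0$, and your own argument already shows directly that $G'$ is edgeless. The paper glosses over this point too.
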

\begin{proof}
    By definition, the single-vertex graph is $k$-edge-connected. Therefore, every vertex of $G$ belongs to at least one frame. Moreover, it is straightforward to check that if two frames $F_1,F_2$ intersect, then $F_1\cup F_2$ is also a frame, which by inclusion-wise maximality implies $F_1=F_2$. This shows the first part of the statement.

    We now prove the second part of the statement. 
    Let $G'$ be the multigraph obtained from $G$ by contracting the $k$-frames $F_1,\dots,F_r$ of $G$. 
    We first justify that every subgraph $H'$ of $G'$ on at least two vertices satisfies $\lambda(H')\leq k-1$.
    Let thus $H'$ be an arbitrary subgraph of $G'$ on at least two vertices.
    Let $F^\star=\bigcup_{F\in V(H')} F$.
    Since $G[F^\star]$ contains at least two distinct frames, by inclusion-wise maximality, $F^\star$ is not a frame. Hence, $G[F^\star]$ admits a cut $\delta_{G[F^\star]}(U)$ of size at most $k-1$. 
    If there exists $F\in V(H')$ such that $U\setminus F \neq \emptyset$ and $F\setminus U \neq \emptyset$, then
    \[
        d_{G[F]} (U\cap F) \leq d_{G[F^\star]}(U) \leq k-1,
    \]
    a contradiction to $F$ being a $k$-frame. Therefore, every frame $F\in V(H')$ is either included in $U$ or disjoint from $U$. Moreover, by definition of $\delta_{G[F^\star]}(U)$ being a cut, at least one frame $F\in V(H')$ is included in $U$, and at least one is disjoint from $U$. 
    Let $U' = \{ F\in V(H') : F\subseteq U\}$. It follows that
    \[
        d_{H'}(U') \leq d_{G[F^\star]}(U) \leq k-1.
    \]
    This shows that every subgraph of $G'$ on at least two vertices has edge-connectivity at most $k-1$. The statement then follows from Proposition~\ref{prop:density_non_k_edge_connected}.
\end{proof}

Using Proposition~\ref{prop:frames}, we derive another intermediate result, which contains the main technical difficulty for the proof of Theorem~\ref{thm:APX_algo}, for which we need one more definition. Namely, given a digraph $D$ and two collections $\mathcal{X}$ and $\mathcal{Z}$ of subsets of $V(D)$, we say that $\mathcal{Z}$ {\it dominates} $\mathcal{X}$ if for every $X \in \mathcal{X}$, there exists some $Z \in \mathcal{Z}$ with $X \subseteq Z$. 
\begin{lemma}
    \label{lem:technical_frames}
    Let  $k\geq 1$ be an integer, $D$ a $2k$-edge-connected digraph, $\mathcal{Z}\subseteq 2^{V(D)}$, and $\mathcal{X}\subseteq \binom{V(D)}{2}$ a minimum set such that $\mathcal{Z}$ dominates $\mathcal{X}$ and $\Inv(D;\mathcal{X})$ is $k$-arc-strong. Then, for every $Z \in \mathcal{Z}$, we have $|\mathcal{X}\cap \binom{Z}{2}|\leq (2k-1)(|Z|-1)$.  
\end{lemma}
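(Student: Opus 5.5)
The plan is to decompose $Z$ into the $2k$-frames of $G[Z]$, where $G=\UG(D)$, bound separately the pairs of $\mathcal{X}$ lying between frames and those lying inside a single frame, and add the bounds.

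\textbf{Step 1 (setup).} By minimality of $\mathcal{X}$, every $\{u,v\}\in\mathcal{X}$ is linked in $D$ by exactly one simple arc. Otherwise the inversion does nothing, since $u,v$ are non-adjacent or $\dig{u,v}$ is a digon, and deleting $\{u,v\}$ from $\mathcal{X}$ keeps domination by $\mathcal{Z}$ and keeps $\Inv(D;\mathcal{X})$ $k$-arc-strong. Hence the pairs of $\mathcal{X}\cap\binom{Z}{2}$ correspond injectively to simple edges of $G[Z]$.

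\textbf{Step 2 (between frames).} Let $F_1,\dots,F_r$ be the $2k$-frames of $G[Z]$. By Proposition~\ref{prop:frames} applied with $2k$, these frames partition $Z$, and contracting them leaves at most $(2k-1)(r-1)$ edges. So at most $(2k-1)(r-1)$ pairs of $\mathcal{X}\cap\binom{Z}{2}$ join two different frames.

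\textbf{Step 3 (inside a frame).} It remains to show $|\mathcal{X}\cap\binom{F}{2}|\leq(2k-1)(|F|-1)$ for each frame $F$. Summing over frames then gives
\[(2k-1)\Bigl(\sum_i(|F_i|-1)+r-1\Bigr)=(2k-1)(|Z|-1).\]
Fix $F$, and let $\mathcal{Y}=\mathcal{X}\cap\binom{F}{2}$ and $D_0=\Inv(D;\mathcal{X}\setminus\mathcal{Y})$. Since $G[F]$ is $2k$-edge-connected, Theorem~\ref{thm:nashwilliams} (second part) provides $k$-arc-strong orientations of $G[F]$ that keep the digons of $D_0[F]$. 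Choose such an orientation $\vec{H}$ differing from $D_0[F]$ on a set $R$ of simple arcs with $|R|$ minimum. Proposition~\ref{ftuziu} shows that $D_0$ with the orientation on $F$ replaced by $\vec{H}$ is $k$-arc-strong. This new digraph equals $\Inv(D;(\mathcal{X}\setminus\mathcal{Y})\cup\mathcal{R})$, where $\mathcal{R}$ is the set of endpoint pairs of $R$. These pairs lie in $F\subseteq Z$, so domination by $\mathcal{Z}$ is preserved, and minimality of $\mathcal{X}$ gives $|\mathcal{Y}|\leq|R|$. So it suffices to bound $|R|$.

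\textbf{Step 4 (bounding $|R|$).} By the minimal choice of $\vec{H}$, flipping any $r\in R$ back destroys $k$-arc-strong connectivity. So every $r\in R$ lies in a dicut of $\vec{H}$ of size exactly $k$, and therefore $r$ lies in every minimally $k$-arc-strong spanning subdigraph $M$ of $\vec{H}$. Theorem~\ref{thm:dalmazzo} then gives $|R|\leq|A(M)|\leq 2k(|F|-1)$.

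The main obstacle is improving this to $(2k-1)(|F|-1)$. Write $M$ as the union of $k$ arc-disjoint out-branchings and $k$ in-branchings rooted at a common vertex, via Edmonds' theorem as in the proof of Theorem~\ref{thm:dalmazzo}. I would try to show that the arcs of $R$ cannot fill $M$ completely, so that one branching's worth of $|F|-1$ arcs is missing from $R$. The route I would try first is an uncrossing argument: the tight sets entered by arcs of $R$ can be uncrossed into a cross-free family, and counting arcs of $R$ against this family should give the sharper bound, since one of the $2k$ branchings can be chosen to avoid $R$. A second idea is to exploit that reversing all arcs of $\vec{H}$ gives another admissible orientation. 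Whichever works, the resulting bound $|R|\leq(2k-1)(|F|-1)$ combined with Steps 2 and 3 proves the lemma.
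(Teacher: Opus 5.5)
Your Steps 1--3 follow the structure of the paper's proof. You decompose $Z$ into $2k$-frames, bound the pairs between frames via Proposition~\ref{prop:frames}, and reduce the within-frame count to the cost of re-orienting $G[F]$ using Proposition~\ref{ftuziu} and minimality of $\mathcal{X}$. The gap is Step 4, which is the heart of the lemma. Your bound $|R|\leq 2k(|F|-1)$ from Theorem~\ref{thm:dalmazzo} is correct but too weak. Summed over frames it gives $(2k-1)(|Z|-1)+(|Z|-r)$, which exceeds the target as soon as some frame has two vertices. The sharpening is then left as a list of candidate strategies. The uncrossing route and the claim that one of the $2k$ branchings can be chosen to avoid $R$ are not substantiated, so as written the proposal does not prove the lemma.

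The missing idea is the second one you mention, and it closes the gap directly, even giving the stronger bound $k(|F|-1)$. Let $M$ be a minimally $k$-arc-strong spanning subdigraph of $\vec H$; you already showed $R\subseteq A(M)$. Let $O$ be obtained from $D_0[F]$ by reversing those simple arcs of $M$ that \emph{agree} with $D_0$. Then $O$ contains the reverse of $M$:
\begin{itemize}
\item an arc of $M$ lying in $R$ is already reversed in $D_0$;
\item an arc of $M$ agreeing with $D_0$ is reversed by construction;
\item the reverse of an arc of a digon is the other arc of that digon, which is untouched.
\end{itemize}
So $O$ is a $k$-arc-strong orientation of $G[F]$ keeping the digons. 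It differs from $D_0[F]$ exactly on the simple arcs of $M$ not in $R$. Minimality of $|R|$ gives $|R|\leq |A(M)|-|R|$, hence $|R|\leq \frac12|A(M)|\leq k(|F|-1)\leq (2k-1)(|F|-1)$.

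The paper runs the same complementary-flip argument on a minimally $2k$-edge-connected spanning subgraph $H$ of $G[F]$ with a $k$-arc-strong orientation $\vec H$. Flipping the simple edges of $H$ on which $D$ disagrees with $\vec H$ yields a digraph containing $\vec H$. Flipping those on which $D$ agrees yields a digraph containing the reverse of $\vec H$. One of these two sets has size at most $\frac12|E(H)|\leq k(|F|-1)$ by Theorem~\ref{thm:mader}.
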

\begin{proof}
    Let $Z \in \mathcal{Z}$, let  $G$ be the underlying graph of $D[Z]$, and let $\mathcal{F}$ be the unique partition of $V(G)$ into $2k$-frames. Observe that $\mathcal{F}$ is well-defined by \Cref{prop:frames}. Now for every $F \in \mathcal{F}$, we set $\mathcal{X}_F=\mathcal{X}\cap \binom{F}{2}$ and $\mathcal{X}_0=(\mathcal{X}\cap \binom{Z}{2})\setminus \bigcup_{F \in \mathcal{F}}\mathcal{X}_F$. It follows directly from the minimality of $\mathcal{X}$ that for every $X \in \mathcal{X}_0$, there exists an edge in $E(G)$ linking the two vertices in $X$. By \Cref{prop:frames}, this yields $|\mathcal{X}_0|\leq (2k-1)(|\mathcal{F}|-1)$. The next result also bounds the number of elements in $\mathcal{X}_F$ for $F \in \mathcal{F}$.
    \begin{claim}\label{tfuziguo}
        Let $F \in \mathcal{F}$. Then $|\mathcal{X}_F|\leq k(|F|-1)$.
    \end{claim}
    \begin{proofclaim}
        Let $H$ be a minimally $2k$-edge-connected subgraph of $G[F]$ with $V(H) =F$, and let $Q$ be the set of simple edges of $H$.
        Further, let $\vec{H}$ be a $k$-arc-strong orientation of $H$ without parallel arcs, which exists by \Cref{thm:nashwilliams}. 
        
        First let $\mathcal{Y}_1$ consist of all pairs of vertices linked by some $q \in Q$ that has a distinct orientation in $D$ and $\vec{H}$. We obtain that $\Inv(D[F];\mathcal{Y}_1)$ is $k$-arc-strong. It hence follows by Proposition \ref{ftuziu} that $\Inv(D;(\mathcal{X}\setminus \mathcal{X}_F)\cup \mathcal{Y}_1)$ is $k$-arc-strong. Moreover, as $Y \subseteq Z$ for every $Y \in \mathcal{Y}_1$, we have that $\mathcal{Z}$ dominates $(\mathcal{X}\setminus \mathcal{X}_F)\cup \mathcal{Y}_1$. Hence, the minimality of $\mathcal{X}$ yields $|\mathcal{X}_F|\leq |\mathcal{Y}_1|$. Now let $\mathcal{Y}_2$ consist of all pairs of vertices linked by some $q \in Q$ that has the same orientation in $D$ and $\vec{H}$. As $\Inv(D[F];\mathcal{Y}_2)$ contains an orientation of $H$ that is obtained by reversing all arcs of $\vec{H}$, we obtain that $\Inv(D[F];\mathcal{Y}_2)$ is $k$-arc-strong. It follows from \Cref{ftuziu} that $\Inv(D;(\mathcal{X}\setminus \mathcal{X}_F)\cup \mathcal{Y}_2)$ is $k$-arc-strong. Moreover, as $Y \subseteq Z$ for every $Y \in \mathcal{Y}_2$, we have that $\mathcal{Z}$ dominates $(\mathcal{X}\setminus \mathcal{X}_F)\cup \mathcal{Y}_2$. Hence, the minimality of $\mathcal{X}$ yields $|\mathcal{X}_F|\leq |\mathcal{Y}_2|$. As $\mathcal{Y}_1$ and $\mathcal{Y}_2$ are disjoint, and since $H$ is minimally $2k$-edge-connected, by \Cref{thm:mader} we obtain
        \[
            |\mathcal{X}_F|\leq \min \{|\mathcal{Y}_1|,|\mathcal{Y}_2|\}\leq \frac{1}{2}(|\mathcal{Y}_1|+|\mathcal{Y}_2|)= \frac{1}{2}|Q|\leq \frac{1}{2}|E(H)| \leq k(|F|-1).\qedhere
        \]
    \end{proofclaim}
    By \Cref{tfuziguo} and $|\mathcal{X}_0|\leq (2k-1)(|\mathcal{F}|-1)$, we obtain
    \begin{align*}
        |\mathcal{X}\cap \binom{Z}{2}|&=|\mathcal{X}_0|+\sum_{F \in \mathcal{F}}|\mathcal{X}_F|\\&\leq (2k-1)(|\mathcal{F}|-1)+\sum_{F \in \mathcal{F}}k(|F|-1)\\
        &= (2k-1)(|\mathcal{F}|-1)+k(|Z|-|\mathcal{F}|)\\
        &\leq (2k-1)(|Z| -1).\qedhere
    \end{align*}
\end{proof}

We make use of Lemma~\ref{lem:technical_frames} in the following form, which is the key lemma for the proof of Theorem~\ref{thm:APX_algo}.

\begin{lemma}
    \label{lem:bounding_invk2_invkp}
   Let  $k\geq 1$ and $p\geq 2$ be integers and $D$ a $2k$-edge-connected digraph. Then 
   \[
   \inv{k}{2}(D)\leq (2k-1)(p-1)\inv{k}{p}(D).
   \]
\end{lemma}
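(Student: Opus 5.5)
We plan to derive the bound directly from \Cref{lem:technical_frames}, applied with $\mathcal{Z}$ equal to an optimal \InvSeq{k}{p} of $D$. If $\inv{k}{p}(D)=+\infty$ there is nothing to prove, so let $\mathcal{Z}\subseteq\binom{V(D)}{\leq p}$ be a \InvSeq{k}{p} of $D$ with $|\mathcal{Z}|=\inv{k}{p}(D)$.

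The first step is to show that there is at least one collection $\mathcal{X}'\subseteq\binom{V(D)}{2}$ dominated by $\mathcal{Z}$ with $\Inv(D;\mathcal{X}')$ being $k$-arc-strong; then a minimum such $\mathcal{X}$ exists. Let $D^\star=\Inv(D;\mathcal{Z})$. For every unordered pair $\{u,v\}$ that is linked by exactly one arc in $D$ and whose arc is reversed an odd number of times by $\mathcal{Z}$, put $\{u,v\}$ into $\mathcal{X}'$. Such a pair lies inside some $Z\in\mathcal{Z}$, since otherwise its arc is never reversed, so $\mathcal{Z}$ dominates $\mathcal{X}'$. Pairs linked by a digon are unaffected, as inverting a digon gives the same digraph. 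Pairs linked by no arc are irrelevant. Since $D$ is a digraph, these are all the cases, and so $\Inv(D;\mathcal{X}')=D^\star$, which is $k$-arc-strong.

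Now take $\mathcal{X}$ to be a minimum set with these properties. Every element of $\mathcal{X}$ is contained in some $Z\in\mathcal{Z}$, so $\mathcal{X}\subseteq\bigcup_{Z\in\mathcal{Z}}\binom{Z}{2}$. By \Cref{lem:technical_frames},
\[
\inv{k}{2}(D)\leq|\mathcal{X}|\leq\sum_{Z\in\mathcal{Z}}\Bigl|\mathcal{X}\cap\tbinom{Z}{2}\Bigr|\leq\sum_{Z\in\mathcal{Z}}(2k-1)(|Z|-1)\leq(2k-1)(p-1)|\mathcal{Z}|,
\]
using $|Z|\leq p$. The hypothesis of \Cref{lem:technical_frames} that $D$ is $2k$-edge-connected is given. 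Sets $Z$ with $|Z|\leq 1$ contribute $0$ on both sides, so they cause no problem.

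The only point needing care is the existence of a dominated $(=2)$-collection that reproduces $\Inv(D;\mathcal{Z})$ exactly, i.e.\ the parity and digon argument above. Everything else is a direct summation, so we expect no real obstacle.
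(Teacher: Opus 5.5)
Your proposal is correct and follows the paper's proof: take an optimal $(k,\leq p)$-inverting set $\mathcal{Z}$ and a minimum collection $\mathcal{X}\subseteq\binom{V(D)}{2}$ dominated by $\mathcal{Z}$ with $\Inv(D;\mathcal{X})$ $k$-arc-strong, then sum the bound of \Cref{lem:technical_frames} over $Z\in\mathcal{Z}$; your parity/digon argument just makes explicit the existence of such an $\mathcal{X}$, which the paper asserts without comment. One small nit: for $Z=\emptyset$ the intermediate bound $(2k-1)(|Z|-1)$ is negative, so small sets do not contribute $0$ on both sides, but an optimal $\mathcal{Z}$ contains no set of size at most $1$, and in any case you can bound those terms by $0\leq(2k-1)(p-1)$ directly.
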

\begin{proof}
    Let $\mathcal{Z}$ be a \InvSeq{k}{p} of $D$ of size $\inv{k}{p}(D)$. It follows that there exists a minimum set $\mathcal{X}\subseteq \binom{V(D)}{2}$ such that $\mathcal{Z}$ dominates $\mathcal{X}$ and $\Inv(D;\mathcal{X})$ is $k$-arc-strong. By \Cref{lem:technical_frames}, we have $|\mathcal{X}\cap \binom{Z}{2}|\leq (2k-1)(|Z|-1)\leq (2k-1)(p-1)$ for every $Z\in \Zcal$. As $\mathcal{Z}$ dominates $\mathcal{X}$, we obtain that 
    \[
    \inv{k}{2}(D)\leq |\mathcal{X}|\leq \sum_{Z\in \mathcal{Z}} |\mathcal{X}\cap \textstyle \binom{Z}{2}|\leq (2k-1)(p-1)\inv{k}{p}(D),
    \]
    as desired.
\end{proof}

In the main proof of \Cref{thm:APX_algo}, we will show that $\inv{k}{p}$ is by a factor of roughly $\lfloor \frac{p}{2}\rfloor$ smaller than $\inv{k}{2}$ when $\inv{k}{2}$ is sufficiently large. The idea is that, given a set of \leqp{2}-inversions, a single \leqp{p}-inversion can replace a collection of these \leqp{2}-inversions. In order to prove the existence of an appropriate collection of \leqp{2}-inversions, we  need the following well-known general form of Ramsey's Theorem~\cite{ramsey1930} (see for instance~\cite[Theorem~9.1.3]{diestel2017}).

\begin{theorem}
    \label{thm:ramsey}
    For all integers $c,r\geq 1$ there exists an integer $n$ such that every $c$-coloring of $E(K_n)$ yields a monochromatic clique on $r$ vertices.
\end{theorem}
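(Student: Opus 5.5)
We prove the statement by the classical ``greedy sequence plus pigeonhole'' argument. This gives the explicit bound $n = c^{c(r-1)+1}$; any $n$ at least this large works. Set $m = c(r-1)+1$ and fix a $c$-coloring $\phi$ of $E(K_n)$ with $n \geq c^{m}$.

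The first step is to build a sequence of vertices $v_1,\dots,v_m$ and nested sets $V(K_n)=S_0 \supseteq S_1 \supseteq \dots \supseteq S_m$ with the following invariants:
\begin{itemize}
    \item $|S_i| \geq c^{m-i}$;
    \item $v_{i+1} \in S_i$ and $S_{i+1} \subseteq S_i \setminus \{v_{i+1}\}$;
    \item every edge from $v_{i+1}$ to $S_{i+1}$ has one and the same color, which we call $\psi(v_{i+1})$.
\end{itemize}
Given $S_i$ with $|S_i| \geq c^{m-i} \geq 1$ (for $i<m$), we pick any $v_{i+1}\in S_i$. We split the remaining $|S_i|-1$ vertices of $S_i$ according to the color of their edge to $v_{i+1}$, and let $S_{i+1}$ be a largest class. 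By pigeonhole, $|S_{i+1}| \geq (|S_i|-1)/c$, and since $|S_i|$ is an integer at least $c^{m-i}$, this gives $|S_{i+1}| \geq \lceil (c^{m-i}-1)/c\rceil = c^{m-i-1}$. The integrality is what absorbs the $-1$.

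By construction, for all $i<j$ we have $v_j \in S_i$, so the edge $v_iv_j$ has color $\psi(v_i)$. That is, the color of an edge is determined by its earlier endvertex.

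The second step applies pigeonhole once more, now to the map $\psi$ on the $m = c(r-1)+1$ vertices $v_1,\dots,v_m$. Some color class contains at least $r$ of these vertices. Any two of them, say $v_i,v_j$ with $i<j$, span an edge of color $\psi(v_i)$, which is the common color of the class. So these $r$ vertices induce a monochromatic clique on $r$ vertices, as required.

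There is no real obstacle here. The only point needing care is the size bookkeeping in the first step, namely checking that $|S_i| \geq c^{m-i}$ survives removing $v_{i+1}$ before dividing by $c$, which the ceiling computation above handles. The degenerate cases $c=1$ or $r=1$ are trivial and are also covered by the same argument. Alternatively, one could simply cite \cite[Theorem~9.1.3]{diestel2017}, since this is a standard result.
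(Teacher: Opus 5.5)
The paper does not prove this statement. It quotes it as Ramsey's theorem with a pointer to \cite[Theorem~9.1.3]{diestel2017}, so there is no proof to compare against. Your greedy-sequence-plus-pigeonhole argument is the standard proof and is correct for $c\geq 2$. In that case $\lceil (c^{m-i}-1)/c\rceil = c^{m-i-1}$ holds because $0<1/c<1$, and both the nesting bookkeeping and the final pigeonhole on $\psi$ are fine.

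There is one small inaccuracy. Your remark that the degenerate case $c=1$ is ``also covered by the same argument'' is false. For $c=1$ the ceiling step gives $\lceil (1-1)/1\rceil = 0$, not $c^{m-i-1}=1$. Correspondingly, your explicit bound $n=c^{c(r-1)+1}$ becomes $n=1$, which fails for every $r\geq 2$ since $K_1$ contains no $K_r$.

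This does not affect the theorem itself: for $c=1$ simply take $n=r$. If you want a single uniform argument, replace the target sizes $c^{m-i}$ by $N_i = 1+c+\dots+c^{m-1-i}$ for $0\leq i\leq m-1$. These satisfy $(N_i-1)/c = N_{i+1}$ exactly, with no ceiling needed, and $N_{m-1}=1$. Taking $n=N_0$ then works for all $c\geq 1$.
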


We are now ready to derive Theorem~\ref{thm:APX_algo}, that we first recall here for convenience.

\thmAPXalgo*
 
\begin{proof}[Proof of Theorem~\ref{thm:APX_algo}]
    Given integers $n_1,n_2,n_3$, we denote by $R(n_1,n_2,n_3)$ the least integer $n$ such that every coloring of the edges of $K_n$ with $\{1,2,3\}$ yields a clique of size $n_i$ colored $i$ for some $i\in \{1,2,3\}$.
    The existence of $R(n_1,n_2,n_3)$ is guaranteed by Theorem~\ref{thm:ramsey}. We prove the statement for 
    \[
    c_{k,p}=R(\lfloor p/2\rfloor, 4k,8k)-1.
    \]
    
    Let us fix an arbitrary $2k$-edge-connected digraph $D=(V,A)$ with underlying graph $G$. 
    Given a graph $H$, an {\it edge set} in $H$ is a set of two vertices in $V(H)$ that are linked by exactly edge in $E(H)$. 
    We say that two distinct edge sets $\{x,y\}$ and $\{u,v\}$ of $H$ are {\it independent} if $H[\{x,y,u,v\}]$ contains precisely the two edges $xy$ and $uv$, {\it dependent} otherwise. Note that two independent edge sets $\{x,y\}$ and $\{u,v\}$ may intersect.
    We consider the following algorithm applied to $D$.
    
    \begin{enumerate}
        \item Compute a \InvSeq{k}{2} $\Zcal_2$ of $D$ of size $\inv{k}{2}(D)$.
        \label{enumitem:algo_apx_1}
        
        \item Compute a minimally $k$-arc-strong subdigraph $D'$ of $\Inv(D;\Zcal_2)$ and let $G'$ be the underlying graph of $D'$. 
        
        \item Build $\Zcal$ from $\Zcal_2$ as follows, starting from $\Zcal=\emptyset$. As long as $\Zcal_2$ contains a set $I$ of $\lfloor p/2\rfloor$ edge sets that are pairwise independent in $G'$, we choose any such set $I$ and let 
        \begin{itemize}
            \item $\Zcal \gets \Zcal \cup \big\{\bigcup_{e\in I} e\big\}$, and
            \item $\Zcal_2 \gets \Zcal_2 \setminus I$.
        \end{itemize}
        \item Output $\Zcal \cup \Zcal_2$.
    \end{enumerate}

    Note that Step~\ref{enumitem:algo_apx_1} of the algorithm can be done in polynomial time by Theorem~\ref{thm:frank}, and that all other steps can be done in polynomial time trivially as $p$ is fixed.
    From now on, let $\Xcal_2$ denote the value of $\Zcal_2$ as computed at Step~\ref{enumitem:algo_apx_1}, and let $\Xcal$ denote the output of the algorithm. 
    Note that $\Xcal$ is a \InvSeq{k}{p} of $D$ since, by construction, $|X|\leq p$ holds for every $X \in \mathcal{X}$ and $\Inv(D;\Xcal)$ contains $D'$ as a subdigraph. 
    In what remains, we prove the following three inequalities, which together imply the statement:
    \begin{align}
        \label{eq:X_by_X2}
        |\Xcal| &\leq  \frac{|\Xcal_2|}{\lfloor p/2\rfloor} + c_{k,p},\\[1em]
        \label{eq:X2_by_Y_bis}
        |\Xcal_2| &\leq \binom{p}{2} \cdot \inv{k}{p}(D),\\[1em]
        \label{eq:X2_by_Y}
        |\Xcal_2| &\leq (2k-1)(p-1) \cdot \inv{k}{p}(D).
    \end{align}
    First observe that Inequality~\eqref{eq:X2_by_Y} follows from Lemma~\ref{lem:bounding_invk2_invkp}.
    Let $\Ycal$ be a \InvSeq{k}{p} of~$D$ of size $\inv{k}{p}(D)$. 
    Observe that a \InvSeq{k}{2} of~$D$ can be obtained by taking all the edge sets $Z$ of $G$ such that the number of sets $Y\in \mathcal{Y}$ containing $Z$ is odd. 
    Clearly, the resulting set has size at most $\binom{p}{2}\cdot |\Ycal| =  \binom{p}{2}\cdot\inv{k}{p}(D)$, and the Inequality~\eqref{eq:X2_by_Y_bis} follows from the optimality of $\Xcal_2$.
    It thus remains to prove~\eqref{eq:X_by_X2}. First observe that every $X \in \Xcal_2$, we have that $X$ is an edge set in $G'$ by the minimality of $\mathcal{X}_2$. Hence, by construction, there is a partition $(\Xcal',\Xcal'')$ of $\Xcal$ such that:
    \begin{itemize}[itemsep=0pt]
        \item for each $X\in \Xcal'$, $E(G'[X])$ has size $\lfloor p/2\rfloor$;
        \item $\Xcal'' \subseteq \Xcal_2$ and any collection of $\lfloor p/2\rfloor$ sets of $\Xcal''$ contains two dependent edge sets; and
        \item $(E(G'[X]))_{X\in \Xcal'}$ partitions $\Xcal_2 \setminus \Xcal''$.
    \end{itemize}
    In particular, it follows that 
    \[
        |\Xcal| \leq \frac{1}{\lfloor p/2\rfloor}\Big(|\Xcal_2| - |\Xcal''|\Big) + |\Xcal''| \leq \frac{|\Xcal_2|}{\lfloor p/2\rfloor}+ |\Xcal''|.
    \]
    Hence, we only have to prove $|\Xcal''| \leq c_{k,p}$.
    Assume for a contradiction that $|\Xcal''| \geq c_{k,p} + 1$. Consider the auxiliary edge-colored complete graph $K$ whose vertex set is $\Xcal''$, and each edge linking two sets $X_1$ and $X_2$ is colored:
    \begin{itemize}[itemsep=0pt]
        \item $1$ if $X_1$ and $X_2$ are independent in $G'$,
        \item $2$ if $X_1$ and $X_2$ are dependent in $G'$ and intersect,
        \item $3$ otherwise, that is $X_1$ and $X_2$ are dependent in $G'$ and disjoint.
    \end{itemize}
    By construction, every set of $\lfloor p/2\rfloor$ edge sets of $\Xcal''$ contains two dependent edges. Therefore, by the choice of $c_{k,p}$, one of the two following cases holds.
    \begin{description}
        \item[Case 1:] {\it $\Xcal''$ contains a set $L$ of $4k$ edge sets that are pairwise intersecting and dependent in $G'$.}

        Recall that, by minimality of $\Xcal_2$, two intersecting edge sets of $\Xcal_2$ share exactly one vertex.
        Let $V_L$ be the union of all sets in $L$. 
        Since $4k\geq 4$, and because the edge sets in $L$ pairwise intersect, there exists a vertex $u$  contained in all sets in $L$, and in particular $|V_L|=4k+1$. Now, for every pair of distinct sets $\{u,v\},\{u,w\} \in L$, since these sets are dependent in $G'$, the graph $G'[\{u,v,w\}]$ is isomorphic to $K_3$. It follows that $G'[V_L]$ is isomorphic to $K_{4k+1}$. Hence, $D'[V_L]$ has order $4k+1$ and $\binom{4k+1}{2} > 8k^2$ arcs, and it is a subdigraph of a minimally $k$-arc-strong digraph, a contradiction to Theorem~\ref{thm:dalmazzo}.
        
        \item[Case 2:] {\it $\Xcal''$ contains a set $L$ of $8k$ edge sets that are pairwise disjoint and dependent in $G'$.}

       Let $V_L$ be the union of all sets in $L$, so $|V_L| = 16k$. For every pair of distinct edge sets $\{u,v\},\{x,y\} \in L$, since these edge sets are dependent, the graph $G'[\{u,v,x,y\}]$ contains at least one edge between $\{u,v\}$ and $\{x,y\}$. It follows that $D'[V_L]$ has order $16k$ and at least  $8k+\binom{8k}{2} > 2k(16k-1)$ arcs, a contradiction to Theorem~\ref{thm:dalmazzo}.
    \end{description}

    This shows~\eqref{eq:X_by_X2} and concludes the proof of Theorem~\ref{thm:APX_algo}.
\end{proof}
   
\section{Parameterized Complexity}
\label{sec:W1_hardness}

This section is devoted to the proofs of \Cref{thm:W1_hardness_invkp_by_p,thm:W1_hardness_invkp_by_ell}, which follow from the more general statements of \Cref{thm:W1_hardness_invkp_by_p_ETH,thm:W1_hardness_invkp_by_ell_ETH} below.

Recall that, given a multidigraph $D$, it can trivially be checked in time $O(n^{p\ell})$ whether $\inv{k}{p}(D) \leq \ell$. We first obtain that, under ETH, the dependence in $p$ is best possible, up to a factor of $\log p$ in the exponent.

\defproblem{$k$-Single-Inversion ($k$-SI)}{An oriented graph $D$ and an integer $p$.}{Is there a set $X\subseteq V(D)$, $|X|\leq p$, such that $\Inv(D,X)$ is $k$-arc-strong?}

\begin{restatable}{theorem}{Whardnessbyp}
    \label{thm:W1_hardness_invkp_by_p_ETH}
    For every $k \geq 2$, {\sc $k$-SI} is W[1]-hard with respect to $p$. 
    Moreover, assuming ETH, there is an absolute constant $\alpha>0$ such that it cannot be solved in time $f(p)\cdot n^{\alpha p / \log p}$ for any computable function $f$.
\end{restatable}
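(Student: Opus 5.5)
We plan a parameterized reduction from a problem that is W[1]-hard and, under ETH, has no $f(t)\, n^{o(t/\log t)}$ algorithm. The natural source is \textsc{Partitioned Subgraph Isomorphism} (Marx), which has exactly this lower bound with $t=|E(H)|$; a plain \textsc{Multicolored Clique} reduction would only give $n^{o(\sqrt{p})}$. The reduction must produce $p=O(t)$; the $\log p$ loss then comes from Marx's theorem. Given $G$, a pattern $H$ and a colouring $\psi\colon V(G)\to V(H)$, we build an oriented graph $D$ in which the chosen set $X$ encodes one vertex of $G$ per colour class and one edge of $G$ per edge of $H$.

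We take a large $k$-arc-strong tournament core $W$, as in Lemma~\ref{tf86zguoi} and Lemma~\ref{ytdexzcu}. Every vertex of $G$ and every edge of $G$ is attached to $W$ so that it already has $k$ out- and in-arcs, except for a controlled deficit that can only be repaired by inversions. For each colour class $V_i$ we add a gadget vertex $c_i$ whose dicut has out-degree $k-1$. This deficiency can be fixed only if $X$ contains $c_i$ together with exactly one vertex $v\in V_i$, oriented so that inverting the pair $\{c_i,v\}$ reverses one arc into $c_i$. For each edge class $E_{ij}$ (edges of $G$ between $V_i$ and $V_j$, with $ij\in E(H)$) we add a gadget vertex $d_{ij}$, together with edge-vertices $w_e$ adjacent to their endpoints. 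The arcs are oriented so that selecting $w_e$ into $X$ creates a deficit at $w_e$ unless both endpoints of $e$ are also in $X$, because the arcs $w_e u$ and $w_e v$ must flip together. The budget is $p=|V(H)|+|E(H)|+|V(H)|+|E(H)|=O(t)$, counting the gadget vertices, one vertex per colour and one edge-vertex per edge class. This tight budget forces exactly one representative per class.

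The proof then has two directions. Completeness: from a colourful copy of $H$ we invert $X=\{c_i\}\cup\{d_{ij}\}\cup\{v_i\}\cup\{w_{e_{ij}}\}$ and check $k$-arc-strength via Proposition~\ref{prop:showing_k_strong}. This amounts to checking local in- and out-degree towards $W$ at every modified vertex, since $D[W]$ is untouched. Soundness: any valid $X$ with $|X|\le p$ must contain all gadget vertices, because each has a deficient cut that only an inversion can fix. Budget counting then leaves one slot per class. Finally, the incidence constraints at $w_e$ force the chosen edges to be consistent with the chosen vertices, which gives the subgraph isomorphism. The requirement $k\ge2$ should be used here. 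With $k\ge2$ we can give vertices exactly one "spare" arc in a direction that becomes critical. For $k=1$, a vertex adjacent to $W$ in both directions is trivially fine, so the degree-based enforcement collapses.

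The main obstacle is designing the orientations so that inverting $X$ has no side effects. The set $X$ induces arcs among all of its members, including arcs between gadget vertices of different classes and between chosen vertices and unrelated edge-vertices. Inverting $X$ flips all of these, and that could destroy other cuts or allow a "cheating" $X$. My first attempt is to make the gadget vertices pairwise non-adjacent and to make $V(G)$ independent, so that only designated arcs lie inside any $X$. I would then verify every dicut through a case analysis of cuts separating $W$ from small sets, using the precise $k$ versus $k-1$ degree counts. If that fails, I would replace single gadget vertices by small $k$-arc-strong blobs so that only arcs incident to the designated vertex matter.
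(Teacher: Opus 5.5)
\textbf{Verdict: there is a genuine gap.} Your overall architecture matches the paper's: a reduction from (normalised) PSI using Marx's bound, a $k$-arc-strong tournament core, a colour gadget with a deficit of one repaired by pairing it with one vertex of $V_i$, and a tight budget of $O(|E(H)|)$. The gap is in the edge-consistency gadget, which is the heart of soundness; as you describe it, it does not work.

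\textbf{Why the edge gadget fails.} Pairing $w_e$ with $d_{ij}$ reverses exactly one arc at $w_e$. So the deficit it creates at the single vertex $w_e$ is one, and reversing just one of $w_eu$, $w_ev$ repairs it. Nothing in your degree bookkeeping makes these two arcs ``flip together''. With one representative $v_i$ per colour class, consider an $X$ that picks, for each $ij\in E(H)$, an edge of $E_{ij}$ incident to only one of $v_i,v_j$. Such an $X$ meets every local constraint you list. Hence the step ``the incidence constraints at $w_e$ force the chosen edges to be consistent with the chosen vertices'' is unsupported.

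There is a second, smaller issue: you never say why the deficit at $d_{ij}$ cannot be repaired, at the same cost, by a core vertex instead of some $w_e$. If it can, no edge-vertex is selected at all.

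\textbf{What is needed.} You need a deficit of \emph{two} that only a single edge-vertex can repair, and this cannot live on a single vertex. The paper places it on a set. For each $ij\in E(H)$ it takes a fresh $k$-arc-strong tournament $T_{i,j}$ and builds the gadget as follows.
\begin{itemize}
\item Every edge-vertex $z_{u,v}$ is joined to $T_{i,j}$ by $k$ arcs in each direction, and the arcs $z_{u,v}u$ and $z_{u,v}v$ are added.
\item $T_{i,j}$ sends $k$ arcs to the core but receives only $k-2$ from it.
\end{itemize}
Then $V(T_{i,j})\cup Z_{i,j}$ has in-degree exactly $k-2$. The budget is $p=2r+|E(H)|$, consisting of the $x_i$, one vertex of $V_i\cup V(T_i)$ for each $i$, and one vertex of $V(T_{i,j})\cup Z_{i,j}$ for each $ij$. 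This leaves no room for the core out-neighbours of $T_{i,j}$. So the single vertex of this set in $X$ must be some $z_{u,v}$, and both $u$ and $v$ must lie in $X$.

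This is exactly where $k\geq 2$ is used: for $k=1$ a set cannot fall short by two. Your heuristic for $k\geq 2$ does not identify this. The construction also makes your extra vertex $d_{ij}$ unnecessary.

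Likewise, the colour gadget is localized to its own tournament $T_i$, so the deficit at $x_i$ can only be repaired by a vertex of $V_i\cup V(T_i)$. The budget count, combined with the edge gadgets, then forces that vertex into $V_i$. Once this gadget is in place, the ETH transfer goes as you indicate, since $p\le 3|E(H)|$ and $|V(D)|$ is polynomial in $|V(G)|$.
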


When parameterized by $\ell$ instead of $p$, the parameterized complexity of the problem remains open for oriented graphs and digraphs. However, as a partial result, we obtain that it is hard for multidigraphs, even in the first nontrivial case. 

\defproblem{$(2,2)$-Inversion}{A multidigraph $D$ and an integer $\ell\geq 1$.}{Is there a set of \eqp{2}-inversions $\Xcal$ such that $|\Xcal|\leq \ell$ and $\Inv(D;\Xcal)$ is $2$-arc-strong?}

\begin{restatable}{theorem}{Whardbyell}
    \label{thm:W1_hardness_invkp_by_ell_ETH}
    {\sc $(2,2)$-Inversion} is W[1]-hard with respect to $\ell$.
    Moreover, assuming ETH, there is an absolute constant $\alpha>0$ such that it cannot be solved in time $f(\ell)n^{\alpha\ell /\log \ell}$ for any computable function $f$. 
\end{restatable}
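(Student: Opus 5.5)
We reduce from {\sc Multicolored Clique} parameterized by the clique size $t$; under ETH, Multicolored Clique with $t$ colour classes cannot be solved in time $f(t)n^{o(t)}$. To get the $n^{\alpha\ell/\log\ell}$ lower bound we would instead reduce from {\sc Partitioned Subgraph Isomorphism}, where the pattern graph $H$ has $|E(H)|$ edges and the known lower bound is $f(|E(H)|)\,n^{o(|E(H)|/\log|E(H)|)}$ (Marx). The plan is to design $D$ so that the budget is $\ell=\Theta(|V(H)|+|E(H)|)$ and every feasible set of \eqp{2}-inversions encodes one chosen vertex per colour class and one chosen edge per pattern edge.

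\textbf{Construction.} The basic design principle is that parallel arcs are never worth reversing, so the only useful inversions are of pairs $\{u,v\}$ joined by a single arc. We first build a large $2$-arc-strong core $W$ in which every arc is doubled, and all gadget arcs we want to be immovable are also doubled. For each colour class $V_i$ we create a vertex-selection gadget: a vertex $c_i$ that has in-degree $1$ (a deficiency witnessing that $\{c_i\}$ is not yet $2$-arc-strong). Its incident single arcs are $c_i\to x$, one for each candidate $x\in V_i$, realised as gadget vertices. Reversing exactly one such arc $c_ix$ fixes $c_i$, so the budget forces the choice of a single candidate $x_i$. For each pattern edge $ij$ we add an edge gadget containing single arcs indexed by the edges $xy$ of $G$ with $x\in V_i$ and $y\in V_j$. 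Reversing one such arc fixes a local deficiency of the edge gadget, but it creates a new deficient set unless the chosen vertex gadgets in classes $i$ and $j$ select $x$ and $y$. Concretely, the set formed by the edge-gadget vertex for $xy$ together with the vertex-gadget nodes for $x$ and $y$ should have out-degree exactly $2$ only when both incident selections match.

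\textbf{Correctness.} The forward direction (a multicoloured clique gives a solution of size $\ell$) is a direct check via \Cref{prop:showing_k_strong}, attaching each gadget to $W$ with two arc-disjoint paths in each direction. For the converse we first argue by minimality that no inverted pair spans a multi-arc or a digon, so each inversion is a single arc reversal. Then a counting argument over the $|V(H)|+|E(H)|$ disjoint deficient sets, each needing at least one reversal inside it, shows that each gadget receives exactly one reversal. Finally, consistency follows from the out-degree condition on the mixed sets.

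\textbf{Main obstacle.} The delicate part is the consistency gadget. We must ensure that selecting a non-matching edge $xy$ leaves some cut of size $1$, while the matching choice always yields $2$-arc-strong connectivity. At the same time, the other vertex choices and edge choices must not interfere through spurious cuts; with doubled arcs to $W$ this can be arranged, but it still has to be verified. I would first try the degree-based design: make each gadget vertex have exactly one single in-arc or out-arc whose replacement is provided only by the reversed arc of the matching selection. The remaining work would then be to check all cuts $\delta^+(S)$ by distinguishing whether $S$ contains $W$.
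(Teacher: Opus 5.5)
\textbf{Verdict: the proposal has a genuine gap, and it lies in your basic design principle rather than in the consistency gadget.} You want an instance where parallel arcs are never worth reversing. Then, after minimality removes empty pairs and digons, every inversion reverses a single simple arc and all multi-arcs stay fixed. On any multidigraph $D$ with this property, the minimum number of $(=2)$-inversions making $D$ $2$-arc-strong equals the minimum number of simple arcs whose reversal, with all multi-arcs kept, makes $D$ $2$-arc-strong.

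That second quantity is computable in polynomial time by exactly the argument of \Cref{thm:frank}. Apply Frank's minimum-weight orientation theorem (\Cref{csadadf}) to $\UG(D)$, with weight $1$ for reversing a simple edge and weight larger than $|A(D)|$ for reversing any edge that has a parallel copy. A reduction of the kind you describe would therefore solve PSI or Multicoloured Clique in polynomial time, which is impossible unless P$=$NP. The underlying reason is that connectivity requirements on individually reoriented arcs form a submodular-flow problem. Such a problem cannot express the AND-type constraint your edge gadget needs, namely that edge $xy$ is acceptable only if both $x$ and $y$ were selected. This is also why the $p=2$ problem is polynomial for digraphs, and why the hardness is claimed only for multidigraphs.

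\textbf{The missing idea is to make the selection itself consist of inversions of pairs spanned by two parallel arcs.} Such an inversion reverses two arcs at once, and this coupling is what orientation problems cannot express. The paper does this as follows.
\begin{itemize}
\item It doubles the selection arcs $a_c\to x_{v,c_1}$, $y_{v,c_i}\to x_{v,c_{i+1}}$, $y_{v,c_\mu}\to b_c$ and $t_f\to u_{cc'}$.
\item It uses vertices with $d^+=d^-=2$ that are incident to one doubled pair and two single arcs. By \Cref{gguuuo}, the three incident pairs at such a vertex are either all inverted or none. This propagates one choice along the whole chain of the selected $v_c$, and from $u_{cc'}$ to both $z_{v,c'}$ and $z_{v',c}$.
\item The deficient vertices $a_c$ (in-degree $1$) and $u_e$ (out-degree $0$) force at least one such propagated block in each gadget. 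The budget $\ell=11|E(H)|+|V(H)|\le 12|E(H)|$ then forces exactly one.
\item A non-matching choice leaves some $z_{v_c,c'}$ with in-degree $1$, which certifies consistency.
\end{itemize}

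Some parts of your plan are fine: reducing from PSI with $\ell=\Theta(|E(H)|)$ to get the ETH bound, the counting over disjoint deficient sets, and the forward-direction check via \Cref{prop:showing_k_strong}. It is specifically the exclusion of multi-arc inversions that makes the plan unworkable.
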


We prove Theorems~\ref{thm:W1_hardness_invkp_by_p_ETH} and~\ref{thm:W1_hardness_invkp_by_ell_ETH} in Sections~\ref{sec:Whardnessbyp} and~\ref{sec:Whardbyell} respectively.
Both hardness results are obtained by reducing from the following problem.

\defproblem{Partitioned Subgraph Isomorphism (PSI)}{
A graph $G$, a partition $(V_1,\ldots,V_k)$ of $V(G)$, and a graph $H$ with $V(H)=[k]$.
}{
 Is there $(v_1,\ldots,v_k) \in V_1 \times \ldots \times V_k$ such that, for every edge $ij \in E(H)$, we have $v_iv_j \in E(G)$?
}

We make use of the famous following hardness result for PSI due to Marx~\cite[Corollary~6.3]{marxTC6}.

\begin{theorem}[\cite{marxTC6}]
    \label{thm:hardness_PSI}
    PSI is W[1]-hard with respect to $k'=|E(H)|$. Moreover, there exists an absolute constant $\alpha>0$ such that, unless ETH fails, PSI cannot be solved in time $f(p)n^{\alpha k' / \log k'}$ for any computable function $f$.
\end{theorem}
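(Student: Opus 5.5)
The plan splits the statement into its two parts. W[1]-hardness will come from a direct parameterized reduction from \textsc{Multicolored Clique}. The ETH lower bound will follow Marx's embedding technique, applied to a family of pattern graphs $H$ with large treewidth and only linearly many edges.

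For W[1]-hardness, I reduce from \textsc{Multicolored Clique}: given a graph $G$ with a partition $(V_1,\ldots,V_t)$, decide whether there is a clique containing one vertex from each $V_i$. This problem is W[1]-hard with respect to $t$. I take $H=K_t$ on vertex set $[t]$ and keep $G$ and the partition unchanged. A tuple $(v_1,\ldots,v_t)\in V_1\times\cdots\times V_t$ satisfies $v_iv_j\in E(G)$ for all $ij\in E(H)$ exactly when it is a multicolored clique. Since $k'=|E(H)|=\binom{t}{2}$ is a function of $t$ alone, this is a parameterized reduction and the first part follows.

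For the ETH bound, the key intermediate statement is Marx's embedding theorem. It says that if $H$ has treewidth $\tau$, then every graph $F$ with $m$ edges has an embedding into $H$ of depth $O(m\log \tau/\tau)$, up to a constant factor in $m$. Here an embedding maps each vertex of $F$ to a connected vertex set of $H$, adjacent vertices of $F$ go to touching sets, and the depth is the maximum number of images containing any one vertex of $H$. I start from a $3$-colouring instance $F$ with $m=O(n_F)$ edges (sparse degree-bounded instances suffice). Under ETH, such instances have no $2^{o(m)}$-time algorithm, by the sparsification lemma.

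The reduction then goes as follows. Let $\phi$ be an embedding of depth $q$. For each $i\in V(H)$, the set $V_i$ consists of all proper $3$-colourings of $F[\phi^{-1}(i)]$, where $\phi^{-1}(i)$ denotes the vertices of $F$ whose image contains $i$. This gives $|V_i|\le 3^{q}$. Two such colourings, placed in $V_i$ and $V_j$, are adjacent in $G$ exactly when they agree on common vertices of $F$ and are consistent along the edges of $F$ that $\phi$ realises between $i$ and $j$. Connectivity of each image guarantees that locally consistent choices glue to a global proper colouring, and conversely every proper colouring gives a valid tuple.

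It remains to choose $H$ and count. I choose $H$ from a family of bounded-degree expanders with treewidth $\Theta(|V(H)|)=\Theta(k')$, which exists for every $k'$. Then $|V(G)|\le k'\cdot 3^{O(m\log k'/k')}$. An algorithm running in time $f(k')\,|V(G)|^{\alpha k'/\log k'}$ would therefore decide $F$ in time $f(k')\cdot 2^{O(\alpha m)}$. Choosing $k'$ as a slowly growing function of $m$ and $\alpha$ small enough contradicts ETH. The main obstacle is the embedding theorem itself. I would prove it via the LP duality between treewidth and well-linked sets, which yields a large flow-routable (well-linked) set in $H$. Into this set I route a concurrent multicommodity flow realising the edges of $F$ with congestion $O(\log\tau)$. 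That logarithmic congestion is exactly the source of the $\log k'$ loss in the exponent.
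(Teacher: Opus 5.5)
The paper does not prove this statement; it imports it from Marx (Corollary~6.3 of the cited work). Your outline is essentially Marx's own argument and is sound: Multicolored Clique with $H=K_t$ for W[1]-hardness, and for the ETH bound, sparse 3-Coloring embedded with depth $O(m\log\tau/\tau)$ into a bounded-degree expander, followed by the compatibility-graph construction. Two imprecisions in the embedding sketch: the $O(\log\tau)$ is the flow--cut gap, so the actual vertex congestion (and hence the depth) is $O((m/\tau)\log\tau)$, and since you only need expanders with $|V(H)|=\Theta(\tau)$, Leighton--Rao routing plus standard rounding suffices without the general treewidth/well-linked machinery.
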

We further say that an instance $(G,(V_1,\ldots,V_k),H)$ of PSI is {\it normalised} if:
\begin{itemize}[itemsep=0pt]
    \item $k \geq 2$,
    \item every vertex of $H$ is incident to at least two edges,
    \item $|V_i|\geq 3$ holds for every $i\in [k]$,
    \item $d_{G}(V_i,V_j)\geq 2$ holds for every $ij \in E(H)$, and
    \item for all $uv \in E(G)$, we have $i_u \neq i_v$ and $i_ui_v \in E(H)$, where $i_u$ (resp. $i_v$) is the unique integer in $[k]$ such that $u \in V_{i_u}$ (resp. $v\in V_{i_v}$).
\end{itemize}

We use {\sc normalised  Partitioned Subgraph Isomorphism (NPSI)} for the restriction of PSI to normalised instances. It is straightforward to check that PSI reduces to NPSI.

\begin{corollary}
    \label{tufizgouhjnik}
    NPSI is W[1]-hard with respect to $k'= |E(H)|$. Moreover, there exists an absolute constant $\alpha_0 >0$ such that, unless ETH fails, NPSI cannot be solved in $f(k)n^{\alpha_0 k'/\log k'}$ for any computable function $f$.
\end{corollary}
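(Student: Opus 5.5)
Since Theorem~\ref{thm:hardness_PSI} already supplies W[1]-hardness and the ETH lower bound for PSI with parameter $k'=|E(H)|$, I would give a polynomial-time parameter-preserving reduction from PSI to NPSI in which $|E(H)|$ grows by at most a constant factor and the instance size by a polynomial factor. The ETH bound then transfers with a smaller constant $\alpha_0$, since $ck'/\log(ck')\ge c' k'/\log k'$.

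The normalisation steps, in order, are as follows.

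\begin{enumerate}
\item Delete every edge $uv$ of $G$ with $i_u=i_v$ or $i_ui_v\notin E(H)$. Such edges are irrelevant to the existence of a solution, so this gives the last property.
\item Handle low-degree vertices of $H$. An isolated vertex $i$ of $H$ can be deleted together with $V_i$, answering `no' if $V_i=\emptyset$; the remaining indices are then relabelled. A vertex of degree one in $H$ is handled by duplicating edges: replace $H$ by the graph obtained by gluing two disjoint copies $H^{1},H^{2}$ of $H$ along $V(H)$, i.e.\ making each edge of $H$ into a path of length $2$ through a new vertex? This does not double degrees in a simple graph. Instead I would subdivide each edge $ij$ of $H$ by a new vertex $m_{ij}$ and add a parallel route: new class $V_{m_{ij}}$ consisting of one vertex per edge $uv\in E_G(V_i,V_j)$, joined to $u$ and $v$. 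Then every original vertex keeps degree $\deg_H(i)$ and each $m_{ij}$ has degree $2$, so degree-one vertices remain a problem. The cleanest fix is to take the disjoint union of $H$ with a copy of itself on the same classes, which is again not simple. So I would instead attach to every vertex $i$ of $H$ a triangle $i,a_i,b_i$, with new classes $V_{a_i},V_{b_i}$ being copies of $V_i$, and $G$-edges between copies of the same vertex only. Then every $H$-vertex has degree at least $2$, the number of edges increases by $3k\le 6k'$ once isolated vertices are removed, and solutions correspond bijectively.
\item Ensure $|V_i|\ge 3$ by adding to each class dummy vertices with no edges. These can never be chosen in a solution when $i$ has positive degree, and every $i$ now does.
\item Ensure $d_G(V_i,V_j)\ge 2$ for each $ij\in E(H)$. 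If $d_G(V_i,V_j)=0$, output a trivial no-instance. If it equals $1$, the choice of $v_i,v_j$ is forced, so I would restrict $V_i,V_j$ to those endpoints, re-apply step 1, and pad again; alternatively, add a dummy isolated endpoint edge that cannot extend to a solution because its endpoints have no other edges and $\deg_H\ge 2$.
\end{enumerate}

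The only delicate point is keeping $|E(H)|=O(k')$ and checking that each gadget preserves yes/no answers. All the steps are routine.
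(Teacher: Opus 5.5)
Your proposal is correct and takes the paper's route: the paper simply asserts that PSI reduces to NPSI in a straightforward way and then inherits both conclusions from Theorem~\ref{thm:hardness_PSI}, and your Steps~1--4, using the triangle gadget of Step~2, make that reduction explicit while keeping $|E(H)|$ within a constant factor. Two small corrections: in Step~4 only the dummy-edge option works, since restricting to the forced endpoints and padding with isolated vertices still leaves $d_G(V_i,V_j)=1$ (applied after Step~2, as in your ordering, it also repairs the triangle edges $ia_i$ when $|V_i|=1$); and the ETH transfer needs the upper bound $k''/\log k''\le ck'/\log(ck')\le ck'/\log k'$ for the parameter $k''\le ck'$ of the produced instance, not the lower bound you wrote.
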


\subsection{W[1]-hardness when parameterised by \texorpdfstring{$p$}{p}}
\label{sec:Whardnessbyp}

The technical difficulty for the proof of Theorem~\ref{thm:W1_hardness_invkp_by_p_ETH} is contained in the following lemma.

\begin{lemma}
    \label{lem:W1_hardness_invkp_by_p}
    The following holds for every fixed $k\geq 2$.
    Given an instance $(G,(V_1,\dots,V_r),H)$ of NPSI, in polynomial time one can compute an equivalent instance $(D,p)$ of $k$-SI such that $|V(D)| \leq (3k+1)\cdot(|V(G)| + |E(G)|+1)$ and $p\leq 3 \cdot |E(H)|$.
\end{lemma}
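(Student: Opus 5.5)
The plan is to encode each choice of a vertex of $G$ and of an edge of $G$ by vertex gadgets hanging off a large $k$-arc-strong core. The construction forces a single inversion $X$ of size at most $p$ to contain one \emph{class vertex} for each edge of $H$, one \emph{edge vertex} for each edge of $H$, and one vertex of each $V_i$. We will set $p=|V(H)|+2|E(H)|$. Since $H$ is normalised, every vertex of $H$ has degree at least two, so $|V(H)|\le |E(H)|$ and hence $p\le 3|E(H)|$.

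\textbf{Construction.} We first build a core $W$, a $k$-arc-strong tournament on $O(k)$ vertices; the only requirement is that it be large enough to supply the attachments below. The gadget vertices are as follows.
\begin{itemize}
\item A vertex $x_v$ for every $v\in V(G)$.
\item A vertex $y_e$ for every $e\in E(G)$.
\item A class vertex $c_{ij}$ for every $ij\in E(H)$; these are counted by the term $+1$ in the bound on $|V(D)|$, taken per edge class.
\end{itemize}
Each gadget vertex $g$ receives $3k$ private attachment vertices. These are linked to $W$, and to $g$, by arcs chosen so that two conditions hold. First, every gadget vertex already has out-connectivity $k$ towards $W$ and in-connectivity $k-1$ from $W$, in the spirit of the sets $W_3^v,W_4^v$ of Lemma~\ref{ytdexzcu}. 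Second, the attachment vertices themselves are robust, so they are never worth inverting. This gives at most $3k+1$ vertices per gadget vertex, which matches the required bound on $|V(D)|$.

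\textbf{The missing $k$-th in-path.} For each gadget vertex, the missing $k$-th arc-disjoint path from $W$ is made available only through a specific local reversal. The arcs are set up as follows.
\begin{itemize}
\item For $e=uv$ with $u\in V_i$, $v\in V_j$, we add arcs $y_e\to x_u$, $y_e\to x_v$ and $y_e\to c_{ij}$.
\item We orient things so that $x_u$ becomes fine as soon as one incident arc $y_e\to x_u$ is reversed.
\item $y_e$ needs both $x_u,x_v\in X$, to obtain two reversed in-arcs. One of these reversed arcs is used for its own deficit; the other supplies an arc-disjoint route.
\item $c_{ij}$ is fixed exactly when some $y_e$ with $e\in E(V_i,V_j)$ lies in $X$.
\end{itemize}
The normalisation conditions ($|V_i|\ge 3$, $d_G(V_i,V_j)\ge 2$, and no edges of $G$ inside classes or across non-edges of $H$) are used to guarantee that every gadget vertex has enough potential neighbours, so that none of them is structurally stuck. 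A vertex that is not touched by $X$ keeps in-degree $k-1$ and therefore violates $k$-arc-strong connectivity. This is the same counting trick as in Claim~\ref{yxcfvg}.

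\textbf{Completeness.} Let $(v_1,\dots,v_r)$ be a solution of the NPSI instance. We take $X$ to consist of the vertices $x_{v_i}$, the vertices $y_{v_iv_j}$ for $ij\in E(H)$, and all the class vertices $c_{ij}$; it has size exactly $p$. To show that $\Inv(D,X)$ is $k$-arc-strong, we apply Proposition~\ref{prop:showing_k_strong} with the core $W$ and verify $\lambda(W,g)\ge k$ and $\lambda(g,W)\ge k$ for every gadget vertex $g$, one gadget type at a time.

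\textbf{Soundness.} Suppose $X$ is an inversion of size at most $p$ that makes $D$ $k$-arc-strong. Every $c_{ij}$ has in-degree $k-1$ unless it is touched, so $X$ contains all $|E(H)|$ class vertices. Moreover, for each $ij\in E(H)$, $X$ contains some $y_e$ with $e$ running between $V_i$ and $V_j$. Each such $y_e$ forces both of its endpoint vertices into $X$, so every class $V_i$ meets $X$. The budget $|V(H)|+2|E(H)|$ then leaves exactly one $x$-vertex per class. Consequently the chosen edges all meet the same representatives, and this yields a partitioned subgraph isomorphism.

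\textbf{Main obstacle.} The hard part is soundness against unintended inversions. These include placing core or attachment vertices in $X$, or placing two gadget vertices in $X$ that are adjacent in an unplanned way, so that reversing arcs between them creates alternative paths. I would handle this in two steps. First, show that any $X$ containing a vertex of $W$ or an attachment vertex destroys at least as much as it gains: its arcs into the robust $k$-arc-strong core are reversed only pairwise, so such vertices can be removed from $X$ without loss. This reduces to the case $X\subseteq\{x_v\}\cup\{y_e\}\cup\{c_{ij}\}$. Second, do a careful local degree count showing that each gadget vertex can only be repaired by the intended pattern. The requirement $k\ge 2$ enters here: it gives the parallel private attachments that make every cut around a gadget vertex tight.
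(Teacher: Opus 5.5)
\textbf{There is a genuine gap: where the deficits are placed.}

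You make \emph{every} gadget vertex deficient. Each $x_v$ for $v\in V(G)$, each $y_e$ for $e\in E(G)$, and each $c_{ij}$ has only $k-1$ arc-disjoint paths from $W$. You also state that a vertex not touched by $X$ keeps in-degree $k-1$. A single inversion changes no arc at a vertex outside $X$. So for your completeness set $X$, the $|V(G)|-r$ unselected vertices $x_v$ and the $|E(G)|-|E(H)|$ unselected vertices $y_e$ stay deficient, and $\Inv(D,X)$ is not $k$-arc-strong.

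The arc directions do not support the intended repairs either.
\begin{itemize}
\item Inverting a set containing $y_e$ and $x_u$ turns $y_e\to x_u$ into an arc \emph{leaving} $x_u$. This lowers the in-degree of $x_u$ rather than raising it; the same holds for $c_{ij}$.
\item Each $c_{ij}$ (and each non-isolated $x_u$) already receives arcs from vertices $y_e$, which are reachable from $W$ through their own private attachments. So these vertices get a $k$-th path from $W$ without any reversal, and nothing forces $c_{ij}$ into $X$.
\item Conversely, every $y_e$ has no entering arcs besides its $k-1$ attachment paths, so every $y_e$ is deficient.
\item There is no mechanism by which putting $y_e$ into $X$ forces both $x_u$ and $x_v$ into $X$.
\end{itemize}
Your vertex count is also off. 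With $3k$ private attachments for each of the $|E(H)|$ class vertices, plus the core, the total exceeds $(3k+1)(|V(G)|+|E(G)|+1)$; the single $+1$ cannot absorb them.

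\textbf{The missing idea: make every selectable vertex robust, and put the deficits on one hub per class and one hub per edge of $H$.}

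In the paper, each $v\in V_i$ has $k$ arcs in both directions to a $k$-arc-strong tournament $T_i$ on $3k$ vertices. Each edge vertex $z_{u,v}$ (your $y_e$) has $k$ arcs in both directions to a tournament $T_{i,j}$. Hence unselected choices are harmless. The forcing comes from two kinds of hubs.
\begin{itemize}
\item A vertex $x_i$ with $k$ arcs to $T_i$ but only $k-1$ arcs from $T_i$, plus an arc $x_i\to v$ for every $v\in V_i$. So $X$ must contain $x_i$ and a vertex of $V_i\cup V(T_i)$.
\item The \emph{set} $V(T_{i,j})\cup Z_{i,j}$. It receives only $k-2$ arcs, all from the central copy $\widetilde{T}$. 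Apart from $k$ arcs into $\widetilde{T}$, its leaving arcs are exactly the arcs $z_{u,v}u$ and $z_{u,v}v$. It therefore needs two reversed leaving arcs, which forces some $z_{u,v}$ together with both $u$ and $v$ into $X$.
\end{itemize}
Putting the deficit of $2$ on this whole set, rather than on individual edge vertices, is what lets one chosen edge demand both endpoints without penalising the unchosen edges. It is also where $k\geq 2$ is used.

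With $p=2r+|E(H)|$, the budget then allows only the $x_i$'s, one vertex of each $V_i\cup V(T_i)$, and one vertex per set $V(T_{i,j})\cup Z_{i,j}$, and none of $\widetilde{T}$. So the two reversed leaving arcs must be $z_{u,v}u$ and $z_{u,v}v$ with $u,v\in X$, which forces $u=u_i$, $v=u_j$ and hence $u_iu_j\in E(G)$. Finally, sharing one $3k$-vertex tournament per class and per edge of $H$, instead of private attachments per vertex, yields the stated bound on $|V(D)|$.
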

\begin{proof}
    Let $(G,(V_1,\dots,V_r), H)$ be an instance of NPSI.
    Let $T$ be an arbitrary $k$-arc-strong tournament of order $3k$. 
    We build an oriented graph $D$ from $(G,(V_1,\dots,V_r), H)$ as follows, see Figure~\ref{fig:W1_hardness_invkp_by_p} for an illustration. We start from the vertex set $V(G)$, and we add a disjoint copy $\tilde{T}$ of $T$. 
    Next, for every integer $i\in [r]$, we add a new copy $T_i$ of $T$, and we put $k$ arcs in both directions between $V(T_i)$ and $V(\tilde{T})$ without creating parallel arcs or  digons. We further add a new vertex $x_i$ with $k$ arcs from $x_i$ to $V(T_i)$, and $k-1$ arcs from  $V(T_i)$ to $x_i$,  again without creating parallel arcs or digons. Finally, for every vertex $v\in V_i$, we add $k$ arcs in both directions between $v$ and $V(T_i)$ without creating parallel arcs or digons, and an arc from $x_i$ to $v$.
    
    For every pair of integers $1\leq i <j\leq r$ such that $ij\in E(H)$, we add a new copy $T_{i,j}$ of $T$, together with $k$ arcs from $V(T_{i,j})$ to $V(\tilde{T})$, and $k-2$ arcs from $V(\tilde{T})$ to $V(T_{i,j})$ without creating parallel arcs or digons. Furthermore, for every pair of vertices $(u,v)\in V_i\times V_j$ which are adjacent in $G$, we further add a vertex $z_{u,v}$ with the arcs $z_{u,v}u,z_{u,v}v$, and $k$ arcs in both directions between $z_{u,v}$ and $V(T_{i,j})$ without creating parallel arcs or digons. We denote by $Z_{i,j}$ the set of such vertices.

    \begin{figure}[ht]
    \centering
	\begin{tikzpicture}[
    region/.style={draw=black, rounded corners, semithick,inner sep=3pt, outer sep=2pt},
    Hvertex/.style = {rectangle, draw=gray, very thick, fill=gray!5, minimum size=22pt, inner sep=0pt, outer sep=2pt}]
        \node[svertex, label=left:$s$] (s) at (0,0) {};
        \node[region, dashed, label=above:$V_1$ ,fit=(s)] (Y) {};
        
        \node[svertex, label=below:$t$] (t) at (1.5,1.5) {};
        \node[svertex, label=above:$u$] (u) at (1.5,0) {};
        \node[region, dashed, label=above:$V_2$, fit=(t)(u)] (Y) {};
        
        \node[svertex, label=below:$v$] (v) at (3,1.5) {};
        \node[svertex, label=above:$w$] (w) at (3,0) {};
        \node[region, dashed, label=above:$V_3$, fit=(v)(w)] (Y) {};

        \draw[tedge] (s) -- (t);
        \draw[tedge] (s) -- (u);
        \draw[tedge] (v) -- (t);
        \draw[tedge] (w) -- (u);
        \draw[tedge, bend left=25] (w) to (s);

        \begin{scope}[xshift=7cm, yshift=0.5cm]
            \node[svertex, label=below:$s$] (s) at (0,0) {};
            \node[svertex, label=below:$t$] (t) at (2.5,1.5) {};
            \node[svertex, label=below:$u$] (u) at (2.5,0) {};
            \node[svertex, label=below:$v$] (v) at (5,1.5) {};
            \node[svertex, label=below:$w$] (w) at (5,0) {};

            \node[Hvertex] (Htilde1) at (0,3) {\footnotesize $T_1$};
            \node[svertex, label=left:$x_1$] (x1) at (-1.5,1.5) {};
            \draw[tarc, bend left=15, g-blue] (Htilde1) to (x1);
            \draw[tarc, bend left=15, red] (x1) to (Htilde1);
            \draw[digon,red] (Htilde1) to (s);
            \draw[tarc] (x1) to (s);
            
            \node[Hvertex] (Htilde2) at (2.5,3) {\footnotesize $T_2$};
            \node[svertex, label=right:$x_2$] (x2) at (3.75,2.5) {};
            \draw[tarc, bend left=15, g-blue] (Htilde2) to (x2);
            \draw[tarc, bend left=15, red] (x2) to (Htilde2);
            
            \node[Hvertex] (Htilde3) at (5,3) {\footnotesize $T_3$};
            \node[svertex, label=right:$x_3$] (x3) at (6.5,1.5) {};
            \draw[tarc, bend left=15, g-blue] (Htilde3) to (x3);
            \draw[tarc, bend left=15, red] (x3) to (Htilde3);
            
            \draw[tarc] (x3) to (v);
            \draw[tarc] (x3) to (w);

            \node[svertex, label=above:$z_{s,t}$] (zst) at (1.25,0.75) {};
            \draw[tarc] (zst) to (s);
            \draw[tarc] (zst) to (t);
            
            \node[svertex, label={[label distance=-5pt]45:$z_{s,u}$}] (zsu) at (1.25,0) {};
            \draw[tarc] (zsu) to (s);
            \draw[tarc] (zsu) to (u);
            \node[Hvertex] (H12) at (0,-2.8) {\footnotesize $T_{1,2}$};

            \node[svertex, label=above:$z_{t,v}$] (ztv)at (3.75,0.75) {};
            \draw[tarc] (ztv) to (t);
            \draw[tarc] (ztv) to (v);
            \node[svertex, label={[label distance=-5pt]135:$z_{u,w}$}] (zuw)at (3.75,0) {};
            \draw[tarc] (zuw) to (u);
            \draw[tarc] (zuw) to (w);
            \node[Hvertex] (H23) at (5,-2.8) {\footnotesize $T_{2,3}$};
            
            \node[svertex, label=above:$z_{s,w}$] (zsw) at (2.5,-1.5) {};
            \draw[tarc] (zsw) to[out=180, in=-45] (s);
            \draw[tarc] (zsw) to[out=0, in=-135] (w);
            \node[Hvertex] (H13) at (2.5,-2.8) {\footnotesize $T_{1,3}$};
            \draw[digon,red] (H13) -- (zsw);

            \node[Hvertex] (Htilde4) at (2.5,-4.3) {\footnotesize $\tilde{T}$};
            \node[Hvertex] (Htilde5) at (2.5,4.5) {\footnotesize $\tilde{T}$};

            \foreach \i in {12, 13, 23}{
                \draw[tarc,red, bend left=15] (H\i) to (Htilde4);
                \draw[tarc,g-green, bend left=15] (Htilde4) to (H\i);
            }
            \foreach \i in {1, 2, 3}{
                \draw[digon,red] (Htilde\i) to (Htilde5);
            }
            
            \draw[tdigon,white] (H23.120) -- (zuw);
            \draw[tdigon,white] (H23.90) -- (ztv);
            \draw[digon,red] (H23.120) -- (zuw);
            \draw[digon,red] (H23.90) -- (ztv);
            
            \draw[tdigon,white] (H12.60) -- (zsu);
            \draw[tdigon,white] (H12.90) -- (zst);
            \draw[digon,red] (H12.60) -- (zsu);
            \draw[digon,red] (H12.90) -- (zst);
            
            \draw[digon,red] (Htilde2) -- (t);
            \draw[tdigon,white] (Htilde2) to[out=-120, in=120] (u);
            \draw[digon,red] (Htilde2) to[out=-120, in=120] (u);
            
            \draw[digon,red] (Htilde3) -- (v);
            \draw[tdigon,white] (Htilde3) to[out=-120, in=120] (w);
            \draw[digon,red] (Htilde3) to[out=-120, in=120] (w);
            
            \draw[tarc] (x2) to (t);
            \draw[vtarc,white] (x2) to (u);
            \draw[tarc] (x2) to (u);
        \end{scope}
	\end{tikzpicture}
    \caption{An illustration of the construction of $D$ (right) from a graph $G$ (left).  Here, $r=3$ and $H$ is the complete graph $K_3$. 
    Note that, for the sake of better readability, we chose here a graph $G$ that is not an instance of NPSI.
    Each grey square illustrates a copy of $T$. For better readability, we put two copies of $\tilde{T}$, but these are the same one copy of $T$. Red, blue, and green arcs illustrate that there exist respectively $k$, $k-1$, and $k-2$ such arcs.}
    \label{fig:W1_hardness_invkp_by_p}
    \end{figure}
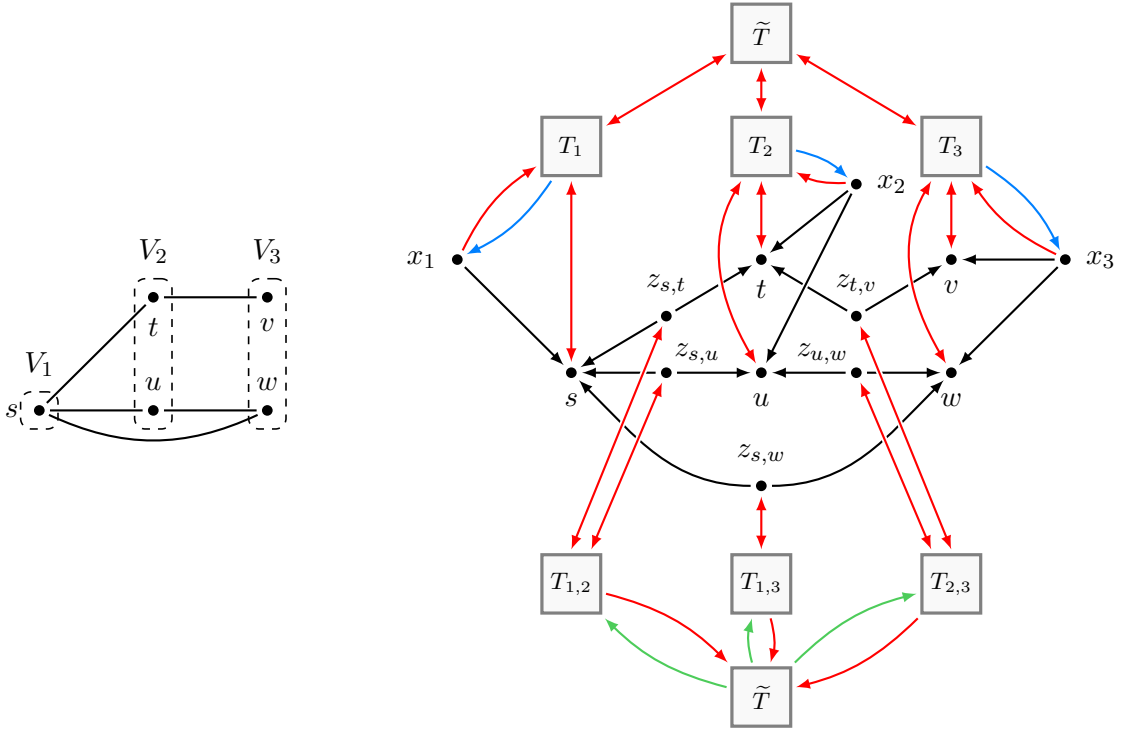

    Let $p=2r+|E(H)|$. Note that $r\leq |E(H)|$ as every vertex of $H$ has at least two incident edges. Hence $p\leq 3|E(H)|$.
    In what remains, we prove that $(G,(V_1,\dots,V_r),H)$ is a yes-instance if and only if $\inv{k}{p}(D) \leq 1$, hence showing the lemma. 
    
    \begin{claim}
        If $(G,(V_1,\dots,V_r),H)$ is a yes-instance of NPSI, then $\inv{k}{p}(D) \leq 1$.
    \end{claim}
    \begin{proofclaim}
        Let $U=(u_1,\dots,u_r) \in V_1\times \ldots \times V_r$ be such that, for every $ij\in E(H)$, $u_iu_j \in E(G)$. Let 
        \[
            X= U \cup \{x_i:1\leq i\leq r\} \cup  \{z_{u_i,u_j}  : 1\leq i<j\leq r \text{~and~} ij\in E(H)\}.
        \]
        Clearly $|X|=2r + |E(H)|=p$, and we claim that $D'=\Inv(D,X)$ is $k$-arc-strong. To show this, let $\emptyset \subsetneq S \subsetneq V(D)$ be an arbitrary subset of $V(D')$, and let us show that $d^+_{D'}(S), d^-_{D'}(S) \geq k$.
        
        Since $X$ does not contain any vertex in $V(\tilde{T})$, we can assume that $V(\tilde{T})$ is either included in $S$ or disjoint from $S$. By a similar argument, we may assume that $V(T_i)$ is either contained in $S$ or disjoint from $S$ for any $i \in [r]$ and that $V(T_{i,j})$ is either contained in $S$ or disjoint from $S$ for any $i,j \in [r]$ with $1\leq i<j \leq r$ and $ij \in E(H)$. By symmetry of $S$ and $V(D')\setminus S$, we assume that $V(\tilde{T}) \cap S = \emptyset$. Let $v$ be an arbitrary vertex in $S$.
    
       First assume that $v\in V(T_i)$ for some $i\in [r]$. Then, by the remark above, $V(T_i) \subseteq S$, and by construction, we have $\min\{d_{D'}^+(S),d_{D'}^-(S)\}\geq k$ as $A(D')$ contains $k$ arcs in both directions between $V(\tilde{T})$ and $V(T_i)$. Henceforth, we thus assume that $\bigcup_{i\in [r]}  V(T_i) \cap S = \emptyset$.
        Similarly, for every $i\in [r]$, if $v\in V_i$, then there exist $k$ arcs in both directions between $v$ and $V(T_i)$. Therefore, we further assume that $V(G)\cap S =\emptyset$.
        
        Now assume that $v=x_i$ for some $i\in [r]$. By construction, $k$ arcs from $x_i$ to $V(T_i)$ are contained in $\delta_{D'}^+(S)$, and there exist $k-1$ arcs in $\delta_{D'}^-(S)$ from $V(T_i)$ to $x_i$.  Moreover, both $x_i$ and $u_i$ belong to $X$, hence $\delta_{D'}^-(S)$ also contains an arc from $u_i$ to $x_i$. Hence we indeed have $k$ arcs in both directions.

      Finally assume that, for some $1\leq i<j\leq r$ with $ij\in E(H)$, we have $v\in V(T_{i,j}) \cup Z_{i,j}$. Note that, for every vertex $z\in Z_{i,j}$, there exist $k$ arcs in both directions between $z$ and and $V(T_{i,j})$ in $D'$. Therefore, we may assume that 
        \[
            V(T_{i,j}) \cup Z_{i,j}  \subseteq S.
        \]
        There exist $k$ arcs from $V(T_{i,j})$ to $V(\tilde{T})$ that are contained in $\delta_{D'}^+(S)$. Further, there exist $k-2$ arcs from $V(\tilde{T})$ to $V(T_{i,j})$ in  $\delta_{D'}^-(S)$ and by the choice of $U$ there exist two arcs from $\{u_i,u_j\}$ to $z_{u_i,u_j}$ in  $\delta_{D'}^-(S)$. The claim follows.
    \end{proofclaim}

    \begin{claim}
        If $\inv{k}{p}(D) \leq 1$ then $(G,(V_1,\dots,V_r),H)$ is a yes-instance of NPSI.
    \end{claim}
    \begin{proofclaim}
        Let $X$ be a set of size at most $p=2r+|E(H)|$ such that $D'$ is $k$-arc-strong, where $D'=\Inv(D,X)$. Note that, for every $i\in [r]$, $X$ contains $x_i$ and a vertex of $V_i \cup V(T_i)$, for otherwise $d^-_{D'}(x_i)=k-1$.
        Moreover, for every pair of integers $1\leq i<j\leq r$ with $ij\in E(H)$, $X$ contains at least one vertex of $V(T_{i,j})\cup Z_{i,j}$ and two vertices in $N^+_D((V(T_{i,j})\cup Z_{i,j})\cap X) \setminus (V(T_{i,j})\cup Z_{i,j})$, for otherwise 
        \[
            d_{D'}^-(V(T_{i,j}) \cup Z_{i,j}) \leq k-1.
        \]
        By construction, and since $|X|\leq 2r+|E(H)|$, this is possible only if
        \begin{itemize}[itemsep=0pt]
            \item $\{x_1,\dots,x_r\} \subseteq X$;
            \item for every $i\in [r]$, $X\cap V(T_i) = \emptyset$;
            \item for every $i\in [r]$, $|X\cap V_i| = 1$;
            \item $V(\tilde{T}) \cap X=\emptyset$;
            \item for every $1\leq i<j\leq r$ with $ij\in E(H)$, $X\cap V(T_{i,j}) = \emptyset$;
            \item for every $1\leq i<j\leq r$ with $ij\in E(H)$, $|X\cap Z_{i,j}| = 1$;
            \item the unique vertex in $X\cap Z_{i,j}$ has an out-neighbor in $X\cap V_i$ and an out-neighbor in $X\cap V_j$.
        \end{itemize}
        The claim follows.
    \end{proofclaim}
\end{proof}

We are now able to derive Theorem~\ref{thm:W1_hardness_invkp_by_p}, that we restate here in a stronger and more technical form.

\Whardnessbyp*

\begin{proof}
  Let $(G,(V_1,\ldots,V_r),H)$ be an instance of NPSI. Then, by Lemma~\ref{lem:W1_hardness_invkp_by_p}, in polynomial time, one can compute an equivalent instance $(D,p)$ of $k$-SI such that $|V(D)| \leq (3k+1)\cdot(|V(G)| + |E(G)|+1)$ and $p\leq 3 \cdot |E(H)|$. It follows directly from \Cref{tufizgouhjnik} and the fact that that $|V(D)|$ is polynomial in $|V(G)|$ and $p$ is bounded by a computable function of $|E(H)|$ that {\sc $k$-Single Inversion} is W[1]-hard with respect to $p$.
    
    For the second part, let $\alpha=\frac{1}{9}\alpha_0$, where $\alpha_0$ is the constant from Corollary~\ref{tufizgouhjnik}. Suppose that {\sc $k$-Single Inversion} can be solved in time $f(p)n^{\alpha p /\log p}$. 
    First, as $p \leq 3 |E(H)|$ and $|V(D)|\leq (3k+1)\cdot(|V(G)| + |E(G)|+1)\leq 12|V(G)|^3$, it follows that
    $(G,(V_1,\ldots,V_r),H)$ can be solved in time 
    \[
    f(|E(H)|)\cdot |V(G)|^{3\alpha p /\log p}.
    \]
    Next, by the monotonicity of the function $g\colon\mathbb{Z}_{\geq 0}\rightarrow \mathbb{R}_{\geq 0}$ defined by $g(n)=n/\log n$ and since $p \leq 3 |E(H)|$, we obtain that 
    \[
    3\alpha\frac{p}{\log p}\leq 9\alpha \frac{|E(H)|}{ \log(3|E(H)|)}\leq 9\alpha \frac{|E(H)|}{ \log(|E(H)|)}= \alpha_0 \frac{|E(H)|}{\log(|E(H)|)}.
    \]
    It follows that $(G,(V_1,\ldots,V_r),H)$ can be solved $f(|E(H)|)\cdot n^{\alpha_0|E(H)|/\log (|E(H)|)}$, which, by Proposition~\ref{rzxdctuvfzink}, implies that ETH fails.
\end{proof}

\subsection{W[1]-hardness when parameterised by \texorpdfstring{$\ell$}{l} for multidigraphs}
\label{sec:Whardbyell}

We now move to the proof of Theorem~\ref{thm:W1_hardness_invkp_by_ell_ETH}, for which we need two easy remarks on 2-arc-strong multidigraphs.

\begin{proposition}
    \label{gguuuo}
    Let $D$ be a multidigraph and $v \in V(D)$ with $d_D^+(v)=d_D^-(v)=2$ such that there are two parallel arcs linking $v$ and a vertex $w_1 \in V(D)$, one arc linking $v$ and a vertex $w_2 \in V(D)$ and one arc linking $v$ and a vertex $w_3 \in V(D)$. Moreover, let $\mathcal{X}\subseteq \binom{V(D)}{2}$ such that $\Inv(D;\mathcal{X})$ is 2-arc-strong. Then either $\{\{v,w_1\}, \{v,w_2\},\{v,w_3\}\}\subseteq \mathcal{X}$ or $\{\{v,w_1\}, \{v,w_2\},\{v,w_3\}\}\cap\mathcal{X}=\emptyset$.
\end{proposition}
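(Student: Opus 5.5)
The key observation is that $v$ has total degree $4$: the two parallel arcs to $w_1$, one arc to $w_2$, one arc to $w_3$, with $d_D^+(v)=d_D^-(v)=2$. In $D'=\Inv(D;\mathcal{X})$, being 2-arc-strong forces $d_{D'}^+(v)\geq 2$ and $d_{D'}^-(v)\geq 2$. Since the total degree of $v$ is unchanged by inversions, we must have $d_{D'}^+(v)=d_{D'}^-(v)=2$.

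For $i\in[3]$ we set $\epsilon_i=1$ if $\{v,w_i\}$ appears in $\mathcal{X}$ an odd number of times, and $\epsilon_i=0$ otherwise. We may assume $\mathcal{X}$ is a set, so the parity is just membership. Inverting $\{v,w_i\}$ reverses exactly the arcs linking $v$ and $w_i$, so the arc groups at $v$ are flipped independently.

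First consider the two parallel arcs between $v$ and $w_1$. If they were oppositely oriented, they would form a digon in a multidigraph. I would rule this out with a cut argument. Suppose first the two arcs to $w_1$ are a digon. Then the arcs to $w_2$ and $w_3$ give one out-arc and one in-arc of $v$, since $d^+=d^-=2$. Flipping $\{v,w_1\}$ then does nothing, and flipping exactly one of $\{v,w_2\},\{v,w_3\}$ breaks the degree balance. So consistency forces $\epsilon_2=\epsilon_3$. This does not give the claimed conclusion, so presumably the intended reading is that the two parallel arcs are both oriented the same way, which is the configuration produced by the reduction. I would therefore treat the main case: both arcs to $w_1$ have the same direction. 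By $d^+=d^-=2$, the arcs to $w_2$ and $w_3$ then both point the opposite way.

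In this main case, say both $w_1$-arcs leave $v$ and both the $w_2$- and $w_3$-arcs enter $v$. After inversion, the out-degree of $v$ is
\[ 2(1-\epsilon_1)+\epsilon_2+\epsilon_3 . \]
Setting this equal to $2$ gives $2\epsilon_1=\epsilon_2+\epsilon_3$. Over $\{0,1\}$ the only solutions are $(0,0,0)$ and $(1,1,1)$, which is exactly the claim. The symmetric orientation is identical after reversing all arcs.

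The main obstacle is the digon case, where the statement as written appears false: flipping $\{v,w_2\}$ and $\{v,w_3\}$ together preserves the degrees. So I would check against the intended use in the reduction, most likely by adding the hypothesis that the parallel arcs are parallel in the same direction. With that hypothesis, the proof is the one-line degree-count equation above.
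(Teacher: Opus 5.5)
Your proof is correct and is essentially the paper's argument. The paper likewise normalises by symmetry to two parallel arcs from $w_1$ to $v$ plus the arcs $vw_2,vw_3$, and uses $d^+_{D'}(v),d^-_{D'}(v)\geq 2$ exactly as your equation $2\epsilon_1=\epsilon_2+\epsilon_3$ does. Your digon worry is moot: the paper's terminology distinguishes parallel arcs (same tail and head) from digons, so the hypothesis already excludes that case and your ``main case'' is the only one, with no extra assumption needed.
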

\begin{proof}
    By symmetry, we may suppose that $A(D)$ contains an arc from $v$ to $w_2$, an arc from $v$ to $w_3$ and two parallel arcs from $w_1$ to $v$. Let $D'=\Inv(D;\mathcal{X})$.
    
    First suppose that $\{v,w_1\}\in \mathcal{X}$, so $D'$ contains two parallel arcs from $v$ to $w_1$. As $D'$ is 2-arc-strong, we obtain that $d_{D'}^-(v)\geq 2$. As the only arcs incident to $v$ in $D$ distinct from the arcs linking $v$ and $w_1$ are the arcs $vw_2$ and $vw_3$, we obtain that $D'$ contains the arcs $w_2v$ and $w_3v$. This yields that $\{\{v,w_2\}, \{v,w_3\}\}\subseteq \mathcal{X}$, so $\{\{v,w_1\}, \{v,w_2\},\{v,w_3\}\}\subseteq \mathcal{X}$. 

    Now suppose that $\{v,w_1\}$ is not contained in $ \mathcal{X}$, so $D'$ contains two parallel arcs from $w_1$ to $v$. As $D'$ is 2-arc-strong, we obtain that $d_{D'}^+(v)\geq 2$. As the only arcs incident to $v$ in $D$ distinct from the arcs linking $v$ and $w_1$ are the arcs $vw_2$ and $vw_3$, we obtain that $D'$ also contains the arcs $vw_2$ and $vw_3$. This yields that $\{\{v,w_2\}, \{v,w_3\}\}\cap \mathcal{X}=\emptyset$, so $\{\{v,w_1\}, \{v,w_2\},\{v,w_3\}\}\cap \mathcal{X}=\emptyset$. 
\end{proof} 

\begin{proposition}\label{awsertdrzftugzi}
    Let $D$ be a digraph and $Q \subseteq V(D)$ such that $D$ is $2$-arc-strong in $Q$. Moreover, let $x_1,\ldots,x_k,y_1,\ldots,y_k\in V(D)$ be pairwise distinct vertices for some positive integer $k$ such that $D$ contains two parallel arcs from $Q$ to $x_1$, two parallel arcs from $y_k$ to $Q$, two parallel arcs from $y_i$ to $x_{i+1}$ for all $i \in [k-1]$, and an arc from $x_i$ to $y_i$, an arc from $x_i$ to $Q$ and an arc from $Q$ to $y_i$ for $i \in [k]$. Then $D$ is $2$-arc-strong in $Q \cup \{x_1,\ldots,x_k,y_1,\ldots,y_k\}$.
\end{proposition}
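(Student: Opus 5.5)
We need to show that for all distinct $u,v\in Q' := Q\cup\{x_1,\ldots,x_k,y_1,\ldots,y_k\}$ we have $\lambda_D(u,v)\geq 2$. By Menger's theorem, it suffices to show that every arc set $F$ of size at most one leaves a directed $u$--$v$ path, i.e.\ that $D-a$ has a $u\to v$ path for every arc $a$. The plan is to first show, for every new vertex $w\in\{x_i,y_i\}$, that $\lambda_D(Q,w)\geq 2$ and $\lambda_D(w,Q)\geq 2$. Then we combine these using the fact that $D$ is $2$-arc-strong in $Q$. This is analogous to \Cref{prop:showing_k_strong}, but relative to $Q$ rather than an induced $k$-arc-strong subdigraph.

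The key step is to exhibit two arc-disjoint paths in each of four situations.

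\emph{From $Q$ to $x_i$.} If $i=1$, use the two parallel arcs from $Q$ to $x_1$. If $i\geq 2$, use the two parallel arcs $y_{i-1}x_i$ preceded by two arc-disjoint paths from $Q$ to $y_{i-1}$.

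\emph{From $Q$ to $y_i$.} Use the arc from $Q$ to $y_i$, together with a path from $Q$ to $x_i$ followed by the arc $x_iy_i$.

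These are constructed by induction on $i$, alternating between $x_i$ and $y_i$. For $y_i$, the two paths are needed only to be arc-disjoint, and the path to $x_i$ can be any single path, so the induction goes through.

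\emph{From $x_i$ to $Q$.} Use the arc from $x_i$ to $Q$, together with the arc $x_iy_i$ followed by a path from $y_i$ to $Q$.

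\emph{From $y_i$ to $Q$.} If $i=k$, use the two parallel arcs. Otherwise, use the two parallel arcs $y_ix_{i+1}$ followed by two arc-disjoint paths from $x_{i+1}$ to $Q$.

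These last two cases are handled by downward induction on $i$, which yields the out-direction.

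It remains to combine. Fix $u,v\in Q'$ and an arc $a$. Take a path from $u$ to $Q$ in $D-a$, ending at some $q_1\in Q$ (if $u\in Q$, set $q_1=u$); such a path exists by $\lambda(u,Q)\geq 2$. Similarly, take a path from some $q_2\in Q$ to $v$ in $D-a$. If $q_1\neq q_2$, join them by a $q_1\to q_2$ path in $D-a$, which exists since $D$ is $2$-arc-strong in $Q$. Concatenating gives a $u\to v$ walk avoiding $a$, hence a path. By Menger, $\lambda_D(u,v)\geq 2$.

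The main obstacle I expect is making sure the inductive paths are genuinely arc-disjoint. In particular, the path "from $Q$ to $x_i$" chosen while building the $y_i$ case must not reuse the arc $Q\to y_i$; it cannot, since that arc ends at $y_i$ and the path stops at $x_i$. The parallel arcs supply the needed doubling at each step.
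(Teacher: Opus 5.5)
Your proof is correct and follows the paper's route: you establish $\lambda_D(Q,x_i)\geq 2$ and $\lambda_D(Q,y_i)\geq 2$ by induction along the chain using the doubled arcs. The paper gets the out-direction by symmetry rather than by your downward induction, and it leaves implicit the Menger-based combination through $Q$ that you spell out. One small correction concerns the justification that the $Q$--$x_i$ path avoids the arc from $Q$ to $y_i$: a path ending at $x_i$ could pass through $y_i$ if $D$ has further arcs, so the correct reason is that your inductive path uses only listed arcs of index below $i$ (or that one of two arc-disjoint $Q$--$x_i$ paths avoids that arc), and the same fix applies to the arc from $x_i$ to $Q$ in the out-direction.
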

\begin{proof}
    By symmetry, it suffices to prove that $\lambda_D(Q,x_i)\geq 2$ and $\lambda_D(Q,y_i)\geq 2$ hold for all $i \in [k]$. First consider some $i \in [k]$ and observe that $A(D)$ contains an arc from $Q$ to $y_i$ by assumption. Next, if $i=1$, we have that $D$ contains the directed path $qx_1y_1$ and if $i \geq 2$, then $D$ contains the directed path $qy_{i-1}x_iy_i$ for some $q \in Q$. We obtain that $\lambda_D(Q,y_i)\geq 2$ for all $i\in [k]$. Next observe that $A(D)$ contains two parallel arcs from $Q$ to $x_1$, so $\lambda_D(Q,x_1)\geq 2$. Finally, for every $i \geq 2$, we have that $A(D)$ contains two parallel arcs from $y_{i-1}$ to $x_i$. As $\lambda_D(Q,y_{i-1})\geq 2$, we obtain that $\lambda_D(Q, x_i)\geq 2$. This finishes the proof.
\end{proof}

The main technical difficulty for the proof of Theorem~\ref{thm:W1_hardness_invkp_by_ell_ETH} is contained in the following lemma that shows how to construct an equivalent instance of {\sc $(2,2)$-Inversion} from an instance of NPSI.

\begin{lemma}\label{rzxdctuvfzink}
Given an instance $(G,(V_1,\ldots,V_k),H)$ of NPSI, one can compute an equivalent instance $(D,\ell)$ of {\sc $(2,2)$-Inversion} with $|V(D)|\leq 11 \cdot |V(G)|^3$ and $\ell \leq 12 \cdot |E(H)|$ in polynomial time.
\end{lemma}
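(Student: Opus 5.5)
We will build $D$ around a "core" set $Q$ in which $D$ is already $2$-arc-strong, for instance a copy of a $2$-arc-strong tournament on a constant number of vertices. We then attach gadgets whose only deficient dicuts can be repaired by a small number of \eqp{2}-inversions. We then set $\ell$ to be exactly the number of inversions a consistent choice of $(v_1,\dots,v_k)$ requires.

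\textbf{Vertex gadgets.} For every $i\in[k]$ we attach a chain as in Proposition~\ref{awsertdrzftugzi}: vertices $x^i_v,y^i_v$ for $v\in V_i$, linked in sequence by double arcs. One arc in each link points the "wrong" way, so the chain is deficient. Each vertex of the chain will have in- and out-degree $2$, with one double arc and two single arcs, so that Proposition~\ref{gguuuo} applies. That proposition forces every vertex to be either fully inverted or untouched, with inversions propagating along the chain. The chain should be designed so that exactly one position $v\in V_i$ is "cut open": the vertices before it are untouched and the vertices after it are inverted (or the opposite). Choosing the position then costs a constant number (about $3$) of inversions. Once the orientation is repaired, Proposition~\ref{awsertdrzftugzi} certifies $2$-arc-strongness in $Q$ together with the gadget.

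\textbf{Edge gadgets.} For every edge $ij\in E(H)$ we add a similar chain over the edges of $G$ between $V_i$ and $V_j$, again with a constant cost for selecting one edge $uv$. We then add connecting vertices $w$ joined to $x^i_u$ and to the edge-gadget vertex of $uv$ by double arcs. Their orientation is deficient unless both the chosen vertex and the chosen edge are consistent, meaning the selected vertex $u\in V_i$ is an endpoint of the selected edge. Normalisation gives $|V_i|\ge3$, at least two edges between the relevant classes and minimum degree $2$ in $H$; we use this to guarantee that each gadget really needs its inversions. With constant costs per vertex class and per edge, summing gives $\ell\le 12|E(H)|$ (using $k\le|E(H)|$). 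The vertex count is $O(|V(G)|+|E(G)|)$ chain vertices plus $Q$, which is within $11|V(G)|^3$.

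\textbf{Correctness.} For the forward direction, a solution $(v_1,\dots,v_k)$ gives an explicit set $\mathcal{X}$ of size $\ell$. We then verify $2$-arc-strongness gadget by gadget via Proposition~\ref{awsertdrzftugzi} and conclude as in Proposition~\ref{prop:showing_k_strong}. For the converse, counting lower bounds show that each gadget needs at least its budgeted number of inversions, so every gadget receives exactly that many. Proposition~\ref{gguuuo} then forces each vertex gadget to encode a single vertex and each edge gadget a single edge, and the connector constraints force consistency.

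The main obstacle is the gadget design itself. The budget must be tight, so that no gadget can be fixed more cheaply or by "cheating" inversions between gadgets. The propagation along chains must pin down exactly one selection. First I would try making every non-core vertex satisfy the degree hypothesis of Proposition~\ref{gguuuo}, so that the admissible solutions are rigid.
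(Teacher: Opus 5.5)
There is a genuine gap, and it sits in the selection gadget, which is the core of the reduction.

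\textbf{The serial chain cannot select a position.} Suppose every chain vertex satisfies the hypothesis of Proposition~\ref{gguuuo}. For two consecutive chain vertices $w,w'$, the linking pair $\{w,w'\}$ is either in $\mathcal{X}$, and then both $w$ and $w'$ are fully inverted, or it is not, and then neither is. So the whole chain is in a single state, and no position can be ``cut open''. Even if you broke rigidity at the cut, inverting the entire suffix after the cut costs a number of inversions proportional to its length. That number depends on the chosen position and on $|V_i|$, contrary to your ``about $3$''. The cost is then not the same for all choices, so the exact counting in your converse breaks down. It is also not bounded by a function of $|E(H)|$, which the statement requires. Your edge chains have the same defect.

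\textbf{The missing idea: parallel candidates and a deficient hub.} In the paper, each $v\in V_c$ gets its own rigid chain $a_c\Rightarrow x_{v,c_1}\to y_{v,c_1}\Rightarrow x_{v,c_2}\to\cdots\to y_{v,c_\mu}\Rightarrow b_c$, where $c_1,\dots,c_\mu$ enumerate $N_H(c)$. The hub $a_c$ has $sa_c$ as its only in-arc. So $d^-_{D'}(a_c)\geq 2$ forces some $\{a_c,x_{v,c_1}\}\in\mathcal{X}$, and by rigidity all of $\mathcal{R}_v$. Its size $4d_H(c)+1$ does not depend on $v$ or on $|V_c|$. Similarly, $u_{cc'}$ has no out-arc, and its only arcs are $su_{cc'}$ and the double arcs from the vertices $t_f$, $f\in\delta_G(V_c,V_{c'})$. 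So $d^+_{D'}(u_{cc'})\geq 2$ forces some rigid $t_f$ (three pairs) to be flipped. These lower bounds add up to exactly $\ell=11|E(H)|+|V(H)|$, which is what makes every selection unique.

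\textbf{The consistency gadget is not workable as stated.} Attaching connectors to $x^i_u$ by double arcs destroys the degree pattern you need at $x^i_u$ for Proposition~\ref{gguuuo}. A per-incidence rule ``deficient unless the chosen vertex and the chosen edge are consistent'' also fails. A chosen $u$ is incident to many unchosen edges, and most connectors involve neither a chosen vertex nor a chosen edge, so such a rule would make them deficient. The constraint actually needed is one-sided: if $v\in V_c$ is chosen, then some edge from $v$ to $V_{c'}$ is chosen.

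The paper implements this with one non-rigid vertex $z_{v,c'}$ per pair $(v,c')$ with $c'\in N_H(c)$. Its in-arcs are $x_{v,c'}z_{v,c'}$ and $sz_{v,c'}$, and it has an arc to $t_f$ for every edge $f$ joining $v$ to $V_{c'}$. Flipping the chain of $v$ reverses $x_{v,c'}z_{v,c'}$, so $d^-_{D'}(z_{v,c'})\geq 2$ can only be restored by flipping some such $t_f$. By tightness of the budget this $t_f$ is the unique chosen edge for $cc'$. Applying this at both $v_c$ and $v_{c'}$ gives $f=v_cv_{c'}$.

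\textbf{Where normalisation is used.} The conditions $|V_c|\geq 3$ and $d_G(V_c,V_{c'})\geq 2$ are used in the forward direction, not to show that gadgets need their inversions. They guarantee two arc-disjoint paths out of $a_c$ and into $b_c$ through two unchosen chains, and a second path into $u_{cc'}$ through an unchosen $t_f$.
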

\begin{proof}
    Let $(G,(V_1,\ldots,V_k),H)$ be an instance of NPSI .
    We construct an instance $(D,\ell)$ of {\sc $(2,2)$-Inversion}.
    We first let $V(D)$ contain a vertex $s$, which will have a particular role in the construction.
    
    Now consider some $c \in [k]$. We create a color gadget $D_c$, illustrated in Figure~\ref{drftg}.
    We first let $V(D_c)$ contain $s$ and two new vertices $a_c$ and $b_c$. 
    We add an arc from $s$ to $a_c$ and an arc from $b_c$ to $s$. Now let $c_1,\ldots,c_\mu$ be an arbitrary ordering of $N_H(c)$, where $\mu=d_H(c)$. For every $v \in V_c$ and every $i \in [\mu]$, we let $V(D_c)$ contain three vertices $x_{v,c_i},y_{v,c_i}$, and $z_{v,c_i}$. 
    Next, for every $v \in V_c$, we let $A(D_c)$ contain two parallel arcs from $a_c$ to $x_{v,c_1}$ and two parallel arcs from $y_{v,c_\mu}$ to $b_c$. 
    Next, for every $v \in V_c$ and $i \in [\mu-1]$, we let $A(D_c)$ contain two parallel arcs from $y_{v,c_i}$ to $x_{v,c_{i+1}}$. Further, for every $v \in V_c$ and $i \in [\mu]$, we let $A(D_c)$ contain an arc from $x_{v,c_i}$ to $y_{v,c_i}$, an arc from $x_{v,c_i}$ to $z_{v,c_i}$, an arc from $s$ to $y_{v,c_i}$, an arc from $s$ to $z_{v,c_i}$, and two parallel arcs from $z_{v,c_i}$ to $s$. This finishes the description of~$D_c$. 
    We let $D$ contain the gadget $D_c$ for all $c\in [k]$. Observe that for all distinct $c,c' \in [k]$, we have $V(D_c)\cap V(D_{c'})=\{s\}$.

    For every $c \in [k]$ and $v \in V_c$, we let $\mathcal{R}_{v}$ consist of all pairs $\{w_1,w_2\}\subseteq V(D_c)$ such that:
    \begin{itemize}[itemsep=0pt]
        \item $w_1$ and $w_2$ are adjacent in $D_c$, and
        \item $\{w_1,w_2\}$ contains $x_{v,c_i}$ or $y_{v,c_i}$ for some $i \in [\mu]$.
    \end{itemize}
    Observe that $|\mathcal{R}_{v}|=4d_H(c)+1$ for every $v \in V_c$. 

    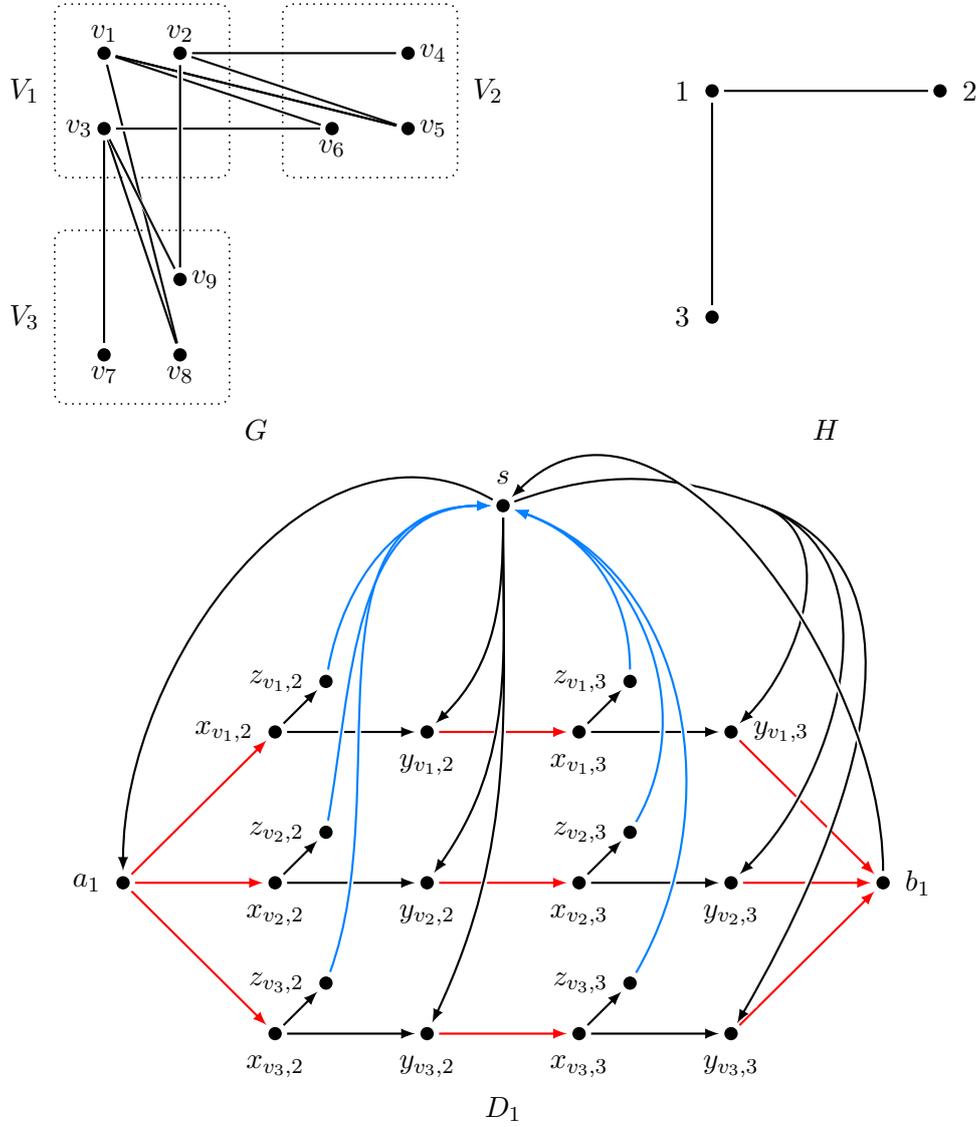
\begin{figure}
        \centering
        \begin{tikzpicture}[
        region/.style={draw=black, dotted, rounded corners, semithick,inner sep=14pt, outer sep=2pt}
        ]   
            \def\R{1}
            \def\halfR{0.5}
            \begin{scope}
                \node[] at (2*\R, -5*\R) {$G$};
                \begin{scope}
                    \node[mvertex, label={[label distance=-4pt]above:$v_1$}] (v1) at (0,0) {}; 
                    \node[mvertex, label={[label distance=-4pt]above:$v_2$}] (v2) at (\R,0) {}; 
                    \node[mvertex, label={[label distance=-4pt]left:$v_3$}] (v3) at (0,-\R) {}; 
                    \node[region,fit=(v1)(v2)(v3), label=left:$V_1$] (V1) {};
                \end{scope}
                \begin{scope}[xshift=3*\R cm]
                    \node[mvertex, label={[label distance=-4pt]right:$v_4$}] (v4) at (\R,0) {}; 
                    \node[mvertex, label={[label distance=-4pt]right:$v_5$}] (v5) at (\R,-\R) {}; 
                    \node[mvertex, label={[label distance=-4pt]below:$v_6$}] (v6) at (0,-\R) {}; 
                    \node[region,fit=(v4)(v5)(v6), label=right:$V_2$] (V1) {};
                \end{scope}
                \begin{scope}[yshift=-3*\R cm]
                    \node[mvertex, label={[label distance=-4pt]below:$v_7$}] (v7) at (0,-\R) {}; 
                    \node[mvertex, label={[label distance=-4pt]below:$v_8$}] (v8) at (\R,-\R) {}; 
                    \node[mvertex, label={[label distance=-4pt]right:$v_9$}] (v9) at (\R,0) {}; 
                    \node[region,fit=(v7)(v8)(v9), label=left:$V_3$] (V1) {};
                \end{scope}
                \draw[tedge] (v1) -- (v5);
                \draw[tedge] (v1) -- (v5);
                \draw[tedge] (v1) -- (v6);
                \draw[tedge] (v1) -- (v8);
                \draw[tedge] (v2) -- (v4);
                \draw[tedge] (v2) -- (v5);
                \draw[stedge,white] (v3) -- (v6);
                \draw[tedge] (v3) -- (v6);
                \draw[stedge,white] (v2) -- (v9);
                \draw[tedge] (v2) -- (v9);
                \draw[tedge] (v3) -- (v7);
                \draw[tedge] (v3) -- (v8);
                \draw[vtedge,white] (v3) -- (v9);
                \draw[tedge] (v3) -- (v9);
            \end{scope}
            \begin{scope}[xshift = 8cm, yshift = -\halfR cm]
                \node[] at (1.5*\R, -4.5*\R) {$H$};
                \node[mvertex, label=left:$1$] (h1) at (0,0) {};
                \node[mvertex, label=right:$2$] (h2) at (3*\R,0) {};
                \node[mvertex, label=left:$3$] (h3) at (0,-3*\R) {};
                \draw[tedge] (h1) -- (h3);
                \draw[tedge] (h1) -- (h2);
            \end{scope}

            \begin{scope}[xshift=5.25cm, yshift=-11cm]
                \def\L{2}
                
                \node[] (D1) at (0, -1.5*\L) {$D_1$};
                
                \node[mvertex, label=left:$a_1$] (a1) at (-2.5*\L, 0) {};
                \node[mvertex, label=right:$b_1$] (b1) at (2.5*\L, 0) {};

                \node[mvertex, label=below:$x_{v_3,2}$] (x1v32) at (-1.5*\L, -\L) {};
                \node[mvertex, label=below:$y_{v_3,2}$] (y1v32) at (-0.5*\L, -\L) {};
                \node[mvertex, label=below:$x_{v_3,3}$] (x1v33) at (0.5*\L, -\L) {};
                \node[mvertex, label=below:$y_{v_3,3}$] (y1v33) at (1.5*\L, -\L) {};
                \draw[tarc,red] (a1) -- (x1v32);
                \draw[tarc] (x1v32) -- (y1v32);
                \draw[tarc,red] (y1v32) -- (x1v33);
                \draw[tarc] (x1v33) -- (y1v33);
                \draw[tarc,red] (y1v33) -- (b1);
                
                \node[mvertex, label=below:$x_{v_2,2}$] (x1v22) at (-1.5*\L, 0) {};
                \node[mvertex, label=below:$y_{v_2,2}$] (y1v22) at (-0.5*\L, 0) {};
                \node[mvertex, label=below:$x_{v_2,3}$] (x1v23) at (0.5*\L, 0) {};
                \node[mvertex, label=below:$y_{v_2,3}$] (y1v23) at (1.5*\L, 0) {};
                \draw[tarc,red] (a1) -- (x1v22);
                \draw[tarc] (x1v22) -- (y1v22);
                \draw[tarc,red] (y1v22) -- (x1v23);
                \draw[tarc] (x1v23) -- (y1v23);
                \draw[tarc,red] (y1v23) -- (b1);
                
                \node[mvertex, label=left:$x_{v_1,2}$] (x1v12) at (-1.5*\L, \L) {};
                \node[mvertex, label=below:$y_{v_1,2}$] (y1v12) at (-0.5*\L, \L) {};
                \node[mvertex, label=below:$x_{v_1,3}$] (x1v13) at (0.5*\L, \L) {};
                \node[mvertex, label=right:$y_{v_1,3}$] (y1v13) at (1.5*\L, \L) {};
                \draw[tarc,red] (a1) -- (x1v12);
                \draw[tarc] (x1v12) -- (y1v12);
                \draw[tarc,red] (y1v12) -- (x1v13);
                \draw[tarc] (x1v13) -- (y1v13);
                \draw[tarc,red] (y1v13) -- (b1);

                \def\P{0.33}
                \node[mvertex, label=left:$z_{v_3,2}$] (z132) at (-\L-\P, -\L/2-\P) {};
                \node[mvertex, label=left:$z_{v_2,2}$] (z122) at (-\L-\P, \L/2-\P) {};
                \node[mvertex, label=left:$z_{v_1,2}$] (z112) at (-\L-\P, 3*\L/2-\P) {};
                \draw[tarc] (x1v32) to (z132);
                \draw[tarc] (x1v22) to (z122);
                \draw[tarc] (x1v12) to (z112);

                \def\P{0.33}
                \node[mvertex, label=left:$z_{v_3,3}$] (z133) at (\L-\P, -\L/2-\P) {};
                \node[mvertex, label=left:$z_{v_2,3}$] (z123) at (\L-\P, \L/2-\P) {};
                \node[mvertex, label=left:$z_{v_1,3}$] (z113) at (\L-\P, 3*\L/2-\P) {};
                \draw[tarc] (x1v33) to (z133);
                \draw[tarc] (x1v23) to (z123);
                \draw[tarc] (x1v13) to (z113);
                
                \node[mvertex, label=above:$s$] (s) at (0, 2.5*\L) {};
                
                \draw[vtarc, white] (z132) to[out=70, in=180] (s);
                \draw[vtarc, white] (z122) to[out=80, in=180] (s);
                \draw[tarc, g-blue] (z132) to[out=70, in=180] (s);
                \draw[tarc, g-blue] (z122) to[out=80, in=180] (s);
                \draw[tarc, g-blue] (z112) to[out=80, in=180] (s);
                
                \draw[vtarc, white] (s) to[out=-90, in=55] (y1v22);
                \draw[vtarc, white] (s) to[out=-90, in=65] (y1v32);
                \draw[tarc] (s) to[out=-90, in=45] (y1v12);
                \draw[tarc] (s) to[out=-90, in=55] (y1v22);
                \draw[tarc] (s) to[out=-90, in=65] (y1v32);

                \draw[vtarc, white] (z133) to[out=60, in=-20] (s);
                \draw[vtarc, white] (z123) to[out=60, in=-20] (s);
                \draw[tarc, g-blue] (z133) to[out=60, in=-20] (s);
                \draw[tarc, g-blue] (z123) to[out=60, in=-20] (s);
                \draw[tarc, g-blue] (z113) to[out=90, in=-20] (s);
                
                \draw[thick] (s) to[out=20, in=160] (1.7*\L, 2.5*\L) {};
                \draw[vtarc, white] (1.7*\L, 2.5*\L)  to[out=0, in=45] (y1v23);
                \draw[vtarc, white] (1.7*\L, 2.5*\L)  to[out=0, in=60] (y1v33);
                \draw[tarc] (1.7*\L, 2.5*\L)  to[out=-20, in=45] (y1v13);
                \draw[tarc] (1.7*\L, 2.5*\L)  to[out=-20, in=45] (y1v23);
                \draw[tarc] (1.7*\L, 2.5*\L)  to[out=-20, in=60] (y1v33);
                
                \draw[tarc] (s) to[out=150, in=90] (a1);
                \draw[vtarc,white] (b1) to[out=90, in=45] (s);
                \draw[tarc] (b1) to[out=90, in=45] (s);
            \end{scope}
        \end{tikzpicture}
        \caption{An example for the construction of a color gadget. An instance $(G,(V_1,\ldots,V_k),H)$ is depicted on the top part of the figure. Again, for the sake of readability, we have chosen $(G,(V_1,\ldots,V_k),H)$ not being an instance of NPSI. The clause gadget $D_1$ is depicted in the bottom part of the figure. Red arcs indicate double arcs and blue arcs indicate two arcs in the indicated direction and one arc in the opposite direction.}
        \label{drftg}
    \end{figure}
    
    Next, for every $e\in E(H)$, we let $V(D)$ contain a vertex $u_{e}$ and we add an arc from $s$ to $u_{e}$.
    
    Now consider some $f=vv'\in E(G)$ and let $c,c'\in [k]$ be such that $v \in V_c$ and $v' \in V_{c'}$. We let $V(D)$ contain a vertex $t_{f}$ and two parallel arcs from $t_{f}$ to $u_{cc'}$. Observe that $u_{cc'}$ exists as $(G,(V_1,\ldots,V_k),H)$ is normalised. We further add an arc from $z_{v,c'}$ to $t_f$ and an arc from $z_{v',c}$ to $t_f$. We do this for all $e \in E(G)$. This finishes the description of $D$, see Figure~\ref{tgtu} for an illustration.   
    
    Observe that $d_D^-(t_{f})=d_D^+(t_{f})=2$ for every $f\in E(G)$. For every $f=vv'\in E(G)$, we further define 
    \[
    \mathcal{R}_{f} = \{ \{t_{f},w\} : w \in \{u_{cc'},z_{v,c'},z_{v',c}\}\},
    \]
    where $c,c'\in [k]$ are such that $v \in V_c$ and $v' \in V_{c'}$. Observe that $|\mathcal{R}_{f}|=3$ for every $f \in E(G)$.
    
    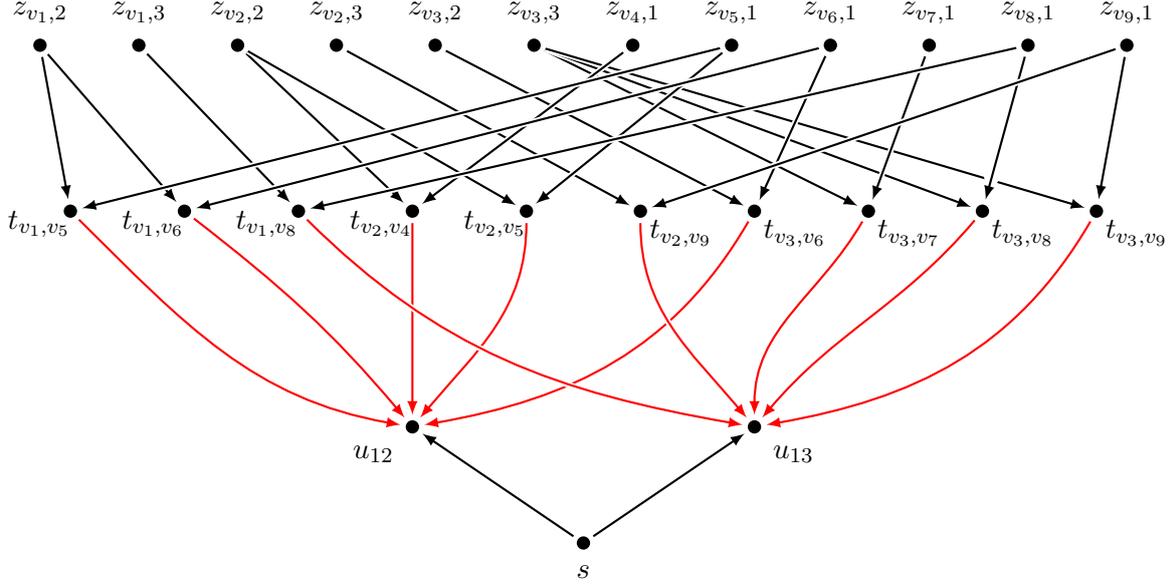
\begin{figure}
    \centering
        \begin{tikzpicture}
            \def\L{1.3}
            \def\Lt{1.5}
            \def\h{-2.2}

            \begin{scope}[xshift=-5.5*\L cm]
                \node[mvertex,label=above:$z_{v_1,2}$] (z12) at (0,0) {};
                \node[mvertex,label=above:$z_{v_1,3}$] (z13) at (\L,0) {};
                \node[mvertex,label=above:$z_{v_2,2}$] (z22) at (2*\L,0) {};
                \node[mvertex,label=above:$z_{v_2,3}$] (z23) at (3*\L,0) {};
                \node[mvertex,label=above:$z_{v_3,2}$] (z32) at (4*\L,0) {};
                \node[mvertex,label=above:$z_{v_3,3}$] (z33) at (5*\L,0) {};
                \node[mvertex,label=above:$z_{v_4,1}$] (z41) at (6*\L,0) {};
                \node[mvertex,label=above:$z_{v_5,1}$] (z51) at (7*\L,0) {};
                \node[mvertex,label=above:$z_{v_6,1}$] (z61) at (8*\L,0) {};
                \node[mvertex,label=above:$z_{v_7,1}$] (z71) at (9*\L,0) {};
                \node[mvertex,label=above:$z_{v_8,1}$] (z81) at (10*\L,0) {};
                \node[mvertex,label=above:$z_{v_9,1}$] (z91) at (11*\L,0) {};
            \end{scope}

            \begin{scope}[xshift=-5.5*\Lt cm]
                \node[mvertex,label={[label distance=-10pt]225:$t_{v_1,v_5}$}] (t15) at (1*\Lt,\h) {};
                \node[mvertex,label={[label distance=-10pt]225:$t_{v_1,v_6}$}] (t16) at (2*\Lt,\h) {};
                \node[mvertex,label={[label distance=-10pt]225:$t_{v_1,v_8}$}] (t18) at (3*\Lt,\h) {};
                \node[mvertex,label={[label distance=-10pt]225:$t_{v_2,v_4}$}] (t24) at (4*\Lt,\h) {};
                \node[mvertex,label={[label distance=-10pt]225:$t_{v_2,v_5}$}] (t25) at (5*\Lt,\h) {};
                \node[mvertex,label={[label distance=-5pt]-45:$t_{v_2,v_9}$}] (t29) at (6*\Lt,\h) {};
                \node[mvertex,label={[label distance=-5pt]-45:$t_{v_3,v_6}$}] (t36) at (7*\Lt,\h) {};
                \node[mvertex,label={[label distance=-5pt]-45:$t_{v_3,v_7}$}] (t37) at (8*\Lt,\h) {};
                \node[mvertex,label={[label distance=-5pt]-45:$t_{v_3,v_8}$}] (t38) at (9*\Lt,\h) {};
                \node[mvertex,label={[label distance=-5pt]-45:$t_{v_3,v_9}$}] (t39) at (10*\Lt,\h) {};
            \end{scope}

            \foreach \i/\j in {12/15, 12/16, 13/18, 22/24, 22/25, 23/29, 32/36, 33/37, 33/38, 33/39, 41/24, 51/15, 51/25, 61/16, 61/36, 71/37, 81/18, 81/38, 91/29, 91/39} {
                \draw[vtarc, white] (z\i) to (t\j);
                \draw[tarc] (z\i) to (t\j);
            }

            \node[mvertex, label=225:$u_{12}$] (u12) at (-1.5*\Lt, 2.3*\h) {};
            \node[mvertex, label=-45:$u_{13}$] (u13) at (1.5*\Lt, 2.3*\h) {};
            \node[mvertex, label=below:$s$] (s) at (0, 3*\h) {};
            \draw[tarc] (s) to (u12);
            \draw[tarc] (s) to (u13);
            
            \draw[tarc, red] (t15) to[out=-45, in=170] (u12);
            \draw[tarc, red] (t16) to[out=-38, in=130] (u12);
            \draw[tarc, red] (t24) to[out=-90, in=90] (u12);
            \draw[tarc, red] (t25) to[out=-90, in=50] (u12);
            \draw[tarc, red] (t36) to[out=-120, in=10] (u12);

            \draw[vtarc, white] (t18) to[out=-45, in=170] (u13);
            \draw[tarc, red] (t18) to[out=-45, in=170] (u13);
            \draw[vtarc, white] (t29) to[out=-90, in=130] (u13);
            \draw[tarc, red] (t29) to[out=-90, in=130] (u13);
            \draw[tarc, red] (t37) to[out=-120, in=90] (u13);
            \draw[tarc, red] (t38) to[out=-130, in=50] (u13);
            \draw[tarc, red] (t39) to[out=-120, in=10] (u13);
        \end{tikzpicture}
        \caption{An example of the construction of $D$ for the example depicted in Figure~\ref{drftg}. Again, the red arrows indicate two parallel arcs. 
        For the sake of better readability, we omit the vertices $a_c$ and $b_c$ for every $c \in [k]$, and, for every $c \in [k], v \in V_{c}$ and $c'\in [k]\setminus \{c\}$ with $cc' \in E(H)$, we omitted the vertices $x_{v,c'}$ and $y_{v,c'}$ and all arcs linking $z_{v,c'}$ and $s$.}\label{tgtu}
\end{figure}

    We finally set $\ell=11|E(H)|+|V(H)|$.
    It is not difficult to see that $(D,\ell)$ can be computed in polynomial time from $(G,(V_1,\ldots,V_k),H)$. Moreover, by construction and as $(G,(V_1,\ldots,V_k),H)$ is normalised, we have that 
    \[
    |V (D)| = |E(G)| + |E(H)| + 3 \sum_{c \in [k]}d_H(c)|V_c| + 2k + 1 \leq 11\cdot|V (G)|^3.
    \]
    Further, as $(G,(V_1,\ldots,V_k),H)$ is normalised, we have $\ell = 11|E(H)| + |V (H)| \leq 12\cdot |E(H)|$.

    \medskip
    
    In the following, we show that $(D,\ell)$ is a yes-instance of {\sc $(2,2)$-Inversion} if and only if $(G,(V_1,\ldots,V_k),H)$ is a yes-instance of NPSI. First suppose that $(G,(V_1,\ldots,V_k),H)$ is a yes-instance of NPSI. That is, there exists some $v_c \in V_c$ for all $c \in [k]$ such that $v_cv_{c'}\in E(G)$ holds for all $cc'\in E(H)$. We let $\mathcal{X}$ be the set that contains $\mathcal{R}_{v_c}$ for all $c \in [k]$ and $\mathcal{R}_{f}$ for every $f \in E(G[\{v_1,\ldots,v_k\}])$. Let $D'=\Inv(D;\mathcal{X})$. 
    Observe that 
    \begin{align*}
        |\mathcal{X}|=\sum_{c \in [k]}|\mathcal{R}_{v_c}|+\sum_{f \in E(G[\{v_1,\ldots,v_k\}])}|\mathcal{R}_{f}|&=\sum_{c \in [k]}(4d_H(c)+1)+3|E(G[\{v_1,\ldots,v_k\}]|\\
        &=8|E(H)|+|V(H)|+3|E(H)|=\ell.
    \end{align*}

    It hence suffices to prove that $D'$ is 2-arc-strong. Let $Q\subseteq V(D)$ be a set of maximum cardinality such that $s \in Q$ and $D'$ is $2$-arc-strong in $Q$. In the next sequence of claims, we show that $Q=V(D)$, implying that $D'$ is $2$-arc-strong.
    
    \begin{claim}
        \label{claim:subset_Q:first}
        For every $e\in E(H)$, we have $u_{e}\in Q$.
    \end{claim}
    \begin{proofclaim}
        Let $e = cc' \in E(H)$. Observe that $D'$ contains an arc from $s$ to $u_{e}$. Further consider some $f \in E(G[V_c\cup V_{c'}])$ such that $f\neq v_cv_{c'}$. Observe that such an edge exists because $(G,(V_1,\ldots,V_k),H)$ is normalised. Then $D'$ contains the directed path $sz_{v_c,c'}t_{f}u_{e}$. We obtain that $\lambda_{D'}(s,u_{e})\geq 2$. Next observe that $D'$ contains two parallel arcs from $u_{e}$ to $t_{v_cv_{c'}}$ and the arc-disjoint directed paths $t_{v_cv_{c'}}z_{v_c,c'}s$ and $t_{v_cv_{c'}}z_{v_{c'},c}s$. It follows that $\lambda_{D'}(u_{e},s)\geq 2$. We obtain by \Cref{prop:showing_k_strong} that $u_{e}\in Q$, as desired.
    \end{proofclaim}
    
    \begin{claim}
        For every $f\in E(G)$, we have  $t_{f}\in Q$.
    \end{claim}
    \begin{proofclaim}
        Let $f=vv'\in E(G)$ and let $c,c'\in [k]$ be such that $v \in V_c$ and $v' \in V_{c'}$. Suppose first that $f=v_cv_{c'}$. Then $D'$ contains two parallel arcs from $u_{cc'}$ to $t_{f}$ and the two arc-disjoint paths $t_{f}z_{v,c'}s$ and $t_{f}z_{v',c}s$. 
        Now suppose that $f\neq v_cv_{c'}$. Then  $D'$ contains two parallel arcs from $t_{f}$ to $u_{cc'}$ and the two arc-disjoint paths $sz_{v,c'}t_{f}$ and $sz_{v',c}t_{f}$. It follows from the previous claim and \Cref{prop:showing_k_strong} that $t_{f}\in Q$, as desired.
    \end{proofclaim}

    \begin{claim}
        For every edge $cc'\in E(H)$ and every $v\in V_c$, we have $z_{v,c'}\in Q$.
    \end{claim}
    \begin{proofclaim}
        Observe that $D'$ contains two parallel arcs from $z_{v,c'}$ to $s$, so $\lambda_{D'}(z_{v,c'},Q)\geq 2$. Next observe that $D'$ contains an arc from $s$ to $z_{v,c'}$. Next, if $v=v_c$, then $A(D')$ contains an arc from $t_{vv_{c'}}$ to $z_{v,c'}$, so $\lambda_{D'}(Q,z_{v,c'})\geq 2$. Now suppose that $v \neq v_c$ and let $c_1,\ldots,c_\mu$ be the ordering of $N_H(c)$ used when constructing $D_c$. If $c'=c_1$, we obtain that $D'$ contains the directed path $sa_cx_{v,c'}z_{v,c'}$ and if $c'=c_i$ for some $i \geq 2$, we obtain that $D'$ contains the directed path $sy_{v,c_{i-1}}x_{v,c'}z_{v,c'}$. In either case, we obtain that $\lambda_{D'}(Q,z_{v,c'})\geq 2$. The claim follows from \Cref{prop:showing_k_strong}.
    \end{proofclaim}
    
    \begin{claim}
        For every $c\in [k]$, we have  $\{a_c,b_c\}\in Q$.
    \end{claim}
    \begin{proofclaim}
        Consider some $c \in [k]$. Let $c_1,\ldots,c_\mu$ be the ordering of $N_H(c)$ used when constructing $D_c$. Observe that $D'$ contains an arc from $s$ to $a_c$ and the directed path $z_{v_c,c_1}x_{v_c,c_1}a_c$. It follows that $\lambda_{D'}(Q,a_c)\geq 2$. Now let $v,v'$ be distinct vertices in $V_c\setminus \{v_c\}$. Observe that such vertices exist as $|V_c|\geq 3$ by definition of a normalised instance. Then $D'$ contains the arc-disjoint paths $a_cx_{v,c_1}z_{v,c_1}$ and $a_cx_{v',c_1}z_{v',c_1}$. It follows that $\lambda_{D'}(a_c,Q)\geq 2$. We obtain that $a_c \in Q$.
    
        Next observe that $D'$ contains the arc-disjoint directed paths $z_{v,c_\mu}y_{v,c_\mu}b_c$ and $z_{v',c_\mu}y_{v',c_\mu}b_c$, so $\lambda_{D'}(Q,b_c)\geq 2$. Moreover, $D'$ contains an arc from $b_c$ to $s$ and $D'$ contains the directed path $b_cy_{v_c,c_\mu}s$. It follows that  $\lambda_{D'}(b_c,Q)\geq 2$, so $b_c \in Q$. We obtain by \Cref{prop:showing_k_strong} that $\{a_c,b_c\}\subseteq Q$, as desired.
    \end{proofclaim}

    \begin{claim}
        \label{claim:subset_Q:last}
        For every $c,c' \in [k]$ and $v \in V_c$, we have  $\{x_{v,c'},y_{v,c'}\}\subseteq Q$.
    \end{claim}
    \begin{proofclaim}
        Let $c_1,\ldots,c_\mu$ be the ordering of $N_H(c)$ used when constructing $D_c$. If $v\neq v_c$, we obtain that $\{x_{v,c'},y_{v,c'}\}\subseteq Q$ for all $c' \in N_H(c)$ by applying Proposition~\ref{awsertdrzftugzi} to $Q$ and $x_{v,c_1},\ldots,x_{v,c_\mu},y_{v,c_1},\ldots,y_{v,c_\mu}$.  If $v= v_c$, we obtain that $\{x_{v,c'},y_{v,c'}\}\subseteq Q$ for all $c' \in N_H(c)$ by applying Proposition~\ref{awsertdrzftugzi} to $Q$ and $y_{v,c_\mu},\ldots,y_{v,c_1},x_{v,c_\mu},\ldots,x_{v,c_1}$. 
    \end{proofclaim}
    
    It follows from Claims~\ref{claim:subset_Q:first} to~\ref{claim:subset_Q:last} that $Q=V(D')$ and hence that $D'$ is 2-arc-strong. We obtain that $(D,\ell)$ is a yes-instance of {\sc $(2,2)$-Inversion}.

    \medskip

    Now suppose that $(D,\ell)$ is a yes-instance of {\sc $(2,2)$-Inversion}, so there exists a set $\mathcal{X}\subseteq \binom{V(D)}{2}$ with $|\mathcal{X}|\leq \ell$ such that $D'$ is 2-arc-strong, where $D'=\Inv(D;\mathcal{X})$. We prove some results on the structure of $\mathcal{X}$, which then imply the existence of the desired copy of $H$.
    \begin{claim}
        For every $e=cc'\in E(H)$, there exists some $f_e\in \delta_G(V_c,V_{c'})$ such that $\mathcal{R}_{f_e}\subseteq \mathcal{X}$. 
    \end{claim}
    \begin{proofclaim}
        Observe that there is only one arc in $D$ that is incident to $u_{e}$ and that is not of the form $t_fu_e$ for some $f \in \delta_G(V_c,V_{c'})$. As $D'$ is 2-arc-strong, we have $d_{D'}^+(u_{e})\geq 2$ and hence there exists some $f_e \in \delta_G(V_c,V_{c'})$  such that $\{u_e,t_{f_e}\}\in \mathcal{X}$. The statement now follows by applying Proposition~\ref{gguuuo} to $t_{f_e}$.
    \end{proofclaim}

    \begin{claim}
        For every $c \in [k]$, there exists some $v_c \in V_c$ such that $\mathcal{R}_{v_c}\subseteq \mathcal{X}$. 
    \end{claim}
    \begin{proofclaim}
    Let $c \in [k]$ and let $c_1,\ldots,c_\mu$ be the ordering of $N_H(c)$ used when constructing $D_c$.
    Observe that there is only one arc in $D$ that is incident to $a_c$ and that is not of the form $a_cx_{v,c_1}$ for some $v \in V_c$. As $D'$ is 2-arc-strong, we have $d_{D'}^-(a_c)\geq 2$ and hence there exists some $v_c \in V_c$  such that $\{a_c,x_{v_c,c_1}\}\in \mathcal{X}$. The statement now follows by applying Proposition~\ref{gguuuo} to $x_{v_c,c'}$ and $y_{v_c,c'}$ for all $c' \in N_H(c)$.
    \end{proofclaim}
    
\begin{claim}\label{tufzgiuk} 
    We have $\mathcal{X}=\bigcup_{e \in E(H)}\mathcal{R}_{f_e}\cup \bigcup_{c \in [k]}\mathcal{R}_{v_c}$.
\end{claim}
\begin{proofclaim}
    Observe that the sets $\mathcal{R}_{f_e}$ for all $ e\in E(H)$ and $\mathcal{R}_{v_c}$ for all $c \in [k]$ are pairwise disjoint. Hence, as $|\mathcal{X}|\leq \ell, |\mathcal{R}_{f_e}|=3$ for all $ e\in E(H)$ and $|\mathcal{R}_{v_c}|=4d_H(c)+1$ for all $c \in [k]$, we obtain that 
    \begin{align*}
        \ell\geq |\mathcal{X}| &\geq \sum_{e \in E(H)}|\mathcal{R}_{f_e}|+\sum_{c \in [k]}|\mathcal{R}_{v_c}|\\
        &=3|E(H)|+\sum_{c \in [k]}(4d_H(c)+1)\\
        &=11|E(H)|+|V(H)|\\
        &=\ell.
    \end{align*}
    Hence equality holds throughout and the claim follows.
\end{proofclaim}

\begin{claim}\label{hikiki}
    For every $e=cc'\in E'(H)$, the endvertices of $f_e$ are $v_c$ and $v_{c'}$.
\end{claim}
\begin{proofclaim}
    Suppose otherwise and, without loss of generality, that $f_e$ is not incident to $v_c$. It then follows by the definition of $v_c$ and Claim~\ref{tufzgiuk} that $\{x_{v_c,c'},z_{v_c,c'}\}\in \mathcal{X}$ and $\mathcal{X}$ does not contain any other set containing $z_{v_c,c'}$. It follows by construction that $d_{D'}^-(z_{v_c,c'})\leq 1$, a contradiction to $D'$ being 2-arc-strong.
\end{proofclaim}

By Claim~\ref{hikiki}, it follows in particular that $v_cv_{c'}\in E(G)$ holds for every $cc'\in E(H)$. We obtain that  $(G,(V_1,\ldots,V_k),H)$ is a yes-instance of NPSI.
\end{proof}
We are now ready to conclude \Cref{thm:W1_hardness_invkp_by_ell_ETH}, which we restate here for convenience.
\Whardbyell*

\begin{proof}
    Let $(G,(V_1,\ldots,V_r),H)$ be an instance of NPSI. Then, by Lemma~\ref{rzxdctuvfzink}, in polynomial time, one can compute an equivalent instance $(D,\ell)$ of {\sc $(2,2)$-Inversion} with $|V(D)|\leq 11|V(G)|^3$ and $\ell \leq 12 |E(H)|$. It follows directly from \Cref{tufizgouhjnik} and the fact that that $|V(D)|$ is polynomial in $|V(G)|$ and $\ell$ is bounded by a computable function of $|E(H)|$ that {\sc $(2,2)$-Inversion} is W[1]-hard with respect to $\ell$.
    
    For the second part, let $\alpha=\frac{1}{36}\alpha_0$, where $\alpha_0$ is the constant from Corollary~\ref{tufizgouhjnik}. Suppose that {\sc $(2,2)$-Inversion} can be solved in time $f(\ell)n^{\alpha\ell /\log \ell}$. 
    First, as $\ell \leq 12 |E(H)|$ and $|V(D)|\leq 11|V(G)|^3$, it follows that
    $(G,(V_1,\ldots,V_r),H)$ can be solved in time 
    \[
    f(|E(H)|)\cdot |V(G)|^{3\alpha\ell /\log \ell}.
    \]
    Next, by the monotonicity of the function $g\colon\mathbb{Z}_{\geq 0}\rightarrow \mathbb{R}_{\geq 0}$ defined by $g(n)=n/\log n$ and since $\ell \leq 12 |E(H)|$, we obtain that 
    \[
    3\alpha\frac{\ell}{\log \ell}\leq 36\alpha \frac{|E(H)|}{ \log(12|E(H)|)}\leq 36\alpha \frac{|E(H)|}{ \log(|E(H)|)}= \alpha_0 \frac{|E(H)|}{\log(|E(H)|)}.
    \]
    It follows that $(G,(V_1,\ldots,V_r),H)$ can be solved $f(|E(H)|)\cdot n^{\alpha_0|E(H)|/\log (|E(H)|)}$, which, by \Cref{tufizgouhjnik}, implies that ETH fails.
\end{proof}
\section{Conclusion}
We have given a characterization of digraphs for which a certain arc-connectivity property can be obtained by applying inversions of fixed size, assuming a moderate minimum size of the graph. Further, we have proved a collection of algorithmic results for the case when the same objective is supposed to be obtained using a minimum number of bounded-size inversions. Our work leaves many questions open.

For \Cref{thm:invertibility_even_case,thm:invertibility_odd_case}, it follows from \Cref{thm:NP_hardness_n-1_inversions} that the condition $n\geq p+2$ cannot be omitted. However, it would be interesting to see whether the condition $n \geq 4k+2$ is really crucial.
\begin{question}
    Are there integers $p\geq 3$ and $k\geq 1$ and a $2k$-edge-connected digraph $D$ on $n$ vertices with $n \geq p+2$ such that $D$ is not $(k,p)$-invertible and $p$ is even or $D$ is not a $k$-obstruction?
\end{question}

For the approximation questions, it would be interesting to better understand the influence of the inversion size and the connectivity requirement on the best approximation factor. An important question is whether the connectivity requirement can be eliminated from the approximation factor.
\begin{question}
    Is there a constant $\alpha>1$ such that, for any fixed integers $p\geq 2$ and $k\geq 1$, one can approximate $\inv{k}{p}$ in digraphs within a factor of $\alpha$ in polynomial time?
\end{question}

Our results indicate that increasing the value of $p$ does not make the approximation significantly worse. We conjecture the following hardness result that, roughly speaking, states that the approximation does at least not get easier when increasing $p$.
\begin{conjecture}
    There exists $\epsilon>0$ such that, unless $P=NP$, for any $p \geq 3$ and any $k \geq 1$, there does not exist a polynomial-time $(1+\epsilon)$-approximation algorithm for $\inv{k}{p}$ in digraphs.
\end{conjecture}

Also the parameterized complexity section leaves open questions. In \Cref{thm:W1_hardness_invkp_by_p}, the case of strong connectivity remains open.

\begin{question}
    Is there an algorithm that, given a digraph $D$, decides whether $\inv{1}{p}(D)\leq 1$ holds in time $f(p)\cdot n^{O(1)}$ for some computable function $f$?
\end{question}

The parameterization by $\ell$ is even less understood. In particular, \Cref{thm:W1_hardness_invkp_by_ell} only handles multidigraphs. We pose the following question: 
\begin{question}
    Is there an algorithm that, given a digraph $D$, decides whether $\inv{k}{p}(D)\leq \ell$ holds in time $f(\ell)\cdot n^{g(k,p)}$ for some computable functions $f,g$?
\end{question}
\bibliographystyle{abbrv}
\bibliography{refs}

@incollection{FRANK198297,
title = {An Algorithm for Submodular Functions on Graphs},
editor = {Achim Bachem and Martin Grötschel and Bemhard Korte},
series = {North-Holland Mathematics Studies},
publisher = {North-Holland},
volume = {66},
pages = {97-120},
year = {1982},
booktitle = {Bonn Workshop on Combinatorial Optimization},
issn = {0304-0208},
author = {András Frank}
}

@article{CHLEBIK2006320,
title = {Complexity of approximating bounded variants of optimization problems},
journal = {Theoretical Computer Science},
volume = {354},
number = {3},
pages = {320-338},
year = {2006},
issn = {0304-3975},
author = {Miroslav Chlebík and Janka Chlebíková}
}

@article{MONNOT2007677,
title = {The path partition problem and related problems in bipartite graphs},
journal = {Operations Research Letters},
volume = {35},
number = {5},
pages = {677-684},
year = {2007},
issn = {0167-6377},
author = {Jérôme Monnot and Sophie Toulouse}
}

@book{karp2010book,
  title={Reducibility among combinatorial problems},
  author={Karp, Richard M.},
  year={2010},
  publisher={Springer}
}

@article{APSSW,
    author = {Alon, Noga and Powierski, Emil and Savery, Michael and Scott, Alex and Wilmer, Elizabeth},
    title = {Invertibility of Digraphs and Tournaments},
    journal = {SIAM Journal on Discrete Mathematics},
    volume = {38},
    number = {1},
    pages = {327--347},
    year = {2024}
}

@article{BELKHECHINE2010703,
title = {Inversion dans les tournois},
journal = {Comptes Rendus Mathematique},
volume = {348},
number = {13},
pages = {703-707},
year = {2010},
issn = {1631-073X},
author = {Houmem Belkhechine and Moncef Bouaziz and Imed Boudabbous and Maurice Pouzet}
}

@Article{nishimuraALGO11,
    AUTHOR = {Nishimura, Naomi},
    TITLE = {Introduction to Reconfiguration},
    JOURNAL = {Algorithms},
    VOLUME = {11},
    YEAR = {2018},
    NUMBER = {4},
    ARTICLE-NUMBER = {52},
    ISSN = {1999-4893}
}

@BOOK{bang2009,
  AUTHOR =       {Bang-Jensen, J{\o}rgen and Gutin, Gregory Z.},
  TITLE =        {{Digraphs: Theory, Algorithms and Applications}},
  PUBLISHER =    {Springer-Verlag},
  address = {London},
  YEAR =         {2009},
  edition ={2nd}
 }

@article{ramsey1930,
  title = {On a Problem of Formal Logic},
  volume = {s2-30},
  ISSN = {0024-6115},
  url = {http://dx.doi.org/10.1112/plms/s2-30.1.264},
  DOI = {10.1112/plms/s2-30.1.264},
  number = {1},
  journal = {Proceedings of the London Mathematical Society},
  publisher = {Wiley},
  author = {Ramsey,  F. P.},
  year = {1930},
  pages = {264–286}
}

@book{diestel2017,
  title     = "Graph Theory",
  author    = "Diestel, Reinhard",
  publisher = "Springer",
  series    = "Graduate texts in mathematics",
  edition   =  {5th},
  year      =  2017,
  address   = "Berlin, Germany",
  language  = "en"
}

@article{dmtcs:7474,
  title={On the inversion number of oriented graphs},
  author={Bang-Jensen, J{\o}rgen and da Silva, Jonas Costa Ferreira and Havet, Fr{\'e}d{\'e}ric},
  journal={Discrete Mathematics \& Theoretical Computer Science},
  volume={23},
  number={2},
  year={2022},
  publisher={Episciences. org}
}

@article{Behague_Johnston_Morrison_Ogden_2025,
  title = {A note on inverting the dijoin of oriented graphs},
  volume = {32},
  number = {1},
  pages = {\#P1.44},
  journal = {The Electronic Journal of Combinatorics},
  author = {Behague, Natalie and Johnston, Tom and Morrison, Natasha and Ogden, Shannon},
  year = {2025}
}

@article{IGT_2026__3__49_0,
     author = {Havet, Fr\'ed\'eric and H\"orsch, Florian and Rambaud, Cl\'ement},
     title = {Diameter of the inversion graph},
     journal = {Innovations in Graph Theory},
     pages = {49--88},
     year = {2026},
     publisher = {Stichting Innovations in Graph Theory},
     volume = {3}
}

@article{behague2025casedijoinconjectureinverting,
      title={A case of the dijoin conjecture on inverting oriented graphs}, 
      author={Natalie Behague and Patrick Gaudart-Wifling},
      year={2025},
      journal={Preprint arXiv:2509.10232}
}

@article{wang2024inversionnumberdijoinsblowup,
      title={The inversion number of dijoins and blow-up digraphs}, 
      author={Haozhe Wang and Yuxuan Yang and Mei Lu},
      year={2024},
      journal={Preprint arXiv:2404.14937}
}

@article{inversion,
  title = {Problems, Proofs, and Disproofs on the Inversion Number},
  volume = {32},
  ISSN = {1077-8926},
  pages = {\#P1.42},
  number = {1},
  journal = {The Electronic Journal of Combinatorics},
  author = {Aubian, Guillaume and Havet, Fr\'ed\'eric and H\"orsch, Florian and Klingelhoefer, Felix and Nisse, Nicolas and Rambaud, Cl\'ement and Vermande, Quentin },
  year = {2025}
}

@article{wang2026inversiondiametertreewidth,
  title={Inversion diameter and treewidth},
  author={Wang, Yichen and Wang, Haozhe and Yang, Yuxuan and Lu, Mei},
  journal={Discrete Mathematics \& Theoretical Computer Science},
  volume={28},
  year={2026},
  publisher={Episciences. org}
}

@article{arana2026inversiondiameter2edgecoloredhomomorphisms,
      title={Inversion diameter and 2-edge-colored homomorphisms}, 
      author={Carmen Arana and Thomas Bellitto and Hector Buffière and Quentin Chuet and Théo Pierron and Amadeus Reinald},
      year={2026},
	   journal = {Preprint arXiv:2602.24171}
}

@article{https://doi.org/10.1002/jgt.23290,
author = {Duron, Julien and Havet, Frédéric and Hörsch, Florian and Rambaud, Clément},
title = {On the Minimum Number of Inversions to Make a Digraph $k$-(Arc-)Strong},
journal = {Journal of Graph Theory},
volume = {111},
number = {2},
pages = {31-62},
year = {2026}
}

@article{nashwilliamsCJM12,
  title={On orientations, connectivity and odd-vertex-pairings in finite graphs},
  author={Nash-Williams, Crispin St J. A.},
  journal={Canadian Journal of Mathematics},
  volume={12},
  pages={555--567},
  year={1960},
  publisher={Cambridge University Press}
}

@article{HORSCH2021103292,
title = {Connectivity of orientations of 3-edge-connected graphs},
journal = {European Journal of Combinatorics},
volume = {94},
pages = {103292},
year = {2021},
issn = {0195-6698},
author = {Florian Hörsch and Zoltán Szigeti}
}

@article{bangjensen2025makingorientedgraphacyclic,
      title={Making an oriented graph acyclic using inversions of bounded or prescribed size}, 
      author={Jørgen Bang-Jensen and Frédéric Havet and Florian Hörsch and Clément Rambaud and Amadeus Reinald and Caroline Silva},
      year={2025},
      journal = {Preprint arXiv:2511.22562}
}

@article{havet2026leqpinversiondiameteroriented,
      title={On the $(\leq p)$-inversion diameter of oriented graphs}, 
      author={Frédéric Havet and Clément Rambaud and Caroline Silva},
      year={2026},
      journal = {Preprint arXiv:2604.04633}
}

@article{doi:10.1137/090756144,
author = {Guruswami, Venkatesan and H\r{a}stad, Johan and Manokaran, Rajsekar and Raghavendra, Prasad and Charikar, Moses},
title = {{Beating the random ordering is hard: every ordering CSP is approximation resistant}},
journal = {SIAM Journal on Computing},
volume = {40},
number = {3},
pages = {878-914},
year = {2011}
}

@article{v009a024,
 author = {Svensson, Ola},
 title = {Hardness of Vertex Deletion and Project Scheduling},
 year = {2013},
 pages = {759--781},
 publisher = {Theory of Computing},
 journal = {Theory of Computing},
 volume = {9},
 number = {24}
}

@article{https://doi.org/10.1002/jgt.23251,
author = {Yuster, Raphael},
title = {On Tournament Inversion},
journal = {Journal of Graph Theory},
volume = {110},
number = {1},
pages = {82-91},
year = {2025}
}

@article{v012a006,
 author = {Guruswami, Venkatesan and Lee, Euiwoong},
 title = {Simple Proof of Hardness of Feedback Vertex Set},
 year = {2016},
 pages = {1--11},
 publisher = {Theory of Computing},
 journal = {Theory of Computing},
 volume = {12},
 number = {6}
}

@article{charbitCPC16,
  author       = {Pierre Charbit and
                  St{\'{e}}phan Thomass{\'{e}} and
                  Anders Yeo},
  title        = {The Minimum Feedback Arc Set Problem is {NP}-Hard for Tournaments},
  journal      = {Combinatorics, Probability, and Computing},
  volume       = {16},
  number       = {1},
  pages        = {1--4},
  year         = {2007}
}

@article{alonSJDM20,
    author = {Alon, Noga},
    title = {Ranking Tournaments},
    journal = {SIAM Journal on Discrete Mathematics},
    volume = {20},
    number = {1},
    pages = {137--142},
    year = {2006}
}

@article{maderMA191,
  title={Minimale n-fach kantenzusammenh{\"a}ngende Graphen},
  author={Mader, Wolfgang},
  journal={Mathematische Annalen},
  volume={191},
  number={1},
  pages={21--28},
  year={1971},
  publisher={Springer}
}

@incollection{frankNHMS66,
  title={An algorithm for submodular functions on graphs},
  author={Frank, Andr{\'a}s},
  booktitle={North-Holland Mathematics Studies},
  volume={66},
  pages={97--120},
  year={1982},
  publisher={Elsevier}
}

@article{dalmazzo1977,
  title={Nombre d'arcs dans les graphes $k$-arc-fortement connexes minimaux},
  author={Dalmazzo, Michel},
  journal={Compte-Rendu de l'Académie des Sciences de Paris A},
  volume={285},
  number={5},
  pages={341--344},
  year={1977}
}

@incollection{edmonds1973,
    author={Jack Edmonds},
    title={Edge-disjoint branchings},
    booktitle={Combinatorial Algorithms},
    publisher={Academic Press},
    editor={Randall Rustin},
    year={1973},
    pages={91--96}
}

@article{klostermeyerAC51,
  title={Pushing vertices and orienting edges},
  author={Klostermeyer, William F.},
  journal={Ars Combinatoria},
  volume={51},
  pages={65--75},
  year={1999}
}

@article{jacksonJGT12,
  title={Some remarks on Arc-connectivity, vertex splitting, and orientation in graphs and digraphs},
  author={Jackson, Bill},
  journal={Journal of Graph Theory},
  volume={12},
  number={3},
  pages={429--436},
  year={1988},
  publisher={Wiley Online Library}
}

@BOOK{schrijver2002,
  title     = "Combinatorial optimization: Polyhedra and efficiency",
  author    = "Schrijver, Alexander",
  publisher = "Springer",
  month     =  jul,
  year      =  2004,
  address   = "Berlin, Germany",
  language  = "en"
}

@article{marxTC6,
 author = {Marx, D{\'a}niel},
 title = {Can You Beat Treewidth?},
 year = {2010},
 pages = {85--112},
 publisher = {Theory of Computing},
 journal = {Theory of Computing},
 volume = {6},
 number = {5}
}

@article{boArxiv26,
  title={Edge-Number Bounds for the Inversion Diameter of Graphs},
  author={Bo, Jiawen and Li, Anqi and Lian, Xiaopan and Yan, Xin},
  journal={Preprint arXiv:2606.17974},
  year={2026}
}

\end{document}